\documentclass[12pt]{amsart}
\usepackage{amssymb,amsfonts,amsmath,longtable}
\title{Procesi bundles and Symplectic reflection algebras}
\address{Department
of Mathematics, Northeastern University, Boston, MA 02115, USA}
\email{i.loseu@neu.edu}
\thanks{MSC 2010: 14E16, 53D20, 53D55 (Primary)  05E05 , 16G20, 16G99, 16S36, 17B63, 20F55 (Secondary)}
\newcommand{\kf}{\mathfrak{k}}
\newcommand{\C}{\mathbb{C}}
\newcommand{\Sp}{\operatorname{Sp}}
\newcommand{\Z}{\mathbb{Z}}
\newcommand{\End}{\operatorname{End}}
\newcommand{\Ext}{\operatorname{Ext}}
\newcommand{\Str}{\mathcal{O}}
\newcommand{\Q}{\mathbb{Q}}
\newcommand{\gr}{\operatorname{gr}}
\newcommand{\A}{\mathcal{A}}
\newcommand{\g}{\mathfrak{g}}
\newcommand{\h}{\mathfrak{h}}
\newcommand{\Weyl}{\mathbf{A}}
\newcommand{\Fi}{\mathbb{F}}
\newcommand{\Vect}{\operatorname{Vect}}
\newcommand{\M}{\mathcal{M}}
\newcommand{\quo}{/\!/}
\newcommand{\red}{/\!/\!/}
\newcommand{\param}{\mathfrak{c}}
\newcommand{\Spec}{\operatorname{Spec}}
\newcommand{\GL}{\operatorname{GL}}
\newcommand{\SL}{\operatorname{SL}}
\newcommand{\Dcal}{\mathcal{D}}
\newtheorem{Thm}{Theorem}[section]
\newtheorem{Prop}[Thm]{Proposition}
\newtheorem{Cor}[Thm]{Corollary}
\newtheorem{Lem}[Thm]{Lemma}
\theoremstyle{definition}
\newtheorem{Ex}[Thm]{Example}
\newtheorem{defi}[Thm]{Definition}
\newtheorem{Rem}[Thm]{Remark}

\newtheorem{Exer}[Thm]{Exercise}
\oddsidemargin=0cm
\evensidemargin=0cm
\textwidth=16cm
\textheight=232mm
\unitlength=1mm
\author{Ivan Losev}
\begin{document}
\begin{abstract}
In this survey we describe an interplay between Procesi bundles on symplectic resolutions
of quotient singularities and Symplectic reflection algebras. Procesi bundles were  constructed by Haiman and, in a greater generality, by Bezrukavnikov and Kaledin. Symplectic reflection algebras are deformations
of skew-group algebras defined in complete generality by Etingof and Ginzburg. We construct and classify
Procesi bundles, prove an isomorphism between spherical Symplectic reflection algebras,
give a proof of wreath Macdonald positivity and of localization theorems for cyclotomic Rational
Cherednik algebras.
\end{abstract}
\maketitle
\section{Introduction}
\subsection{Procesi bundles: Hilbert scheme case}
A Procesi bundle is a vector bundle of rank $n!$ on the Hilbert scheme $\operatorname{Hilb}_n(\C^2)$
whose existence was predicted by Procesi and proved by Haiman, \cite{Haiman}. This bundle was used
by Haiman to prove a famous $n!$ conjecture in Combinatorics that, in turn, settles another
famous conjecture: Schur positivity of Macdonald polynomials.

\subsubsection{n! theorem}
Consider the Vandermond determinant $\Delta(\underline{x})$, where we write $\underline{x}$ for $(x_1,\ldots,x_n)$,
it is given by $\Delta(\underline{x})=\det(x_i^{j-1})_{i,j=1}^n$.
Consider the space $\partial \Delta$ spanned by all partial
derivatives of $\Delta$. This space is graded and carries an action of the symmetric group $\mathfrak{S}_n$
(by permuting the variables $x_1,\ldots,x_n$). A deeper fact is that $\dim \partial \Delta=n!$
(and $\partial \Delta\cong \C \mathfrak{S}_n$ as an $\mathfrak{S}_n$-module), in fact, $\partial \Delta$
coincides with the space of the $\mathfrak{S}_n$-harmonic polynomials, i.e., all polynomials annihilated
by all elements of $\C[\underline{\partial}]^{\mathfrak{S}_n}$ without constant term.

One can ask if there is a two-variable generalization of that fact. We have several two-variable versions of
$\Delta$, one for each Young diagram $\lambda$ with $n$ boxes. Namely, let $(a_1,b_1),\ldots, (a_n,b_n)$
be the coordinates of the boxes in $\lambda$, e.g., $\lambda=(3,2)$ gives pairs $(0,0),(1,0),(2,0), (0,1),(1,1)$.

\begin{picture}(60,17)
\put(2,1){\line(0,1){14}}
\put(12,1){\line(0,1){14}}
\put(22,1){\line(0,1){14}}
\put(32,1){\line(0,1){7}}
\put(2,1){\line(1,0){30}}
\put(2,8){\line(1,0){30}}
\put(2,15){\line(1,0){20}}
\put(3,3){\small $(0,0)$}
\put(13,3){\small $(1,0)$}
\put(23,3){\small $(2,0)$}
\put(3,10){\small $(0,1)$}
\put(13,10){\small $(1,1)$}
\end{picture}

Then set $\Delta_\lambda(\underline{x},\underline{y}):=\det(x_i^{a_j}y_i^{b_j})_{i,j=1}^n$ so that, for
$\lambda=(n)$, we get $\Delta_\lambda(\underline{x},\underline{y})=\Delta(\underline{x})$, for
$\lambda=(1^n)$, we get $\Delta_\lambda(\underline{x},\underline{y})=\Delta(\underline{y})$, while,
for $\lambda=(2,1)$, we get $\Delta_\lambda(\underline{x},\underline{y})=x_1y_2+x_2y_3+x_3y_1
-x_2y_1-x_3y_2-x_1y_3$.

\begin{Thm}[Haiman's n! theorem]\label{Thm_n!}
The space $\partial \Delta_\lambda$ spanned by the partial derivatives of $\Delta_\lambda$
is isomorphic to $\C\mathfrak{S}_n$ as an $\mathfrak{S}_n$-module (where $\mathfrak{S}_n$
acts by permuting the pairs $(x_1,y_1),\ldots, (x_n,y_n)$) and, in particular, has dimension
$n!$.
\end{Thm}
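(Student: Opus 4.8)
The plan is to deduce the theorem from the existence of a \emph{Procesi bundle} on $\operatorname{Hilb}_n(\C^2)$, following Haiman. Recall that $\operatorname{Hilb}_n(\C^2)$ is the smooth, irreducible, $2n$-dimensional scheme of ideals $I\subset\C[x,y]$ with $\dim_\C\C[x,y]/I=n$; it carries an action of $T=(\C^*)^2$ induced by scaling the coordinates on $\C^2$, and the $T$-fixed points are precisely the monomial ideals $I_\lambda$, one for each Young diagram $\lambda$ with $n$ boxes. Let $\rho\colon X_n\to\operatorname{Hilb}_n(\C^2)$ be the projection from the isospectral Hilbert scheme, i.e.\ from the reduced fibre product of $\operatorname{Hilb}_n(\C^2)$ and $(\C^2)^n$ over $\operatorname{Sym}^n(\C^2)=(\C^2)^n/\mathfrak{S}_n$; it is a finite morphism, generically of degree $n!$, and $\mathfrak{S}_n$ acts on $X_n$ over $\operatorname{Hilb}_n(\C^2)$ by permuting the $n$ factors of $(\C^2)^n$. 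Set $P:=\rho_*\Str_{X_n}$, a $T\times\mathfrak{S}_n$-equivariant coherent sheaf. The first, and technically crucial, input --- which in this paper is subsumed in the general construction of Procesi bundles on symplectic resolutions, and which in Haiman's work is proved by an analysis of ``polygraphs'' --- is that $X_n$ is Cohen--Macaulay (and normal). Since $\rho$ has finite fibres, $X_n$ is equidimensional of dimension $2n=\dim\operatorname{Hilb}_n(\C^2)$, and $\operatorname{Hilb}_n(\C^2)$ is regular, miracle flatness makes $\rho$ flat; hence $P$ is a vector bundle, of rank equal to the generic degree $n!$.

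Next I would pin down the $\mathfrak{S}_n$-module structure of the fibres of $P$. Over the open subset $U\subset\operatorname{Hilb}_n(\C^2)$ of ideals of $n$ distinct points, $\rho^{-1}(U)\to U$ is the $\mathfrak{S}_n$-torsor of orderings of the $n$ points, so the fibre $P_I$ over any $I\in U$ is the regular representation $\C\mathfrak{S}_n$. For an arbitrary $I$, observe that for each $g\in\mathfrak{S}_n$ the function $I\mapsto\operatorname{tr}(g\mid P_I)$ is a global regular function on $\operatorname{Hilb}_n(\C^2)$; it takes only finitely many values, each a sum of $n!$ roots of unity of order dividing that of $g$, and $\operatorname{Hilb}_n(\C^2)$ is integral, so it is constant. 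Thus the $\mathfrak{S}_n$-character of $P_I$ does not depend on $I$, and $P_I\cong\C\mathfrak{S}_n$ for all $I$; in particular $P_{I_\lambda}\cong\C\mathfrak{S}_n$ as an $\mathfrak{S}_n$-module for every $\lambda$.

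The last step is to identify the fibre $P_{I_\lambda}$ with $\partial\Delta_\lambda$. Since the subscheme $V(I_\lambda)\subset\C^2$ is supported at the origin, the image of $\rho^{-1}([I_\lambda])$ in $(\C^2)^n$ is the single point $0$, so $P_{I_\lambda}$ is a graded quotient $\C[\underline x,\underline y]/J_\lambda$ of the polynomial ring supported at $0$. Unravelling the equations of the reduced scheme $X_n$ near $[I_\lambda]$, using that $\rho$ is the torsor above on a dense open set, one finds $J_\lambda=\operatorname{ann}(\Delta_\lambda)$, the apolar ideal of $\Delta_\lambda$ (here $f\in\C[\underline x,\underline y]$ acts on $\C[\underline x,\underline y]$ by $x_i\mapsto\partial_{x_i},\ y_i\mapsto\partial_{y_i}$). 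The Artinian Gorenstein ring $R_\lambda:=\C[\underline x,\underline y]/\operatorname{ann}(\Delta_\lambda)$ is, by Macaulay's inverse system, the graded $\mathfrak{S}_n$-dual of $\partial\Delta_\lambda$, the pairing being $(\bar f,g)\mapsto\bigl(f(\partial)g\bigr)(0)$; as finite-dimensional $\mathfrak{S}_n$-modules are self-dual, $\partial\Delta_\lambda\cong R_\lambda$ as $\mathfrak{S}_n$-modules and $\dim\partial\Delta_\lambda=\dim R_\lambda$. Combining with the previous paragraph, $\partial\Delta_\lambda\cong R_\lambda=P_{I_\lambda}\cong\C\mathfrak{S}_n$, and in particular $\dim\partial\Delta_\lambda=n!$, as claimed.

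The main obstacle is beyond doubt the first step: the flatness of $\rho$, equivalently the Cohen--Macaulayness (and normality) of the isospectral Hilbert scheme. This is not formal --- smoothness of $\operatorname{Hilb}_n(\C^2)$ alone does not give it --- and it is the technical heart of Haiman's proof, there reduced to the freeness of certain polygraph ideals over $\C[\underline x]$; in the present survey it is packaged into the construction and classification of Procesi bundles. The identification of the special fibre $P_{I_\lambda}$ with the apolar quotient $R_\lambda$ in the third step is also delicate, while the representation theory of the second step and the Macaulay-duality comparison are soft.
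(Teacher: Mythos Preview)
Your proposal is correct and follows essentially the same route as the paper's sketch in \S1.1.3: form the isospectral Hilbert scheme, invoke Haiman's theorem that it is Cohen--Macaulay (and Gorenstein) to get the Procesi bundle $\mathcal{P}=\eta_*\mathcal{O}_{I_n}$ as a rank-$n!$ vector bundle, and then identify the fibre $\mathcal{P}_\lambda$ at the torus-fixed point with $\partial\Delta_\lambda$. You add two details the paper omits---the character-continuity argument for the $\mathfrak{S}_n$-module structure of the fibres, and the Macaulay inverse-system mechanism for the identification $P_{I_\lambda}\cong R_\lambda\cong\partial\Delta_\lambda$---and you correctly flag that the Cohen--Macaulayness is the hard input and that the fibre identification $J_\lambda=\operatorname{ann}(\Delta_\lambda)$ is itself nontrivial (it uses the Gorenstein property, not just Cohen--Macaulayness).
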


This is a beautiful  result with an elementary statement and an  extremely  involved proof, \cite{Haiman}.
\subsubsection{Macdonald positivity}
Before describing some ideas of the proof that are relevant to the present survey, let
us describe an application to {\it Macdonald polynomials}, particularly important and interesting
symmetric polynomials with coefficients in $\Q(q,t)$. It will be more convenient for us to speak about
representations of $\mathfrak{S}_n$ rather than about symmetric polynomials
(they are related via taking the Frobenius character) and to deal with Haiman's
modified Macdonald polynomials.

\begin{defi}\label{defi_Macdonald}
The modified Macdonald polynomial $\tilde{H}_\lambda$ is the Frobenius character of
a bigraded $\mathfrak{S}_n$-module $P_\lambda:=\bigoplus_{i,j\in \Z} P_\lambda[i,j]$ subject to the following
three conditions
\begin{itemize}
\item[(a)] The class of $P_\lambda\otimes \sum_{i=0}^n (-1)^i \bigwedge^i \C^n[1,0]$
is expressed via the irreducibles $V_\mu$ with $\mu\geqslant \lambda$ (in
the $K_0$ of the bigraded $\mathfrak{S}_n$-modules).
\item[(b)] $P_\lambda\otimes \sum_{i=0}^n (-1)^i \bigwedge^i \C^n[0,1]$
is expressed via the irreducibles $V_\mu$ with $\mu\geqslant \lambda^t$.
\item[(c)] The trivial module $V_{(n)}$ occurs in $P_\lambda$ once and in degree $(0,0)$.
\end{itemize}
Here $\mu\geqslant \lambda$ is the usual dominance order on the set of Young diagrams
(meaning that $\sum_{i=1}^k \mu_i\geqslant \sum_{i=1}^k \lambda_i$) and $\lambda^t$
denotes the transpose of $\lambda$.
\end{defi}

It is not clear from this definition that the representations $P_\lambda$ exist
(the statement on the level of $K_0$ is easier but also non-trivial, this was known before Haiman's
work).

\begin{Thm}[Haiman's Macdonald positivity theorem]\label{Thm:Mac_pos}
A bigraded $\mathfrak{S}_n$-module $P_\lambda$ exists (and is unique) for any $\lambda$.
Moreover, $P_\lambda$ coincides with $\partial \Delta_\lambda$, where $\partial \Delta_\lambda$
is is given the structure of a bigraded $\mathfrak{S}_n$-module as the quotient
of $\C[\underline{\partial}_x,\underline{\partial}_y]$ (via $f\mapsto f\Delta_\lambda$).
\end{Thm}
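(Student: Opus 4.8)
The plan is to reduce the statement to the \emph{Procesi bundle} $\mathcal{P}$ on $\operatorname{Hilb}_n(\C^2)$ and the behaviour of its fibres at the torus-fixed points. I would begin with uniqueness and a reduction. Conditions (a) and (b) say that, after applying the invertible operators $(-)\otimes\sum_i(-1)^i\bigwedge^i\C^n[1,0]$ and $(-)\otimes\sum_i(-1)^i\bigwedge^i\C^n[0,1]$, the bigraded character of $P_\lambda$ becomes triangular for the dominance order, respectively for dominance on transposes, while (c) normalizes the $V_{(n)}$-coefficient; these are exactly Macdonald's plethystic conditions characterizing $\tilde H_\lambda$, so a routine triangularity argument shows that (a), (b), (c) determine the bigraded character, hence (the operators being invertible on bigraded characters) the bigraded $\mathfrak{S}_n$-module $P_\lambda$ up to isomorphism whenever it exists. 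It therefore suffices to prove that $\partial\Delta_\lambda$, with its bigrading as a quotient of $\C[\underline{\partial}_x,\underline{\partial}_y]$, satisfies (a), (b) and (c); this yields both existence and the identification $P_\lambda=\partial\Delta_\lambda$, and it also yields Macdonald positivity, since the coefficients are then the graded multiplicities in an honest bigraded module.

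Next I would realize $\partial\Delta_\lambda$ geometrically. Let $T=(\C^\times)^2$ act on $\C^2$, hence on $\operatorname{Hilb}_n(\C^2)$ and on $\mathcal{P}$; the $T$-fixed points are the monomial ideals $I_\lambda$, $\lambda\vdash n$. The content of Theorem~\ref{Thm_n!} (in the form in which it is actually proved) is that the bigraded $\mathfrak{S}_n$-module $M_\lambda:=\mathcal{P}|_{I_\lambda}$ is isomorphic to $\partial\Delta_\lambda$: on the open locus of $n$ distinct points the fibre of $\mathcal{P}$ is visibly $\C\mathfrak{S}_n$, degenerating the points to the box configuration of $\lambda$ produces $\partial\Delta_\lambda$, and Theorem~\ref{Thm_n!} is precisely the assertion that this limit does not drop dimension. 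Condition (c) is then immediate: the Procesi bundle satisfies $\mathcal{P}^{\mathfrak{S}_n}\cong\Str_{\operatorname{Hilb}_n(\C^2)}$ with trivial $T$-equivariant structure, so $(M_\lambda)^{\mathfrak{S}_n}$ is one-dimensional and of bidegree $(0,0)$.

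Conditions (a) and (b) are the heart of the matter. Swapping the two coordinates of $\C^2$ carries (a) to (b) and $\lambda$ to $\lambda^t$, so it is enough to prove (a), i.e.\ the triangularity $\tilde H_\lambda[X(1-q)]\in\operatorname{span}\{s_\mu:\mu\geqslant\lambda\}$. I would pick a generic cocharacter of $T$ with large $q$-weight, use the associated Bia\l{}ynicki--Birula (Ellingsrud--Str{\o}mme) decomposition of $\operatorname{Hilb}_n(\C^2)$ into affine attracting cells $C_\mu$, and combine it with the tautological rank-$n$ bundle $B$ (so $B|_{I_\mu}=\C[x,y]/I_\mu$), whose $q$-Koszul complex realizes, at the fixed points, the operator $(-)\otimes\sum_i(-1)^i\bigwedge^i\C^n[1,0]$. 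Equivariant localization then expresses $\sum_\mu\operatorname{ch}_{q,t}(M_\mu)\cdot(\text{tangent factors at }I_\mu)$, after applying this operator, as the Euler characteristic of an explicit complex of twists of $\mathcal{P}$; the higher-cohomology vanishing for these twists collapses it to a genuine graded module whose fixed-point characters are supported, cell by cell, on the closed cells $\overline{C_\mu}$ with $\mu\geqslant\lambda$, and comparing characters at the fixed points gives the triangularity.

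The one serious obstacle is this vanishing statement: it is the cohomological shadow of the Cohen--Macaulayness of the isospectral Hilbert scheme — equivalently, of the structural property of $\mathcal{P}$ forced by the symplectic reflection algebra picture in which $\mathcal{P}$ is constructed — and it is also exactly what underlies the bigraded identification $M_\lambda\cong\partial\Delta_\lambda$ used above. Everything else is bookkeeping with equivariant characters and the combinatorics of the dominance order. (An alternative to the cell decomposition, closer to Haiman's original route, is to derive (a) by induction on $n$ through restriction of the isospectral scheme to a coordinate hyperplane, the Cohen--Macaulay property again being the crux.)
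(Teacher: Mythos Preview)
Your overall architecture is right: reduce to showing the Procesi fibre $\mathcal{P}|_{I_\lambda}$ satisfies (a)--(c), get (c) from normalization, and get (a), (b) from flatness of $\mathcal{P}$ over $\C[\h]$ together with a support/triangularity statement for $\mathcal{P}/\h\mathcal{P}$. This is the shape of both Haiman's argument (sketched in \ref{SSS_HS_PB}) and the paper's own proof in Section~\ref{SS_Macpos}. But your route to the triangularity differs from the paper's, and that step in your sketch has a real gap.

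For (a) you invoke Bia\l{}ynicki--Birula cells and ``equivariant localization'', asserting that the fixed-point characters end up ``supported, cell by cell, on the closed cells $\overline{C_\mu}$ with $\mu\geqslant\lambda$''. That sentence is precisely the statement to be proved, and localization does not supply it: localization computes characters at fixed points, not supports of sheaves, and the closure order on BB cells of $\operatorname{Hilb}_n(\C^2)$ is not \emph{a priori} the dominance order. What one actually needs is that the honest coherent sheaf $(\mathcal{P}^*/\mathcal{P}^*\h)e_\tau$ is supported on $\bigcup_{\tau'\leqslant\tau}Y_{\tau'}$; you have not given a mechanism for this. (A smaller point: the Koszul complex realizing the operator in (a) is that of $\h_{\underline{x}}$ acting on $\mathcal{P}$ via $\End(\mathcal{P})\cong\C[V_n]\#\mathfrak{S}_n$, as in (\ref{eq:Koszul}); the tautological bundle $B$ carries no $\mathfrak{S}_n$-action and its Koszul complex is a different object.)

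The paper proves the support statement (Proposition~\ref{Prop:upper_triang}) by an entirely different method. One deforms $\mathcal{P}^*$ to $\widetilde{\mathcal{P}}^*$ over $\widetilde{X}\to\g^{*G}$ and specializes to a Zariski-generic $p$, where $\M^0_p(n\delta,\epsilon_0)$ is smooth affine. There $(\mathcal{P}_p^*/\mathcal{P}_p^*\h)e_\tau$ is $e\Delta_{0,c}(\tau)$, the spherical Verma module for $H_{0,c}$, and one shows directly (via the baby Verma $\underline{\Delta}_{0,c}(\tau)$) that its support is the single attracting locus $Y_{p,\tau}$. Taking the $\C^\times_c$-closure as $p\to 0$ then gives the inclusion of supports at $p=0$. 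The flatness of $\mathcal{P}$ over $\C[\h]$ is likewise not deduced from Cohen--Macaulayness of $I_n$ but from the characteristic-$p$ construction of $\mathcal{P}$: it reduces to the lagrangianity of the contracting locus in $X_{\Fi}$. What this buys over Haiman's route is a uniform argument that works for all wreath products $\Gamma_n$, where polygraph combinatorics is unavailable; what your BB-cell idea would buy, if it could be made to work, is a proof staying entirely inside the geometry of a single Hilbert scheme without deforming the parameter --- but as written the key support step is missing.
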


\subsubsection{Hilbert schemes and Procesi bundles}\label{SSS_HS_PB}
The proofs of the two theorems above given in \cite{Haiman} are based on the geometry of the Hilbert schemes
$\operatorname{Hilb}_n(\C^2)$ of points on $\C^2$. A basic reference for Hilbert schemes
of points on smooth surfaces is \cite{Nakajima_book}. As a set, $\operatorname{Hilb}_n(\C^2)$ consists
of the ideals $J\subset \C[x,y]$ of codimension $n$. It turns out that $\operatorname{Hilb}_n(\C^2)$
is a smooth algebraic variety of dimension $2n$. It admits a morphism (called the Hilbert-Chow map)
to the variety $\operatorname{Sym}_n(\C^2)$ of the unordered $n$-tuples of points in $\C^2$:
to an ideal $J$ one assigns its support, where points are counted with multiplicities.
Of course, $\operatorname{Sym}_n(\C^2)$ is nothing else but the quotient space
$(\C^2)^{\oplus n}/\mathfrak{S}_n$, the affine algebraic variety whose algebra of functions
is the invariant algebra $\C[\underline{x},\underline{y}]^{\mathfrak{S}_n}$.
The Hilbert-Chow map is  a resolution of singularities.

Note that the two-dimensional torus $(\C^\times)^2$ acts on $\operatorname{Hilb}_n(\C^2)$ and on
$\operatorname{Sym}_n(\C^2)$, the action is induced from the following action on $\C^2$:
$(t,s).(a,b):=(t^{-1}a,s^{-1}b)$. The fixed points of this action on $\operatorname{Hilb}_n(\C^2)$
correspond to the monomial ideals (=ideals generated by monomials) in $\C[x,y]$, they are in
a natural one-to-one correspondence with Young diagrams (as before we fill a Young diagram
with monomials and take the ideal spanned by all monomials that do not appear in the diagram).
Let $z_\lambda$ denote the fixed point corresponding to a Young seminar $\lambda$.

Following Haiman, consider the isospectral Hilbert scheme $I_n$, the reduced Cartesian product
$\C^{2n}\times_{\operatorname{Sym}_n(\C^2)}\operatorname{Hilb}_n(\C^2)$, let $\eta: I_n\rightarrow
\operatorname{Hilb}_n(\C^2)$ be the natural morphism. It is finite of generic degree $n!$.
The main technical result of Haiman, \cite{Haiman}, is that $I_n$ is Cohen-Macaulay and Gorenstein.
So $\mathcal{P}:=\eta_*\mathcal{O}_{I_n}$ is a rank $n!$ vector bundle on $\operatorname{Hilb}_n(\C^2)$
(the Procesi bundle). By the construction, each fiber of this bundle carries an algebra structure that
is a quotient of $\C[\underline{x},\underline{y}]$. Let us write $\mathcal{P}_\lambda$
for the fiber of $\mathcal{P}$ in $z_\lambda$, this is an  algebra that carries a
natural bi-grading because the bundle
$\mathcal{P}$ is $(\C^\times)^2$-equivariant by the construction. On the other hand, $\partial \Delta_\lambda$
is a quotient of $\C[\underline{\partial}_{x},\underline{\partial}_y]$ by an ideal and so is also
an algebra. The latter algebra is bigraded. Haiman has shown that $\mathcal{P}_\lambda\cong \partial\Delta_\lambda$,
an isomorphism of bigraded algebras. This finishes the proof of  Theorem \ref{Thm_n!}.

Let us proceed to Theorem \ref{Thm:Mac_pos}. The class in (a) of Definition \ref{defi_Macdonald} is
that of the fiber at $z_\lambda$ of the Koszul complex
\begin{equation}\label{eq:Koszul}\mathcal{P}\leftarrow \h_{\underline{x}}\otimes\mathcal{P}\leftarrow \Lambda^2 \h_{\underline{x}}\otimes\mathcal{P} \leftarrow\ldots,\end{equation}
where $\h_{\underline{x}}$ is the span of $x_1,\ldots,x_n$ viewed as endomorphisms of $\mathcal{P}$.
Haiman has shown that $I_n$ is flat over $\operatorname{Spec}(\C[\underline{x}])$
(with morphism $I_n\rightarrow \operatorname{Spec}(\C[\underline{x},\underline{y}])\rightarrow
\operatorname{Spec}(\C[\underline{x}])$). It follows that (\ref{eq:Koszul}) is a resolution of
$\mathcal{P}/\h_{\underline{x}}\mathcal{P}$. Now (a) follows from the claim that, for any Young
diagram $\lambda$, the support of the isotypic $V_\lambda$-component in
$\mathcal{P}/\h_{\underline{x}}\mathcal{P}$ contains only points $z_\mu$ with $\mu\leqslant \lambda$.
This was checked by Haiman. Part (b) is analogous, while (c) follows directly from the construction.

There are several other proofs of Theorem \ref{Thm:Mac_pos} available. Two of them use the geometry
of Hilbert schemes and Procesi bundle, \cite{Gordon_Procesi,BF}. We will discuss (a somewhat modified)
approach from \cite{BF} in detail in Section \ref{S_Macdonald}.

\subsection{Quotient singularities and symplectic resolutions}
\subsubsection{Setting}
Let $V$ be a finite dimensional vector space over $\C$ equipped with a symplectic form $\Omega\in \bigwedge^2 V^*$.
Let $\Gamma$ be a finite subgroup of $\Sp(V)$. The invariant algebra $\C[V]^\Gamma$ is a graded Poisson algebra
(as a subalgebra of $\C[V]$) and the corresponding quotient $V/\Gamma=\operatorname{Spec}(\C[V]^\Gamma)$ is a singular Poisson affine variety
that comes with a $\C^\times$-action induced from the action on $V$ by dilations: $t.v:=t^{-1}v$.

\subsubsection{Symplectic resolutions}
One can ask if there is a resolution of singularities of $V/\Gamma$ that is nicely compatible with  the Poisson
structure (and with the $\C^\times$-action). This compatibility is formalized in the notion of a (conical) symplectic resolution.

\begin{defi}\label{defi_sympl_res}
Let $X_0$ be a singular normal affine Poisson variety such that
the regular locus $X_0^{reg}$ is symplectic. We say that a variety $X$ equipped with a morphism $\rho: X\rightarrow X_0$ is a {\it symplectic resolution} of $X_0$ if $X$ is symplectic (with form $\omega$), $\rho$ is a resolution of singularities and $\rho: \rho^{-1}(X_0^{reg})\rightarrow X_0^{reg}$ is a symplectomorphism.
\end{defi}

\begin{defi}\label{defi_conic_res}
Further, suppose that $X_0$ is equipped with a $\C^\times$-action such that
\begin{itemize}
\item the corresponding grading $\C[X_0]=\bigoplus_{i\in \Z}\C[X_0]_i$ is positive, meaning that $\C[X_0]_i=\{0\}$ for $i<0$ and $\C[X]_0=\C$,
\item and the Poisson bracket on $\C[X_0]$ has degree $-d$ for some fixed
$d\in \Z_{>0}$: $\{\C[X_0]_i,\C[X_0]_j\}\subset \C[X_0]_{i+j-d}$
for all $i,j$.\end{itemize}
We say that a symplectic resolution $X$ is {\it conical} if it is equipped with a $\C^\times$-action
making $\rho$ equivariant.
\end{defi}

The variety $X_0=V/\Gamma$ is normal and carries a natural $\C^\times$-action (by dilations)
as in Definition \ref{defi_conic_res} with $d=2$. Also note that the $\C^\times$-action on
$X$ automatically satisfies $t.\omega=t^{-d}\omega$. Finally, note that, under assumptions of Definition \ref{defi_sympl_res}, we have $\C[X]=\C[X_0]$.

\subsubsection{Symplectic resolutions for quotient singularities}
In the previous subsection, we have already seen an example of $(V,\Gamma)$ such that $V/\Gamma$ admits a conical symplectic resolution: $V=(\C^{2})^{\oplus n}, \Gamma=\mathfrak{S}_n$, in this case one can take $X=\operatorname{Hilb}_n(\C^2)$ together with the Hilbert-Chow morphism, see \cite[Section 1]{Nakajima_book}.

There are other examples as well. Let $\Gamma_1$ be a finite subgroup of $\operatorname{SL}_2(\C)$, such subgroups
are classified (up to conjugacy) by Dynkin diagrams of ADE types. Say, the cyclic subgroup $\Z/(\ell+1)\Z$ (embedded
into $\operatorname{SL}_2(\C)$ via $n\mapsto \operatorname{diag}(\eta^n,\eta^{-n})$ with $\eta:=\exp(2\pi\sqrt{-1}/(\ell+1))$) corresponds to the diagram $A_\ell$. The quotient singularity
$\C^2/\Gamma_1$ admits a distinguished {\it minimal} resolution to be denoted by $\widetilde{\C^2/\Gamma_1}$.
This resolution is conical symplectic, see, e.g., \cite[Section 4.1]{Nakajima_book}.

The examples of $\mathfrak{S}_n$ and $\Gamma_1$ can be ``joined'' together. Consider the group
$\Gamma_n:=\mathfrak{S}_n\ltimes \Gamma_1^n$. It acts on $V_n:=(\C^2)^{\oplus n}$: the symmetric
group permutes the summands, while each copy of $\Gamma_1$ acts on its own summand. The
quotient singularities $V_n/\Gamma_n$ admit symplectic resolutions. For example, one can take
$X:=\operatorname{Hilb}_n(\widetilde{\C^2/\Gamma_1})$. But there are other conical symplectic
resolutions of $V_n/\Gamma_n$, conjecturally, they are all constructed as Nakajima quiver varieties,
we will recall the construction of these varieties in \ref{SSS_quiv_var_constr}.

To finish this section, let us point out that, presently,  two more pairs $(V,\Gamma)$
such that $V/\Gamma$ admits a symplectic resolutions are known, see \cite{Bellamy,BSched}. In this paper,
we are not interested in these cases.

\subsection{Procesi bundles: general case}
\subsubsection{Smash-product algebra}
One nice feature of quotient singularities $V/\Gamma$ is that they always have a nice resolution of singularities
which is, however, noncommutative algebraic rather than algebro-geometric: the smash-product algebra $\C[V]\#\Gamma$.
As a vector space, $\C[V]\#\Gamma$ is the tensor product $\C[V]\otimes \C\Gamma$, and the product on
$\C[V]\#\Gamma$ is given by
$$(f_1\otimes \gamma_1)\cdot (f_2\otimes \gamma_2)=f_1\gamma_1(f_2)\otimes \gamma_1\gamma_2, f_1,f_2\in \C[V], \gamma_1,\gamma_2\in \Gamma,$$
where $\gamma_1(f_2)$ denotes the image of $f_2$ under the action of $\gamma_1$. The definition is arranged
in such a way that a $\C[V]\#\Gamma$-module is the same thing as a $\Gamma$-equivariant $\C[V]$-module.
Note that the algebra $\C[V]\#\Gamma$ is graded, for a homogeneous element $f$ of degree $n$, the degree of
$f\otimes \gamma$ is $n$.

Let us explain what we mean when we say that $\C[V]\#\Gamma$ is a resolution of singularities of $V/\Gamma$.
Note that $\C[V]^\Gamma$ can be recovered from $\C[V]\#\Gamma$ in two different but related ways. First,
we have an embedding $\C[V]^\Gamma\hookrightarrow \C[V]\#\Gamma$ given by $f\mapsto f\otimes 1$. The image lies
in the center (this is easy) and actually coincides with the center (a bit harder). Second, consider the element
$e\in \C\Gamma, e:=|\Gamma|^{-1}\sum_{\gamma\in \Gamma}\gamma$, the averaging idempotent. Consider the subspace
$e(\C[V]\#\Gamma)e\subset \C[V]\#\Gamma$. It is obviously closed under multiplication, and $e$ is a unit
with respect to the multiplication there. So $e(\C[V]\#\Gamma)e$ is an algebra, to be called the spherical
subalgebra of $\C[V]\#\Gamma$. It is isomorphic to $\C[V]^{\Gamma}$, an isomorphism is given by
$f\mapsto ef$.

Thanks to the realization of $\C[V]^{\Gamma}$ as a spherical subalgebra, we can consider the functor
$M\mapsto eM: \C[V]\#\Gamma\operatorname{-mod}\rightarrow \C[V]^{\Gamma}\operatorname{-mod}$
(an analog of the morphism $\rho$). Note that the algebra $\C[V]\#\Gamma$ has finite homological
dimension (because $\C[V]$ does) and so is ``smooth''. The algebra $\C[V]\#\Gamma$ is finite over
$\C[V]^{\Gamma}$ which can be thought as an analog of $\rho$ being proper. Also, after replacing
$\C[V]\#\Gamma, \C[V]^{\Gamma}$ with sheaves $\mathcal{O}_{V^{reg}}\#\Gamma, \mathcal{O}_{V^{reg}/\Gamma}$
on $V^{reg}/\Gamma$, where
\begin{equation}\label{eq:reg_locus} V^{reg}:=\{v\in V| \Gamma_v=\{1\}\},
\end{equation}
the functor $M\mapsto eM$ becomes a category equivalence. This is an analog of $\rho$
being birational.

\subsubsection{Procesi bundle: an axiomatic description}
Now let $X$ be a conical symplectic resolution of $V/\Gamma$. We want to relate $X$ to $\C[V]\#\Gamma$.

\begin{defi}
A Procesi bundle $\mathcal{P}$ on $X$ is a $\C^\times$-equivariant vector bundle on $X$
together with an isomorphism $\End_{\Str_X}(\mathcal{P})\xrightarrow{\sim} \C[V]\#\Gamma$
of graded algebras over $\C[X]=\C[V]^\Gamma$ such that $\Ext^i(\mathcal{P},\mathcal{P})=0$
for $i>0$.
\end{defi}

Note that the isomorphism $\End_{\Str_X}(\mathcal{P})\xrightarrow{\sim} \C[V]\#\Gamma$
gives a fiberwise $\Gamma$-action on $\mathcal{P}$. The invariant sheaf $e\mathcal{P}$
is a vector bundle of rank $1$. We say that $\mathcal{P}$ is {\it normalized}
if $e\mathcal{P}=\mathcal{O}_X$ (as a $\C^\times$-equivariant vector bundle).
We can normalize an arbitrary Procesi bundle by tensoring it with $(e\mathcal{P})^{*}$. Below we only consider
normalized Procesi bundles.

In particular, Haiman's Procesi bundle on $X=\operatorname{Hilb}_n(\C^2)$ fits the definition, this is essentially
a part of \cite[Theorem 5.3.2]{Haiman_CDM}
(and is normalized). The existence of a Procesi bundle on a general $X$ was proved by Bezrukavnikov and Kaledin in
\cite{BK_Procesi}.  We will see that the number of different Procesi bundles on
a symplectic resolution of $\C^{2n}/\Gamma_n$ equals $2|W|$ if $n>1$, where $W$
is the Weyl group of the Dynkin diagram corresponding to $\Gamma_1$. For example,
when $\Gamma_1=\Z/\ell\Z$, we get $W=\mathfrak{S}_\ell$ and so the number of different
Procesi bundles is $2\ell!$.

\subsection{Symplectic reflection algebras}
\subsubsection{Definition}
Symplectic reflection algebras were introduced by Etingof and Ginzburg in \cite{EG}. Those are filtered deformations
of $\C[V]\#\Gamma$.

By a symplectic reflection in $\Gamma$ one means an element $\gamma$ with
$\operatorname{rk}(\gamma-1_V)=2$. Note that the rank has to be even: the image of $\gamma-1_V$
is a symplectic subspace of $V$. By $S$ we denote the set of all symplectic reflections in $\Gamma$,
it is a union of conjugacy classes, $S=\sqcup_{i=1}^r S_i$. Now pick $t\in \C$
and $c=(c_1,\ldots,c_r)\in \C^r$. We define the algebra $H_{t,c}$ as the quotient
of $T(V)\#\Gamma$ by the relations
\begin{equation}\label{eq:SRA_reln} u\otimes v-v\otimes u= t\Omega(u,v)+\sum_{i=1}^r c_i\sum_{s\in S_i} \Omega(\pi_su,\pi_sv)s, u,v\in V.
\end{equation}
Here we write $\pi_s$ for the projection  $V\twoheadrightarrow\operatorname{im}(s-1_V)$ corresponding to the decomposition
$V=\operatorname{im}(s-1_V)\oplus \ker(s-1_V)$.

As Etingof and Ginzburg checked in \cite{EG}, the algebra $H_{t,c}$ satisfies the PBW property:
if we filter $H_{t,c}$ by setting $\deg\Gamma=0, \deg V=1$, then $\gr H_{t,c}=\C[V]\#\Gamma$
(here we identify $V$ with $V^*$ by means of $\Omega$ so that $\C[V]\cong S(V)$). Moreover,
we will see that $H_{t,c}$ satisfies a certain universality property so this deformation
of $\C[V]\#\Gamma$ is forced on us, in a way.

\subsubsection{Connection to Procesi bundles}
It may seem that Symplectic reflection algebras and Procesi bundles are not related. This is not so.
It turns out that the algebra $H_{t,c}$ is the endomorphism algebra of a suitable understood
deformation of a Procesi bundle $\mathcal{P}$. This connection is beneficial for studying both.
On the Procesi side, it allows to classify Procesi bundles, \cite{Procesi}, and prove
the Macdonald positivity in the case of groups $\Gamma_n$ with $\Gamma_1=\Z/\ell \Z$,
\cite{BF}. On the symplectic reflection side, it allows to relate the algebras
$H_{t,c}$ to quantized Nakajima quiver varieties, see \cite{EGGO,quant} and references
therein, which then allows to study the representation theory of $H_{t,c}$ (\cite{BL})
and to prove versions of Beilinson-Bernstein localization theorems, \cite{GL,cher_ab_loc}.
Connections between Procesi bundles and Symplectic reflection algebras is a subject of this survey.

\subsection{Notation and conventions} Let us list some notation used in the paper.

{\it Quantizations and deformations}.
We use the following conventions for quantizations. For a Poisson algebra $A$, we write $\A_\hbar$ for its
formal quantization. When $A$ is graded, we write $\A$ for its filtered quantization. The notation
$\mathcal{D}_\hbar$ is usually used for a formal quantization of a variety, while $\mathcal{D}$
usually denotes a filtered quantization.

When $X$ is a conical symplectic resolution of singularities, we write $\tilde{X}$
for its universal conical deformation (over $H^2_{DR}(X)$) and $\tilde{\mathcal{D}}_\hbar$
stands for the canonical quantization of $\tilde{X}$.

{\it Symplectic reflection groups and algebras}. We write $\Gamma_1$ for a finite subgroup
of $\SL_2(\C)$ and $\Gamma_n$ for the semidirect product $\mathfrak{S}_n\ltimes \Gamma_1^n$.
This semi-direct product acts on $V_n:=\C^{2n}$. In the case when $\Gamma_1=\{1\}$, we usually
write $V_n$ for $T^*\C^{n-1}$, where $\C^{n-1}$ is the reflection representation of $\mathfrak{S}_n$.

For a group $\Gamma$ acting on a space $V$ by linear symplectomorphisms, by $S$ we denote the set
of symplectic reflections in $\Gamma$. By $e$ we denote the averaging idempotent of $\Gamma$.
By ${\bf H}$ we denote the universal symplectic reflection algebra of $(V,\Gamma)$. Its specializations
are denoted by $H_{t,c}$.

{\it Quotients and reductions}. Let $G$ be a group acting on a variety $X$.
If $G$ is finite and $X$ is quasi-projective, then the quotient is denoted by $X/G$ (note
that this quotient may fail to exist when $X$ is not quasi-projective).
If $G$ is reductive and $X$ is affine, then $X\quo G$ stands for the categorical quotient.
A GIT quotient of $X$ under the $G$-action with stability condition $\theta$
is denoted by $X\quo^\theta G$.

When $X$ is Poisson, and the $G$-action is Hamiltonian, we write $X\red_\lambda G$ for $\mu^{-1}(\lambda)\quo G$
and $X\red_\lambda^\theta G$ for $\mu^{-1}(\lambda)\quo^\theta G$.

{\it Miscellaneous notation}.

\setlongtables
\begin{longtable}{p{2.5cm} p{12.5cm}}
$\widehat{\otimes}$&the completed tensor product of complete topological vector spaces/ modules.\\
$(a_1,\ldots,a_k)$& the two-sided ideal in an associative algebra generated by  elements $a_1,\ldots,a_k$.\\
 $A^{\wedge_\chi}$&
the completion of a commutative (or ``almost commutative'') algebra $A$ with respect to the maximal ideal
of a point $\chi\in \Spec(A)$.\\
$\Weyl(V)$& the Weyl algebra of a symplectic vector space $V$.\\
$D(X)$& the algebra of differential operators on a smooth variety $X$.\\
$\Fi_q$& the finite field with $q$ elements.\\
$\gr \A$& the associated graded vector space of a filtered
vector space $\A$.\\
$H^i_{DR}(X)$& the $i$th De Rham cohomology of $X$ with coefficients in $\C$. \\
$\Str_X$& the structure sheaf of a scheme $X$.\\
$R_\hbar(\A)$&$:=\bigoplus_{i\in
\mathbb{Z}}\hbar^i \A_{\leqslant i}$ :the Rees $\C[\hbar]$-module of a filtered
vector space $\A$.\\
$\mathfrak{S}_n$& the symmetric group in $n$ letters.\\
$S(V)$& the symmetric algebra of a vector space $V$.\\
$\operatorname{Sp}(V)$& the symplectic linear group of a symplectic vector space $V$.\\
$\Gamma(\mathcal{S})$& global sections of a sheaf $\mathcal{S}$.
\end{longtable}

{\bf Acknowledgements}: This survey is a greatly expanded version of
lectures I gave at Northwestern in May 2012.
I would like to thank Roman Bezrukavnikov and Iain Gordon for numerous
stimulating discussions. My work was supported by the NSF  under Grant  DMS-1161584.

\section{Quantizations}\label{S_quantization}
In this section we review the quantization formalism. In Section \ref{SS_quant_alg}
we discuss  quantizations of
Poisson algebras.  There are two formalisms here: filtered quantizations and formal quantizations. We introduce both of them, discuss a relation between them and then give examples.

Then, in Section \ref{SS_sheaf_quant}, we proceed to quantizations of non-necessarily
affine Poisson algebraic varieties. Here we quantize the structure sheaf. We explain that to quantize an
affine variety is the same thing as to quantize its algebra of functions. Then we mention a theorem
of Bezrukavnikov and Kaledin classifying quantizations of symplectic varieties under certain cohomology
vanishing conditions.

After that we proceed to modules over quantizations. We define coherent and quasi-coherent sheaves of modules
and outline their basic properties. For a coherent sheaf of modules, we define its support. Then we discuss
global section and localization functors and their derived analogs.

We finish this system by discussing Frobenius constant quantizations in positive characteristic.

\subsection{Algebra level}\label{SS_quant_alg}
Here we will review formalisms of quantizations of Poisson algebras. Let $A$ be a Poisson algebra
(commutative, associative and with a unit).

\subsubsection{Formal quantizations}
First, let us discuss formal quantizations. By a formal quantization of $A$ we mean an associative $\C[[\hbar]]$-algebra
$\A_\hbar$ equipped with an algebra isomorphism $\pi:\A_\hbar/(\hbar)\xrightarrow{\sim}A$ such that
\begin{itemize}
\item[(i)]$\A_\hbar\cong A[[\hbar]]$ as a $\C[[\hbar]]$-module and this isomorphism intertwines $\pi$
and the natural projection $A[[\hbar]]\rightarrow A$.
\item[(ii)] We have $\pi(\frac{1}{\hbar}[a,b])\equiv \{\pi(a),\pi(b)\}$ (note that $\pi([a,b])=[\pi(a),\pi(b)]=0$
and so $\frac{1}{\hbar}[a,b]$ makes sense).
\end{itemize}
Condition (i) can be stated equivalently as follows: $\A_\hbar$ is flat over $\C[[\hbar]]$ and is complete and separated
in the $\hbar$-adic topology.

\subsubsection{Filtered quantizations}\label{SSS_filt_quant_alg}
Second, we will need the formalism of filtered quantizations. Suppose that $A$ is equipped with an algebra grading,
$A=\bigoplus_{i\in \Z}A_i$, that is compatible with $\{\cdot,\cdot\}$ in the following way: $\{A_i,A_j\}\subset A_{i+j-1}$.

First, we consider the case when the grading on $A$ is non-negative: $A_i=\{0\}$ for $i<0$. Then, by a filtered quantization
of $A$ one means a $\Z_{\geqslant 0}$-filtered algebra $\A=\bigcup_{i\geqslant 0}\A_{\leqslant i}$
together with a graded algebra isomorphism $\pi:\gr \A\xrightarrow{\sim}A$ such that, for $a\in \A_{\leqslant i},
b\in \A_{\leqslant j}$, one has $\{\pi(a+\A_{\leqslant i-1}),\pi(b+\A_{\leqslant j-1})\}=
\pi([a,b]+\A_{\leqslant i+j-2})$ (note that $[a,b]\in \A_{\leqslant i+j-1}$ because $\gr\A$
is commutative).

\subsubsection{Relation between the two formalisms}\label{SSS_form_rel}
Let us explain a connection between the two formalisms (that will also motivate the definition of  a filtered
quantization in the case when the grading on $A$ has negative components). Take a filtered
quantization $\A$ of $A$. Form the {\it Rees algebra} $R_\hbar(\A):=\bigoplus_{i\geqslant 0} \A_{\leqslant i}\hbar^i$
that is equipped with a graded algebra structure as a subalgebra in $\A[\hbar]$. We have natural identifications $R_\hbar(\A)/(\hbar)\cong A, R_\hbar(\A)/(\hbar-1)\cong \A$. The $\hbar$-adic
completion $R_\hbar(\A)^{\wedge_\hbar}:=\varprojlim_{n\rightarrow +\infty} R_\hbar(\A)/(\hbar^n)$ satisfies
(i) and (ii) and so is a formal quantization of $A$. Moreover, it comes with a $\C^\times$-action by algebra
automorphisms such that $t.\hbar=t\hbar, t\in \C^\times$: the action is given by $t.\sum_{i=0}^{+\infty} a_i\hbar^i:=
\sum_{i=0}^{+\infty} t^ia_i\hbar^i$. Clearly, the induced action on $A$ coincides with the action coming from the grading.
Conversely, suppose we have a formal quantization $\A_\hbar$ of $A$ equipped with a $\C^\times$-action by algebra
automorphisms such that $t.\hbar=t\hbar$ and the epimorphism $\pi$ is $\C^\times$-equivariant. Assume, further,
that the action is pro-rational meaning that it is rational on all quotients $\A_\hbar/(\hbar^n)$. Consider the subspace
$\A_{\hbar,fin}\subset \A_\hbar$ consisting of all {\it $\C^\times$-finite} elements, i.e., those elements
that are contained in some finite dimensional $\C^\times$-stable subspace. This is a $\C^\times$-stable
$\C[\hbar]$-subalgebra of $\A_{\hbar}$. It is easy to see that $\pi$ induces an isomorphism
$\A_{\hbar,fin}/(\hbar)\cong A$. Then $\A:=\A_{\hbar,fin}/(\hbar-1)$ is a filtered quantization.

\subsubsection{Filtered quantizations, general case}
Let us proceed to the case when the grading on $A$ is not necessarily non-negative. We can still consider
a formal quantization $\A_\hbar$ with a $\C^\times$-action as above, the subalgebra $\A_{\hbar,fin}\subset
\A_\hbar$ and the quotient $\A:=\A_{\hbar,fin}/(\hbar-1)$. It is still a filtered quantization
in the sense explained above (with the difference that now we have a $\Z$-filtration rather than a $\Z_{\geqslant 0}$-filtration) but, moreover, the filtration on $\A$ has a special property:
it is complete and separated meaning that a natural homomorphism $\A\rightarrow \varprojlim_{n\rightarrow -\infty}
\A/\A_{\leqslant n}$ is an isomorphism. By a filtered quantization of $A$ we now mean a $\Z$-filtered
algebra $\A$, where the filtration is complete and separated, together with an isomorphism
$\pi:\gr\A\xrightarrow{\sim}A$ of graded algebras such that $\{\pi(a+\A_{\leqslant i-1}),\pi(b+\A_{\leqslant j-1})\}=\pi([a,b]+\A_{\leqslant i+j-2})$.

Our conclusion is that the following two formalisms are equivalent: filtered quantizations and formal quantizations
with a $\C^\times$-action. To get from a filtered quantization $\A$ to a formal one, one takes $R_\hbar(\A)^{\wedge_\hbar}$. To get from a formal quantization $\A_\hbar$ to  a filtered one,
one takes $\A_{\hbar,fin}/(\hbar-1)$.

\subsubsection{Examples}
Let us proceed to examples. In examples, one usually gets $\Z_{\geqslant 0}$-filtered quantizations,
more general $\Z$-filtered or formal quantizations arise in various constructions (such as (micro)localization
or completion).

\begin{Ex}\label{Ex:un_env}
Let $\g$ be a Lie algebra. Then, by the PBW theorem, the universal enveloping algebra $U(\g)$
is a filtered quantization of $S(\g)$.
\end{Ex}

\begin{Ex}\label{Ex:diff_op}
Let $Y$ be an affine algebraic variety. The algebra $D(Y)$ of linear differential operators on $Y$
(together with the filtration by the order of  differential operators) is a filtered quantization of
$\C[T^*Y]$.
\end{Ex}

\begin{Rem}\label{Rem:degree} Often one needs to deal with a more general compatibility condition
between the grading and the bracket: $\{A_i,A_j\}\subset A_{i+j-d}$ for some fixed $d>0$. In this
case, one can modify the definitions of  formal and filtered quantizations. Namely, in the definition
of a formal quantization one can require that $[\A_\hbar,\A_\hbar]\subset \hbar^d \A_{\hbar}$ and
$\pi(\frac{1}{\hbar^d}[a,b])=\{\pi(a),\pi(b)\}$. The definition of a filtered quantization can be
modified similarly.
\end{Rem}

\begin{Ex}\label{Ex:SRA}
Let $V$ be a symplectic vector space and $\Gamma\in \Sp(V)$ be a finite group. Consider $A=S(V)^\Gamma$
with Poisson bracket $\{\cdot,\cdot\}$ restricted from $S(V)$.
In the notation of Remark \ref{Rem:degree}, $d=2$. As was essentially checked in \cite{EG}, the spherical subalgebra
$eH_{1,c}e$ (with a filtration restricted from $H_{1,c}$) is a quantization of $S(V)^\Gamma$ for any parameter $c$
When $\Gamma=\{1_V\}$, we recover the usual Weyl algebra, $\Weyl(V)$, of $V$.

To check that $eH_{1,c}e$ is a quantization carefully we note that the proof of Theorem 1.6 in {\it loc.cit.}
shows that the bracket on $S(V)^{\Gamma}$ coming from the filtered deformation $eH_{1,c}e$
coincides with $a\{\cdot,\cdot\}$, where $a$ is a nonzero number independent of $c$.
Then we notice that for $c=0$ we get $eH_{1,c}e=\Weyl(V)^\Gamma$ and so $a=1$.
\end{Ex}

In fact, in the previous example we often can also achieve $d=1$. Namely, if $-1_V\in \Gamma$, then all degrees
in $S(V)^\Gamma$ are even and so we can consider the grading $A=\bigoplus_{i\geqslant 0}A_i$ with
$A_i$ consisting of all homogeneous elements with usual degree $2i$. We introduce a filtration on
$e H_{1,c}e$ in a similar way (this filtration is not restricted from $H_{1,c}$). Then we get a filtered
quantization according to our original definition. When $\Gamma=\Gamma_n$, we only have $-1_V\not\in\Gamma$ if
$\Gamma=\Z/\ell\Z$ for odd $\ell$. For $\Gamma=\Z/\ell\Z$ (and any $\ell$), $V$ splits as
$\h\oplus\h^*$, where $\h=\C^n$. We can grade $S(V)$ by setting $\deg \h^*=0, \deg \h=1$
and take the induced grading on $S(V)^\Gamma$ and the induced filtration on $H_{1,c}$.

\subsection{Sheaf level}\label{SS_sheaf_quant}
Above, we were dealing with Poisson algebras or, basically equivalently, with affine Poisson algebraic
varieties. Now we are going to consider general Poisson varieties (or schemes). Recall that
by a Poisson variety one means a variety $X$ such that the structure sheaf $\Str_X$ is equipped
with a Poisson bracket (meaning that all algebras of sections are Poisson and the restriction homomorphisms
respect the Poisson brackets). In this case a quantization of $X$ will be a (formal or filtered)
quantization of $\Str_X$ in the sense explained below in this section.

\subsubsection{Formal quantizations}\label{SSS_form_quant_sheaf}
We start with a formal setting. A  quantization $\mathcal{D}_\hbar$ of $X$ is a sheaf
of $\C[[\hbar]]$-algebras on $X$ together with an isomorphism $\pi:\mathcal{D}_\hbar/(\hbar)\xrightarrow{\sim}
\Str_X$ such that
\begin{itemize}
\item[(a)]  $\mathcal{D}_\hbar$ is flat over $\C[[\hbar]]$ (equivalently, there are no nonzero local sections
annihilated by $\hbar$) and complete and separated in the $\hbar$-adic topology (meaning that
$\mathcal{D}_\hbar\xrightarrow{\sim}\varprojlim_{n\rightarrow+\infty} \mathcal{D}_\hbar/(\hbar^n)$).
\item[(b)] $\pi(\frac{1}{\hbar}[a,b])=\{\pi(a),\pi(b)\}$ for any local sections $a,b$ of $\mathcal{D}_\hbar$.
\end{itemize}

\subsubsection{Motivation: star-products}
The origins of this definition are in the deformation quantization introduced in \cite{BFFLS}.
Let us adopt this definition to our situation. Let $A$ be a Poisson algebra. By a star-product
on $A$ one means a bilinear map $*:A\otimes A\rightarrow A[[\hbar]]$ subject to the following conditions:
\begin{enumerate}
\item The $\C[[\hbar]]$-bilinear extension of $*$ to $A[[\hbar]]$ is associative and $1\in A$ is a unit.
\item $a*b\equiv ab \operatorname{mod} \hbar A[[\hbar]]$, $a*b-b*a\equiv\hbar\{a,b\} \operatorname{mod} \hbar^2 A[[\hbar]]$.
\end{enumerate}
Of course, $A[[\hbar]]$ together with $*$ is a formal quantization of $A$ in the sense of the previous section.
Conversely, any formal quantization $\A_\hbar$ is isomorphic to $(A[[\hbar]],*)$.

Traditionally, one imposes an additional restriction on $*$: the locality axiom that requires that
the coefficients $D_i$ in the $\hbar$-adic expansion of $*$ ($a*b=\sum_{i=0}^{\infty} D_i(a,b)\hbar^i$)
are bidifferential operators. If $*$ is local, then it naturally extends to any localization
$A[a^{-1}]$. So, if $A=\C[X]$ for $X$ affine, then  a local star-product defines a quantization
of $\Str_X$.

Let us provide an example of a local star-product. Consider $A=\C[\underline{x},\underline{y}]$
with standard Poisson bracket: $\{x_i,x_j\}=\{y_i,y_j\}, \{y_i,x_j\}=\delta_{ij}$. Then set
\begin{equation}\label{eq:star} f*g=m\circ \exp(\hbar\sum_{i=1}^n \partial_{y_i}\otimes \partial_{x_i})f\otimes g,\end{equation}
where $\mu:A\otimes A\rightarrow A$ is the usual commutative product. For example, we have $x_i*x_j=x_ix_j,y_i*y_j=y_iy_j,
x_i*y_j=x_iy_j, y_j*x_i=x_iy_j+\hbar \delta_{ij}$. In this case, $A[\hbar]$ is closed with respect to $*$
and is identified with $R_\hbar(D(\C^n))$.

\subsubsection{Algebra vs sheaf setting in the affine case}
It turns out that any formal quantization $\A_\hbar$ of $\C[X]$ for an affine variety $X$ defines a quantization of
$X$. The reason is that we can localize elements of $\C[X]$ in $\A_\hbar$. The construction is as follows.
Pick $f\in \C[X]$ and lift it to $\hat{f}\in \A_\hbar$. The operator $\operatorname{ad}\hat{f}$ is nilpotent
in $\A_\hbar/(\hbar^n)$ for any $n$ and so the set $\{\hat{f}^n\}\subset \A_\hbar/(\hbar^n)$ satisfies the Ore conditions,
hence the localization $\A_\hbar/(\hbar^n)[\hat{f}^{-1}]$ makes sense. It is easy to see that these localizations
do not depend on the choice of the lift $\hat{f}$ and form an inverse system. We set $\A_\hbar[f^{-1}]:=\varprojlim_{n\rightarrow+\infty} \A_\hbar/(\hbar^n)[\hat{f}^{-1}]$.

\begin{Exer}\label{Exer:microloc}
Check that there is a unique sheaf $\mathcal{D}_\hbar$ in the Zariski topology on $X$ such that
$\mathcal{D}_\hbar(X_f)=\A_\hbar[f^{-1}]$ for any $f\in \C[X]$ and that this sheaf is a quantization of $X$.
\end{Exer}

So we see that there is a natural bijection between the quantizations of $X$ and of $\C[X]$
(to get from a quantization of $X$ to that of $\C[X]$ we just take the global sections).
Thanks to this, we can view a quantization of a general variety $X$ as glued from affine pieces.

\subsubsection{Filtered quantizations}\label{SSS_filt_quant_sheaf}
Let us proceed to the filtered setting. Suppose that $X$ is equipped with a $\C^\times$-action
such that the Poisson bracket has degree $-1$. Obviously, for an arbitrary
open $U\subset X$, the algebra $\C[U]$ does not need to be graded. However, it is graded when
$U$ is $\C^\times$-stable. By a {\it conical topology} on $X$ we mean the topology, where
``open'' means Zariski open and $\C^\times$-stable. One can ask whether this topology is sufficiently
rich, for example, whether any point has an open affine neighborhood.

\begin{Thm}[Sumihiro]\label{Thm:Sumihiro}
Suppose $X$ is normal. Then any point in $X$ has an open affine neighborhood in the conical topology.
\end{Thm}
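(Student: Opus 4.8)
Sumihiro's theorem is classically stated for normal varieties with an action of a connected linearly reductive group (here $\C^\times$), so the task is really to recall the standard argument and observe that it produces $\C^\times$-stable neighborhoods. I would proceed in three steps. First, reduce to the quasi-projective case: since we only want an affine neighborhood of a given point $x \in X$, I may replace $X$ by any $\C^\times$-stable quasi-projective open subset containing $x$ (e.g. the union of an affine chart through $x$ and its translates suffices to get quasi-projectivity after passing to an open piece, or one simply invokes that the question is local). Second, the heart of the matter: on a normal quasi-projective variety with a $\C^\times$-action, Sumihiro's linearization theorem provides a $\C^\times$-equivariant locally closed embedding $X \hookrightarrow \mathbb{P}(W)$ for some finite-dimensional $\C^\times$-module $W$, compatible with the given action. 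Third, use this embedding to build the neighborhood: decompose $W = \bigoplus_{k} W_k$ into weight spaces; the point $x \in \mathbb{P}(W)$ lies in some standard affine chart $\{[\underline{w}] : \ell(\underline{w}) \neq 0\}$, but to make this chart $\C^\times$-stable I instead take a weight vector: choose a $\C^\times$-semiinvariant linear form $\ell$ on $W$ with $\ell(x) \neq 0$ (possible since the weight decomposition lets us project $x$ to a nonzero component and pick the dual coordinate), and set $U := X \cap \{[\underline{w}] : \ell(\underline{w}) \neq 0\}$. This $U$ is Zariski open, it is $\C^\times$-stable because $\ell$ is semiinvariant (so $\{\ell \neq 0\}$ is $\C^\times$-stable in $\mathbb{P}(W)$), it contains $x$, and it is affine as a closed subvariety of the affine chart $\{\ell \neq 0\} \cong \mathbb{A}(\ker \ell)$ — using here that $X \subset \mathbb{P}(W)$ is locally closed, so intersecting with a suitable further affine open of $\mathbb{P}(W)$ cuts out a closed, hence affine, subscheme.

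**Main obstacle.** The genuinely substantive input is Sumihiro's equivariant embedding/linearization theorem itself, which uses normality in an essential way (it fails for non-normal varieties, as the nodal cubic with its natural $\C^\times$-action shows). I would quote this as a black box from Sumihiro's original paper rather than reprove it; with it in hand, steps one and three are routine. The only point requiring a little care is the passage from ``$\ell$ semiinvariant, $\ell(x)\neq 0$'' to ``affine'': one must ensure that after removing the zero locus of $\ell$ one genuinely lands in an affine variety, which is where local closedness of the embedding is used — one may first intersect with the open subset of $\mathbb{P}(W)$ on which $X$ is closed, and shrink $\ell$'s support accordingly, noting that this open set can itself be taken $\C^\times$-stable since both $X$ and its closure $\overline{X}$ are $\C^\times$-stable and the boundary $\overline{X}\setminus X$ is a $\C^\times$-stable closed subset whose complement is the desired stable open piece of $\mathbb{P}(W)$ (intersected with a weight-coordinate chart as above). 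Putting these together yields the $\C^\times$-stable affine open neighborhood, i.e. an affine open in the conical topology.
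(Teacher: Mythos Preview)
The paper does not prove this statement at all; it is simply quoted as a known result attributed to Sumihiro and then used as a standing hypothesis (``Below we always assume that $X$ is normal''). So there is no paper proof to compare against.

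Your sketch is a reasonable outline of the standard route: reduce to the quasi-projective case, invoke Sumihiro's equivariant projective embedding (linearization) theorem to place $X$ inside some $\mathbb{P}(W)$ with a linear $\C^\times$-action, and then intersect with the complement of a weight-hyperplane to get a $\C^\times$-stable affine chart through $x$. The only comment worth making is that you are, as you yourself note, trading one form of Sumihiro's theorem for another: the equivariant embedding theorem is the deep input (and is where normality is genuinely used), and the passage from it to the existence of invariant affine neighborhoods is the easy direction. Since the paper is content to cite the result, your level of detail is already more than what is required here; if you wanted to be self-contained you would have to reprove the linearization theorem itself, but that is well beyond the scope of this survey.
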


Below we always assume that $X$ is  normal. Note that $\mathcal{O}_X$ is a sheaf of graded algebras in
the conical topology. By a filtered quantization of $X$ we mean a sheaf $\mathcal{D}$ of filtered algebras
(in the conical topology on $X$) equipped with an isomorphism $\pi:\gr\mathcal{D}\xrightarrow{\sim}\Str_X$
of graded algebras such that the filtration on $\mathcal{D}$ is complete and separated and $\pi$
is compatible with the Poisson brackets as in \ref{SSS_filt_quant_alg}.

We still have a one-to-one correspondence between filtered quantizations and formal quantizations with
$\C^\times$-actions. This works just as in \ref{SSS_form_rel}  (note that $\mathcal{D}_{\hbar,fin}$
makes sense as a sheaf in conical topology).

\subsubsection{Quantization in families}
Let $X$ be a smooth scheme over a scheme $\mathcal{S}$. It still makes sense to speak about closed and non-degenerate
forms in $\Omega^2(X/\mathcal{S})$. By a symplectic $\mathcal{S}$-scheme we mean a smooth $\mathcal{S}$-scheme $X$ together with
a closed non-degenerate form $\omega_{\mathcal{S}}\in \Omega^2(X/\mathcal{S})$. Note that from $\omega$ one can recover
an $\mathcal{O}_S$-linear Poisson bracket on $X$.

By a formal quantization $\mathcal{D}_\hbar$ of $X$ we mean a sheaf of $\mathcal{O}_{\mathcal{S}}$-algebras on $X$
satisfying conditions (a),(b) in \ref{SSS_form_quant_sheaf}.

Note that the definition above still makes sense when $\mathcal{S}$ is a formal scheme and $X$ is a formal $\mathcal{S}$-scheme.

\subsubsection{Classification theorem}\label{SSS_quant_classif}
Let us finish this section with a classification theorem due to Bezrukavnikov and Kaledin, \cite{BK_quant_0}
(with a ramification given in \cite{quant}).

\begin{Thm}\label{Thm:BK}
Let $X$ be a smooth symplectic variety. Suppose $H^1(X,\Str_X)=H^2(X,\Str_X)=0$ (this holds when
$X$ is affine, for example). Then the formal quantizations of $X$ are parameterized by $H^2_{DR}(X,\C)[[\hbar]]$.
If $X$ has a $\C^\times$-action compatible with the bracket (where we have $d=1$), then the filtered
quantizations are in one-to-one correspondence with $H^2_{DR}(X,\C)$.
\end{Thm}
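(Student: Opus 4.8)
The plan is to prove Theorem \ref{Thm:BK} by the standard obstruction-theoretic / deformation-theoretic method, bootstrapping from the local (affine) case and gluing via a \v{C}ech-type argument, with the cohomological hypotheses $H^1(X,\Str_X)=H^2(X,\Str_X)=0$ used precisely to kill the obstruction and to rigidify automorphisms. First I would recall that a formal quantization is the same data as a (local) star-product $*=\sum_{i\geqslant 0}D_i\hbar^i$ on $\Str_X$ in the sense of \ref{SSS_form_quant_sheaf}, and that locally, on a Darboux chart isomorphic to an open piece of $T^*\C^n$, such a star-product exists — for instance the Moyal--Weyl product \eqref{eq:star}. So the issue is patching local quantizations. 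Work order by order in $\hbar$: suppose we have an associative product $*$ modulo $\hbar^{k+1}$ with the correct symbol; the obstruction to extending it to order $k+1$ is a Hochschild $3$-cocycle of $\Str_X$ valued in $\Str_X$, and the ambiguity in a choice of extension is a Hochschild $2$-cochain. By the Hochschild--Kostant--Rosenberg theorem the Hochschild cohomology sheaf $\mathcal{HH}^i(\Str_X)$ is $\bigwedge^i T_X$; using the symplectic form $\omega$ to identify $T_X\cong\Omega^1_X$, the relevant obstruction class lives in $H^0(X,\bigwedge^3 T_X)$-type data but, after the standard reduction using that we only deform in the ``Poisson direction'' dictated by $\omega$, the genuine obstruction to gluing a global quantization at each order sits in $H^2(X,\Str_X)$ (hence vanishes), and the choices at each order form a torsor over $H^1(X,\Str_X)$ (hence are unique up to isomorphism) modulo the genuinely new parameter, which is a class in $H^2_{DR}(X,\C)$. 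This is the Bezrukavnikov--Kaledin argument of \cite{BK_quant_0}; I would cite it for the hard analytic core and only sketch the formal structure.

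More precisely, I would organize the bijection as follows. The ``period map'' assigns to a quantization $\mathcal{D}_\hbar$ a class $\operatorname{Per}(\mathcal{D}_\hbar)\in H^2_{DR}(X,\C)[[\hbar]]$ whose leading term is the class $[\omega]$ of the symplectic form (normalized so that the trivial/canonical quantization in a Darboux chart has period $[\omega]$, or $0$ after a shift, depending on convention). Injectivity: if two quantizations have the same period, one builds an isomorphism between them order by order in $\hbar$; the obstruction to extending a partial isomorphism lives in $H^1(X,\Str_X)=0$, so no obstruction, and the isomorphism can always be completed — this uses the completeness/separatedness in the $\hbar$-adic topology from (a). Surjectivity: given a target class, one constructs a quantization realizing it by the order-by-order gluing above, the obstruction at each stage lying in $H^2(X,\Str_X)=0$. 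The vanishing $H^i(X,\Str_X)=0$ for $i=1,2$ is exactly what makes both the gluing and the rigidity work; for $X$ affine it is automatic, which is why the affine/sheaf comparison of the previous subsection is consistent with this.

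For the filtered (graded, $d=1$) statement I would deduce it from the formal one using the equivalence between filtered quantizations and formal quantizations carrying a compatible pro-rational $\C^\times$-action with $t.\hbar=t\hbar$, as set up in \ref{SSS_form_rel} and \ref{SSS_filt_quant_sheaf}. A filtered quantization corresponds to a formal one together with such a $\C^\times$-action; concretely one must show that a formal quantization $\mathcal{D}_\hbar$ admits a (then essentially unique) compatible $\C^\times$-action iff its period $\operatorname{Per}(\mathcal{D}_\hbar)\in H^2_{DR}(X,\C)[[\hbar]]$ is ``homogeneous'', i.e. lies in $H^2_{DR}(X,\C)\cdot\hbar^{?}$ — because the grading forces each De Rham coefficient to sit in the correct weight, collapsing the $\hbar$-series to a single class in $H^2_{DR}(X,\C)$. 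One builds the $\C^\times$-action again order by order; the obstruction to lifting it through one more order of $\hbar$ lies in $H^1(X,\Str_X)=0$, so it exists and is unique up to conjugation by an automorphism, which is the rigidity needed to get a genuine bijection with $H^2_{DR}(X,\C)$ rather than with its $\hbar$-series. The main obstacle, and the place where I would lean hardest on \cite{BK_quant_0} rather than reprove, is the construction of the period map itself and the verification that it is well defined and additive: identifying the order-$k$ obstruction/ambiguity classes canonically with De Rham (as opposed to merely $\Str_X$-cohomology) classes requires the careful globalization of the local ``Fedosov''-type normalization of star-products, and this is the technical heart of the theorem.
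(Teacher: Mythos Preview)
The paper does not prove this theorem; it is stated as a result of Bezrukavnikov--Kaledin with a citation to \cite{BK_quant_0}, and the paragraph following the statement only records that there is a period map $\mathsf{Per}:\mathsf{Quant}(X)\to H^2_{DR}(X)[[\hbar]]$, that it is a bijection under the vanishing hypothesis, and that the filtered statement follows because a $\C^\times$-equivariant quantization has period lying in $H^2_{DR}(X)\subset H^2_{DR}(X)[[\hbar]]$ (citing \cite[2.3]{quant}). Your sketch is therefore more detailed than anything the paper offers, and its overall architecture --- period map, order-by-order construction with obstructions controlled by $H^i(X,\Str_X)$, filtered case via the equivalence with $\C^\times$-equivariant formal quantizations from \ref{SSS_form_rel} --- matches both the paper's brief discussion and the spirit of the original argument.

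One caution on precision: the Bezrukavnikov--Kaledin proof does not proceed by na\"{\i}ve Hochschild obstruction theory on $\Str_X$ directly, but via a non-abelian ``Harish-Chandra torsor'' formalism, and the vanishing of $H^1(X,\Str_X)$ and $H^2(X,\Str_X)$ enters through a comparison of the classifying stack for quantizations with De Rham cohomology rather than as literal gluing obstructions for local star-products. Your statements that ``the obstruction to extending a partial isomorphism lives in $H^1(X,\Str_X)$'' and ``the obstruction at each stage lies in $H^2(X,\Str_X)$'' are morally correct but would require real work to make precise; you are right to defer the construction and well-definedness of the period map itself to \cite{BK_quant_0}, since that is indeed the technical core and neither the paper nor a survey-level sketch should be expected to reproduce it.
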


Even without the cohomology vanishing assumption, there is a so called {\it period map}
$\mathsf{Per}$ from the set $\mathsf{Quant}(X)$ of formal quantizations of $X$ (considered up to
an isomorphism) to
$H^2_{DR}(X)[[\hbar]]$. When the vanishing condition holds, this map is a bijection.
The classification of filtered quantizations follows from the observation that
once a quantization admits a $\C^\times$-action by automorphisms, its period
lies in $H^2_{DR}(X)\subset H^2_{DR}(X)[[\hbar]]$ (and if the vanishing holds, the converse is also true),
see \cite[2.3]{quant}.

Assume until the end of the section that the vanishing condition holds.

A formal quantization $\mathcal{D}_\hbar$ having a $\C^\times$-action by automorphisms and satisfying  $\operatorname{Per}(\mathcal{D}_\hbar)=0$ has a
nice property: it is {\it even}. When $X$ is affine this means that the quantization can be realized
by a star-product $f*g=\sum_{i=0}^\infty D_i(f,g)\hbar^i$ with $\deg D_i=-i$ and
$D_i(f,g)=(-1)^i D_i(g,f)$. For general $X$, being even means that there is an antiautomorphism $\varrho$ of $\mathcal{D}_\hbar$ that commutes with the $\C^\times$-action, is the identity modulo $\hbar$, and maps $\hbar$ to $-\hbar$.

Let us finish this subsection with the discussion of the universal quantization. The variety
$X$ has a universal symplectic deformation $\widehat{X}$ over the formal disc $\mathcal{S}$
that is the formal neighborhood of $0$ in $H^2_{DR}(X)$ (provided $H^i(X,\mathcal{O}_X)=0$
for $i=1,2$), see \cite{KaVe}. The universality means that any
other formal symplectic deformation of $X$ is obtained from $\widehat{X}$ by pull-back.
Further, there is a canonical quantization $\widehat{\mathcal{D}}_\hbar$ of $\widehat{X}/\mathcal{S}$.
All quantizations of $X$ are obtained by pulling back $\widehat{\mathcal{D}}_\hbar$.
More precisely, we can view $\widehat{\mathcal{D}}_\hbar$ as a sheaf of $\C[[H_{DR}^2(X),\hbar]]$-algebras
on $X$ (via the sheaf-theoretic pull-back) and then we can obtain quantizations of $X$
by base change to $\C[[\hbar]]$.

In the case when $X$, in addition, has a $\C^\times$-action rescaling the symplectic form, we can consider
the universal $\C^\times$-equivariant deformation $\tilde{X}$ over $H^2_{DR}(X)$
as well as its canonical quantization $\tilde{\mathcal{D}}_\hbar$.

\subsection{Modules over quantizations}
Let $X$ be a Poisson variety (or scheme). We are going to define  coherent and  quasi-coherent
modules over filtered and formal quantizations of $X$ (to be denoted by $\mathcal{D}$ and $\mathcal{D}_\hbar$, respectively).
\subsubsection{Coherent modules over formal quantizations}
By definition, a sheaf $\mathcal{M}_\hbar$ of $\Dcal_\hbar$-modules on $X$ is called {\it coherent}
if $\mathcal{M}_\hbar/\hbar \mathcal{M}_\hbar$ is a coherent $\mathcal{O}_X$-module and
$\mathcal{M}_\hbar$ is complete and separated in $\hbar$-adic topology. Note that the condition
of being complete and separated in the $\hbar$-adic topology is local. So being coherent is a local
condition (as in Algebraic geometry).

Let $X$ be affine and let $\A_\hbar:=\Gamma(\Dcal_\hbar)$. Let $\mathcal{N}_\hbar$ be a finitely generated $\A_\hbar$-module. Then it is easy to see that $\mathcal{N}_\hbar$ is complete and separated in the $\hbar$-adic
topology. It follows that $\Dcal_\hbar\otimes_{\A_\hbar}\mathcal{N}_\hbar$ is a coherent $\Dcal_\hbar$-module.
Conversely, for a coherent $\Dcal_\hbar$-module $\mathcal{M}_\hbar$, the global sections $\Gamma(\mathcal{M}_\hbar)$
is a finitely generated $\A_\hbar$-module.

\begin{Lem}\label{Lem:coh_affine}
Let $X$ be affine. Then the functors $\Dcal_\hbar\otimes_{\A_\hbar}\bullet$ and $\Gamma(\bullet)$
are mutually quasi-inverse equivalences between the categories of coherent $\Dcal_\hbar$-modules
and finitely generated $\A_\hbar$-modules.\end{Lem}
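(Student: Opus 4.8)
### Proof proposal for Lemma \ref{Lem:coh_affine}

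The plan is to verify the two compositions $\Gamma(\Dcal_\hbar\otimes_{\A_\hbar}\bullet)\cong\mathrm{id}$ and $\Dcal_\hbar\otimes_{\A_\hbar}\Gamma(\bullet)\cong\mathrm{id}$ by reducing everything modulo powers of $\hbar$, where the analogous statement is the classical equivalence between coherent $\Str_X$-modules and finitely generated $\C[X]$-modules, and then passing to the inverse limit. First I would set up the two functors carefully: for a finitely generated $\A_\hbar$-module $\mathcal{N}_\hbar$, one checks (as indicated before the Lemma) that $\mathcal{N}_\hbar$ is automatically $\hbar$-adically complete and separated — this uses that $\A_\hbar$ is Noetherian (being a $\C[[\hbar]]$-deformation of the Noetherian algebra $\C[X]$, with $\gr$ with respect to the $\hbar$-filtration finitely generated) so that the $\hbar$-adic topology on $\mathcal{N}_\hbar$ is the one induced from $\A_\hbar$, and an Artin–Rees type argument. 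Consequently $\mathcal{M}_\hbar:=\Dcal_\hbar\otimes_{\A_\hbar}\mathcal{N}_\hbar$ is coherent: $\mathcal{M}_\hbar/\hbar\mathcal{M}_\hbar=\Str_X\otimes_{\C[X]}(\mathcal{N}_\hbar/\hbar\mathcal{N}_\hbar)$ is coherent over $\Str_X$, and completeness/separatedness is preserved.

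Next I would prove $\Gamma(\Dcal_\hbar\otimes_{\A_\hbar}\mathcal{N}_\hbar)\cong\mathcal{N}_\hbar$. Working modulo $\hbar^n$, the sheaf $\mathcal{M}_\hbar/\hbar^n\mathcal{M}_\hbar$ is, by construction, the localization sheaf on $X$ attached to the module $\mathcal{N}_\hbar/\hbar^n\mathcal{N}_\hbar$ over $\A_\hbar/\hbar^n\A_\hbar$ (using the Ore localizations $\A_\hbar/(\hbar^n)[\hat f^{-1}]$ from \ref{SSS_form_quant_sheaf} and the flatness of these localizations, exactly as in the commutative case since $\A_\hbar/(\hbar^n)$ has a finite filtration with commutative associated graded). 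Each $\A_\hbar/\hbar^n\A_\hbar$ is a "noncommutative affine" situation where Serre's theorem applies — one can either invoke the commutative statement on the associated graded level and lift by a five-lemma induction on the length of the $\hbar$-filtration, or argue directly that higher Čech cohomology vanishes because it does so on the associated graded. This gives $\Gamma(\mathcal{M}_\hbar/\hbar^n\mathcal{M}_\hbar)\cong\mathcal{N}_\hbar/\hbar^n\mathcal{N}_\hbar$ compatibly in $n$; taking the inverse limit and using that $\mathcal{N}_\hbar$ is complete and that $\varprojlim$ is left exact (so no $\varprojlim^1$ obstruction appears, the system being surjective) yields $\Gamma(\mathcal{M}_\hbar)\cong\mathcal{N}_\hbar$.

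For the other composition, let $\mathcal{M}_\hbar$ be coherent and set $\mathcal{N}_\hbar:=\Gamma(\mathcal{M}_\hbar)$. One first shows $\mathcal{N}_\hbar$ is finitely generated over $\A_\hbar$: modulo $\hbar$ it is $\Gamma(\mathcal{M}_\hbar/\hbar\mathcal{M}_\hbar)$, finitely generated over $\C[X]$ by the classical theorem; lift generators to $\mathcal{N}_\hbar$ and use the $\hbar$-adic completeness of $\mathcal{N}_\hbar$ together with a Nakayama-type argument for the $\hbar$-adic filtration. Then the natural morphism $\Dcal_\hbar\otimes_{\A_\hbar}\mathcal{N}_\hbar\to\mathcal{M}_\hbar$ is an isomorphism because it is so modulo each $\hbar^n$ — again reducing to the commutative Serre equivalence on the associated graded of the $\hbar$-filtration — and both sides are $\hbar$-adically complete and separated.

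The main obstacle I anticipate is the cohomology-vanishing input: showing $H^i(X,\mathcal{M}_\hbar/\hbar^n\mathcal{M}_\hbar)=0$ for $i>0$ on an affine $X$, i.e. the sheaf–theoretic half of Serre's criterion in the quantized setting. The clean way around it is the $\hbar$-filtration dévissage: each $\mathcal{M}_\hbar/\hbar^n\mathcal{M}_\hbar$ is a successive extension of coherent $\Str_X$-modules (the subquotients $\hbar^k\mathcal{M}_\hbar/\hbar^{k+1}\mathcal{M}_\hbar$), whose higher cohomology vanishes since $X$ is affine, so the long exact sequences force vanishing for the whole module; and surjectivity of the resulting inverse system of global sections kills the relevant $\varprojlim^1$. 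Everything else is a routine combination of Noetherianity of $\A_\hbar$, Ore localization, and $\hbar$-adic completeness arguments of the kind already used implicitly in the text preceding the Lemma.
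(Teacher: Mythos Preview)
Your proposal is correct and follows essentially the same approach as the paper: reduce modulo $\hbar^n$ to obtain compatible equivalences between coherent $\mathcal{D}_\hbar/(\hbar^n)$-modules and finitely generated $\A_\hbar/(\hbar^n)$-modules (via d\'evissage along the $\hbar$-filtration to the classical commutative case), and then pass to the inverse limit using $\hbar$-adic completeness and separatedness. The paper's proof is stated in two sentences; you have simply unpacked the details (Noetherianity of $\A_\hbar$, the cohomology-vanishing d\'evissage, the Nakayama/lift-of-generators step, and the $\varprojlim^1$ control) that the paper leaves implicit.
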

\begin{proof}
Note that these functors define compatible equivalences between the categories of coherent $\Dcal_\hbar/(\hbar^n)$-modules
and of finitely generated $\A_\hbar/(\hbar^n)$-modules for any $n$ (which is proved in the same way as the classical
statement for $n=1$). Then we use that all objects we consider are complete and separated in the $\hbar$-adic topology.
\end{proof}

From this lemma we easily see that a subsheaf and a quotient sheaf  of a coherent $\Dcal_\hbar$-module
is coherent itself. So the category $\operatorname{Coh}(\mathcal{D}_\hbar)$ of coherent $\Dcal_\hbar$-modules is an
abelian category.

\subsubsection{Quasi-coherent modules over formal quantizations}
By a quasi-coherent $\Dcal_\hbar$-module we mean a direct limit of coherent $\Dcal_\hbar$-modules.
Lemma \ref{Lem:coh_affine} implies that, when $X$ is affine, the category of quasi-coherent $\Dcal_\hbar$-modules
is equivalent to the category of $\Gamma(\mathcal{D}_\hbar)$-modules.

Analogously to the classical algebro-geometric result, the category $\operatorname{QCoh}(\Dcal_\hbar)$ of quasi-coherent $\Dcal_\hbar$-modules has enough injective objects. Note that the natural functor from $D^b(\operatorname{Coh}(\Dcal_\hbar))$ to the full subcategory in $D^b(\operatorname{QCoh}(\Dcal_\hbar))$
of all complexes with coherent homology is a category equivalence. This is because any quasi-coherent complex
is a union of coherent subcomplexes, as in the usual Algebro-geometric situation.

\subsubsection{Modules over filtered quantizations}
Let us proceed to modules over filtered quantizations. Let $\mathcal{M}$ be a sheaf of $\mathcal{D}$-modules.
We say that $\mathcal{M}$ is {\it coherent} if it can be equipped with a global complete and separated
filtration compatible with that on $\mathcal{D}$ and such that $\operatorname{gr}\mathcal{M}$
is a coherent sheaf on $X$ (such a filtration is usually called {\it good}).
The $\hbar$-adic completion of the Rees sheaf $R_\hbar(\mathcal{M})$
is then a $\C^\times$-equivariant coherent $\mathcal{D}_\hbar$-module. Conversely, if we take
a $\C^\times$-equivariant coherent $\mathcal{D}_\hbar$-module $\mathcal{M}_\hbar$, take the
$\C^\times$-finite part $\mathcal{M}_{\hbar,fin}$, then $\mathcal{M}_{\hbar,fin}/(\hbar-1)$
is a coherent $\mathcal{D}$-modules.

\begin{Lem}\label{Lem:cat_equi}
Consider the full subcategory $\operatorname{Coh}^{\C^\times}(\mathcal{D}_\hbar)_{tor}$
consisting of all modules that are torsion over $\C[[\hbar]]$. Then taking quotient by
$\hbar-1$ gives rise to an equivalence  $\operatorname{Coh}^{\C^\times}(\mathcal{D}_\hbar)/
\operatorname{Coh}^{\C^\times}(\mathcal{D}_\hbar)_{tor}\xrightarrow{\sim}\operatorname{Coh}(\mathcal{D})$.
\end{Lem}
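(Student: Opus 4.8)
The plan is to exhibit the quotient functor $\operatorname{Coh}^{\C^\times}(\mathcal{D}_\hbar)\to\operatorname{Coh}(\mathcal{D})$, $\mathcal{M}_\hbar\mapsto \mathcal{M}_{\hbar,fin}/(\hbar-1)$, check it kills the Serre subcategory $\operatorname{Coh}^{\C^\times}(\mathcal{D}_\hbar)_{tor}$ of $\C[[\hbar]]$-torsion modules, and then verify the induced functor on the Serre quotient is an equivalence by producing a quasi-inverse, namely the $\hbar$-adic completion of the Rees sheaf $\mathcal{M}\mapsto R_\hbar(\mathcal{M})^{\wedge_\hbar}$ with respect to a good filtration, followed by passage to the quotient category. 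Before anything else I would record that the problem is local in the conical topology: by Sumihiro's theorem (Theorem \ref{Thm:Sumihiro}) $X$ is covered by $\C^\times$-stable open affines, and by Lemma \ref{Lem:coh_affine} (and its filtered analogue sketched in the text preceding this lemma) both sides localize compatibly, so it suffices to treat the affine case, i.e.\ to prove the corresponding statement for the categories of finitely generated modules over $\A_\hbar=\Gamma(\mathcal{D}_\hbar)$ and $\A=\Gamma(\mathcal{D})=\A_{\hbar,fin}/(\hbar-1)$.

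First I would check that the functor is well defined: for a $\C^\times$-equivariant coherent $\mathcal{D}_\hbar$-module $\mathcal{M}_\hbar$ (with pro-rational $\C^\times$-action), the $\C^\times$-finite part $\mathcal{M}_{\hbar,fin}$ is a $\C^\times$-stable $\C[\hbar]$-submodule on which $\hbar-1$ acts, and $\mathcal{M}_{\hbar,fin}/(\hbar-1)$ is a coherent $\mathcal{D}$-module carrying the induced good filtration — this is exactly the construction reviewed just above the lemma statement, so I would simply invoke it. Next, if $\mathcal{M}_\hbar$ is $\C[[\hbar]]$-torsion and coherent then it is killed by some power $\hbar^N$ (coherence forces a single $N$ locally, hence globally by a covering argument), so $\hbar$ acts locally nilpotently on $\mathcal{M}_{\hbar,fin}$, hence $\hbar-1$ acts invertibly and $\mathcal{M}_{\hbar,fin}/(\hbar-1)=0$. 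Therefore the functor factors through the Serre quotient by $\operatorname{Coh}^{\C^\times}(\mathcal{D}_\hbar)_{tor}$. I would also note the converse containment for later: a morphism $\varphi$ in $\operatorname{Coh}^{\C^\times}(\mathcal{D}_\hbar)$ becomes an isomorphism after $/(\hbar-1)$ iff $\ker\varphi$ and $\operatorname{coker}\varphi$ are $\hbar$-power-torsion, which is precisely the condition that $\varphi$ is an isomorphism in the Serre quotient.

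To build the quasi-inverse, given a coherent $\mathcal{D}$-module $\mathcal{M}$ I would pick a good filtration and set $\Phi(\mathcal{M}):=R_\hbar(\mathcal{M})^{\wedge_\hbar}$, a $\C^\times$-equivariant coherent $\mathcal{D}_\hbar$-module; a change of good filtration produces a $\C[\hbar]$-module isomorphic after inverting $\hbar$ and agreeing mod $\hbar$, hence an isomorphism in the Serre quotient, so $\Phi$ is well defined on the quotient. The composite $\mathcal{M}\mapsto \Phi(\mathcal{M})\mapsto \Phi(\mathcal{M})_{fin}/(\hbar-1)$ is canonically $\mathcal{M}$ because the $\C^\times$-finite part of the $\hbar$-adic completion of a Rees module is the Rees module itself (the completion only adds $\hbar$-infinite elements), and setting $\hbar=1$ recovers $\mathcal{M}$; this uses that the filtration is complete and separated. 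For the other composite, starting from $\mathcal{M}_\hbar$, the module $\mathcal{M}:=\mathcal{M}_{\hbar,fin}/(\hbar-1)$ inherits a good filtration $\mathcal{M}_{\leqslant i}=\operatorname{image of}\bigoplus_{j\leqslant i}(\mathcal{M}_{\hbar,fin})_j$, and the natural map $R_\hbar(\mathcal{M})^{\wedge_\hbar}\to \mathcal{M}_\hbar$ has $\hbar$-power-torsion kernel and cokernel — its cokernel is $\mathcal{M}_\hbar/\mathcal{M}_{\hbar,fin}$ which, being coherent and $\hbar$-adically complete with $\C^\times$-infinite-only classes, is killed by a power of $\hbar$ — so it is an isomorphism in the Serre quotient. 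I expect the main obstacle to be precisely this last point: controlling the discrepancy between $\mathcal{M}_\hbar$ and the completed Rees module of its finite part, i.e.\ showing $\mathcal{M}_\hbar/\mathcal{M}_{\hbar,fin}$ is annihilated by a power of $\hbar$ uniformly, which rests on the pro-rationality of the $\C^\times$-action together with coherence (so that $\mathcal{M}_\hbar/\hbar\mathcal{M}_\hbar$ is a coherent graded $\Str_X$-module with bounded-below grading), and on a standard but slightly delicate completeness/separatedness argument for $\hbar$-adic completions of $\C^\times$-equivariant sheaves. Once that estimate is in hand, full faithfulness and essential surjectivity of the induced functor follow formally from the two composite-isomorphism computations.
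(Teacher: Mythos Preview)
Your overall approach---Rees completion as quasi-inverse, independence of the good filtration up to $\hbar$-torsion---is exactly the paper's. The paper is terser: it records the commensurability of any two good filtrations (there exist $d_1,d_2$ with $\operatorname{F}_{i-d_1}\mathcal{M}\subset\operatorname{F}'_i\mathcal{M}\subset\operatorname{F}_{i+d_2}\mathcal{M}$) to conclude that $R_\hbar(\mathcal{M})$ is well defined modulo $\hbar$-torsion, and then declares the remaining quasi-inverse verification ``standard.''

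Your attempt to flesh out the other composite contains an error, however. You assert that the cokernel of $R_\hbar(\mathcal{M})^{\wedge_\hbar}\to\mathcal{M}_\hbar$ is $\mathcal{M}_\hbar/\mathcal{M}_{\hbar,fin}$ and that this quotient is coherent and killed by a power of $\hbar$. Both claims are false: already for $\mathcal{D}_\hbar=\C[[\hbar]]$ with the obvious $\C^\times$-action one has $\mathcal{M}_{\hbar,fin}=\C[\hbar]$, and $\C[[\hbar]]/\C[\hbar]$ is neither coherent over $\C[[\hbar]]$ nor annihilated by any $\hbar^N$. The correct argument is simpler and uses precisely the pro-rationality you invoke but do not exploit correctly: since the $\C^\times$-action on each $\mathcal{M}_\hbar/\hbar^n\mathcal{M}_\hbar$ is rational, every element there is already $\C^\times$-finite, so the inclusion $\mathcal{M}_{\hbar,fin}\hookrightarrow\mathcal{M}_\hbar$ is an isomorphism modulo $\hbar^n$ for every $n$ and hence an isomorphism after $\hbar$-adic completion. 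In the $\hbar$-torsion-free case $R_\hbar(\mathcal{M})$ identifies with $\mathcal{M}_{\hbar,fin}$, giving $R_\hbar(\mathcal{M})^{\wedge_\hbar}\cong\mathcal{M}_\hbar$ on the nose; in general one first passes to $\mathcal{M}_\hbar$ modulo its $\hbar$-torsion (harmless in the Serre quotient) and then applies the same argument.
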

\begin{proof}
Let us produce a quasi-inverse functor. Of course, the $R_\hbar(\mathcal{D})$-module $R_\hbar(\mathcal{M})$
depends on the choice of a good filtration. Let $\operatorname{F}, \operatorname{F}'$ be two good filtrations.
Then one can find positive integers $d_1,d_2$ such that $\operatorname{F}_{i-d_1}\mathcal{M}
\subset \operatorname{F}'_{i}\mathcal{M}\subset \operatorname{F}_{i+d_2}\mathcal{M}$
the inclusion of subsheaves (of vector spaces) in $\mathcal{M}$ (it is enough to check this claim
for local sections over open subsets from an affine cover, where it is easy). It follows that modulo $\hbar$-torsion
the sheaf $R_\hbar(\mathcal{M})$ is independent of the choice of a good filtration. Our quasi-inverse
functor sends $\mathcal{M}$ to the $\hbar$-adic completion of $R_\hbar(\mathcal{M})$. To check that this
is indeed a quasi-inverse functor is standard.
\end{proof}

\subsubsection{Supports}\label{SSS_supports}
For a coherent $\mathcal{D}_\hbar$-module $\mathcal{M}_\hbar$ we have the notion of support.
By definition, $\operatorname{Supp}(\mathcal{M}_\hbar):=\operatorname{Supp}(\mathcal{M}_\hbar/\hbar \mathcal{M}_\hbar)$,
this is a closed subvariety in $X$.

Now let $\mathcal{M}\in \operatorname{Coh}(\mathcal{D})$. Then we can take a good filtration on $\mathcal{M}$
and set $\operatorname{Supp}(\mathcal{M}):=\operatorname{Supp}(\operatorname{gr}\mathcal{M})$.
By the argument in the proof of Lemma \ref{Lem:cat_equi}, the support of $\mathcal{M}$ is well-defined,
i.e., it does not depend on the choice of a good filtration.

\subsubsection{Global sections and localization}
Let $\mathcal{D}$ be a filtered quantization of $X$. We have natural functors $\operatorname{Coh}(\mathcal{D})
\rightarrow \Gamma(\mathcal{D})\operatorname{-mod}$ of taking global sections (to be denoted by $\Gamma$)
as well as a functor in the opposite direction $\operatorname{Loc}: \Gamma(\mathcal{D})\operatorname{-mod}
\rightarrow \operatorname{Coh}(\mathcal{D}), M\mapsto \mathcal{D}\otimes_{\Gamma(\mathcal{D})}M$.

Let us discuss a situation when these functors behave particularly nicely. Namely, let $X$ be a conical
symplectic resolution of singularities of an affine variety $X_0$. Note that, by the Grauert-Riemenschneider
theorem, the higher cohomology of $\mathcal{O}_X$ vanish. This has the following corollary (the proof is
left to the reader).

\begin{Lem}\label{Lem:der_glob_sec}
We have $H^i(\mathcal{D})=0$ for $i>0$. Moreover, $\Gamma(\mathcal{D})$ is a quantization of $X_0$.
\end{Lem}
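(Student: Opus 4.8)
The plan is to deduce both statements from the cohomology vanishing $H^i(X,\Str_X)=0$ for $i>0$ (Grauert--Riemenschneider) by an $\hbar$-adic approximation argument. First I would work with the formal quantization $\mathcal{D}_\hbar$ attached to $\mathcal{D}$ (i.e.\ the $\hbar$-adic completion of $R_\hbar(\mathcal{D})$), prove $H^i(X,\mathcal{D}_\hbar)=0$ for $i>0$, and then pass back to $\mathcal{D}$ by taking $\C^\times$-finite parts and quotienting by $\hbar-1$. The key point is that $\mathcal{D}_\hbar/(\hbar^n)$ is filtered by the subquotients $\hbar^k\mathcal{D}_\hbar/\hbar^{k+1}\mathcal{D}_\hbar \cong \Str_X$ for $0\le k<n$; since each graded piece has vanishing higher cohomology, the long exact sequences coming from $0\to \hbar^{n-1}\mathcal{D}_\hbar/\hbar^n \to \mathcal{D}_\hbar/\hbar^n\to \mathcal{D}_\hbar/\hbar^{n-1}\to 0$ give $H^i(X,\mathcal{D}_\hbar/(\hbar^n))=0$ for $i>0$ and all $n$ by induction. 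One then needs $H^i(X,\mathcal{D}_\hbar)=\varprojlim_n H^i(X,\mathcal{D}_\hbar/(\hbar^n))$: this follows from a Mittag-Leffler argument, since the surjectivity of $H^0(X,\mathcal{D}_\hbar/(\hbar^{n+1}))\to H^0(X,\mathcal{D}_\hbar/(\hbar^n))$ (a consequence of the vanishing of $H^1$ of the kernel $\Str_X$) kills the $\varprojlim^1$ term. Here $X$ is quasi-compact and separated, so cohomology commutes with the relevant filtered colimits and $\check{\mathrm{C}}$ech computations on a finite affine cover are available.

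With $H^i(X,\mathcal{D}_\hbar)=0$ for $i>0$ in hand, I would recover the filtered statement: the functor $\Gamma$ commutes with taking $\C^\times$-finite parts (the cover can be taken $\C^\times$-stable by Sumihiro, Theorem~\ref{Thm:Sumihiro}, and $\C^\times$ is linearly reductive so passing to finite parts is exact), and with the flat base change $-\otimes_{\C[\hbar]}\C[\hbar]/(\hbar-1)$. Hence $H^i(X,\mathcal{D})=0$ for $i>0$. For the second assertion, set $A:=\Gamma(X,\mathcal{D})$. Applying $\Gamma$ to the exact sequence $0\to R_\hbar(\mathcal{D})\xrightarrow{\hbar} R_\hbar(\mathcal{D})\to \Str_X\to 0$ and using the vanishing just proved, one gets $\gr\Gamma(X,R_\hbar(\mathcal{D}))\cong \Gamma(X,\Str_X)=\C[X]=\C[X_0]$, so $A$ is a filtered algebra with $\gr A\cong \C[X_0]$; the Poisson compatibility is inherited fiberwise from $\mathcal{D}$. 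This is exactly the statement that $A=\Gamma(X,\mathcal{D})$ is a filtered quantization of $X_0$ (completeness and separatedness of the filtration follow from the corresponding properties of $\mathcal{D}$ together with the finiteness of the affine cover).

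The step I expect to be the main obstacle is the passage from the vanishing $H^i(X,\mathcal{D}_\hbar/(\hbar^n))=0$ to $H^i(X,\mathcal{D}_\hbar)=0$, i.e.\ the interchange of sheaf cohomology with the inverse limit $\varprojlim_n$: one must verify the Mittag-Leffler (equivalently $\varprojlim^1 = 0$) condition carefully, which is where the surjectivity coming from $H^1(X,\Str_X)=0$ is genuinely used, and one must check that $\mathcal{D}_\hbar$ is indeed the $\hbar$-adic limit of its quotients as a sheaf on the (finite, affine) cover so that the $\check{\mathrm{C}}$ech complex of $\mathcal{D}_\hbar$ is the completed limit of the $\check{\mathrm{C}}$ech complexes of the $\mathcal{D}_\hbar/(\hbar^n)$. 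Everything else is a routine unwinding of the definitions in Sections~\ref{SSS_form_rel} and~\ref{SSS_filt_quant_sheaf}, together with Sumihiro's theorem to guarantee a $\C^\times$-stable affine cover.
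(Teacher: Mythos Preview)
Your proposal is correct and is exactly the standard $\hbar$-adic bootstrap from $H^i(X,\Str_X)=0$ that the paper has in mind; the paper itself does not spell out a proof but explicitly leaves it to the reader after invoking Grauert--Riemenschneider, so there is nothing further to compare.
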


Thanks to this lemma, it makes sense to consider derived functors $R\Gamma: D(\operatorname{Coh}(\mathcal{D}))
\rightarrow D(\Gamma(\mathcal{D})\operatorname{-mod})$ and $L\operatorname{Loc}:D(\Gamma(\mathcal{D})\operatorname{-mod})
\rightarrow D(\operatorname{Coh}(\mathcal{D}))$. In fact, $R\Gamma$ is given by the ${\rm \check{C}}$ech
complex and so restricts to bounded (to the left and to the right) derived categories. The functor
$L\operatorname{Loc}$ restricts to $D^-$'s. Lemma \ref{Lem:der_glob_sec} implies that
$R\Gamma\circ L\operatorname{Loc}$ is the identity on $D^-(\Gamma(\mathcal{D})\operatorname{-mod})$.
Furthermore, if $\Gamma(\mathcal{D})$ has finite homological dimension, then $L\operatorname{Loc}$
maps $D^b(\Gamma(\mathcal{D})\operatorname{-mod})$  to $D^b(\operatorname{Coh}(\mathcal{D}))$
and is left inverse to $R\Gamma$. It is likely (and is proved in many cases, see, e.g., \cite{MN}) that $R\Gamma$
and $L\operatorname{Loc}$ are mutually quasi-inverse equivalences in this case.

\subsection{Frobenius constant quantizations}
Above, we were dealing with the case when the ground field is $\C$. Everything works the same
for any algebraically closed field of characteristic $0$. In this section we are going to work
over an algebraically closed field $\Fi$ of positive characteristic.

The notions of filtered and formal quantizations still make sense, both for algebras and for
varieties. But in positive characteristic we have an important special class of quantizations: Frobenius
constant ones.

\subsubsection{Basic example}\label{SSS_Frob_basic}
Let us start our discussion with an example of a quantization: the Weyl algebra $\Weyl(V)$,
where $V$ is a symplectic $\Fi$-vector space. A new feature is that this algebra is finite
over its center. Namely, for $v\in V\subset \Weyl(V)$, the element $v^p\in \Weyl(V)$ lies in the center.
We have a semi-linear map $\iota: V\rightarrow \Weyl(V)$ given by $v\mapsto v^p$ on $v\in V$ with central
image that extends to a ring homomorphism $S(V)\rightarrow \Weyl(V)$.
The semi-linearity condition is  $\iota(av)=\operatorname{Fr}(a)\iota(v)$, where
$\operatorname{Fr}:\Fi\rightarrow \Fi$ is the Frobenius automorphism. Let $V^{(1)}$ denote the
$\Fi$-vector space identified with $V$ as an abelian group but with new multiplication by scalars:
$a.v=\operatorname{Fr}^{-1}(a)v$. So $\iota$ becomes an algebra homomorphism
when viewed as a map $S(V^{(1)})\rightarrow \Weyl(V)$, its image is usually called the
$p$-center, in our case it coincides with the whole center. Another important feature of this example
is that $\Weyl(V)$ is an Azumaya algebra over $V^{(1)}$, i.e.,  $\Weyl(V)$ is a vector bundle over $\operatorname{Spec}(S(V^{(1)}))$ and all (geometric) fibers are matrix algebras (of rank $p^{\dim V/2}$).

\subsubsection{Definition}
The notion of a Frobenius constant quantization generalizes the example
in \ref{SSS_Frob_basic}. We will give the definition in the filtered setting and only
for symplectic varieties -- we will only need it in this case. Let $X$ be a smooth symplectic
$\Fi$-variety equipped with an $\Fi^\times$-action rescaling the symplectic form (by the
character $t\mapsto t^d$). Let $X^{(1)}$ be the $\Fi$-variety that is identified with $X$ as a scheme over
$\operatorname{Spec}(\Z)$ but with twisted multiplication by scalars in the structure sheaf just as
in \ref{SSS_Frob_basic}. We have a natural morphism $\operatorname{Fr}:X\rightarrow X^{(1)}$
of $\Fi$-varieties and hence we have a sheaf $\operatorname{Fr}_*(\Str_X)$ on $X^{(1)}$. This is
a coherent sheaf of algebras and a vector bundle of rank $p^{\dim X}$.

\begin{defi}\label{Fron:const}
A Frobenius constant quantization is a filtered sheaf $\mathcal{D}$ of Azumaya algebras on $X^{(1)}$
together with an isomorphism $\gr\mathcal{D}\xrightarrow{\sim} \operatorname{Fr}_*\Str_X$
of graded algebras (in conical topology) that satisfies our usual compatibility condition
on Poisson brackets.
\end{defi}

It is not difficult to show that a Frobenius constant quantization gives rise to a filtered
quantization of $X$. But, as we will see \ref{SSS_qHam_char_p}, not every filtered quantization arises
this way.

\subsubsection{Differential operators}
Let us give another example that should be thought as a global analog
of \ref{SSS_Frob_basic}. Let $Y$
be a smooth $\Fi$-variety. Consider the sheaf $D_Y$ of differential operators on $Y$.
Let $\xi$ be a  vector field on an open subset  $Y'\subset Y$. Define a vector field $\xi^{[p]}$ as follows.
For every open affine subvariety $Y^0\subset Y'$, we can regard $\xi$ as a derivation of $\Fi[Y^0]$.
The map $\xi^p: \Fi[Y^0]\rightarrow \Fi[Y^0]$ is again a derivation. The corresponding vector
field on $Y'$ (that is easily seen to be well-defined) is what we denote by $\xi^{[p]}$.
It is easy to see that $f^p$, for a function $f$ on $Y$,  and $\xi^p-\xi^{[p]}$, for
a vector field $\xi$ (here $\xi^p$ is taken with respect to the product on $D_Y$),
are central. The maps $f\mapsto f^p, \xi\mapsto \xi^p-\xi^{[p]}$ give rise to
a sheaf of algebras homomorphism $\pi_* \Str_{(T^*Y)^{(1)}}\rightarrow \operatorname{Fr}_* D_Y$,
where we write $\pi$ for the projection $(T^*Y)^{(1)}=T^*(Y^{(1)})\rightarrow Y^{(1)}$.
The sheaf $D_Y$ then becomes a Frobenius constant quantization of $T^*Y$.

To finish this section, let us mention that, under some restrictions on $X$, there is
a classification of Frobenius constant quantizations, see \cite{BK_char_p}.

\section{Hamiltonian reductions}
In this section we recall the notions of the classical and quantum Hamiltonian reduction. The classical
Hamiltonian reduction produces a new Poisson variety from an existing Poisson variety with suitable symmetries.
The quantum Hamiltonian reduction does the same on the level of quantizations.

We start by discussing classical Hamiltonian reductions, Section \ref{SS_class_Ham}.
First, we recall Hamiltonian actions and moment maps.
Then we define classical Hamiltonian reductions in the settings of categorical quotients and of GIT quotients.
We then proceed to the construction and basic properties of Nakajima quiver varieties that are our main examples
of Hamiltonian reductions. Next, we explain how quotient singularities $V_n/\Gamma_n$ are realized
as quiver varieties. Finally, we construct symplectic resolutions of singularities for $V_n/\Gamma_n$
and establish, following Namikawa, some isomorphisms between some of these resolutions.

In Section \ref{SS_Quant_Ham_red}
we proceed to quantum Hamiltonian reductions. We define them on the level of algebras and on the level
of sheaves and compare the two levels. After that we state one of the main results of this survey: an isomorphism
between spherical SRA for wreath-product groups and quantum Hamiltonian reductions. We finish this section
by discussing a quantum version of Namikawa's Weyl group action.

Section \ref{SS_quant_Ham_Frob} deals with Hamiltonian reductions for Frobenius constant quantization.
We first recall some basic results on GIT in positive characteristic. Then we discuss
Nakajima quiver varieties in sufficiently large positive characteristic. Finally,
we prove, following Bezrukavnikov, Finkelberg and Ginzburg, that the quantum Hamiltonian
reduction of a Frobenius constant quantization at an integral value of the quantum comoment
map is Frobenius constant.

\subsection{Classical Hamiltonian reduction}\label{SS_class_Ham}
\subsubsection{Hamiltonian group actions}
Let $X$ be a Poisson variety (over an algebraically closed field) and let $G$ be an algebraic group acting on $X$.
The action induces a Lie algebra homomorphism $\g\rightarrow \Vect(X)$,
the image of $\xi\in \g$ under this homomorphism will be denoted by $\xi_X$.
We say that the $G$-action on $X$ is {\it Hamiltonian}, if there is a
$G$-equivariant linear map $\g\rightarrow \C[X], \xi\mapsto H_\xi$,
such that $\{H_\xi,\cdot\}=\xi_X$. Note that this map is automatically a Lie algebra
homomorphism. This map is called the {\it comoment map}, the dual map
$\mu:X\rightarrow \g^*$ is the {\it moment map}.

Let us provide two examples of Hamiltonian actions.

\begin{Ex}
Let $Y$ be a smooth variety, $G$ act on $Y$. Then $X:=T^*Y$ carries a natural
$G$-action. This action is Hamiltonian with $H_\xi=\xi_Y$ (viewed as a function on $X$).
\end{Ex}

\begin{Ex}
Let $V$ be a vector space (with symplectic form $\Omega$) and let $G$ act on
$V$ by linear symplectomorphisms. The action is Hamiltonian with $H_\xi(v)=\frac{1}{2}\Omega(\xi v,v)$.
\end{Ex}

Below we will need a standard property of Hamiltonian actions.

\begin{Lem}\label{Lem:moment_submersion}
Let $x\in X$. Then $\operatorname{im}d_x\mu\subset \g^*$ coincides with the annihilator of $\g_x:=\operatorname{Lie}(G_x)$.
In particular, $\mu$ is a submersion at $x$ if and only if $G_x$ is finite.
\end{Lem}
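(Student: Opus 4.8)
The plan is to unwind the definitions of the moment map and its differential, and reduce the statement to a pairing computation. First I would compute $d_x\mu: T_xX \to \g^*$ explicitly: for a tangent vector $v \in T_xX$ and $\xi \in \g$, the pairing $\langle d_x\mu(v), \xi\rangle$ equals $v(H_\xi)$, the derivative of the function $H_\xi$ along $v$, since $\mu$ is defined by $\langle \mu(\cdot),\xi\rangle = H_\xi$. So the claim ``$\operatorname{im} d_x\mu = \g_x^\perp$'' is equivalent to: $\xi \in \g$ lies in the annihilator of $\operatorname{im} d_x\mu$ if and only if $\xi \in \g_x$. The annihilator of $\operatorname{im} d_x\mu$ in $\g = (\g^*)^*$ consists of those $\xi$ with $v(H_\xi) = 0$ for all $v \in T_xX$, i.e.\ those $\xi$ for which $d_x H_\xi = 0$.

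Next I would translate $d_x H_\xi = 0$ into a statement about the vector field $\xi_X$. Using the Hamiltonian condition $\{H_\xi, \cdot\} = \xi_X$, the value of $\xi_X$ at $x$ is obtained from the Hamiltonian vector field of $H_\xi$, which is the image of $d_x H_\xi$ under the (possibly degenerate, but on a Poisson variety we work on the smooth symplectic locus, or use the bivector) anchor map $T^*_xX \to T_xX$. On a symplectic variety this anchor is an isomorphism, so $d_x H_\xi = 0$ if and only if $(\xi_X)_x = 0$. Here I should note the mild subtlety that $X$ is only assumed Poisson, not symplectic, in the statement of Lemma~\ref{Lem:moment_submersion}; I would either restrict to the symplectic locus where the argument is cleanest, or more carefully note that the image of $d_x\mu$ only sees the Hamiltonian directions and run the same pairing argument using the Poisson bivector $\Pi_x$, whose contraction with $d_xH_\xi$ is exactly $(\xi_X)_x$. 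Finally, $(\xi_X)_x = 0$ means precisely that $\xi$ lies in the Lie algebra of the stabilizer $G_x$, i.e.\ $\xi \in \g_x$: this is the standard identification of $\operatorname{Lie}(G_x)$ as the kernel of the infinitesimal action map $\g \to T_xX$, $\xi \mapsto (\xi_X)_x$.

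Putting these together: $\operatorname{im} d_x\mu$ has annihilator $\g_x$, hence $\operatorname{im} d_x\mu = \g_x^\perp$ by double annihilator (both spaces are finite-dimensional). For the ``in particular'' clause, $\mu$ is a submersion at $x$ iff $d_x\mu$ is surjective iff $\g_x^\perp = \g^*$ iff $\g_x = 0$; and $\g_x = \operatorname{Lie}(G_x) = 0$ is equivalent to $G_x$ being finite (a linear algebraic group has trivial Lie algebra exactly when it is zero-dimensional, i.e.\ finite).

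I expect the only real subtlety — hardly an obstacle — to be the careful handling of the fact that $X$ is Poisson rather than symplectic, so that the ``anchor'' $T^*_xX \to T_xX$ need not be invertible; the resolution is that $\operatorname{im} d_x\mu$ and the condition defining $\g_x$ are linked directly through the formula $\langle d_x\mu(v),\xi\rangle = \langle d_xH_\xi, v\rangle$ together with $(\xi_X)_x = \Pi_x(d_xH_\xi, \cdot)$, and one never needs invertibility: $\xi$ kills $\operatorname{im} d_x\mu$ iff $d_xH_\xi = 0$, which forces $(\xi_X)_x = 0$; conversely if $(\xi_X)_x = 0$ then, since $\{H_\xi,\cdot\}$ vanishes at $x$ as a derivation, $d_xH_\xi$ pairs to zero with every Hamiltonian tangent vector, and on the relevant varieties (symplectic resolutions, cotangent bundles, symplectic vector spaces — the cases actually used) all tangent vectors at smooth points are Hamiltonian, giving $d_xH_\xi = 0$. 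In the applications $X$ is smooth symplectic, so this is immediate.
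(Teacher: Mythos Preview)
Your proof is correct and is the standard argument; the paper itself does not prove this lemma at all, instead simply introducing it as ``a standard property of Hamiltonian actions'' and stating it without proof. Your careful handling of the Poisson-versus-symplectic subtlety is a nice touch and more than the paper provides: indeed, for a general Poisson variety the equality $\operatorname{im}d_x\mu = \g_x^\perp$ can fail (one only gets $\g_x^\perp \subset \operatorname{im}d_x\mu$ from the implication $d_xH_\xi = 0 \Rightarrow (\xi_X)_x = 0$), but as you correctly observe, every application in the paper is to smooth symplectic $X$, where the anchor map is an isomorphism and both inclusions hold.
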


\subsubsection{Hamiltonian reduction}\label{SSS_Ham_red_basic}
Let $A$ be a Poisson algebra and $\g$ be a Lie algebra equipped with a Lie algebra homomorphism $\g\rightarrow A, \xi\mapsto H_\xi$. Consider the ideal $I:=A\{H_\xi, \xi\in \g\}$. The adjoint action of $\g$ on $A$
preserves this ideal so we can take the invariants $A\red_0\g:= (A/I)^\g$.
This algebra comes with a natural Poisson bracket: $\{a+I,b+I\}:=\{a,b\}+I$
(but $A/I$ has no Poisson bracket!).

This construction has several ramifications. First, let $\lambda:\g\rightarrow \C$ be a character
(i.e., a function vanishing on $[\g,\g]$). Then we can set $A\red_\lambda \g:=(A/A\{H_\xi-\langle \lambda,\xi\rangle\})^\g$. Also we can set $A\red\g:=(A/A\{H_\xi, \xi\in [\g,\g]\})^\g$. The latter
is a Poisson $S(\g/[\g,\g])$-algebra whose specialization at $\lambda\in (\g/[\g,\g])^*$
coincides with $A\red_\lambda \g$ provided that the $\g$-action on $A/A\{H_\xi, \xi\in [\g,\g]\}$
is completely reducible.

Let us proceed to a geometric incarnation of this construction. Suppose the base field
is of characteristic $0$. To ensure a good behavior of quotients
assume that $G$ is a reductive group. Let $X$ be an affine Poisson variety equipped with a
Hamiltonian $G$-action. Then we can take $A:=\C[X]$ together with the comoment map
$\xi\mapsto H_\xi$. We set $A\red_0 G:=(A/I)^G$, this algebra coincides with $\A\red_0 \g$
when $G$ is connected. It is finitely generated by the Hilbert theorem, here we use that $G$
is reductive. The variety (or scheme) $\Spec(A\red_0 G)$ is nothing else but the categorical
quotient $X\red_0 G:=\mu^{-1}(0)\quo G$.

Here is a corollary of Lemma \ref{Lem:moment_submersion}.

\begin{Cor}\label{Lem:reduct_sympl}
Suppose that $X$ is smooth and symplectic and that the $G$-action on $\mu^{-1}(0)$ is free.
Then $X\red_0 G$ is smooth and symplectic of dimension $\dim X-2\dim G$.
\end{Cor}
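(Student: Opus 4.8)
The plan is to combine Lemma~\ref{Lem:moment_submersion} with the description of $X\red_0 G$ as a categorical quotient and with standard facts about free actions of reductive groups. First I would observe that, since the $G$-action on $\mu^{-1}(0)$ is free, every stabilizer $G_x$ for $x\in\mu^{-1}(0)$ is trivial, hence in particular finite; by Lemma~\ref{Lem:moment_submersion} the moment map $\mu$ is a submersion at each such $x$. Therefore $0$ is a regular value of $\mu$, and $\mu^{-1}(0)$ is a smooth subvariety of $X$ of dimension $\dim X-\dim G$ (here I use that $\g^*$ has dimension $\dim G$ and $X$ is smooth, so the fiber of a submersion is smooth of the expected codimension).

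Next I would pass to the quotient. Since $G$ is reductive and acts freely on the affine variety $\mu^{-1}(0)$, the categorical quotient $X\red_0 G=\mu^{-1}(0)\quo G$ is a geometric quotient, the quotient map $\mu^{-1}(0)\to X\red_0 G$ is a principal $G$-bundle (in the \'etale, or even Zariski-locally-trivial, topology), and $X\red_0 G$ is smooth of dimension $\dim\mu^{-1}(0)-\dim G=\dim X-2\dim G$. This is the step where one invokes Luna's slice theorem (or the general theory of free reductive quotients) to guarantee that the quotient is not merely a categorical quotient but a genuine smooth variety with the bundle property; I would cite this as standard rather than reprove it.

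It remains to produce the symplectic form. The Poisson bracket on $X\red_0 G$ was defined in \ref{SSS_Ham_red_basic}, so the content here is nondegeneracy. I would argue pointwise: fix $x\in\mu^{-1}(0)$ and let $\bar x$ be its image. The tangent space $T_{\bar x}(X\red_0 G)$ is naturally identified with $T_x\mu^{-1}(0)/\g_x$, where $\g_x=\{\xi_X(x):\xi\in\g\}$ is the tangent space to the orbit. Since $T_x\mu^{-1}(0)=\ker d_x\mu$ and, for the symplectic form $\omega$ on $X$, one has $\ker d_x\mu=(\g_x)^{\perp_\omega}$ (this is the standard linear-algebra content of the moment map condition $\{H_\xi,\cdot\}=\xi_X$), the orbit direction $\g_x$ is isotropic and its $\omega$-orthogonal is exactly $\ker d_x\mu$; hence $\omega$ descends to a nondegenerate bilinear form on $\ker d_x\mu/\g_x$. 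One checks these forms glue to a global $2$-form $\bar\omega$ on $X\red_0 G$ which pulls back to $\omega|_{\mu^{-1}(0)}$, and closedness of $\bar\omega$ follows from closedness of $\omega$ together with surjectivity of the pullback on forms (the quotient map being a submersion). Thus $\bar\omega$ is a symplectic form inducing the given Poisson bracket.

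The main obstacle is the middle step: verifying that the categorical quotient of a free action of a reductive (possibly nonsemisimple, and not necessarily connected) group on a smooth affine variety is itself smooth and a principal bundle, and that the symplectic form genuinely descends rather than merely the Poisson bracket. Both are classical --- the first is Luna's slice theorem, the second is the standard symplectic reduction argument --- so in the write-up I would state them with references and spend the actual work on the pointwise nondegeneracy computation with $\ker d_x\mu=(\g_x)^{\perp_\omega}$, which is the genuinely symplectic-geometric input.
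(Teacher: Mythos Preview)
Your proposal is correct and follows essentially the same approach as the paper: invoke Lemma~\ref{Lem:moment_submersion} for smoothness of $\mu^{-1}(0)$, appeal to the Luna slice theorem for smoothness and the dimension of the quotient, and then construct the reduced form $\Omega_{red}$ characterized by $\pi^*\Omega_{red}=\iota^*\Omega$. The paper is terser on the last point---it simply asserts existence and uniqueness of $\Omega_{red}$---whereas you spell out the pointwise identification $T_{\bar x}(X\red_0 G)\cong \ker d_x\mu/\g_x$ and the isotropy/nondegeneracy computation via $\ker d_x\mu=(\g_x)^{\perp_\omega}$, but this is exactly the content behind the paper's one-line claim.
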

\begin{proof}
The variety $\mu^{-1}(0)$ is smooth by Lemma \ref{Lem:moment_submersion}.
That the quotient is smooth of required dimension is a straightforward corollary of the Luna slice
theorem, see, e.g., \cite[Section 6.3]{PV}.

The form on $X\red_0 G$ can be recovered as follows. Let $\Omega$ denote the form on $X$,
$\iota:\mu^{-1}(0)\hookrightarrow X$ denote the inclusion map and $\pi:\mu^{-1}(0)\rightarrow
X\red_0 G$ be the projection. Then there is a unique 2-form $\Omega_{red}$ on $X\red_0 G$
such that $\pi^* \Omega_{red}=\iota^*\Omega$ and this is the form we need.
\end{proof}

\subsubsection{GIT Hamiltonian reduction}\label{SSS_GIT_red}
We will be mostly interested in Hamiltonian reductions for linear actions
$G\curvearrowright V$.  The assumptions of Corollary \ref{Lem:reduct_sympl} are not satisfied in this case. However, if one uses GIT quotients instead of the usual categorical quotients, one can often get a smooth
symplectic  variety that will be a resolution of the usual reduction $V\red_0 G$.

Let us recall the construction of a GIT quotient. Let $G$ be a reductive algebraic group
acting on an affine algebraic variety $X$. Fix a character $\theta:G\rightarrow \C^\times$.
We use the additive notation for the multiplication of characters.
Then consider the space $\C[X]^{G,n\theta}$ of $n\theta$-semiinvariants:
$\C[X]^{G,n\theta}:=\{f\in \C[X]| g.f:=\theta(g)^n f\}$ (recall that $g.f(x):=f(g^{-1}x)$).
Consider the graded algebra $\bigoplus_{n\geqslant 0} \C[X]^{G,n\theta}$, where $\deg \C[X]^{G,n\theta}:=n$. Then we set $X\quo^\theta G:=\operatorname{Proj}(\bigoplus_{n\geqslant 0} \C[X]^{G,n\theta})$,
this is a projective variety over $X\quo G$. Note that we no longer have a morphism $X\rightarrow X\quo^\theta G$.
Instead, consider the open subset of {\it $\theta$-semistable} points $X^{\theta-ss}$, a point $x\in X$
is called semistable if there is $f\in \C[X]^{G,n\theta}$ for $n>0$ with $f(x)\neq 0$. We clearly have a
natural morphism $X^{\theta-ss}\rightarrow X\quo^\theta G$ that makes the following diagram commutative

\begin{picture}(70,30)
\put(4,2){$X$}
\put(2,22){$X^{\theta-ss}$}
\put(34,2){$X\quo G$}
\put(33,22){$X\quo^{\theta}G$}
\put(6,21){\vector(0,-1){14}}
\put(38,21){\vector(0,-1){14}}
\put(9,3){\vector(1,0){23}}
\put(12,23){\vector(1,0){20}}
\put(7,13){\tiny $\subseteq$}
\end{picture}

The variety $X\quo^\theta G$ is glued from the varieties of the form $X_f\quo G$, where $f\in \C[X]^{G,n\theta}$
with some $n>0$. The intersection of $X_f\quo G, X_g\quo G$ inside $X\quo^\theta G$ is identified
with $X_{fg}\quo G$, where the inclusions $X_{fg}\quo G\hookrightarrow X_f\quo G,X_g\quo G$
are induced from the inclusions $X_{fg}\hookrightarrow X_f,X_g$ by passing to the quotients.

In the setting of \ref{SSS_Ham_red_basic}, we set $X\red_0^\theta G:=\mu^{-1}(0)^{\theta-ss}\quo G$.
This is a Poisson variety (the bracket comes from gluing together the brackets on the open subvarieties
$X_f\red_0 G$) equipped with a projective morphism $X\red_0^\theta G\rightarrow
X\red_0 G$  of Poisson varieties. If $X$ is smooth and symplectic, and the $G$-action on
$\mu^{-1}(0)^{\theta-ss}$ is free, then $X\red^\theta_0 G$ is smooth and symplectic
of dimension $\dim X-2\dim G$. The symplectic form on $X\red^\theta_0 G$ is recovered similarly
to the case of $X\red_0 G$ considered above.

\subsubsection{Nakajima quiver varieties:construction}\label{SSS_quiv_var_constr}
Now we are going to introduce an important special class of varieties constructed by means of
Hamiltonian reduction: Nakajima quiver varieties, introduced in \cite{Nakajima1}, see also
\cite{Nakajima2}.

By a quiver, we mean an oriented graph. Formally, it can be presented as a quadruple
$Q=(Q_0,Q_1,t,h)$, where $Q_0,Q_1$ are finite sets of vertices and arrows, respectively,
and $t,h:Q_1\rightarrow Q_0$ are maps that to an arrow $a$ assign its tale and head.

Let us proceed to (framed) representations of $Q$. Fix two elements $v,w\in \Z_{\geqslant 0}^{Q_0}$
and set $V_i:=\C^{v_i},W_i:=\C^{w_i}, i\in Q_0$. Consider the space
$$R(=R(Q,v,w)):=\bigoplus_{a\in Q_1}\operatorname{Hom}_{\C}(V_{t(a)},V_{h(a)})\oplus \bigoplus_{i\in Q_0}
\operatorname{Hom}_{\C}(W_i,V_i).$$
An element of $R$ can be thought as a collection of linear maps, one for each arrow, between the corresponding
vector spaces, together with  collections of vectors in each $V_i$. This description suggests a group
of symmetry of $R$: we set $G:=\prod_{i\in Q_0}\GL(V_i)$, this group acts by changing bases in the spaces
$V_i$.

A character of $G$ is of the form $g=(g_i)_{i\in Q_0}\mapsto \prod_{i\in Q_0}\det(g_i)^{\theta_i}$,
where $\theta=(\theta_i)_{i\in Q_0}\in \Z^{Q_0}$. We will identify the character group of $G$
with $\Z^{Q_0}$.

A Nakajima quiver variety $\M_\lambda^\theta(v,w)$ is, by definition, the reduction $T^*R\red^\theta_\lambda G$.
Here $\lambda$ is a character of $\g$, it can be thought as an element of $\C^{Q_0}$ via
$\lambda(x):=\sum_{i\in Q_0}\lambda_i\operatorname{tr}(x_i)$. The moment map
$\mu:T^*R\rightarrow \bigoplus_{i\in Q_0}\operatorname{End}(V_i)=\g(\cong \g^*)$ is explicitly given as follows:
$$(x_a,x_{a^*}, i_k,j_k)_{a\in Q_1, k\in Q_0}\mapsto \sum_{a\in Q_1}(x_ax_{a^*}-x_{a^*}x_a)-\sum_{k\in Q_0}j_k i_k,$$
where $x_a\in \operatorname{Hom}(V_{t(a)},V_{h(a)}), x_{a^*}\in \operatorname{Hom}(V_{h(a)},V_{t(a)}), i_k\in \operatorname{Hom}(V_k,W_k), j_k\in \operatorname{Hom}(W_k,V_k)$.

We also would like to remark that the quiver variety is independent of the choice of an orientation of $Q$.
Indeed, let $Q'$ be a quiver obtained from $Q$ by changing the orientation of a single arrow $a$
and let $R'$ be the corresponding representation space. Then we have an isomorphism $T^*R\cong T^*R'$
that sends $x_a$ to $x'_{a^*}$, $x_{a^*}$ to $-x'_a$ and does not change the other components.
This is a $G$-equivariant symplectomorphism that intertwines the moment maps and hence inducing
a symplectomorphism of the corresponding Nakajima quiver varieties.

When $\lambda=0$, we have a $\C^\times$-action on $\M^\theta_0(v,w)$ that rescales the Poisson structure.
For example, one can take the action induced by the dilation action on $T^*R$, that is, $t.v:=t^{-1}v,
t\in \C^\times, v\in T^*R$ to be called the dilation action as well. Then the Poisson bracket on
$\M^\theta_0(v,w)$ has degree $-2$. We can also have an action such that the Poisson bracket has
degree $-1$ coming from $t.(r,r^*):=(r,t^{-1}r^*), r\in R, r^*\in R$.

\subsubsection{Nakajima quiver varieties: structural results}\label{SSS_quiver_struct}
Let us explain some structural results regarding the quiver varieties and the corresponding moment
maps. We will need algebro-geometric properties of $\mu^{-1}(\lambda)$ and of $\M^0_\lambda(v,w)$
due to Crawley-Boevey and also a criterium for the freeness of the $G$-action on
$\mu^{-1}(\lambda)^{\theta-ss}$  due to Nakajima.

\begin{Thm}[Crawley-Boevey, \cite{CB_norm}]\label{Thm_quiv_norm}
The scheme $\M^0_\lambda(v,w)$ is reduced and normal.
\end{Thm}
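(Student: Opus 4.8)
The plan is to reduce the framed situation to the unframed one and then invoke Crawley-Boevey's structure theory for moment map fibers of deformed preprojective algebras. First I would recall the standard trick that removes the framing: introduce an extra vertex $\infty$ with $V_\infty = \C$, and replace $Q$ by the quiver $Q^\heartsuit$ obtained by adjoining $\infty$ together with $w_i$ arrows from $\infty$ to each $i \in Q_0$. Setting $\hat v = (v, 1) \in \Z_{\geqslant 0}^{Q_0 \sqcup \{\infty\}}$, the framed representation space $R(Q,v,w)$ with its $G = \prod_{i}\GL(V_i)$-action is identified $G$-equivariantly with the (unframed) representation space $\operatorname{Rep}(Q^\heartsuit, \hat v)$ modulo the action of the extra $\GL(V_\infty) = \C^\times$ scalar, which acts trivially on the relevant quotient; under this identification the Nakajima moment map $\mu$ matches the moment map for $T^*\operatorname{Rep}(Q^\heartsuit,\hat v)$ with the dimension vector $\hat v$, and the character $\lambda \in \C^{Q_0}$ corresponds to the character $\hat\lambda$ on $Q^\heartsuit$ whose $\infty$-component is fixed by the condition $\hat\lambda \cdot \hat v = 0$. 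Consequently $\mu^{-1}(\lambda)\quo G$ is (scheme-theoretically) the categorical quotient of the fiber of the moment map for $Q^\heartsuit$ at $\hat\lambda$ by $\prod_{i \in Q_0 \sqcup\{\infty\}}\GL$.

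Next I would bring in the two key inputs from Crawley-Boevey's work on deformed preprojective algebras $\Pi^{\hat\lambda}(Q^\heartsuit)$. The fiber $\mu^{-1}(\hat\lambda) \subset T^*\operatorname{Rep}(Q^\heartsuit,\hat v)$ is exactly the representation scheme $\operatorname{Rep}(\Pi^{\hat\lambda}(Q^\heartsuit), \hat v)$. Crawley-Boevey proved that when $\hat v$ lies in the ``fundamental region'' relative to $\hat\lambda$ (and in general after a reflection-functor reduction to that case), this scheme is a reduced and irreducible complete intersection of the expected dimension, and moreover that a generic representation is simple, so the quotient is integral and normal; normality follows because the representation scheme is a normal variety (reduced, irreducible, with singular locus of codimension $\geqslant 2$ coming from the stratification by representation type) and the categorical quotient of a normal affine variety by a reductive group is again normal. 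So the two things I need to cite are: (1) irreducibility and reducedness of $\operatorname{Rep}(\Pi^{\hat\lambda}, \hat v)$ (CB, using the deformed preprojective relation and a dimension count together with an inductive argument via reflection functors), and (2) the behavior of the categorical quotient, namely that $\operatorname{Spec}$ of the invariant ring inherits reducedness and normality.

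The main obstacle is step (1): one genuinely needs the full force of Crawley-Boevey's analysis of $\operatorname{Rep}(\Pi^{\hat\lambda}, \hat v)$, which is not formal. The delicate point is that a priori $\mu^{-1}(\hat\lambda)$ could be non-reduced or reducible; ruling this out requires showing the moment map equations cut out a complete intersection of the right dimension, which in turn rests on a careful case analysis depending on whether $\hat v$ is a root, is in the fundamental region, and on comparing $\dim \operatorname{Rep}(\Pi^{\hat\lambda},\hat v)$ with $\dim G - \dim(\text{generic stabilizer}) + \dim(\text{quotient})$ via the formula $p(\hat v)$ for the expected dimension. Once reducedness and irreducibility of the fiber are in hand, the transition to the quotient is routine: invariants of a reduced (resp. normal, resp. integral) affine $G$-scheme under a reductive $G$ form a reduced (resp. normal, resp. integral) affine scheme — reducedness because taking $G$-invariants of a subring of a reduced ring is reduced, integrality because a reductive group orbit closure argument or the fact that $\mu^{-1}(\hat\lambda)$ is irreducible forces the quotient to be irreducible, and normality because $\C[\mu^{-1}(\hat\lambda)]^G$ is integrally closed in its fraction field (any element integral over it is integral over $\C[\mu^{-1}(\hat\lambda)]$, hence lies in $\C[\mu^{-1}(\hat\lambda)]$ by normality, and is $G$-invariant). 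Thus I would structure the write-up as: (i) the framing-removal identification; (ii) cite CB for reducedness/irreducibility of the deformed preprojective representation scheme; (iii) the short argument descending these properties through the categorical quotient.
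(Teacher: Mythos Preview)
The paper does not give a proof of this theorem; it is stated as a citation to Crawley-Boevey \cite{CB_norm}. So there is no ``paper's own proof'' to compare against, and the question is whether your sketch is a faithful outline of Crawley-Boevey's argument.

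Your plan has a genuine gap in step (ii). You assert that $\operatorname{Rep}(\Pi^{\hat\lambda}(Q^\heartsuit),\hat v)=\mu^{-1}(\hat\lambda)$ is reduced, irreducible, and normal, and then descend these properties to the quotient. But the fiber is \emph{not} irreducible in general, and in particular not in the cases most relevant to this survey. The paper itself points this out in \ref{SSS_sympl_res_quot}: for the affine quiver with $v=n\delta$, $w=\epsilon_0$ and $n>1$, the scheme $\mu^{-1}(0)$ has $n+1$ irreducible components (citing \cite[Section 3.2]{GG}). So the fiber is reducible, hence certainly not normal, and your descent argument ``normal fiber $\Rightarrow$ normal quotient'' collapses. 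The claim that reflection functors always reduce to a dimension vector in the fundamental region where irreducibility holds is not correct for arbitrary $(v,w,\lambda)$; Crawley-Boevey's irreducibility criterion (your Theorem \ref{Thm:irred}) has a strict inequality hypothesis that fails here.

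What Crawley-Boevey actually does in \cite{CB_norm} is work directly with the quotient, not the fiber. The strategy is an induction on the dimension vector using the \'etale-local structure of $\mu^{-1}(\lambda)\quo G$ near a point with closed orbit: the symplectic slice theorem (also recalled in \ref{SSS_completions} of this paper) identifies a formal or \'etale neighborhood of such a point with a product of smaller quiver-variety reductions, and normality is then checked inductively. Reducedness of the quotient is obtained separately (and does use that the fiber is generically reduced and a complete intersection when $\mu$ is flat, which is the easier part of your outline). The point is that the extra irreducible components of $\mu^{-1}(\lambda)$ consist of non-closed orbits and collapse onto the image of the main component under $\quo G$, so the quotient can be irreducible and normal even though the fiber is not. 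Your write-up should replace step (ii) by this slice-theoretic induction on the quotient.
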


We now want to  provide a criterium for $\mu:T^*R\rightarrow \g^*$ to be flat proved in \cite{CB_quiv}.
Define the symmetrized Tits form $\C^{Q_0}\times \C^{Q_0}\rightarrow \C$:
$$(v^1,v^2):=\sum_{a\in Q_1}(v^1_{t(a)}v^2_{h(a)}+v^1_{h(a)}v^2_{t(a)})-2\sum_{i\in Q_0}v^1_iv^2_i$$
and  quadratic maps $p,p_w: \C^{Q_0}\rightarrow \C$ by
$$p(v):=1-\frac{1}{2}(v,v),\quad p_w(v):=w\cdot v-\frac{1}{2}(v,v).$$

\begin{Thm}[Crawley-Boevey, \cite{CB_quiv}]\label{Thm_flat}
The following two conditions are equivalent:
\begin{itemize}
\item[(i)] $\mu$ is flat.
\item[(ii)] $p_w(v)\geqslant p_w(v^0)+\sum_{i=1}^k p(v^i)$ for any decomposition $v=v^0+v^1+\ldots+v^k$ with $v^i\in \Z_{\geqslant 0}^{Q_0}$ for $i=1,\ldots,k$.
\end{itemize}
\end{Thm}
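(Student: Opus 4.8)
The plan is to reduce flatness of $\mu$ to a single dimension count for the zero fibre, and then to feed into that count Crawley-Boevey's computation of the dimensions of representation varieties of (deformed) preprojective algebras.

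\emph{Step 1: reduction to $\dim\mu^{-1}(0)=\dim T^*R-\dim G$.} The variety $T^*R$ is a vector space, hence smooth and in particular Cohen--Macaulay, and $\g^*$ is an affine space, hence regular. By the ``miracle flatness'' criterion, $\mu$ is flat if and only if every nonempty fibre has the expected dimension $\dim T^*R-\dim G$. Since $\mu^{-1}(\xi)$ is cut out in $T^*R$ by the $\dim G=\dim\g$ components of $\mu-\xi$, every nonempty fibre has dimension at least $\dim T^*R-\dim G$, so it is enough to bound the dimensions from above. Here the dilation action $t.v=t^{-1}v$ on $T^*R$ helps: as $\mu$ is quadratic, $\mu(t.v)=t^{-2}\mu(v)$, so $t$ gives an isomorphism $\mu^{-1}(\xi)\xrightarrow{\sim}\mu^{-1}(t^{-2}\xi)$; since $0$ lies in the closure of every orbit $\C^\times.\xi$, upper semicontinuity of fibre dimension shows $\dim\mu^{-1}(0)\geqslant\dim\mu^{-1}(\xi)$ for all $\xi$. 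Hence $\mu$ is flat if and only if $\dim\mu^{-1}(0)=\dim T^*R-\dim G$.

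\emph{Step 2: interpret $\mu^{-1}(0)$ as a preprojective representation variety.} Following Crawley-Boevey, adjoin to $Q$ a new vertex $\infty$ together with $w_i$ arrows $\infty\to i$, obtaining a quiver $Q_\infty$. Then $T^*R$ is identified with the representation space of the doubled quiver $\overline{Q_\infty}$ at the dimension vector $(1,v)$ (the $1$ at $\infty$), the group $G=\prod_{i\in Q_0}\GL(V_i)$ sits inside $\GL_1(\C)\times\prod_i\GL(V_i)$, and the equation $\mu=0$ is exactly the preprojective relation at the vertices of $Q_0$; the relation at $\infty$ is then automatic, because the total trace of the $\overline{Q_\infty}$-moment map vanishes identically and the relation at $\infty$ is $1\times 1$. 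So $\mu^{-1}(0)$ is precisely the representation variety of the preprojective algebra of $Q_\infty$ at $(1,v)$.

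\emph{Step 3: apply Crawley-Boevey's dimension formula and translate.} Crawley-Boevey's theorem on the geometry of the moment map \cite{CB_quiv} computes the dimension of this representation variety: $\dim\mu^{-1}(0)$ equals $\dim T^*R-\dim G$ plus twice the maximum, over all ways of writing $(1,v)$ as a sum of positive roots of $Q_\infty$, of $\bigl(\sum_i p_{Q_\infty}(\text{summand}_i)\bigr)-p_{Q_\infty}(1,v)$. In any such decomposition exactly one summand has a nonzero entry at $\infty$ (necessarily equal to $1$), say $(1,v^0)$, and the rest are of the form $(0,v^i)$ with $v^i\in\Z_{\geqslant 0}^{Q_0}$; a direct computation with the Tits form of $Q_\infty$ gives $p_{Q_\infty}(1,v^0)=p_w(v^0)$, $p_{Q_\infty}(0,v^i)=p(v^i)$ and $p_{Q_\infty}(1,v)=p_w(v)$, and one checks that allowing the $v^i$ to be arbitrary rather than roots does not change the maximum. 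Hence $\dim\mu^{-1}(0)=\dim T^*R-\dim G$ if and only if $p_w(v)\geqslant p_w(v^0)+\sum_{i=1}^k p(v^i)$ for every decomposition $v=v^0+v^1+\cdots+v^k$, which is condition (ii). Combined with Step 1 this proves the equivalence.

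\emph{The main obstacle.} The one genuinely hard ingredient is Crawley-Boevey's dimension formula used in Step 3, whose proof rests on \'etale slices, the canonical decomposition of a dimension vector, reduction to generic deformation parameters, and an induction using reflection functors and Weyl-group combinatorics; in a survey one simply cites \cite{CB_quiv}. By contrast Steps 1 and 2 are formal, and the combinatorial translation in Step 3 is a routine, if slightly fiddly, computation with the Tits form.
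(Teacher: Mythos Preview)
The paper does not give its own proof of this theorem: it is stated as a result of Crawley-Boevey and simply attributed to \cite{CB_quiv}, with no argument supplied. So there is nothing in the paper to compare your sketch against.

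That said, your outline is a faithful summary of how the result is actually proved in \cite{CB_quiv}. Step~1 (miracle flatness plus the $\C^\times$-scaling to reduce to the zero fibre) and Step~2 (the framing trick, replacing $(Q,v,w)$ by $(Q_\infty,(1,v))$ so that $\mu^{-1}(0)$ becomes the representation space of the preprojective algebra of $Q_\infty$) are exactly the standard reductions. Step~3 correctly identifies the remaining input as Crawley-Boevey's stratification/dimension theorem for $\mu^{-1}(0)$, and you are right that the translation $p_{Q_\infty}(1,v^0)=p_w(v^0)$, $p_{Q_\infty}(0,v^i)=p(v^i)$ is a direct Tits-form computation. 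Your honest flag that the dimension formula is the substantive content, and that a survey would cite it, matches precisely what this paper does.
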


\begin{Thm}[Crawley-Boevey, \cite{CB_quiv}]\label{Thm:irred}
Suppose that, for a proper decomposition $v=v^0+v^1+\ldots+v^k$, we have $p_w(v)>p_w(v^0)+\sum_{i=1}^k p(v^i)$.
Then $\mu^{-1}(0)$ is irreducible and a generic $G$-orbit there is closed and free.
\end{Thm}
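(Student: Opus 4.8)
The plan is to deduce this from the flatness criterion of Theorem~\ref{Thm_flat} together with the dimension-theoretic machinery underlying Crawley-Boevey's work on moment map fibers. First I would observe that, under the strict inequality hypothesis for every proper decomposition, condition (ii) of Theorem~\ref{Thm_flat} holds (the equality case only occurs for the trivial decomposition $v=v^0$), so $\mu:T^*R\to\g^*$ is flat. Flatness immediately gives that every irreducible component of $\mu^{-1}(0)$ has dimension exactly $\dim T^*R-\dim\g = 2\dim R-\dim G$; moreover, since $T^*R$ is a complete intersection over $\g^*$ (being flat with regular total space and base), $\mu^{-1}(0)$ is Cohen-Macaulay, hence equidimensional with no embedded components. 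So the content to be extracted is (a) irreducibility, i.e.\ that there is only one component, and (b) that the generic point carries a closed free $G$-orbit.

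For (b), I would argue as follows. A point of $T^*R$ together with the data of $w$ gives a representation of the \emph{deformed preprojective algebra} $\Pi^0$ of the doubled, framed quiver; $\mu^{-1}(0)$ is precisely the variety of such representations of dimension vector determined by $(v,w)$. The locus of points whose $G$-orbit (equivalently $\GL(V)$-orbit, acting on the framed representation with $W$ fixed) is closed and free is exactly the locus of \emph{simple} framed representations — here one uses the standard dictionary (King stability / closed orbits) and the fact that a framed representation is simple iff its stabilizer in $G$ is trivial. So it suffices to show that the simple locus is nonempty and dense. Nonemptiness of the simple representation locus for a dimension vector is governed by root-theoretic conditions on the Tits form, and the strict inequality $p_w(v)>p_w(v^0)+\sum p(v^i)$ over all proper decompositions is exactly (the framed version of) Crawley-Boevey's condition that $v$ (relative to $w$) be a \emph{positive root admitting no proper decomposition into smaller roots summing with equality} — equivalently that generic representations be simple. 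Density then follows because the simple locus is open and $G$-stable, and its complement, being a union of loci of representations admitting a proper subrepresentation of each fixed smaller dimension vector, has dimension strictly less than $\dim\mu^{-1}(0)$: the dimension of the locus of representations with a fixed decomposition type $v^0+v^1+\cdots+v^k$ is bounded by $\dim T^*R - \dim G + p_w(v^0)+\sum p(v^i) - p_w(v)$ (one counts the dimension of the space of extensions of the pieces, which is $2\dim R$-type quantity corrected by the Euler form), and the strict inequality makes this drop below $\dim\mu^{-1}(0)$.

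For (a), irreducibility: since $\mu^{-1}(0)$ is equidimensional of the expected dimension and the simple locus is open, it is enough to show the simple locus is irreducible. This is the standard argument that the variety of simple representations of fixed dimension vector of $\Pi^0$ is irreducible — one can see it as a dense open subset of the total space of $\mu^{-1}(0)$ near a point where $\mu$ is a submersion (which exists precisely because a generic point has finite, in fact trivial, stabilizer, by Lemma~\ref{Lem:moment_submersion}), so locally $\mu^{-1}(0)$ is smooth there; and a smooth equidimensional variety with a dense smooth locus that is connected is irreducible. Connectedness of the simple locus follows from Crawley-Boevey's general results, or can be obtained by a direct deformation argument degenerating any simple framed representation to a standard one.

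The main obstacle will be step (b), and within it the dimension estimate on the non-simple (decomposable) locus: one must carefully set up the stratification of $\mu^{-1}(0)$ by decomposition type of the associated $\Pi^0$-module, compute the dimension of each stratum in terms of $p$ and $p_w$ using the Euler form of the preprojective algebra, and check that the strict inequality hypothesis is exactly what forces every proper stratum to have strictly smaller dimension. Translating the hypothesis — stated combinatorially in terms of $p_w(v)$ and $p(v^i)$ — into the representation-theoretic statement about genericity of simple modules is the crux, and it is where I would expect to need to invoke (rather than reprove) the results of Crawley-Boevey on moment map fibers and deformed preprojective algebras cited as \cite{CB_quiv}.
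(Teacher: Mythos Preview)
The paper does not prove this theorem at all: it is stated as a result of Crawley-Boevey and attributed to \cite{CB_quiv} with no argument given in the text. So there is no ``paper's own proof'' to compare against; your proposal is effectively a sketch of what one would find in \cite{CB_quiv}, not a parallel to anything in this survey.

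As a sketch of Crawley-Boevey's argument your outline is broadly on target: the reduction to flatness via Theorem~\ref{Thm_flat}, the identification of closed free $G$-orbits with simple (framed) $\Pi^0$-modules, and the stratification of $\mu^{-1}(0)$ by decomposition type with dimensions controlled by $p_w$ and $p$ are indeed the ingredients. The one place where your sketch is genuinely thin is the irreducibility step: once you know the simple locus is open and meets every component (from the dimension drop on proper strata plus equidimensionality), you still need that the simple locus is connected. You defer this to ``Crawley-Boevey's general results'' or an unspecified deformation argument, but this is exactly the nontrivial input --- in \cite{CB_quiv} it comes from an inductive construction of simple representations and a careful analysis showing any two simples can be connected through a one-parameter family. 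Without that, the argument does not close: smoothness of the simple locus and equidimensionality of $\mu^{-1}(0)$ by themselves do not rule out multiple smooth components.
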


Let us proceed to a criterium for the action of $G$ on $\mu^{-1}(\lambda)^{\theta-ss}$ to be free.
We can view $Q$ as a Dynkin diagram and form the corresponding Kac-Moody algebra $\g(Q)$.
Then $\C^{Q_0}$ gets identified with the dual of the Cartan of $\g(Q)$ in such a way that the coordinate
vector $\epsilon_i, i\in Q_0,$ becomes a simple root.
Then, \cite{Nakajima1}, the action of $G$ on $\mu^{-1}(\lambda)^{\theta-ss}$ is free if and only if there
are no roots $v'$ of $\g(Q)$ such that $v'\leqslant v$ (component-wise) and
$v'\cdot \theta=v'\cdot \lambda=0$.

The equations $v'\cdot \theta=0$, where $v'$ is a root satisfying $v'\leqslant v, v'\cdot \lambda=0$ split the character
lattice into the union of cones. It is a classical fact from GIT, that when $\theta,\theta'$
are generic and inside one cone, we have $\mu^{-1}(\lambda)^{\theta-ss}=\mu^{-1}(\lambda)^{\theta'-ss}$.
So $\M_\lambda^\theta(v,w)=\M_\lambda^{\theta'}(v,w)$.


\subsubsection{$\operatorname{Hilb}_n(\C^2)$ and $\C^{2n}/\mathfrak{S}_n$ as quiver varieties}\label{SSS_Hilb_quiver}
Let $Q$ be a quiver with a single vertex and a single loop (a.k.a. the Jordan quiver).
We are going to show that $\operatorname{Hilb}_n(\C^2)$ is identified with $\M^{-1}_0(n,1)$
and $\C^{2n}/\mathfrak{S}_n$ is identified with $\M^0_0(n,1)$ (and the Hilbert-Chow
map from \ref{SSS_HS_PB} becomes the natural morphism $\M^{-1}_0(n,1)\rightarrow \M^0_0(n,1)$
from \ref{SSS_GIT_red}).

An identification  $\M_0^{-1}(n,1)\cong \operatorname{Hilb}_n(\C^2)$ is an easier part.
We have $R=\operatorname{End}(\C^n)\oplus \C^n$. Using the trace pairing, we identify
$R^*$ with $\operatorname{End}(\C^n)\oplus \C^{n*}$ so that
$T^*R=\operatorname{End}(\C^n)^{\oplus 2}\oplus \C^n\oplus \C^{n*}$.
We write $(A,B,i,j)$ for a typical point of $T^*R$. Identifying $\g$
with $\g^*$ again using the trace pairing, we can write the moment map
$\mu:T^*R\rightarrow \g$ as $\mu(A,B,i,j)=[A,B]+ij$.

Using the Hilbert-Mumford theorem from Invariant theory, see, e.g., \cite[5.3]{PV}, one shows that
$(T^*R)^{\theta-ss}=\{(A,B,i,j)| \C\langle A,B\rangle i=\C^n\}$.
Then it is a nice Linear Algebra exercise to show that if $[A,B]+ij=0$
and $\C\langle A,B\rangle i=\C^n$, then $j=0$. So
$\mu^{-1}(0)^{\theta-ss}\quo G=\{(A,B,i)| [A,B]=0, \C[A,B]i=\C^n\}/G$
that recovers the classical description of $\operatorname{Hilb}_n(\C^2)$,
see \cite[Theorem 1.14]{Nakajima_book}.

An identification $\M^0_0(n,1)\cong \C^{2n}/\mathfrak{S}_n$ is more subtle.
An easy part is to construct a morphism $\iota:\C^{2n}/\mathfrak{S}_n\rightarrow
\M^0_0(n,1)$: we send $(\underline{x},\underline{y})\in \C^{2n}$ to
$(\operatorname{diag}(\underline{x}),\operatorname{diag}(\underline{y}),0,0)\in \mu^{-1}(0)$
and this induces a morphism of quotients. Then one checks that $\iota$ is  a closed
embedding. For this, one uses a classical result of Weyl to see that polynomials
of the form $\iota^* F_{m,n}$, where $F_{m,n}(A,B,i,j):=\operatorname{tr}(A^n B^m)$
generate the algebra $\C[\underline{x},\underline{y}]^{\mathfrak{S}_n}$. It remains to
prove that $\iota$ is surjective. This  is  based on an even nicer linear
algebra fact: $A,B\in \operatorname{End}(\C^n)$  with $\operatorname{rk}[A,B]\leqslant 1$
are upper-triangular in some basis.

\begin{Lem}\label{Lem:typeA_Poisson}
The isomorphism $\M^0_0(n,1)\cong \C^{2n}/\mathfrak{S}_n$ intertwines  the Poisson brackets.
\end{Lem}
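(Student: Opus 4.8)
The plan is to show that the comorphism $\iota^{*}\colon \C[\M^0_0(n,1)]\to \C[\underline{x},\underline{y}]^{\mathfrak{S}_n}$ is a homomorphism of Poisson algebras. Since $\iota$ is an isomorphism of varieties and the Poisson bracket is a biderivation, it suffices to verify $\iota^{*}\{f,g\}=\{\iota^{*}f,\iota^{*}g\}$ for $f,g$ ranging over a set of algebra generators of $\C[\M^0_0(n,1)]$. Set $F_{k,m}(A,B,i,j):=\operatorname{tr}(A^kB^m)$; this is a $G$-invariant polynomial on $T^*R$ and hence descends to a regular function $\bar F_{k,m}$ on $\M^0_0(n,1)=\mu^{-1}(0)\quo G$ (using that $\mu^{-1}(0)$ is reduced, Theorem \ref{Thm_quiv_norm}). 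By construction $\iota^{*}\bar F_{k,m}=\sum_i x_i^k y_i^m=:p_{k,m}$, and these are, up to relabeling of indices, exactly the functions used in \ref{SSS_Hilb_quiver} to prove that $\iota$ is a closed embedding; being generators of $\C[\underline{x},\underline{y}]^{\mathfrak{S}_n}$ (the classical result of Weyl recalled there) and $\iota^{*}$ an algebra isomorphism, the $\bar F_{k,m}$ generate $\C[\M^0_0(n,1)]$. So it remains to compute $\iota^{*}\{\bar F_{k,m},\bar F_{k',m'}\}$ and compare it with $\{p_{k,m},p_{k',m'}\}$.

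By the definition of the reduced Poisson bracket, $\{\bar F_{k,m},\bar F_{k',m'}\}$ is obtained by restricting the bracket $\{F_{k,m},F_{k',m'}\}$, computed in $\C[T^*R]$, to $\mu^{-1}(0)$; its value at the point $\iota(\underline{x},\underline{y})$, which is the image of $(\operatorname{diag}(\underline{x}),\operatorname{diag}(\underline{y}),0,0)\in\mu^{-1}(0)$, therefore equals $\{F_{k,m},F_{k',m'}\}$ evaluated at that same point of $T^*R$. Fix the symplectic form on $T^*R=\operatorname{End}(\C^n)^{\oplus 2}\oplus\C^n\oplus\C^{n*}$ normalized compatibly with the standard form on $\C^{2n}$, so that on the $\operatorname{End}(\C^n)$-factors $\{A_{ab},B_{cd}\}=\delta_{ad}\delta_{bc}$ and the remaining linear coordinates are paired in the standard way. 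Since $F_{k,m}$ does not involve $i,j$, a routine matrix-calculus computation gives $\{F_{k,m},F_{k',m'}\}=\operatorname{tr}(G^A_{k,m}G^B_{k',m'})-\operatorname{tr}(G^B_{k,m}G^A_{k',m'})$, where $G^A_{k,m}:=\sum_{p=0}^{k-1}A^{k-1-p}B^m A^p$ and $G^B_{k,m}:=\sum_{q=0}^{m-1}B^{m-1-q}A^k B^q$ are the matrix gradients in $A$ and $B$. At a point where $A$ and $B$ commute, in particular at $(\operatorname{diag}(\underline{x}),\operatorname{diag}(\underline{y}),0,0)$, these collapse to $G^A_{k,m}=kA^{k-1}B^m$ and $G^B_{k,m}=mA^kB^{m-1}$, so the bracket there equals $(km'-mk')\operatorname{tr}(A^{k+k'-1}B^{m+m'-1})$, that is, $(km'-mk')\sum_i x_i^{k+k'-1}y_i^{m+m'-1}$. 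On the other hand a direct one-variable computation in $\C[\underline{x},\underline{y}]^{\mathfrak{S}_n}$ gives $\{p_{k,m},p_{k',m'}\}=(km'-mk')\sum_i x_i^{k+k'-1}y_i^{m+m'-1}$. The two sides agree (both equal $\iota^{*}$ of $(km'-mk')\bar F_{k+k'-1,m+m'-1}$), so $\iota^{*}\{\bar F_{k,m},\bar F_{k',m'}\}=\{\iota^{*}\bar F_{k,m},\iota^{*}\bar F_{k',m'}\}$ and the lemma follows.

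The steps requiring care are purely bookkeeping: one must choose the normalization of the symplectic form on $T^*R$ so that it is genuinely compatible with the one on $\C^{2n}$ underlying the Poisson structure on $\C^{2n}/\mathfrak{S}_n$, not merely compatible up to a scalar (this is automatic, since both are the canonical cotangent-type forms and $\iota$ is induced by the diagonal embedding), and one must justify that evaluating the reduced bracket of $\bar F_{k,m},\bar F_{k',m'}$ at a point of $\M^0_0(n,1)$ reduces to evaluating the ambient bracket $\{F_{k,m},F_{k',m'}\}$ at a representative in $\mu^{-1}(0)$, which uses that $F_{k,m}$ is an honest $G$-invariant lift and that $\mu^{-1}(0)$ is reduced. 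A more conceptual alternative: the morphism $\M^{-1}_0(n,1)\to\M^0_0(n,1)$ of \ref{SSS_GIT_red} is Poisson and induces an isomorphism on global functions, and under the identifications $\M^{-1}_0(n,1)\cong\operatorname{Hilb}_n(\C^2)$, $\M^0_0(n,1)\cong\C^{2n}/\mathfrak{S}_n$ it becomes the Hilbert--Chow map, which is Poisson because a symplectic resolution restricts to a symplectomorphism over the smooth locus of the base and $\operatorname{Hilb}_n(\C^2)$ is reduced and irreducible; chasing these identities forces the two Poisson structures on $\C^{2n}/\mathfrak{S}_n$ to coincide, provided one already knows that the variety isomorphism $\operatorname{Hilb}_n(\C^2)\cong\M^{-1}_0(n,1)$ is Poisson, a statement of the same nature whose cleanest check is again the computation above.
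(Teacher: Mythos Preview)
Your argument is correct. You verify the Poisson compatibility by an explicit computation on algebra generators: lift the trace functions $F_{k,m}=\operatorname{tr}(A^kB^m)$, compute their ambient bracket in $\C[T^*R]$, restrict to $\mu^{-1}(0)$, and evaluate at the diagonal representatives; then compare with the bracket of the power sums $p_{k,m}$ in $\C[\underline{x},\underline{y}]^{\mathfrak{S}_n}$. Both sides give $(km'-mk')\,p_{k+k'-1,m+m'-1}$ (up to an overall sign that is fixed once you pin down a single convention, as you note), and since the $\bar F_{k,m}$ generate, this suffices.

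The paper argues differently and more geometrically. It restricts to the principal open subsets $R^{reg}=\{(A,i)\mid A\text{ has distinct eigenvalues}\}$ and $\C^{n,reg}$, observes that the linear embedding $T^*\C^{n,reg}\hookrightarrow T^*R^{reg}$ induced by the diagonal map pulls back the symplectic form on the nose, and then invokes the description of the reduced symplectic form from Corollary~\ref{Lem:reduct_sympl} to conclude that $T^*\C^{n,reg}/\mathfrak{S}_n\to T^*R^{reg}\red_0^0 G$ is a symplectomorphism. Since this is the restriction of $\iota$ to a dense open set, the lemma follows. In effect the paper checks the compatibility on a single $2$-form over a smooth open locus, whereas you check it on a generating family of functions everywhere. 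Your route is self-contained and avoids appealing to the reduced-form description, at the cost of the matrix-calculus bookkeeping and the care you flag about normalizations; the paper's route is shorter once one has the symplectic-reduction machinery in hand, and it is the argument that generalizes most cleanly (essentially the same pattern is reused in the proof of Proposition~\ref{Prop:quot_sing_quiv}). Your ``conceptual alternative'' via the Hilbert--Chow map is, as you say, of the same nature and ultimately reduces to the same check.
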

\begin{proof}
Consider the principal open subsets $$R^{reg}=\{(A,i)| A\text{ has distinct e-values}\}, \C^{n,reg}:=\{(x_1,\ldots,x_n)| x_i\neq x_j, \text{ for }i\neq j\}.$$ Note that under the above embedding $\C^{2n}\hookrightarrow T^*R$, we have
$T^*\C^{n,reg}\hookrightarrow T^*R^{reg}$. Moreover, the pull-back of the symplectic form from
$T^*R^{reg}$ to $T^*\C^{n,reg}$ coincides with the natural symplectic form on the latter.
Using the description of the symplectic form on the reduction, we conclude that
the induced morphism of quotients $T^*\C^{n,reg}/\mathfrak{S}_n\rightarrow T^*R^{reg}\red_0^0 G$
is a symplectomorphism. But $T^*R^{reg}\red_0^G$ embeds as an open subset into $\M^0_0(n,1)$
and the  symplectomorphism above is the restriction of the isomorphism $\C^{2n}/\mathfrak{S}_n\xrightarrow{\sim}
\M^0_0(n,1)$ to $T^*\C^{n,reg}/\mathfrak{S}_n$. The claim of the lemma follows.
\end{proof}

\subsubsection{McKay correspondence}
Let $\Gamma_1$ be a finite subgroup of $\SL_2(\C)$. It turns out that the singular
Poisson variety $V_n/\Gamma_n$ (where recall $V_n=\C^{2n}$) and its symplectic resolutions also can be realized
as Nakajima quiver varieties.

The first step in this isomorphism is the McKay correspondence: a way to label the finite subgroups
of $\SL_2(\C)$ by Dynkin diagrams. Let $\Gamma_1$ be a finite subgroup of $\SL_2(\C)$ and let
$N_0,\ldots,N_r$ be the irreducible representations of $\Gamma_1$, where $N_0$ is the trivial representation.
Let us define the McKay graph of $\Gamma_1$: its vertices are $0,1,\ldots,r$ and the number of
edges (we consider a non-oriented graph)
between $i$ and $j$ is $\dim \operatorname{Hom}_{\Gamma}(\C^2\otimes N_i,N_j)$, note that this is
well-defined because $\C^2$ is a self-dual representation of $\Gamma$ and so the number of edges
between $i$ and $j$ is the same as between $j$ and $i$. McKay proved the following facts:
\begin{itemize}
\item[(i)] The resulting graph is an extended Dynkin graph of types $A,D,E$ and $0$
is the extending vertex.
\item[(ii)] The vector $(\dim N_i)_{i=0}^r$ is the indecomposable imaginary root $\delta$
of the corresponding affine Kac-Moody algebra.
\end{itemize}

\subsubsection{$\C^2/\Gamma_1$ as a quiver variety}
Let $Q$ be the McKay graph of $\Gamma_1$ with an arbitrary orientation. Then there is an isomorphism
$\M_0^0(\delta,0)\cong \C^2/\Gamma_1$.

Let us explain how this is established following \cite[Section 8]{CBH}. For this, we will need the representation
varieties.

Let $A$ be a finitely generated associative algebra and $V$ be a vector space. Then
the set $X:=\operatorname{Hom}(A, \End(V))$ of algebra homomorphisms is an algebraic variety.
More precisely, if $A$  is  the quotient of $\C\langle x_1,\ldots,x_n\rangle$ by relations
$F_\alpha(x_1,\ldots,x_n)$, where $\alpha$ runs over an indexing set $\mathfrak{I}$,
then $X=\{(A_1,\ldots,A_n)\in \operatorname{End}(V)| F_\alpha(A_1,\ldots,A_n)=0, \alpha\in \mathfrak{I}\}$.
The group $G:=\GL(V)$ naturally acts on $X$ and so we can form the quotient $X\quo G$ (called the
representation variety). Recall that, in general, the points of $X\quo G$ correspond to the closed
$G$-orbits on $X$, in our case an orbit is closed if its element is a semisimple representation.

This construction has various ramifications. For example, we can consider a semisimple
finite dimensional subalgebra $A_0\subset A$ and an $A_0$-module $V$. This leads
to the variety $X$ of $A_0$-linear homomomorphisms $A\rightarrow \operatorname{End}(V)$
acted on by the group $G$ of $A_0$-linear automorphisms of $V$. In this situation
we still can speak about representation varieties. We will realize $\M_0^0(\delta,0),\C^2/\Gamma_1$
as the representation varieties of this kind and then show that the algebras involved are Morita
equivalent, this will yield an isomorphism of interest.

Let us start with $\C^2/\Gamma_1$. Set $A:=\C[x,y]\#\Gamma_1, A_0:=\C\Gamma_1\subset A$
and $V:=\C\Gamma_1$, a regular representation. Then one can show that $\C^2/\Gamma_1$
is the representation variety for this triple.

Let us proceed to $\M^0_0(\delta,0)$. Let $\bar{Q}$ be the {\it double quiver} of $Q$. It is obtained from $Q$ by adding the inverse
arrow to each arrow in $Q$. Formally, $\bar{Q}_0=Q_0$, $\bar{Q}_1=Q_1\sqcup Q_1^*$,
where $Q_1^*$ is in bijection with $Q_1$, $a\mapsto a^*$, in such a way that
$t(a^*)=h(a), h(a^*)=t(a)$. Then form the {\it path algebra} $\C\bar{Q}$ of $\bar{Q}$,
it has a basis consisting of the paths in $\bar{Q}$, the multiplication is given by concatenation
(if two paths cannot be concatenated, the product is zero). This algebra is graded by the length
of  a path, where, by convention, the degree $0$ paths are just vertices so the corresponding
graded component $\C\bar{Q}_0$ is $\C^{Q_0}$.

Let us consider the quotient $\Pi^0(Q)$ of $\C\bar{Q}$ called the  preprojective algebra.
It is given by the following relation:
$$\sum_{a\in Q_1}[a,a^*]=0.$$
Note that $\C^{Q_0}$ naturally embeds into $\Pi^0(Q)$. It is easy to see that $\M_0^0(\delta,0)$
is the representation variety for the triple $(\Pi^0(Q),\C^{Q_0},\bigoplus_{i\in Q_0}\C^{\delta_i})$.

It turns out that there is an idempotent $f\in \C \Gamma_1$ such that $f(\C[x,y]\#\Gamma_1)f\cong
\Pi^0(Q)$. Namely, take primitive idempotents $f_i, i=0,\ldots,r,$ in the matrix summands of
$\C\Gamma_1$. Set $f:=\sum_{i\in Q_0}f_i$. Obviously, $f(\C\Gamma_1)f\cong \C^{Q_0}$.
Further, the construction of $Q$ implies that $f(\operatorname{Span}(x,y)\otimes\C\Gamma_1)f\cong \C \bar{Q}_1$.
These identifications induce an isomorphism $f(\C\langle x,y\rangle\#\Gamma_1)f\cong \C\bar{Q}$.
Under this isomorphism, the ideal $f(xy-yx)f$ becomes $(\sum_{a\in Q_1}[a,a^*])$, see \cite[Section 2]{CBH}.
Also note that the $\C^{Q_0}$-module $\bigoplus_{i\in Q_0}\C^{\delta_i}$ is nothing else
but $f\C\Gamma_1$. Finally, note that $f$ defines a Morita equivalence between $\C[x,y]\#\Gamma_1,
\Pi^0(Q)$. An isomorphism $\C^2/\Gamma_1\cong\M^0_0(\delta,0)$ now follows from the next lemma,
whose proof is left to the reader.

\begin{Lem}\label{Lem:rep_var_iso}
Let $A_0\subset A$ and $V$ be as above and let $f\in A_0$ be an idempotent giving a Morita
equivalence. Then the representation varieties for $(A,A_0,V)$ and $(fAf, fA_0f,fV)$
are naturally isomorphic.
\end{Lem}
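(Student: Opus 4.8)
The plan is to exhibit the natural isomorphism explicitly as a ``compression'' morphism together with its inverse, both built from the Morita context attached to $f$. Denote by $X$ the representation variety of $(A,A_0,V)$: the affine scheme of $A_0$-linear algebra homomorphisms $\rho\colon A\to\End(V)$ extending the fixed $A_0$-module structure $\rho_0$ on $V$, with $G:=\operatorname{Aut}_{A_0}(V)$ acting by conjugation, so that the representation variety is $X\quo G$; similarly let $X'$, $G'$ be the analogous data for $(fAf,fA_0f,fV)$. First I would observe that for every $\rho\in X$ one has $\rho(f)=\rho_0(f)=:e_f$, an idempotent with image $fV\subset V$, whence $\rho$ carries $fAf$ into $e_f\End(V)e_f=\End(fV)$; thus $\Phi(\rho):=\rho|_{fAf}$ lands in $X'$, and on any word in a fixed finite generating set of $A$ it is a polynomial in the matrix coordinates of $\rho$, so $\Phi\colon X\to X'$ is a morphism of schemes. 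Since every $g\in G$ commutes with $e_f$, $\Phi$ intertwines the $G$- and $G'$-actions along the homomorphism $g\mapsto g|_{fV}$, and this homomorphism is an isomorphism $G\xrightarrow{\sim}G'$, because the equivalence $A_0\operatorname{-mod}\xrightarrow{\sim}(fA_0f)\operatorname{-mod}$, $M\mapsto fM$, induces an algebra isomorphism $\End_{A_0}(V)\xrightarrow{\sim}\End_{fA_0f}(fV)$. Hence $\Phi$ descends to $\bar\Phi\colon X\quo G\to X'\quo G'$, and it suffices to prove that $\Phi$ itself is an isomorphism of schemes.

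For the inverse I would use the bimodule $Af$. Fullness of $f$ \emph{inside} $A_0$ makes $A_0f$ a progenerator over the semisimple ring $fA_0f$; together with $A=A_0fA$ (which follows from $A_0=A_0fA_0$) this identifies the multiplication map as an isomorphism of $(A_0,fAf)$-bimodules $A_0f\otimes_{fA_0f}fAf\xrightarrow{\sim}Af$. For $\sigma\in X'$ put $W_\sigma:=(fV,\sigma)$ and $M_\sigma:=Af\otimes_{fAf}W_\sigma$, an $A$-module; the displayed bimodule isomorphism collapses it to $M_\sigma\cong A_0f\otimes_{fA_0f}\bigl(\operatorname{res}_{fA_0f}W_\sigma\bigr)=A_0f\otimes_{fA_0f}fV$, and then the fixed Morita isomorphism $A_0f\otimes_{fA_0f}fV\xrightarrow{\sim}V$ identifies the underlying space of $M_\sigma$ with $V$ \emph{independently of $\sigma$} (the point being that once $fAf$ is replaced by $fA_0f$ the tensor no longer sees $\sigma$). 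Transporting the $A$-action on $M_\sigma$ along this identification defines $\Psi(\sigma)\in X$, and expanding $Af$ over a finite set of right $fAf$-module generators shows $\Psi\colon X'\to X$ is a morphism. That $\Phi$ and $\Psi$ are mutually inverse is then a formal consequence of the Morita-context isomorphisms $Af\otimes_{fAf}fM\cong M$ for $A$-modules $M$ (giving $\Psi\Phi=\mathrm{id}$) and $f(Af\otimes_{fAf}N)=fAf\otimes_{fAf}N\cong N$ for $fAf$-modules $N$ (giving $\Phi\Psi=\mathrm{id}$), all compatible with the $A_0$-side via the displayed isomorphism. Composing with $\bar\Phi$ then yields the desired natural isomorphism $X\quo G\xrightarrow{\sim}X'\quo G'$ of representation varieties.

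The step I expect to be the real obstacle is the $\sigma$-independence of the identification of the underlying vector space of $M_\sigma$ with $V$. The Morita equivalence of module categories gives, a priori, only a bijection between isomorphism classes of (semisimple) modules, hence between $\C$-points of the two quotients; upgrading this to an isomorphism of the rigidified representation schemes — which may well be singular, so are not accessible by a normality or Zariski's main theorem argument — is precisely the content of the lemma. What makes it work is the bimodule isomorphism $A_0f\otimes_{fA_0f}fAf\cong Af$: it uses fullness of $f$ in $A_0$, not merely in $A$, and it is what turns the $\sigma$-dependent-looking space $Af\otimes_{fAf}W_\sigma$ into the $\sigma$-independent space $A_0f\otimes_{fA_0f}fV$. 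Establishing this isomorphism, and checking that all the above identifications are natural so that $\Phi$ and $\Psi$ really are morphisms, is the only non-formal part; everything else is a routine unwinding of the Morita context.
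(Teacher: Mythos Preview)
The paper does not actually prove this lemma---it explicitly states that the proof is left to the reader. So there is nothing to compare against; I can only assess your argument on its own merits.

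Your proof is correct, and the approach is the natural one. The crucial step is exactly the one you flag: obtaining the $(A_0,fAf)$-bimodule isomorphism $A_0f\otimes_{fA_0f}fAf\xrightarrow{\sim}Af$. This follows cleanly by applying the Morita equivalence for $A_0$ (not for $A$) to the left $A_0$-module $A$---giving $A_0f\otimes_{fA_0f}fA\cong A$ as $(A_0,A)$-bimodules---and then multiplying on the right by $f$. With this in hand, the underlying $A_0$-module of $M_\sigma=Af\otimes_{fAf}W_\sigma$ collapses to $A_0f\otimes_{fA_0f}fV\cong V$ independently of $\sigma$, which is precisely what is needed to make $\Psi$ a morphism of schemes rather than merely a bijection on isomorphism classes of modules. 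Your verification that $\Phi\Psi=\mathrm{id}$ and $\Psi\Phi=\mathrm{id}$ is straightforward; for the latter, the point is that the canonical map $Af\otimes_{fAf}fV_\rho\to V$ given by $af\otimes v\mapsto\rho(af)v$ agrees, under the bimodule isomorphism, with the fixed Morita identification $A_0f\otimes_{fA_0f}fV\cong V$, since $\rho|_{A_0}=\rho_0$.

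One minor remark: you assert that $\Psi$ is a morphism by ``expanding $Af$ over a finite set of right $fAf$-module generators.'' This is fine, but note that the finite generation of $Af$ over $fAf$ is not automatic from $A$ being finitely generated as an algebra; it follows here from your bimodule isomorphism together with the finite-dimensionality of $A_0f$ over $fA_0f$ (which holds because $A_0$ is finite-dimensional semisimple).
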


Note that the algebras $\C[x,y]\#\Gamma_1$ and $\Pi^0(Q)$ are graded and an isomorphism $\Pi^0(Q)\cong
\C[x,y]\#\Gamma_1$ preserves the grading. From here one easily deduces that the isomorphism
$\C^2/\Gamma_1\cong \M_0^0(\delta,0)$ is equivariant with respect to the dilation $\C^\times$-actions.

\subsubsection{$V_n/\Gamma_n$ as a quiver variety}
Let us proceed now to the case of an arbitrary $n$. Let $\epsilon_0\in \C^{Q_0}$
be the coordinate vector at the extending vertex.

\begin{Prop}\label{Prop:quot_sing_quiv}
We have a $\C^\times$-equivariant isomorphism $\M^0_0(n\delta,\epsilon_0)\cong V_n/\Gamma_n$ (of Poisson schemes).
\end{Prop}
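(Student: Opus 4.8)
The plan is to deduce Proposition~\ref{Prop:quot_sing_quiv} from the rank-one case $\C^2/\Gamma_1 \cong \M^0_0(\delta,0)$ established above, combined with the description of $\operatorname{Hilb}_n$ and $\C^{2n}/\mathfrak{S}_n$ as quiver varieties for the Jordan quiver (Section~\ref{SSS_Hilb_quiver}). The guiding principle is that taking the $n$-th symmetric power should correspond, on the quiver side, to replacing the framing vector $w$ by $n$ copies of the trivial framing and the dimension vector $\delta$ by $n\delta$. Concretely, I would again realize both sides as representation varieties in the sense of Section on the McKay correspondence and then invoke Lemma~\ref{Lem:rep_var_iso} together with a Morita equivalence.

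First I would set $A := \C[x,y]\#\Gamma_n$ and observe that $A \cong (\C[x,y]^{\otimes n}\# \Gamma_1^{\wr n})$, i.e. $A = (\C[x,y]\#\Gamma_1)^{\otimes n} \rtimes \mathfrak{S}_n$ (the wreath product structure). Taking $A_0 := \C\Gamma_n$ and $V := \C\Gamma_n$ (the regular representation), the quotient $V_n/\Gamma_n = \C^{2n}/\Gamma_n$ is the representation variety for the triple $(A, A_0, V)$ — this is the $\Gamma_n$-analog of the identification for $\C^2/\Gamma_1$ and follows because a closed point of the representation variety is a semisimple $\Gamma_n$-equivariant $\C[x,y]^{\otimes n}$-module of the appropriate dimension, which (since $\C[x,y]^{\otimes n}$ is commutative of the right size) is forced to be $\bigoplus$ of skyscrapers along a $\Gamma_n$-orbit in $(\C^2)^n$, i.e.\ a point of $\C^{2n}/\Gamma_n$. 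On the quiver side, the target $\M^0_0(n\delta,\epsilon_0)$ is, by the construction in Section~\ref{SSS_quiv_var_constr} combined with Crawley-Boevey's reading of quiver varieties as moduli of representations of a deformed preprojective algebra, the representation variety for a triple $(\Pi^0(Q^\heartsuit), \C^{Q_0^\heartsuit}, \ldots)$ where $Q^\heartsuit$ is the McKay quiver with an extra framing vertex attached at the extending vertex $0$ — equivalently, $\M^0_0(n\delta,\epsilon_0) = \M^0_0(n\delta,0)$ for the framed quiver where the framing $W_0 = \C$ is absorbed into an enlarged vertex set. The cleanest route is: $\M^0_0(n\delta,\epsilon_0)$ is the moduli of semisimple representations of dimension $n\delta$ of the quiver $Q$ with one framing arrow at vertex $0$, subject to preprojective relations, which by a standard "one framing vector at the extending node" argument (see e.g.\ the Crawley-Boevey trick identifying framed quiver varieties with unframed ones for an enlarged quiver) equals the representation variety for $(f_\star A f_\star, f_\star A_0 f_\star, f_\star V)$ for a suitable idempotent.

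The key step is then the Morita equivalence: I would produce an idempotent $\mathbf{f} \in \C\Gamma_n$ generalizing the idempotent $f = \sum_{i\in Q_0} f_i \in \C\Gamma_1$ from the rank-one case, such that $\mathbf{f}(\C[x,y]\#\Gamma_n)\mathbf{f}$ is isomorphic (as a graded algebra) to the relevant path/preprojective algebra with the enlarged framing, and such that $\mathbf{f}$ defines a Morita equivalence $\C[x,y]\#\Gamma_n \sim \mathbf{f}(\C[x,y]\#\Gamma_n)\mathbf{f}$. The natural candidate combines the rank-one idempotent $f$ in each of the $n$ tensor factors with the averaging idempotent $e_{\mathfrak{S}_n} = |\mathfrak{S}_n|^{-1}\sum_{\sigma} \sigma$ for the symmetric group, so $\mathbf{f} = f^{\otimes n} e_{\mathfrak{S}_n}$ (one must check this is genuinely an idempotent giving a Morita equivalence — it is, because $f^{\otimes n}$ is a full idempotent in $(\C[x,y]\#\Gamma_1)^{\otimes n}$ and symmetrization is compatible). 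Then Lemma~\ref{Lem:rep_var_iso} converts the Morita equivalence into an isomorphism of representation varieties, giving $V_n/\Gamma_n \cong \M^0_0(n\delta,\epsilon_0)$. Finally, $\C^\times$-equivariance and compatibility of Poisson brackets follow exactly as in the rank-one case and in Lemma~\ref{Lem:typeA_Poisson}: all isomorphisms in sight are grading-preserving (the gradings $\deg x = \deg y = 1$ match up), and the Poisson structures agree because they agree on the open dense locus of free $\Gamma_n$-orbits, where both sides restrict to $T^*((\C^2)^{n,reg}/\Gamma_n)$ with its canonical symplectic form — one checks the pullback of the symplectic form from $T^*R^{reg}$ matches, as in the proof of Lemma~\ref{Lem:typeA_Poisson}, and then uses normality (Theorem~\ref{Thm_quiv_norm}) to extend the identification of Poisson brackets from the smooth locus to all of $\M^0_0(n\delta,\epsilon_0)$.

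The main obstacle I anticipate is pinning down precisely the right quiver-with-framing and the right idempotent so that $\mathbf{f}(\C[x,y]\#\Gamma_n)\mathbf{f}$ is identified with the correct deformed preprojective algebra for the \emph{framed} McKay quiver with framing $\epsilon_0$ at the extending vertex — the bookkeeping of how the framing vertex and the extra arrow arise from the $\mathfrak{S}_n$-structure and the regular representation of $\Gamma_n$ is delicate, and it is easy to be off by a choice of stability parameter or to conflate $\M^0_0(n\delta,0)$ with $\M^0_0(n\delta,\epsilon_0)$. (These two are in fact related: adding the one-dimensional framing at the extending vertex is what "resolves" the would-be extra $\C^\times$ and produces exactly $V_n/\Gamma_n$ rather than something larger.) Everything else — the representation-variety descriptions, Lemma~\ref{Lem:rep_var_iso}, gradings, and the Poisson-on-the-smooth-locus argument — is routine given the rank-one case and Lemma~\ref{Lem:typeA_Poisson}.
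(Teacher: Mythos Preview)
Your approach is genuinely different from the paper's, and it contains a real gap in the key step.

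The paper does not go through representation varieties or a Morita equivalence for $\Gamma_n$. Instead it builds an explicit morphism geometrically: the block-diagonal embedding $T^*R(Q,\delta,0)^{\oplus n}\hookrightarrow T^*R(Q,n\delta,\epsilon_0)$ restricts to $\mu_1^{-1}(0)^n\hookrightarrow\mu^{-1}(0)$, inducing an $\mathfrak{S}_n$-invariant map $\M^0_0(\delta,0)^n\to\M^0_0(n\delta,\epsilon_0)$ and hence $\iota:V_n/\Gamma_n=(\C^2/\Gamma_1)^n/\mathfrak{S}_n\to\M^0_0(n\delta,\epsilon_0)$. One checks $\iota$ is bijective and $\C^\times$-equivariant, hence finite; since the target is normal by Theorem~\ref{Thm_quiv_norm}, $\iota$ is an isomorphism. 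For the Poisson structure the paper does not compare forms on a dense open set as you suggest, but instead invokes an Etingof--Ginzburg uniqueness lemma: for symplectically irreducible $(V,\Gamma)$ the space of degree $-2$ brackets on $\C[V]^\Gamma$ is one-dimensional, so after a $\C^\times$-rescaling the brackets must agree.

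The gap in your route is the Morita claim. The idempotent $\mathbf{f}=f^{\otimes n}e_{\mathfrak{S}_n}$ is \emph{not} full. Already when $\Gamma_1=\{1\}$ one has $f=1$ and $\mathbf{f}=e_{\mathfrak{S}_n}$, a primitive idempotent in $\C\mathfrak{S}_n$; the two-sided ideal it generates in $\C\Gamma_n$ misses every irreducible on which $\mathfrak{S}_n$ acts nontrivially, and $\C[V_n]\#\mathfrak{S}_n$ is not Morita equivalent to $\C[V_n]^{\mathfrak{S}_n}$ (they have different numbers of simples over the origin). For nontrivial $\Gamma_1$ the same obstruction persists: any $\Gamma_n$-irreducible whose $\mathfrak{S}_n$-type is not trivial is annihilated by $e_{\mathfrak{S}_n}$. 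Without fullness Lemma~\ref{Lem:rep_var_iso} does not apply, and the identification of $\mathbf{f}A\mathbf{f}$ with the preprojective algebra of the framed quiver --- which you already flag as the delicate point --- cannot be carried through by this idempotent. The paper's diagonal-embedding argument avoids this entirely.
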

\begin{proof}
We have a diagonal embedding $T^*R(Q,\delta,0)^{\oplus n}\rightarrow T^*R(Q,n\delta,\epsilon_0)$,
compare to \ref{SSS_Hilb_quiver}, that restricts to $\mu_1^{-1}(0)^n\hookrightarrow
\mu^{-1}(0)$, where $\mu_1$ stands for the moment map $T^*R(Q,\delta,0)\rightarrow \mathfrak{gl}(\delta)^*$.
This gives rise to a $\mathfrak{S}_n$-invariant morphism
$\M^0_0(\delta,0)^n\rightarrow \M_0^0(n\delta,\epsilon_0)$ and hence to a morphism
$\iota:\C^{2n}/\Gamma_n=(\C^2/\Gamma_1)^n/\mathfrak{S}_n\rightarrow \M_0^0(n\delta,\epsilon_0)$.
One can show that this morphism is bijective. Also it is $\C^\times$-equivariant, where
the $\C^\times$-actions on $\C^{2n}/\Gamma_n, \M^0_0(n\delta,\epsilon_0)$
are induced from the dilation actions on $\C^{2n}, T^*R(Q,n\delta,\epsilon_0)$.
It follows that $\iota$ is finite. By Theorem \ref{Thm_quiv_norm}, $\M_0^0(n\delta,\epsilon_0)$
is normal and this implies that $\iota$ is an isomorphism.

We can make the isomorphism $\iota$ Poisson if we rescale it using the $\C^\times$-actions. This is
a consequence of the following lemma.
\end{proof}

\begin{Lem}{\cite[Lemma 2.23]{EG}}
Let $V$ be a symplectic vector space and $\Gamma\subset \operatorname{Sp}(V)$ be a finite subgroup
such that $V$ is symplectically irreducible, i.e., there are no proper symplectic $\Gamma$-stable
subspace in $V$. Then there are no nonzero brackets (=skew-symmetric bi-derivations) of degree $<-2$ on
$\C[V]^{\Gamma}$. Further, the space of brackets of degree $-2$ is one-dimensional.
\end{Lem}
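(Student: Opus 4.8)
The plan is to reduce the claim to a statement about $\Gamma$-invariant bi-derivations of $\C[V]$, and then to a small amount of linear algebra about the commutant $\End_\Gamma(V)$.

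First I would use that $\Gamma\subset\Sp(V)$ is finite to control the singularities of $V/\Gamma$: for $\gamma\neq 1$ one has $\operatorname{rk}(\gamma-1_V)\geq 2$, since a rank-one element of $\Sp(V)$ is a transvection and hence of infinite order. Therefore the non-free locus in $V$ has codimension $\geq 2$, the quotient $V/\Gamma$ is normal with singular locus of codimension $\geq 2$, and $V^{reg}\to V^{reg}/\Gamma$ is \'etale. Now a skew-symmetric bi-derivation of $\C[V]^\Gamma$ is determined by its values on a generating set, and amounts to a global section over $V/\Gamma$ of a reflexive sheaf (the $\mathcal{H}om$ into $\Str$ of the sheaf of $2$-forms, which on the smooth locus is the sheaf of bi-vector fields). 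Restricting to $V^{reg}/\Gamma$, pulling back along the \'etale cover and extending over $V$ by Hartogs identifies such a bi-derivation with a $\Gamma$-invariant polynomial skew bi-derivation of $\C[V]$, compatibly with the gradings. So it suffices to analyze $\Gamma$-invariant skew bi-derivations of $\C[V]=S(V^*)$ by internal degree.

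A homogeneous such bi-derivation of degree $d$ (the Poisson bracket having $d=-2$) is determined by its restriction to $V^*\times V^*$, which lands in $S^{2+d}(V^*)$. If $d<-2$ this space vanishes, so the bi-derivation is zero; this yields the first assertion. If $d=-2$, the restriction to $V^*\times V^*$ is an arbitrary element of $\wedge^2 V$, and $\Gamma$-invariance means it lies in $(\wedge^2 V)^\Gamma\cong(\wedge^2 V^*)^\Gamma$, the isomorphism being induced by $\Omega$; here the Poisson bracket corresponds to the nonzero class of $\Omega$. Thus the whole problem becomes: $(\wedge^2 V^*)^\Gamma=\C\,\Omega$.

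For this last step I would pass to $\mathfrak{z}:=\End_\Gamma(V)$ equipped with the $\Omega$-adjoint anti-involution $*$ (which preserves $\mathfrak{z}$ because $\gamma^*=\gamma^{-1}$ for $\gamma\in\Gamma$). The assignment $S\mapsto\Omega(S\,\cdot\,,\cdot)$ identifies $\{S\in\mathfrak{z}:S^*=S\}$ with $(\wedge^2 V^*)^\Gamma$ and sends $1_V$ to $\Omega$. A self-adjoint idempotent $p\in\mathfrak{z}$ has $\operatorname{im}p$ and $\ker p=(\operatorname{im}p)^{\perp_\Omega}$ both $\Gamma$-stable with $\operatorname{im}p$ symplectic, and conversely a symplectic $\Gamma$-stable subspace gives such an idempotent (projection along its $\Omega$-orthogonal complement); hence symplectic irreducibility of $V$ is exactly the statement that $\mathfrak{z}$ has no self-adjoint idempotents other than $0$ and $1_V$. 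By the structure theory of finite-dimensional semisimple algebras with anti-involution, $\mathfrak{z}$ is then forced to be $\C$, or $\C\times\C$ with the swap, or $\operatorname{Mat}_2(\C)$ with a symplectic-type anti-involution; in each of these the self-adjoint part is one-dimensional, so $(\wedge^2 V^*)^\Gamma=\C\,\Omega$. The main obstacle is precisely this concluding linear-algebra step: one must know, or re-prove using the $\Gamma$-isotypic decomposition of $V$ and the resulting block form of $\Omega$, that symplectic irreducibility pins $\End_\Gamma(V)$ down to those three cases; everything preceding it is formal.
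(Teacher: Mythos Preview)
The paper does not actually prove this lemma; it is quoted from \cite[Lemma 2.23]{EG} and used as a black box (both here and implicitly in the Hochschild cohomology computation of \S\ref{SS_SRA}, where $(\bigwedge^2 V)^\Gamma$ being one-dimensional is taken for granted). So there is no argument in the paper to compare yours against, and the relevant question is simply whether your proof stands on its own. It does.

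Your reduction step is sound: the fixed locus of any nontrivial $\gamma\in\Gamma$ has codimension $\ge 2$ because a symplectic transvection has infinite order, so bivector fields on $V/\Gamma$ pull back along the \'etale cover $V^{reg}\to V^{reg}/\Gamma$ and extend across the complement by Hartogs to $\Gamma$-invariant polynomial bivectors on $V$; this identification is grading-preserving and bijective. The degree count then immediately kills brackets of degree $<-2$ and reduces degree $-2$ to $(\bigwedge^2 V^*)^\Gamma$.

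Your final step is also correct, though it is worth recording the isotypic picture that justifies the trichotomy you invoke. Decomposing $V=\bigoplus N_i^{\oplus m_i}$, symplectic irreducibility forces a single block under the duality involution, and then: if $N_i\not\cong N_i^*$ one gets $\C\times\C$ with the swap (and $m_i=1$, else the diagonal rank-$k$ projector for $0<k<m_i$ is a nontrivial self-adjoint idempotent); if $N_i$ is symplectically self-dual one gets $\C$ (with $m_i=1$, since the induced form on the multiplicity space is symmetric and any non-isotropic line would give a nontrivial self-adjoint idempotent); if $N_i$ is orthogonally self-dual one gets $M_2(\C)$ with the symplectic anti-involution (with $m_i=2$, since the induced form on the multiplicity space is skew). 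In each case the $*$-fixed part is $\C\cdot 1_V$, whence $(\bigwedge^2 V^*)^\Gamma=\C\Omega$. This is exactly the case analysis behind Etingof--Ginzburg's original argument; your formulation via self-adjoint idempotents is a clean way to package it.
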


One can ask why we use $\M_0^0(n\delta,\epsilon_0)$ instead of $\M_0^0(n\delta,0)$ in the proposition.
The reason is that the moment map for $T^*R(n\delta,\epsilon_0)$ is flat, this can be checked
using Theorem \ref{Thm_flat}.

\subsubsection{Symplectic resolutions of $V_n/\Gamma_n$}\label{SSS_sympl_res_quot}
Here we will study symplectic resolutions of $V_n/\Gamma_n$ constructed as non-affine Nakajima quiver
varieties for generic stability conditions $\theta$.

Let us consider the case $n=1$ first. Let $\bar{G}$ denote the quotient of $G=\GL(\delta)$
modulo the one-dimensional torus $T_{const}:=\{(x\operatorname{id}_{\C^{\delta_i}})_{i=0}^r, x\in \C^\times\}$.
Note that the $G$-action on $R:=R(Q,\delta,0)$ factors through $\bar{G}$.
Analogously to Nakajima's result explained in \ref{SSS_quiver_struct},
the group $\bar{G}$ acts freely on $\mu^{-1}(0)^{\theta-ss}$
if and only if $\theta\cdot \alpha\neq 0$ for every Dynkin root of $Q$ (these are the roots $\alpha\in \C^{Q_0}$
with $\alpha_0=0$). For such $\theta$, we get
a conical symplectic resolution $\M^\theta_0(\delta,0)\rightarrow \M_0^0(\delta,0)$, this can be deduced,
for example, from Theorem \ref{Thm:irred}. Of course, all these resolutions are isomorphic to the minimal resolution $\widetilde{\C^2/\Gamma_1}$: there are just no other symplectic resolutions.

Let us proceed to the case $n>1$. We get a projective morphism $\M_0^\theta(n\delta,\epsilon_0)
\rightarrow \M_0^0(n\delta,\epsilon_0)$. Theorem \ref{Thm:irred} no longer holds, in fact,
$\mu^{-1}(0)$ has $n+1$ irreducible components by \cite[Section 3.2]{GG}. Still, $\M^\theta_0(n\delta,\epsilon_0)
\rightarrow \M^0_0(n\delta,\epsilon_0)$ is a resolution of singularities. One just needs to
check that the fiber over a generic point in $\M^0_0(n\delta,\epsilon_0)$ consists of a single point.
A generic closed $G$-orbit in $\mu^{-1}(0)$  has a point of the form $r^1\oplus\ldots\oplus r^n$,
where $r^1,\ldots,r^n$ are pair-wise non-isomorphic simple representations of $\Pi^0(Q)$ of dimension $\delta$.
Then one can analyze the structure of the $G$-action near that orbit using a symplectic slice theorem,
see, for example,  \cite[Section 4]{CB_norm} or \ref{SSS_completions}
below.
This analysis shows that there is a unique semistable $G$-orbit
containing $Gr$ in its closure. So we see that $\M^\theta(n\delta,\epsilon_0)
\rightarrow \M^0_0(n\delta,\epsilon_0)$ is a conical symplectic resolution.

\subsubsection{Isomorphic resolutions}\label{SSS_resol_iso}
Now let us discuss how many resolutions we get. The stability condition $\theta$
is generic if $\theta\cdot\delta\neq 0$ and
$\theta\cdot v\neq 0$ for $v$ of the form $v=\alpha+m\delta$, where
$\alpha$ is a Dynkin root and $|m|<n$. So we get resolutions labeled by
the open cones in the complement to these hyperplanes in $\mathbb{R}^n$. However, some of these
resolutions are isomorphic: there is an  action of $W\times \Z/2\Z$ on $\Z^{Q_0}$
such that, for $\theta,\theta'$ lying in one orbit, the resolutions
$\M^\theta_0(n\delta,\epsilon_0)\rightarrow \M_0^0(n\delta,\epsilon_0),\M^{\theta'}_0(n\delta,\epsilon_0)
\rightarrow \M_0^0(n\delta,\epsilon_0)$ are isomorphic (here $W$ denotes the Weyl group
of the Dynkin diagram obtained from $Q$ by removing the vertex $0$). This is a special case
of a construction due to Namikawa, \cite{Namikawa1}, that we are going to explain now.

Let $X\rightarrow X_0$ be an arbitrary symplectic resolution. The variety $X_0$ has finitely
many symplectic leaves, \cite{Kaledin}. Let $\mathcal{L}_1,\ldots,\mathcal{L}_k$ be the leaves
of codimension $2$. Take formal slices $\mathsf{S}^{\wedge}_1,\ldots,\mathsf{S}^{\wedge}_k$ through $\mathcal{L}_1,
\ldots,\mathcal{L}_k$. The slices are formal neighborhoods of $0$ in Kleinian singularities $\mathsf{S}_1,
\ldots,\mathsf{S}_k$. From these Kleinian singularities one produces Weyl groups $\tilde{W}_1,\ldots,
\tilde{W}_k$ (of the same types as the singularities) acting on the spaces
$H^2(\mathsf{S}_k,\C)$ identified with their reflection representations
$\tilde{\mathfrak{h}}_i$. The fundamental group $\pi_1(\mathcal{L}_i)$ acts on the irreducible components
of the exceptional divisor in $\mathsf{S}_i$. Hence it also acts on $\tilde{W}_i$ (by diagram automorphisms)
and on $\tilde{\mathfrak{h}}_i$. Set $W_i:=\tilde{W}_i^{\pi_1(\mathcal{L}_i)}, \h_i:=\tilde{\h}_i^{\mathcal{L}_i}$
so that $W_i$ is a crystallographic reflection group and $\mathfrak{h}_i$ is its reflection
representation. There is a natural restriction map $H^2(X)\rightarrow \h:=\bigoplus_i \h_i$.
Namikawa proved that this map is surjective. Furthermore, he has constructed a $W:=\prod_{i}W_i$-action
on $H_{DR}^2(X)$ that makes the map equivariant  and is trivial on the kernel.

Let us return to our situation. The symplectic leaves in $V/\Gamma$ are in one-to-one correspondence
with conjugacy classes of stabilizers of points in $V$. The leaf corresponding to $\Gamma'\subset \Gamma$
is the image of $V^{\Gamma',reg}:=\{v\in V| \Gamma_v=\Gamma'\}$ under the quotient morphism
$\pi:V\rightarrow V/\Gamma$. The leaf is identified with $V^{\Gamma',reg}/N_\Gamma(\Gamma')$.
So, in the case when $V=V_n$ and $\Gamma=\Gamma_n$, we get two leaves of codimension 2
(provided $\Gamma_1\neq \{1\}$, in that case we get just one leaf of codimension $2$).
One of them, say $\mathcal{L}_1$, corresponds to $\Gamma_1\subset \Gamma_n$ (the stabilizer
of a point of the form $(0,p_1,\ldots,p_{n-1})$, where $p_1,\ldots,p_{n-1}$ are pairwise different
points of $\C^2$). The other, say $\mathcal{L}_2$, corresponds to $\mathfrak{S}_2$ (the stabilizer
of $(p_1,p_1,p_2,\ldots,p_{n-1})$). The fundamental group actions from the previous
paragraph are easily seen to be trivial. So we get $W_1=W, W_2=\Z/2\Z$. Further, $H^2(X)=\C^{Q_0}$
and $\h_1=\{(x_i)_{i\in Q_0}| x\cdot \delta=0\}, \h_2=\C\delta$. The group $W_2$ acts on $\C\delta$
by $\pm 1$, while $\h_1$ is identified with the Cartan space for $W_1$ via $(x_i)_{i\in Q_0}\mapsto
\sum_{i=1}^r x_i\omega_i^\vee$, where we write $\omega_i^\vee$ for the fundamental coweights.

Let us remark that the $W$-action can be recovered by using the quiver variety setting as well,
see \cite{Maffei} and \cite{quant} for more detail.

\subsection{Quantum Hamiltonian reduction}\label{SS_Quant_Ham_red}
Here we will explain a quantum counterpart of the constructions of the previous section.

\subsubsection{Quantum Hamiltonian reduction: algebra level}
Let $\A$ be an associative algebra, $\g$ a Lie algebra and $\Phi: \g\rightarrow \A$
be a  Lie algebra homomorphism. Then, for a character $\lambda$ of $\g$, set
$\mathcal{I}_\lambda:=\A\{x-\langle\lambda,x\rangle, x\in \g\}$, this is
a left ideal in $\A$ that is stable under the adjoint action of $\g$. We set
$\A\red_\lambda\g:=(\A/\mathcal{I}_\lambda)^{\g}$. This space has a natural
associative product given by $(a+\mathcal{I}_\lambda)(b+\mathcal{I}_\lambda):=
ab+\mathcal{I}_\lambda$. With this product, $\A\red_\lambda\g$ becomes naturally
isomorphic to $\operatorname{End}_{\A}(\A/\mathcal{I}_\lambda)^{opp}$, an element
$a+\mathcal{I}_\lambda$ gets mapped to the unique endomorphism sending $1+\mathcal{I}_\lambda$
to $a+\mathcal{I}_\lambda$. We also have a universal variant of quantum Hamiltonian reduction:
$\A\red\g:=(\A/\A\Phi([\g,\g]))^\g$.

Now suppose $\A$ is a filtered quantization of $\C[X]$, where $X$ is an an affine Poisson variety
(we assume that the bracket on $\C[X]$ has degree $-1$). Suppose that $G$ acts on $X$
in a Hamiltonian way and the functions $\mu^*(\xi)$ have degree $1$ for all $\xi\in \g$. By a quantization of
the Hamiltonian $G$-action on $\C[X]$ we mean  a rational $G$-action on $\A$ together with a $G$-equivariant map $\Phi:\g\rightarrow \A$
  such that
\begin{enumerate}
\item[(i)] the filtration on $\A$ is $G$-stable and  the isomorphism
$\gr\A\cong \C[X]$ is $G$-equivariant,
\item[(ii)] $\Phi(\xi)$ lies in $\A_{\leqslant 1}$ and coincides with $\mu^*(\xi)$ modulo $\A_{\leqslant 0}$,
\item[(iii)] and   $[\Phi(\xi),\cdot]=\xi_{\A}$, where $\xi_{\A}$ is the derivation of $\A$ coming from the $G$-action.
\end{enumerate}
Note that $\gr\mathcal{I}_\lambda\supset I:=\C[X]\mu^*(\g)$ and so we have a surjective homomorphism
$\C[X\red_0 G]\twoheadrightarrow \gr\A\red_\lambda G$. We want to get a sufficient condition
for $\gr\mathcal{I}_\lambda= I$ for all $\lambda$.

\begin{Lem}\label{Lem:reg_seq}
Let $\xi_1,\ldots,\xi_n$ be a basis in $\g$. Suppose $\mu^*(\xi_1),\ldots,\mu^*(\xi_n)$ form a regular sequence.
Then $\gr \mathcal{I}_\lambda=I$ for any $\lambda$.
\end{Lem}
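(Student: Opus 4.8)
The plan is to pass to the associated graded / Rees picture and reduce the statement to a commutative-algebra fact about regular sequences. Recall that $\mathcal{I}_\lambda=\A\{\Phi(\xi)-\langle\lambda,\xi\rangle\mid \xi\in\g\}$ and that by condition (ii) the principal symbol of $\Phi(\xi)-\langle\lambda,\xi\rangle$ in $\gr\A\cong\C[X]$ equals $\mu^*(\xi)$ for every $\xi$. Hence $\gr\mathcal{I}_\lambda$ is a graded ideal containing $I=\C[X]\mu^*(\g)$, and we must show there are no ``extra'' elements, i.e. that every element of $\mathcal{I}_\lambda$ has symbol already lying in $I$.

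First I would work with the Rees algebra $R_\hbar(\A)$, which is flat over $\C[\hbar]$ with $R_\hbar(\A)/(\hbar)\cong\C[X]$ and $R_\hbar(\A)/(\hbar-1)\cong\A$. Lift each $\Phi(\xi_i)-\langle\lambda,\xi_i\rangle$ to its degree-one Rees representative $\hat\phi_i\in R_\hbar(\A)_{\le 1}\hbar$; then $\hat\phi_i\equiv\hbar\,\mu^*(\xi_i)\pmod{\hbar^2}$, so modulo $\hbar$ the $\hat\phi_i$ reduce to $\mu^*(\xi_i)$. The key claim is that $\hat\phi_1,\dots,\hat\phi_n$ form a regular sequence in $R_\hbar(\A)$. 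Granting this, one has $R_\hbar(\A)\big/(\hat\phi_1,\dots,\hat\phi_n)$ flat over $\C[\hbar]$ (the Koszul complex on the $\hat\phi_i$ stays exact after $\otimes_{\C[\hbar]}\C[\hbar]/(\hbar)$ by the regular-sequence hypothesis on the $\mu^*(\xi_i)$, and a standard local-criterion argument upgrades this to flatness), so the specialization at $\hbar=1$ commutes with the quotient: $\gr\bigl(\A/\mathcal{I}_\lambda\bigr)$ — computed via the Rees construction — is exactly $\C[X]/(\mu^*(\xi_1),\dots,\mu^*(\xi_n))=\C[X]/I$. Since taking $\g$-invariants is exact in characteristic zero (or since the statement $\gr\mathcal{I}_\lambda=I$ is about ideals before passing to invariants and one only needs $\gr\mathcal{I}_\lambda=I$), this gives the lemma.

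The regular-sequence claim for the $\hat\phi_i$ in $R_\hbar(\A)$ is the one point requiring care, and it is where I expect the main work to be. The strategy: induct on $k$, showing $\hat\phi_{k+1}$ is a nonzerodivisor on $R_\hbar(\A)/(\hat\phi_1,\dots,\hat\phi_k)$. Suppose $\hat\phi_{k+1}\,a\in(\hat\phi_1,\dots,\hat\phi_k)$. Pass to symbols: the symbol of $a$ is killed by $\mu^*(\xi_{k+1})$ modulo $(\mu^*(\xi_1),\dots,\mu^*(\xi_k))$ in $\C[X]$, which forces, by the regular-sequence hypothesis, the symbol of $a$ to lie in $(\mu^*(\xi_1),\dots,\mu^*(\xi_k))$; lift this relation back and subtract, lowering the filtration degree of $a$. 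Because the filtration on $\A$ (equivalently the grading on $R_\hbar(\A)$) is complete and separated and bounded below in each Rees degree, this descending induction terminates, showing $a\in(\hat\phi_1,\dots,\hat\phi_k)$. One should also note $R_\hbar(\A)$ is Noetherian (since $\gr\A=\C[X]$ is, $X$ being a variety), which legitimizes all the ideal-theoretic manipulations and the flatness criterion. Finally, I would remark that the hypothesis is automatically satisfied in the quiver-variety situation via Crawley-Boevey's flatness criterion (Theorem~\ref{Thm_flat}), so that the lemma applies there.
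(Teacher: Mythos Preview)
Your argument is correct. Both your approach and the paper's rest on the same mechanism—lifting the regular-sequence property of the $\mu^*(\xi_i)$ from $\C[X]$ to the filtered algebra $\A$ via a symbol-lowering induction—but the packaging differs. You pass to the Rees algebra $R_\hbar(\A)$, show inductively that the lifts $\hat\phi_i$ form a regular sequence there, and deduce $\hbar$-torsion-freeness (flatness) of the quotient, which immediately gives $\gr(\A/\mathcal{I}_\lambda)=\C[X]/I$. The paper instead works directly in $\A$ and uses only the vanishing of the first Koszul homology $H_1$: given $a=\sum a_i(\Phi(\xi_i)-\lambda_i)$ whose leading terms cancel, the relation $\sum \bar a_i\,\mu^*(\xi_i)=0$ in $\C[X]$ forces $\bar a_i=\sum_j f_{ij}\mu^*(\xi_j)$ with $f_{ij}=-f_{ji}$; lifting and subtracting the skew combination $\sum_{i,j}\hat f_{ij}(\Phi(\xi_j)-\lambda_j)(\Phi(\xi_i)-\lambda_i)$ lowers the degree of the coefficients, and one iterates. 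Your route is slightly more structural and yields the flatness statement as a by-product; the paper's is more hands-on and in principle needs only $H_1=0$ rather than the full regular-sequence hypothesis. Either is a standard and complete proof.
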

\begin{proof}
The proof is based on the observation that the 1st homology in the Koszul complex associated to
$\mu^*(\xi_1),\ldots,\mu^*(\xi_n)$ is zero. In other words, if $f_1,\ldots,f_n\in \C[X]$ are such that
$\sum_{i=1}^n f_i\mu^*(\xi_i)$, then there are $f_{ij}\in \C[X]$ with $f_{ij}=-f_{ji}$
and $f_i=\sum_{j=1}^n f_{ij}\mu^*(\xi_j)$. Details of the proof are left to the reader.
\end{proof}

So if $G$ is reductive and the assumptions of Lemma \ref{Lem:reg_seq} hold, then $\A\red_\lambda \g$
is a filtered quantization of $\C[X\red_0 G]$.

We can also give the definition of a quantization of a Hamiltonian action in the setting of formal quantizations.
One should modify (i)-(iii) as follows. In  (i) one requires the $G$-action to be $\C[[\hbar]]$-linear
and the isomorphism $\A_\hbar/\hbar\A_\hbar\cong \C[X]$ has to be $G$-equivariant. In (ii), one requires
that $\Phi(\xi)$ coincides with $\mu^*(\xi)$ modulo $\hbar$. In  (iii) one requires
$\frac{1}{\hbar}[\Phi(\xi),\cdot]=\xi_{\A_\hbar}$. We then can consider reductions
of the form $\A_\hbar\red_{\lambda(\hbar)}G$, where $\lambda(\hbar)$ is an element in $(\g^{*G})[[\hbar]]$.
If $G$ is reductive, and the elements $\mu^*(\xi_i)-\langle\lambda(0),\xi_i\rangle, i=1,\ldots,n$,
form a regular sequence in $\C[X]$, then $\A_\hbar\red_{\lambda(\hbar)}G$ is a formal quantization
of $\C[X\red_{\lambda(0)} G]$.

\subsubsection{Quantum Hamiltonian reduction: sheaf level}\label{SSS_quant_red_sheaf}
Let $X$ be a smooth affine symplectic algebraic variety equipped with a Hamiltonian action of $G$
and let $\theta$ be a character of $G$. Assume that, for a basis $\xi_1,\ldots,\xi_n$
of $\g$, the elements $\mu^*(\xi_1),\ldots,\mu^*(\xi_n)$ form a regular sequence at
points of $\mu^{-1}(0)^{\theta-ss}$. Let $\mathcal{D}_\hbar$ be a formal quantization
of $\mathcal{O}_X$.  Our goal is to define a (formal)
quantization $\mathcal{D}_\hbar\red^\theta_{\lambda(\hbar)} G$ of $X\red^\theta_0 G$
(so $\lambda(0)=0$).

Recall that it is enough to define the following data:
\begin{enumerate}
\item For an open affine covering  $X\red^\theta_\lambda G:=\bigcup_i Y_i$, the algebras
of sections $\Gamma(Y_i,\mathcal{D}_\hbar\red^\theta_{\lambda(\hbar)} G)$ that quantize $Y_i$,
\item and identifications  $\Gamma(Y_i,\mathcal{D}_\hbar\red^\theta_{\lambda(\hbar)} G)_{Y_i\cap Y_j}\cong
\Gamma(Y_j,\mathcal{D}_\hbar\red^\theta_{\lambda(\hbar)} G)_{Y_i\cap Y_j}$ satisfying cocycle conditions.
\end{enumerate}
Recall that we can choose an open covering by setting $Y_i:=X_{f_i}\red_0 G$, where
polynomials $f_i\in \C[X]^{G,n_i\theta}$ are such that $X^{\theta-ss}=\bigcup_i X_{f_i}$.
Then we set $\Gamma(Y_i,\mathcal{D}_\hbar\red^\theta_{\lambda(\hbar)} G):=\Gamma(X_{f_i},\mathcal{D}_\hbar)\red_{\lambda(\hbar)}G$. The sections of the corresponding
sheaf on $Y_i\cap Y_j$ are easily seen to be  $\Gamma(X_{f_i}\cap X_{f_j},\mathcal{D}_\hbar)\red_{\lambda(\hbar)}G$
and this yields the gluing maps.

Now let us discuss the period map mentioned in \ref{SSS_quant_classif}.
Suppose that the  $G$-action on $\mu^{-1}(0)^{\theta-ss}$
is free so that $X\red_0^\theta G$ is smooth and symplectic. In this case we have a period map associated
to the quantization of $\mathcal{D}_\hbar\red_{\lambda(\hbar)}^\theta G$. Assume, for simplicity,
that $\lambda(\hbar):=\lambda\hbar$ for $\lambda\in \g^{*G}$ -- this is the most interesting case,
for example, it is the only case that appears when we work with the filtered setting. Further,
assume that $\mathcal{D}_\hbar$ is canonical, i.e., has period $0$. Recall that this means
the existence of a parity anti-automorphism, let us denote it by $\varrho$. Finally, assume that
$\Phi$ is {\it symmeterized}, meaning that $\varrho\circ \Phi=\Phi$, this can be achieved by
modifying $\Phi$. Then the period of $\mathcal{D}_\hbar\red_{\lambda\hbar}^\theta G$ equals
to the Chern class associated to $\lambda$ (if $\lambda$ integrates to a character of $G$, then it defines
the line bundle on $X\red_0^\theta G$, in general, we extend the notion of a Chern class by linearity).
This was essentially checked in \cite[Sections 3.2,5.4]{quant}.

\subsubsection{Algebra vs sheaf level}
We need to relate the sheaf $\mathcal{D}_\hbar\red_{\lambda(\hbar)}^\theta G$ to the algebra
$\mathcal{D}_\hbar\red_{\lambda(\hbar)} G$. What one could expect is that the algebra is the global
sections (or even better, the derived global sections) of the sheaf. Let us provide some sufficient
conditions for this to hold.

\begin{Prop}\label{Prop:quant_alg_vs_sheaf}
Assume, for simplicity, that $\lambda(0)=0$. Further, suppose that the following holds.
\begin{enumerate}
\item The moment map $\mu$ is flat.
\item $X\red_0 G$ is a normal reduced scheme.
\item $X\red^\theta_0 G\rightarrow X\red_0 G$ is a resolution of singularities.
\end{enumerate}
Then $R\Gamma(\mathcal{D}_\hbar\red^\theta_{\lambda(\hbar)}G)\cong \mathcal{D}_\hbar\red_{\lambda(\hbar)}G$.
\end{Prop}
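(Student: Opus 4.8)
The plan is to prove the isomorphism $\hbar$-adically: I construct a natural morphism from left to right, reduce both sides modulo $\hbar$ where the statement degenerates to the fact that $X\red_0 G$ has rational singularities, and then lift by a completeness argument. To build the comparison map, fix an affine covering $X^{\theta-ss}=\bigcup_i X_{f_i}$ with $f_i\in\C[X]^{G,n_i\theta}$ as in \ref{SSS_quant_red_sheaf}, so that $X\red^\theta_0 G=\bigcup_i Y_i$ with $Y_i=X_{f_i}\red_0 G$ and $\Gamma(Y_i,\mathcal{D}_\hbar\red^\theta_{\lambda(\hbar)}G)=\Gamma(X_{f_i},\mathcal{D}_\hbar)\red_{\lambda(\hbar)}G$. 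Each restriction $\Gamma(X,\mathcal{D}_\hbar)\to\Gamma(X_{f_i},\mathcal{D}_\hbar)$ is a $G$-equivariant algebra homomorphism intertwining the quantum comoment maps, hence descends to $\mathcal{D}_\hbar\red_{\lambda(\hbar)}G\to\Gamma(Y_i,\mathcal{D}_\hbar\red^\theta_{\lambda(\hbar)}G)$; these agree on overlaps, so they glue to $\mathcal{D}_\hbar\red_{\lambda(\hbar)}G\to\Gamma(\mathcal{D}_\hbar\red^\theta_{\lambda(\hbar)}G)\to R\Gamma(\mathcal{D}_\hbar\red^\theta_{\lambda(\hbar)}G)$, and it remains to see this is a quasi-isomorphism.

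The next step is flatness over $\C[[\hbar]]$ and the reduction modulo $\hbar$. By assumption (1) and smoothness of $X$, for a basis $\xi_1,\dots,\xi_n$ of $\g$ the functions $\mu^*(\xi_1),\dots,\mu^*(\xi_n)$ form a regular sequence on $X$, and on each $X_{f_i}$. Hence the Koszul complex on the elements $\Phi(\xi_k)-\hbar\langle\lambda(\hbar),\xi_k\rangle$ over $\Gamma(X_{f_i},\mathcal{D}_\hbar)$ (and over $\Gamma(X,\mathcal{D}_\hbar)$) is acyclic in positive degrees, since modulo $\hbar$ it is the classical Koszul complex of a regular sequence (compare Lemma \ref{Lem:reg_seq}). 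Consequently $\Gamma(X_{f_i},\mathcal{D}_\hbar)/\mathcal{I}_{\lambda(\hbar)}$ is flat over $\C[[\hbar]]$, $\hbar$-adically complete (it is finitely generated over a ring whose reduction $\C[X_{f_i}]$ modulo $\hbar$ is Noetherian), and its reduction modulo $\hbar$ is $\C[X_{f_i}]/\C[X_{f_i}]\mu^*(\g)=\C[Y_i]$, where we use that $X\red_0 G$ is reduced (assumption (2)). Since $G$ is reductive, passing to $G$-invariants is exact and preserves flatness and completeness; thus $\mathcal{D}_\hbar\red_{\lambda(\hbar)}G$ and the sheaf $\mathcal{D}_\hbar\red^\theta_{\lambda(\hbar)}G$ are flat and $\hbar$-adically complete, $(\mathcal{D}_\hbar\red_{\lambda(\hbar)}G)/\hbar=\C[X\red_0 G]$, and $(\mathcal{D}_\hbar\red^\theta_{\lambda(\hbar)}G)/\hbar=\Str_{X\red^\theta_0 G}$.

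Now reduce the comparison map modulo $\hbar$. Since $\mathcal{D}_\hbar\red^\theta_{\lambda(\hbar)}G$ is $\hbar$-torsion free, applying $R\Gamma$ to the exact triangle $\mathcal{D}_\hbar\red^\theta_{\lambda(\hbar)}G\xrightarrow{\hbar}\mathcal{D}_\hbar\red^\theta_{\lambda(\hbar)}G\to\Str_{X\red^\theta_0 G}$ gives $R\Gamma(\mathcal{D}_\hbar\red^\theta_{\lambda(\hbar)}G)\otimes^{L}_{\C[[\hbar]]}\C\cong R\Gamma(X\red^\theta_0 G,\Str_{X\red^\theta_0 G})$; by flatness $(\mathcal{D}_\hbar\red_{\lambda(\hbar)}G)\otimes^{L}_{\C[[\hbar]]}\C=\C[X\red_0 G]$, and the reduced map is the pullback $\C[X\red_0 G]\to R\Gamma(X\red^\theta_0 G,\Str)$ along $\rho\colon X\red^\theta_0 G\to X\red_0 G$. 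By assumptions (2),(3) the map $\rho$ is a resolution of the normal affine variety $X\red_0 G$, and $X\red^\theta_0 G$ is a smooth symplectic variety (the reduction $2$-form on the semistable locus is nondegenerate over the smooth symplectic locus of $X\red_0 G$, hence everywhere). By Grauert--Riemenschneider and the triviality of $\omega_{X\red^\theta_0 G}$ we get $R^{i}\rho_*\Str_{X\red^\theta_0 G}=0$ for $i>0$, while $\rho_*\Str_{X\red^\theta_0 G}=\Str_{X\red_0 G}$ by normality; since $X\red_0 G$ is affine this yields $R\Gamma(X\red^\theta_0 G,\Str)=\C[X\red_0 G]$ (cf.\ Lemma \ref{Lem:der_glob_sec}). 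So the comparison map is an isomorphism after $\otimes^{L}_{\C[[\hbar]]}\C$. As both sides are flat, hence $\hbar$-torsion free, and $\hbar$-adically complete, a standard argument — reduce modulo $\hbar^n$ by induction and pass to the inverse limit — upgrades this to an isomorphism of complexes, proving the proposition.

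The main obstacle is the passage modulo $\hbar$: identifying $(\mathcal{D}_\hbar\red^\theta_{\lambda(\hbar)}G)/\hbar$ with $\Str_{X\red^\theta_0 G}$ hinges on flatness of $\mu$ (assumption (1)) via the Koszul computation, and the ensuing vanishing of higher cohomology, giving $R\Gamma(X\red^\theta_0 G,\Str)=\C[X\red_0 G]$, hinges on assumptions (2),(3) through rationality of symplectic singularities. Once these are in place the $\hbar$-adic bootstrap is routine.
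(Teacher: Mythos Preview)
Your proof is correct and follows essentially the same approach as the paper's: construct the natural comparison map, reduce modulo $\hbar$ using flatness of $\mu$ (assumption (1)) and the Koszul argument, invoke Grauert--Riemenschneider together with normality (assumptions (2),(3)) to identify both reductions with $\C[X\red_0 G]$ and kill higher cohomology, then lift $\hbar$-adically. The paper's version is terser --- it first notes that vanishing of $H^i(\mathcal{O}_{X\red^\theta_0 G})$ forces vanishing of $H^i(\mathcal{D}_\hbar\red^\theta_{\lambda(\hbar)}G)$ (so $R\Gamma=\Gamma$), then runs the same modulo-$\hbar$ comparison on $\Gamma$ alone --- while you package the same ingredients in a derived-category framework with $\otimes^L_{\C[[\hbar]]}\C$; but the substance is identical.
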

\begin{proof}
By (3) and the Grauert-Riemmenschneider theorem, the higher cohomology of $\mathcal{O}_{X\red^\theta_0 G}$ vanish. This implies that the higher cohomology of $\mathcal{D}_\hbar\red_{\lambda(\hbar)}^\theta G$ vanish. Moreover,
$\Gamma(\mathcal{D}_\hbar\red_{\lambda(\hbar)}^\theta G)/(\hbar)\cong \C[X\red^\theta_0 G]$. By (2) and (3), the right hand side is naturally identified with $\C[X\red_0 G]$. By (1), $(\mathcal{D}\red_{\lambda(\hbar)}G)/(\hbar)=\C[X\red_0 G]$.
Besides, we have a natural homomorphism $\mathcal{D}\red_{\lambda(\hbar)}G\rightarrow
\Gamma(\mathcal{D}_\hbar\red_{\lambda(\hbar)}^\theta G)$. Modulo $\hbar$, this homomorphism
is the identity. The source algebra is complete and separated in the $\hbar$-adic topology,
and the target algebra is flat over $\C[[\hbar]]$. It follows that the homomorphism
$\mathcal{D}\red_{\lambda(\hbar)}G\rightarrow
\Gamma(\mathcal{D}_\hbar\red_{\lambda(\hbar)}^\theta G)$ is an isomorphism.
\end{proof}

\subsubsection{Isomorphism theorem}
Recall a $\C^\times$-equivariant
isomorphism $\C^{2n}/\Gamma_n\cong \M^\theta(n\delta,\epsilon_0)$ of  Poisson varieties.
The left hand side admits a family of quantizations, $eH_{1,c}e$, and so does the right hand side,
there quantizations are the quantum Hamiltonian reductions $D(R)\red_\lambda G$, where we use the symmetrized
quantum comoment map $\Phi(\xi)=\frac{1}{2}(\xi_R+\xi_{R^*})$. In fact, these two families
are the same. Let us state a precise result to be proved in Section \ref{SS_iso_proof} (using Procesi bundles).
We write ${\bf c}$ for $$\frac{1}{|\Gamma_1|}\left(1+\sum_{\gamma\in \Gamma_1\setminus \{1\}}c(\gamma)\gamma\right),$$
where $c(\gamma):=c_i$ for $\gamma\in S_i$ (recall that $S_0$ is the conjugacy class of a reflection
in $\mathfrak{S}_n\subset \Gamma_n$ and $S_1,\ldots,S_r$ are conjugacy classes of elements of $\Gamma_1\subset \Gamma_n$).

\begin{Thm}\label{Thm:iso}
We have a filtered algebra isomorphism  $eH_{1,c}e\cong D(R)\red_\lambda G$ that is the identity
on the level of associated graded algebras (we consider the filtration on $D(R)\red_\lambda G$
induced from the Bernstein filtration on $D(R)$, where $\deg R=\deg R^*=1$). Here $\lambda:=\sum_{i=0}^r
\lambda_i \operatorname{tr}_i$
is recovered from $c$ by the following formulas:
\begin{equation}\label{eq:param_corresp}
\lambda_i:=\operatorname{tr}_{N_i}{\bf c},\, i=1,\ldots,r, \quad
\lambda_0:=\operatorname{tr}_{N_0}{\bf c}-\frac{1}{2}(c_0+1),
\end{equation}
where in the $n=1$ case one needs to put $c_0=1$.
\end{Thm}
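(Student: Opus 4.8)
\emph{Proof strategy.} The plan is to deduce the isomorphism from the existence of a Procesi bundle on a conical symplectic resolution $X$ of $X_0:=V_n/\Gamma_n$, the classification of quantizations recalled in \ref{SSS_quant_classif}, and the quiver-variety realizations of $X_0,X$ from Proposition \ref{Prop:quot_sing_quiv} and \ref{SSS_sympl_res_quot}. Fix a generic $\theta$ and set $X:=\M^\theta_0(n\delta,\epsilon_0)$, so $\rho:X\to X_0$ is a conical symplectic resolution. By \cite{BK_Procesi} it carries a normalized Procesi bundle $\mathcal{P}$: we have $\End_{\Str_X}(\mathcal{P})\cong\C[V_n]\#\Gamma_n$ as graded algebras over $\C[X]=\C[X_0]$ (the induced identification of centers being, after rescaling, the Poisson isomorphism of Proposition \ref{Prop:quot_sing_quiv}), $e\mathcal{P}=\Str_X$, and $\Ext^i(\mathcal{P},\mathcal{P})=0$ for $i>0$. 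Both $eH_{1,c}e$ (Example \ref{Ex:SRA}) and $D(R)\red_\lambda G$ (using Lemma \ref{Lem:reg_seq}, flatness of $\mu$ from Theorem \ref{Thm_flat}, and normality from Theorem \ref{Thm_quiv_norm}) are filtered quantizations of $\C[X_0]$, so the task is to match them compatibly with associated graded.

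\emph{The deformed Procesi bundle.} Let $\widetilde{\Dcal}_\hbar$ be the canonical $\C^\times$-equivariant quantization of the universal deformation $\widetilde X$ over $H^2_{DR}(X)$. Since $\Ext^i(\mathcal{P},\mathcal{P})=0$ for $i>1$ the deformation problem for $\mathcal{P}$ over $\widetilde{\Dcal}_\hbar$ is unobstructed, and since $\Ext^1(\mathcal{P},\mathcal{P})=0$ the lift $\widetilde{\mathcal{P}}_\hbar$ is unique; thus $\widetilde{\A}:=\End_{\widetilde{\Dcal}_\hbar}(\widetilde{\mathcal{P}}_\hbar)$ is a flat $\C[[H^2_{DR}(X),\hbar]]$-algebra deforming $\C[V_n]\#\Gamma_n$. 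By the universality property of symplectic reflection algebras, $\widetilde{\A}$ is the pullback of the universal SRA ${\bf H}$ of $(V_n,\Gamma_n)$ along an affine morphism $\phi$ of parameter spaces; comparison of the first-order brackets in $\hbar$ (as in Example \ref{Ex:SRA}) shows that under $\phi$ the SRA parameter $t$ corresponds to $\hbar$, so setting $\hbar=1$ lands in the slice $\{t=1\}$, and since the source $H^2_{DR}(X)$ and the target $c$-space both have dimension equal to the number $r+1$ of conjugacy classes of symplectic reflections in $\Gamma_n$ (for $n>1$), $\phi$ restricts there to an affine isomorphism. Applying $e(-)e$, and using that $e\widetilde{\mathcal{P}}_\hbar$ is the unique deformation of $e\mathcal{P}=\Str_X$ (unique since $H^1(X,\Str_X)=0$), hence isomorphic to $\widetilde{\Dcal}_\hbar$, we get $e\,{\bf H}\,e\cong\Gamma(X,\widetilde{\Dcal}_\hbar)$ over $\{t=1\}$. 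Specializing $\hbar=1$ gives $eH_{1,c}e\cong\Gamma(X,\Dcal_{p})$, where $\Dcal_{p}$ is the quantization of $X$ of period $p=p(c)$, an affine function of $c$. Finally, by Proposition \ref{Prop:quant_alg_vs_sheaf} (its hypotheses hold by the previous paragraph), $\Gamma(X,\Dcal_{p})\cong D(R)\red_{\lambda}G$, where $\lambda$ is the value for which the quantum Hamiltonian reduction with symmetrized comoment map at $\lambda\hbar$ has period $p$; by \ref{SSS_quant_red_sheaf} that period is the Chern class attached to $\lambda$, giving an affine bijection $\lambda\leftrightarrow p$. Composing, $eH_{1,c}e\cong D(R)\red_{\lambda(c)}G$ for an affine bijection $c\mapsto\lambda(c)$, the identity on associated graded by construction.

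\emph{Identifying the parameters.} It remains to check that $\lambda(c)$ is given by \eqref{eq:param_corresp}. For $n>1$ I would match the two sides along the codimension-two symplectic leaves $\mathcal{L}_1,\mathcal{L}_2$ of $X_0$ described in \ref{SSS_resol_iso}. Completing $eH_{1,c}e$ at a generic point of $\mathcal{L}_1$ (stabilizer $\Gamma_1$) yields the spherical SRA of $(\C^2,\Gamma_1)$ with parameter $(c_1,\dots,c_r)$, while completing $D(R)\red_\lambda G$ there yields the quantized quiver variety for the finite ADE quiver $Q\setminus\{0\}$ with parameter $\lambda|_{\h_1}$; matching through the $n=1$ case gives $\lambda_i=\operatorname{tr}_{N_i}{\bf c}$ for $i=1,\dots,r$. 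Completing at $\mathcal{L}_2$ (stabilizer $\mathfrak{S}_2$, slice the $A_1$-singularity) matches the rank-one spherical SRA with parameter $c_0$ against the $A_1$-quiver quantization with parameter $\lambda\cdot\delta$; since $\sum_i\delta_i\operatorname{tr}_{N_i}{\bf c}=\operatorname{tr}_{\C\Gamma_1}{\bf c}=1$, this forces $\lambda\cdot\delta=\tfrac12(1-c_0)$, which is equivalent to the stated value of $\lambda_0$. For the base case $n=1$ (where $\mathcal{L}_1,\mathcal{L}_2$ collapse), $eH_{1,c}e$ is directly a quantization of the Kleinian singularity $\C^2/\Gamma_1$; here $\lambda\cdot\delta=0$ is forced because the central torus $T_{const}\subset G$ has zero moment map, and the identification $\lambda_i=\operatorname{tr}_{N_i}{\bf c}$, $\lambda_0=\operatorname{tr}_{N_0}{\bf c}-1$ is the classical computation for subgroups of $\SL_2(\C)$, cf. \cite{CBH},\cite{EGGO}; this is the point at which one sets $c_0=1$.

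\emph{Main obstacle.} The crux is the middle step: realizing $\End_{\widetilde{\Dcal}_\hbar}(\widetilde{\mathcal{P}}_\hbar)$ as a pullback of the universal SRA and proving that the resulting map of parameter spaces is an affine isomorphism onto $\{t=1\}$. This requires both the universality property of symplectic reflection algebras (invoked above as known) and enough control of the deformed bundle to see that the deformation of $\C[V_n]\#\Gamma_n$ it produces is nondegenerate in every direction. The leafwise parameter matching, though it uses the localization-at-a-leaf formalism together with the classical Kleinian computation, is comparatively routine once those inputs are in hand.
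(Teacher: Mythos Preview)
Your overall architecture is the same as the paper's: deform a Procesi bundle over the canonical quantization $\widetilde{\Dcal}_\hbar$, take endomorphisms to get a flat deformation of $\C[V_n]\#\Gamma_n$, invoke the universality of the SRA (Theorem \ref{Thm:flatness}) to get a linear map $\nu:\param_{univ}\to\param_{red}$ with $\nu(t)=\hbar$, and then determine $\nu$ by completing at the two codimension-$2$ leaves and reducing to the $n=1$ case. So the strategy is right.

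There is, however, a genuine gap in your parameter identification. The completion at $\mathcal{L}_1$ gives an isomorphism of the $n=1$ spherical algebra $e^1{\bf H}^1e^1$ with ${\bf A}^1=\Weyl_\hbar(T^*R(\delta,0))\red\overline{\GL}(\delta)$, but not a \emph{specific} one: any graded automorphism of ${\bf A}^1$ that is the identity on the associated graded can be inserted, and the group of such automorphisms is exactly the finite Weyl group $W$ (see \ref{SSS_iso_thm_n1}). So the leaf $\mathcal{L}_1$ only tells you that $\nu|_{\param_{univ}^1}$ is one of $|W|$ conjugates of the formula you want. Likewise at $\mathcal{L}_2$ the $A_1$ computation gives $\lambda\cdot\delta=\pm\tfrac12(c_0+t)$, not a single sign. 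Altogether you learn only that $\nu$ is $W\times\Z/2\Z$-conjugate to the map \eqref{eq:nu}. The missing step, which the paper supplies, is that this $W\times\Z/2\Z$-action lifts to graded automorphisms of ${\bf A}=D(R)\red G$ that are the identity modulo $\param_{red}$ (this is the quantum Namikawa action of \ref{SSS_quant_auto}); composing with such an automorphism lets you realize the specific correspondence \eqref{eq:param_corresp}. Without this, your ``matching through the $n=1$ case gives $\lambda_i=\operatorname{tr}_{N_i}{\bf c}$'' is unjustified.

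A smaller point: your dimension count for $\phi$ being an isomorphism is, as you note, incomplete. In the paper this is not argued abstractly but falls out of the same completion analysis: once you know $\nu$ restricts to isomorphisms $\param_{univ}^1\xrightarrow{\sim}\param_{red}^1$ and $\param_{univ}^2\xrightarrow{\sim}\param_{red}^2$ (each up to the finite-group ambiguity), and that $\param_{univ}=\param_{univ}^1+\param_{univ}^2$, $\param_{red}=\param_{red}^1+\param_{red}^2$, you get that $\nu$ is an isomorphism. So the ``main obstacle'' you flag and the ``routine'' leafwise matching are really the same computation, and the subtlety lies in handling the Weyl ambiguity, not in a separate nondegeneracy argument.
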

For $n=1$, this theorem was proved by Holland in \cite{Holland}. The case of $\Gamma_1=\{1\}$ was handled
in \cite{EG,GG} (\cite{EG} proved a weaker statement and then in \cite{GG} the proof was completed).
The case of cyclic $\Gamma_1$ was done in \cite{Oblomkov,Gordon_cyclic}. In \cite{EGGO} the proof
was completed: they considered the case when $Q$ is a bi-partive graph. Let us note that in these
papers formulas  look different from (\ref{eq:param_corresp}):
they use the quantum  comoment map $\Phi(\xi)=\xi_R$. A uniform and more
conceptual proof was given in \cite{quant} using Procesi bundles.

Theorem \ref{Thm:iso} is of crucial importance for the representation theory of the algebras $H_{1,c}$.
It turns out that the representation theory of the algebras $D(R)\red_\lambda G$ (actually, of sheaves
$D_R\red_\lambda^\theta G$) is easier to study. The main ingredient here is the geometry of the quiver varieties
$\M^\theta(v,\epsilon_0)$. Using this, in \cite{BL}, the author and Bezrukavnikov have proved a conjecture
of Etingof, \cite{Etingof_affine}, on the number of the finite dimensional irreducible representations
of $H_{1,c}$.

\subsubsection{Automorphisms}\label{SSS_quant_auto}
Here we are going to explain a quantum version of Namikawa's construction recalled in \ref{SSS_resol_iso}.
In the complete generality this construction was given in \cite[Section 3.3]{BPW}.

Let $X$ be a conical symplectic resolution of $X_0$. Let $\tilde{X}$ be its universal deformation over
$H^2_{DR}(X)$ and let $\tilde{\mathcal{D}}_\hbar$ be the canonical quantization of $\tilde{X}$.
Let $\tilde{\A}_\hbar$ denote the $\C^\times$-finite part of  $\Gamma(\tilde{\mathcal{D}}_\hbar)$.
Then Namikawa's Weyl group $W$ acts on $\tilde{\A}_\hbar$ by graded $\C[\hbar]$-algebra automorphisms
preserving $H^2_{DR}(X)^*$. Moreover, the action on $H^2_{DR}(X)^*$ is as explained in \ref{SSS_resol_iso}.

\subsection{Quantum Hamiltonian reduction for Frobenius constant quantizations}\label{SS_quant_Ham_Frob}
In this section, we will consider the situation in characteristic $p$. Our main result is
that a quantum GIT Hamitlonian reduction under a free Hamiltonian action is again Frobenius
constant.
\subsubsection{GIT in characteristic $p$}
The definition of a reductive group (one with trivial unipotent radical) makes sense in all characteristics.
A crucial difficulty of dealing with reductive groups in positive characteristic is that
their rational representations are no longer completely reducible, in general. The groups for which
the complete reducibility holds are called {\it linearly reductive}. Tori are
still linearly reductive independently of the characteristic. We need to deal with GIT for reductive groups
(such as products of $\GL$'s) and so we need to explain how this works in positive characteristic.

It turns out  that reductive groups satisfy a weaker condition than being linearly reductive,
they are {\it geometrically reductive}. This was conjectured by Mumford and proved by Haboush, \cite{Haboush}.
To state the condition of being geometrically reductive, let us reformulate the linear reductivity first:
a group $G$ is called linearly reductive, if, for any linear $G$-action on a vector space $V$
and any fixed point $v\in V$, there is $f\in (V^*)^G$ with $f(v)\neq 0$. A group
$G$ is called {\it geometrically reductive} if instead of  $f\in (V^*)^G$, one can find $f\in S^r(V^*)^G$
(for some $r>0$) with $f(v)\neq 0$.

This condition is enough for many applications. For example, if $X$ is an affine algebraic variety acted on
by a reductive (and hence geometrically reductive) group $G$, then $\Fi[X]^G$ is finitely generated.
So we can consider the quotient morphism  $X\rightarrow X\quo G$. This morphism is surjective and separates
the closed orbits. Moreover, if $X'\subset X$ is a $G$-stable subvariety, then the natural morphism
$X'\quo G\rightarrow X\quo G$ is injective with closed image.

The claim about the properties of the quotient morphism in the previous paragraph can be deduced from the
following lemma, \cite[Lemma A.1.2]{MFK}.

\begin{Lem}\label{Lem:char_p_inv}
Let $G$ be a geometrically reductive group acting on a finitely generated commutative $\Fi$-algebra $R$
rationally and by algebra automorphisms. Let $I\subset R$ be a $G$-stable ideal and $f\in (R/I)^G$. Then
there is $n$ such that $f^{p^n}$ lies in the image of $R^G$ in $(R/I)^G$.
\end{Lem}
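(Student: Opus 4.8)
The plan is to follow the classical argument behind \cite[Lemma A.1.2]{MFK}: reduce the statement to a single application of geometric reductivity by replacing $f$ with a generator of a suitable finite-dimensional $G$-submodule of $R$. We may assume $f\neq 0$ in $R/I$, since otherwise $f^{p^n}=0$ is trivially in the image of $R^G$.

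First I would choose an arbitrary lift $\tilde f\in R$ of $f$. Since the $G$-action on $R$ is rational, the $G$-submodule $W\subset R$ spanned by the orbit $G\tilde f$ is finite-dimensional. Put $W':=W\cap I$, a $G$-stable subspace. The image of $W$ in $R/I$ is exactly $\Fi f$: a general element of $W$ has the form $\sum_i a_i\, g_i\tilde f$ with $a_i\in\Fi$, $g_i\in G$, and since $I$ is $G$-stable and $f$ is $G$-invariant this maps to $\left(\sum_i a_i\right)f$. Hence $W/W'$ is the trivial one-dimensional $G$-module, generated by the image of $\tilde f$. Dualizing, one obtains a $G$-invariant functional $\lambda\in (W^*)^G$ with $\lambda(\tilde f)=1$ and $\lambda|_{W'}=0$; concretely $\lambda(w)$ is the scalar $c$ determined by $w\equiv c f\pmod I$, and the computation above shows $\lambda$ is $G$-invariant.

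Next I would apply geometric reductivity of $G$ to the rational $G$-module $W^*$ and its nonzero fixed point $\lambda$. In the refined form provided by Haboush's theorem \cite{Haboush} (which strengthens Mumford's conjecture so that the invariant separating a fixed vector can be taken of degree a power of the characteristic), this yields a $G$-invariant homogeneous $F\in S^r(W)^G$ of degree $r=p^n$ with $F(\lambda)\neq 0$. Now feed $F$ into the multiplication map $m\colon S^r(W)\to R$, $w_1\cdots w_r\mapsto w_1\cdots w_r$. Because $G$ acts on $R$ by algebra automorphisms, $m$ is $G$-equivariant, so $m(F)\in R^G$. Reducing modulo $I$, the composite $S^r(W)\to R\to R/I$ factors through $S^r(W/W')=S^r(\Fi f)=\Fi f^{p^n}\subset R/I$, and tracing through the identification $F(\lambda)=(S^r\lambda)(F)$ one gets $m(F)\equiv F(\lambda)\,f^{p^n}\pmod I$. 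Since $F(\lambda)\neq 0$ is invertible in $\Fi$, the element $F(\lambda)^{-1}m(F)\in R^G$ maps to $f^{p^n}$ in $(R/I)^G$, which is the assertion.

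The only nonelementary ingredient is the passage from plain geometric reductivity, which a priori only produces $f^{r}$ for some $r$ not necessarily a power of $p$, to the refined version giving $r=p^n$; this is precisely where Haboush's theorem on reductive groups in characteristic $p$ must be invoked rather than reproved, and it is the main obstacle. Everything else is linear algebra together with rationality of the action (to make $W$ finite-dimensional) and the fact that $G$ acts by algebra automorphisms (to make $m$ equivariant).
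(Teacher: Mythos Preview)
Your argument is correct and is essentially the classical proof from \cite[Lemma A.1.2]{MFK}; the paper does not give its own proof but simply cites that reference, so there is nothing further to compare. Your observation that plain geometric reductivity only yields $f^r$ for some $r$ and that Haboush's theorem is needed to guarantee $r=p^n$ is accurate and worth making explicit.
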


In characteristic $p$, we can still speak about unstable and semistable points for reductive group actions
on vector spaces, about GIT quotients, etc.

Another very useful and powerful result of Invariant theory in characteristic $0$ is Luna's \'{e}tale
slice theorem, see, e.g., \cite[6.3]{PV}.
There is a version of this theorem in characteristic $p$ due to Bardsley and Richardson,
see \cite{BR}. We will need a consequence of this theorem dealing with free actions.

Recall that, in characteristic $0$, an action of an algebraic group $G$ on a variety $X$ is called  free
if the stabilizers of all points are trivial. In characteristic $p$ one should give this definition more carefully:
the stabilizer may be a nontrivial finite group scheme with a single point. An example is provided by the left
action of $G$ on $G^{(1)}$, we will discuss a closely related question in the next subsection. We have the following
three equivalent definitions of a free action.

\begin{itemize}
\item For every $x\in X$, the stabilizer $G_x$ equals $\{1\}$ as a group scheme.
\item For every $x\in X$, the orbit map $G\rightarrow X$ corresponding to $x$
is an isomorphism of algebraic varieties.
\item For every $x\in X$, $G_x$ coincides with $\{1\}$ as a set and the stabilizer of $x$
in $\g$ is trivial.
\end{itemize}

The following is a weak version of the slice theorem that we need.

\begin{Lem}\label{Lem:slice}
Let $X$ be a smooth affine variety equipped with a free action of a reductive algebraic group $G$.
Then the quotient morphism $X\rightarrow X/G$ is a principal $G$-bundle in \'{e}tale topology.
\end{Lem}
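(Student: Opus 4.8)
The plan is to deduce the lemma from the slice theorem of Bardsley and Richardson \cite{BR}, mimicking the way Luna's slice theorem yields the corresponding statement in characteristic $0$.

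The first step is to observe that a free action (in the scheme-theoretic sense recalled above) automatically has closed orbits, so that $X\quo G$ is a geometric quotient, which is the morphism target $X/G$ in the statement. Indeed, if $Gx$ were not closed, then $\overline{Gx}\setminus Gx$ would be a nonempty $G$-stable closed subset of the irreducible variety $\overline{Gx}$, hence of dimension strictly smaller than $\dim\overline{Gx}=\dim Gx$; being $G$-stable it is a union of orbits, each of dimension $<\dim Gx$. But freeness (the orbit map $G\to Gy$ is an isomorphism for every $y$) gives $\dim Gy=\dim G=\dim Gx$ for all $y$, a contradiction. So every orbit is closed and isomorphic to $G$, and the categorical quotient $\pi\colon X\to X\quo G$ is a geometric quotient.

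The second step is to apply the slice theorem at each point $x\in X$. Since the stabilizer group scheme $G_x$ is trivial, in particular linearly reductive, the slice theorem applies in its strong form and produces a locally closed affine subvariety $S=S_x\ni x$ such that $GS$ is a $G$-saturated open subset of $X$, the subset $\pi(GS)$ is open in $X\quo G$ and the morphism $S\to\pi(GS)$ induced by $\pi$ is \'etale, and moreover the square whose top arrow is the action map $G\times S\to GS$ and bottom arrow is $S\to\pi(GS)$ is cartesian, i.e. $G\times S\cong GS\times_{\pi(GS)}S$. As $x$ ranges over $X$ the open sets $GS_x$ cover $X$, so the \'etale morphisms $S_x\to\pi(GS_x)\hookrightarrow X\quo G$ form an \'etale covering of $X\quo G$. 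Since $GS_x$ is $\pi$-saturated we have $GS_x=\pi^{-1}(\pi(GS_x))$, whence the base change of $\pi$ along $S_x\to X\quo G$ is $GS_x\times_{\pi(GS_x)}S_x\cong G\times S_x$, the trivial $G$-bundle. Thus $\pi$ is a principal $G$-bundle in the \'etale topology.

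The only genuine obstacle is that in positive characteristic Luna's theorem is unavailable, so the argument must be run through the Bardsley--Richardson version and with the scheme-theoretic notion of freeness: a nonreduced stabilizer group scheme would already destroy the \'etaleness of the action map $G\times S\to GS$ (and the smoothness of $X\quo G$), so the equivalent formulations of freeness listed above are exactly what is needed. One can also bypass the explicit slice bundle and argue by descent: the slice theorem still gives that $X\quo G$ is smooth, so $\pi$ is flat by miracle flatness and faithfully flat since surjective; combined with $X\times_{X\quo G}X\cong G\times X$ (again a consequence of $X\quo G$ being a geometric quotient of a free action), this exhibits $X$ as an fppf $G$-torsor over $X\quo G$, and since $G$ is a smooth group such a torsor is automatically trivial in the \'etale topology.
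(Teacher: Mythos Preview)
Your proposal is correct and follows exactly the approach the paper indicates: the paper does not give a proof of this lemma but states it as a consequence of the Bardsley--Richardson slice theorem \cite{BR}, and your argument carries out precisely that deduction (closed orbits from freeness, then the slice theorem with trivial---hence linearly reductive---stabilizer). The minor remark that $\overline{Gx}$ need not be irreducible when $G$ is disconnected is harmless, since the dimension drop on the boundary follows already from $Gx$ being open and dense in its closure.
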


\subsubsection{Quiver varieties}\label{SSS_quiv_var_char_p}
Let us now discuss Nakajima quiver varieties in characteristic $p\gg 0$. We have a finite localization
$\mathfrak{R}$ of $\Z$ with the following properties:
\begin{enumerate}
\item $R$ together with the $G$-action and $\mu$ are defined over $\mathfrak{R}$.
\item $\mu^{-1}(0)^{\theta-ss}$ and the $G$-bundle $\mu^{-1}(0)^{\theta-ss}\rightarrow
\mu^{-1}(0)^{\theta-ss}/G$ are defined over $\mathfrak{R}$.
\end{enumerate}
For an $\mathfrak{R}$-algebra $\mathfrak{R}'$, let $R_{\mathfrak{R}'},G_{\mathfrak{R}'},
\mu_{\mathfrak{R}'}$ etc. denote the $\mathfrak{R}'$-forms of the corresponding
objects. Let us write $X_{\mathfrak{R}}$ for an $\mathfrak{R}$-form of
$\mu^{-1}(0)^{\theta-ss}/G$. After a finite localization
of $\mathfrak{R}$, we can achieve that $X_{\mathfrak{R}}$ is a symplectic scheme over
$\operatorname{Spec}(\mathfrak{R})$ with $\C\otimes_{\mathfrak{R}}\Gamma(X_{\mathfrak{R}}, \mathcal{O}_{X_{\mathfrak{R}}})\xrightarrow{\sim} \C[X_{\C}]$ and
$H^i(X_{\mathfrak{R}}, \mathcal{O}_{X_{\mathfrak{R}}})=0$ for $i>0$.

For $\mathfrak{R}'$, we can take $\Fi:=\overline{\Fi}_p$ when $p$ is large enough. So we get a symplectic
$\Fi$-variety $\M^\theta_0(n\delta,1)_{\Fi}$ that is naturally identified with
$T^*R_{\Fi}\red^\theta_0 G_{\Fi}$ as well as with  $\Spec(\Fi)\times_{\Spec(\mathfrak{R})}X_{\mathfrak{R}}$.
For $p\gg 0$, we get $\Fi[X_{\Fi}]=\Fi\otimes_{\mathfrak{R}}\Gamma(X_{\mathfrak{R}}, \mathcal{O}_{X_{\mathfrak{R}}})$
and $H^i(X_{\Fi}, \mathcal{O}_{X_{\Fi}})=0$.

We can take a finite algebraic extension of $\mathfrak{R}$ and assume that the $\Gamma_n$-module
$\C^{2n}$ is defined over $\mathfrak{R}$. Now we claim that (again for $p\gg 0$)
$\M^\theta_0(n\delta,1)_{\Fi}$ is a symplectic resolution of $\Fi^{2n}/\Gamma_n$.
This  follows  from the claim that both $\Gamma(X_{\mathfrak{R}},\mathcal{O}_{X_{\mathfrak{R}}}),
\mathfrak{R}[\underline{x},\underline{y}]^{\Gamma_n}$ are $\mathfrak{R}$-forms of $\C[\underline{x},\underline{y}]^{\Gamma_n}$
so they coincide after some finite localization of $\mathfrak{R}$.

\subsubsection{Quantum Hamiltonian reduction}\label{SSS_qHam_char_p}
Now suppose that $R$ is a symplectic vector space over $\Fi$, $G$ is a reductive group over $\Fi$
acting on $R$ and $\theta$ is a character of $G$. We suppose that $G$ acts freely on $\mu^{-1}(0)^{\theta-ss}$.
We are going to define a Frobenius constant quantization $D_R\red^\theta_\lambda G$ of $T^*R\red^\theta_0 G$,
where $\lambda\in \operatorname{Hom}(G,\Fi^\times)\otimes_{\Z}\Fi_p\hookrightarrow \g^{*G}$. The associated filtered
quantization of $T^*R\red_0^\theta G$ will be a quantization obtained by quantum Hamiltonian reduction,
see \ref{SSS_quant_red_sheaf}. We note that for $\lambda\not\in \operatorname{Hom}(G,\Fi^\times)$ we do not get a {\it Frobenius constant} quantization of $T^*R\red_0^\theta G$.

Consider the Frobenius twist $G^{(1)}$. It is a group and the morphism $\operatorname{Fr}:G\rightarrow G^{(1)}$
is a group epimorphism. Its kernel (a.k.a. the Frobenius kernel) $G_1$ is a finite group scheme whose Lie
algebra coincides with $\g$.

The action of $G$ on $R$ induces an action of $G^{(1)}$ on $R^{(1)}$. The $G^{(1)}$-action
on $T^*R^{(1)}$ is Hamiltonian with moment map $\mu^{(1)}:T^*R^{(1)}\rightarrow \g^{(1)*}$ induced by $\mu$.
Consider the sheaf $D_R\red_\lambda^\theta G_1$ (a subquotient of $D_R$) on $T^*R^{(1)\theta-ss}$.
One can show, see \cite[Section 3.6]{BFG}, that it is supported on  $\left(\mu^{(1)}\right)^{-1}(0)$,
here we use that $\lambda\in \operatorname{Hom}(G,\Fi^\times)\otimes_{\Z}\Fi_p$. Moreover, it is a $G^{(1)}$-equivariant
Azumaya  algebra on $\left(\mu^{(1)}\right)^{-1}(0)$. The descent of this algebra to
$(T^*R\red_0^\theta G)^{(1)}=T^*R^{(1)}\red_0^\theta G^{(1)}$ is an Azumaya algebra with a filtration
induced from that on $D_R$. We have a natural homomorphism
$\gr (D_R\red_\lambda^\theta G_1)\rightarrow \operatorname{Fr}_* \mathcal{O}_{T^*R\red_0^\theta G_1}$.
To show that it is an isomorphism one uses that the action of $G_1$ is free (that yields the required
cohomology vanishing). This isomorphism implies
$\gr (D_R\red_\lambda^\theta G)\xrightarrow{\sim} \operatorname{Fr}_* \mathcal{O}_{T^*R\red_0^\theta G}$.
So $D_R\red_\lambda^\theta G$ is indeed a Frobenius constant quantization.

Note that if $\lambda\not\in \operatorname{Hom}(G,\Fi^\times)\otimes_{\Z}\Fi_p$, then
$D_R\red_\lambda^\theta G_1$ is supported on a nonzero fiber of $\mu^{(1)}$, see \cite[Section 3.6]{BFG}
for details, and so  $D_R\red_\lambda G$ is no longer a Frobenius constant quantization of
$X\red_0^\theta G$.

\section{Existence and classification of Procesi bundles}\label{S_Procesi_SRA}
In this section we construct and classify Procesi bundles on $X=\M^\theta(n\delta,\epsilon_0)$
and also prove Theorem \ref{Thm:iso}.

In Section \ref{SS_Procesi} we construct a Procesi bundle on $X$.
The case $n=1$ is relatively easy,
it was done in \cite{KaVa}. For $n>1$, we follow \cite{BK_Procesi}. A key step here is to
construct a special Frobenius constant quantization of $X_{\Fi}$, where $\Fi$
is an algebraically closed field of large enough positive characteristic.
This quantization provides a suitable version of derived McKay equivalence
and using this equivalence we can produce a Procesi bundle over $\Fi$. Then
we lift it to characteristic $0$.

In Section \ref{SS_SRA} we prove that Symplectic reflection algebras
satisfy PBW property and, in some sense, the family of SRA $H_{t,c}$
is universal with this property. The proof is based on computing
relevant graded components in the Hochschild cohomology of $SV\#\Gamma$.

Theorem \ref{Thm:iso} is proved in Section \ref{SS_iso_proof}. Using the Procesi bundle, we show that each
algebra $D(R)\red_\lambda G$ is isomorphic to some $eH_{1,c}e$. Then the task
is to show that the correspondence between the parameters $\lambda$ and the
parameters $c$ is as in Theorem \ref{Thm:iso}. We first do this for $n=1$.
Then we reduce the case of $n>1$ to $n=1$ by studying completions of the algebras
involved. This allows to show that the map between the parameters is conjugate
to that in Theorem \ref{Thm:iso} up to a conjugation under an action of the
group $W\times \Z/2\Z$, where $W$ is the Weyl group of the finite part of the
quiver $Q$. But from \ref{SSS_quant_auto} we know that this action lifts to
an action on the universal reduction $D(R)\red G$ by automorphisms. This completes
the proof of Theorem \ref{Thm:iso}.

Then, in Section \ref{SS_Procesi_classif}, we classify Procesi bundles. Namely, we show that, when $n>1$,
there are $2|W|$ different Procesi bundles on $X$. For this, we use
Theorem \ref{Thm:iso} to produce this number of bundles. And then we
use techniques used in the proof to show that the number cannot exceed
$2|W|$. Further, we show that each $X$ carries a distinguished
Procesi bundle.

\subsection{Construction of Procesi bundles}\label{SS_Procesi}
\subsubsection{Baby case: $n=1$}
In this case it is easy to construct a vector bundle of required rank on $X$.
Namely, for $i=0,\ldots,r$, let $U_i$ be the $G$-module $\C^{\delta_i}$
and let $\mathcal{U}_i$ be the corresponding vector bundle on $X$.
We set $\mathcal{P}:=\bigoplus_{i=0}^r \mathcal{U}_i^{\delta_i}$. It follows from
results of Kapranov and Vasserot, \cite{KaVa}, that this bundle satisfies the axioms
of a Procesi bundle.
\subsubsection{Procesi bundles and derived McKay equivalence}\label{SSS_derived_McKay}
Before we proceed to constructing Procesi bundles in general, let us explain their connection
to  derived Mckay equivalences, i.e., equivalences $D^b(\operatorname{Coh}X)\xrightarrow{\sim}
D^b(\mathbb{K}[V_n]\#\Gamma_n)$, here $\mathbb{K}$ stands for the base field.

\begin{Prop}\label{Prop:McKay}
Let $\mathcal{P}$ be a Procesi bundle on $X$. Then the functor $R\operatorname{Hom}_{\Str_X}(\mathcal{P},\bullet)
$ is a derived equivalence $D^b(\operatorname{Coh}X)\rightarrow D^b(\mathbb{K}[V_n]\#\Gamma\operatorname{-mod})$.
\end{Prop}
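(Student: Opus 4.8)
The plan is to apply the standard tilting-bundle criterion for derived equivalences. The bundle $\mathcal{P}$ is a $\C^\times$-equivariant vector bundle on $X$ with $\End_{\Str_X}(\mathcal{P})\cong \mathbb{K}[V_n]\#\Gamma_n$ as graded algebras over $\C[X]=\C[V_n]^{\Gamma_n}$, and $\Ext^i_{\Str_X}(\mathcal{P},\mathcal{P})=0$ for $i>0$. These are precisely the hypotheses needed to invoke the Morita-theoretic statement for tilting objects: the functor $R\Hom_{\Str_X}(\mathcal{P},\bullet)\colon D^b(\operatorname{Coh}X)\to D^b(\End_{\Str_X}(\mathcal{P})^{\mathrm{op}}\operatorname{-mod})$ is an equivalence provided that $\mathcal{P}$ is a \emph{compact generator} of the (unbounded) derived category $D(\operatorname{QCoh}X)$ and that the endomorphism dg-algebra is formal, i.e. concentrated in degree $0$ — which is exactly the Ext-vanishing condition. (One should note $\End^{\mathrm{op}}$ versus $\End$; since $\mathbb{K}[V_n]\#\Gamma_n$ carries an antiautomorphism, e.g. $f\otimes\gamma\mapsto \gamma^{-1}(f)\otimes\gamma^{-1}$, the opposite algebra is isomorphic to $\mathbb{K}[V_n]\#\Gamma_n$ itself, so the target is identified with $D^b(\mathbb{K}[V_n]\#\Gamma_n\operatorname{-mod})$ as claimed.)

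\textbf{Step 1: reduce to generation.} First I would invoke the general theorem (Bondal--Van den Bergh, or the version for quasi-coherent sheaves on a quasi-compact quasi-separated scheme) that a perfect complex $\mathcal{P}$ which classically generates $D^b(\operatorname{Coh}X)$ — equivalently, compactly generates $D(\operatorname{QCoh}X)$ — and satisfies $\Ext^{>0}(\mathcal{P},\mathcal{P})=0$ yields a derived equivalence with $D^b(\End(\mathcal{P})\operatorname{-mod})$ once $\End(\mathcal{P})$ is noetherian of finite global dimension. Here $\End_{\Str_X}(\mathcal{P})\cong \mathbb{K}[V_n]\#\Gamma_n$ is indeed noetherian and of finite homological dimension (as noted in the excerpt, because $\mathbb{K}[V_n]$ is). So the entire content is reduced to showing $\mathcal{P}$ generates.

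\textbf{Step 2: prove generation.} This is the step I expect to be the main obstacle. The cleanest route is to use that $\rho\colon X\to X_0=V_n/\Gamma_n$ is a projective (indeed, symplectic) resolution, so $X$ is covered by the affine opens pulled back from $X_0$, and $\mathcal{P}$ restricted to $\rho^{-1}(U)$ for affine $U\subset X_0$ already has $\rho_*\End(\mathcal{P}|_{\rho^{-1}U})$ acting with the spherical subalgebra recovering $\C[U]$. Concretely: the structure sheaf $\Str_X$ is a direct summand of $\mathcal{P}$ (namely $e\mathcal{P}=\Str_X$ after normalization), so it suffices to show that $\Str_X$ together with the other isotypic summands $e_\tau\mathcal{P}$ (for irreducibles $\tau$ of $\Gamma_n$) generate; and by the Beilinson-type argument, it is enough that $R\Gamma(\mathcal{P}\otimes\mathcal{F})\neq 0$ for every nonzero $\mathcal{F}\in D^b(\operatorname{Coh}X)$, or that the right orthogonal of $\mathcal{P}$ vanishes. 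For this one uses that $R\Hom_{\Str_X}(\mathcal{P},\mathcal{F})$ is a bounded complex of finitely generated $\mathbb{K}[V_n]\#\Gamma_n$-modules whose support in $X_0$ must be all of $\rho(\operatorname{Supp}\mathcal{F})$ — exploiting that over the open locus $V_n^{\mathrm{reg}}/\Gamma_n$ the functor $e(\bullet)$, i.e. restriction of $\mathcal{P}$-homomorphisms, is already an equivalence (the birationality remark in the excerpt), and that $\mathcal{P}$ being a vector bundle means $R\Hom(\mathcal{P},\bullet)$ detects supports. Hence if $R\Hom(\mathcal{P},\mathcal{F})=0$ then $\mathcal{F}$ is supported on the exceptional locus, and a Noetherian-induction / dévissage on the support (stratifying by symplectic leaves of $X_0$ and using the local model near each leaf, cf. the slice arguments of \ref{SSS_sympl_res_quot}) forces $\mathcal{F}=0$.

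\textbf{Step 3: identify the target.} Finally I would record that $\End_{\Str_X}(\mathcal{P})^{\mathrm{op}}\cong \mathbb{K}[V_n]\#\Gamma_n$ via the antiautomorphism above, so $R\Hom_{\Str_X}(\mathcal{P},\bullet)$ lands in $D^b(\mathbb{K}[V_n]\#\Gamma_n\operatorname{-mod})$, with quasi-inverse $\bullet\otim^{\mathbf{L}}_{\mathbb{K}[V_n]\#\Gamma_n}\mathcal{P}$. The hard part, as indicated, is the generation statement in Step 2; everything else is a formal invocation of tilting theory plus the Ext-vanishing and smoothness already built into the definition of a Procesi bundle. In characteristic $p$ (the $\Fi$ case used in \ref{SS_Procesi}) the same argument applies verbatim, which is in fact how the Procesi bundle is produced there from a derived McKay equivalence — so in that setting the proposition is almost a tautology, and the content is really the characteristic-zero descent.
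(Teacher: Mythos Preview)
Your overall strategy --- tilting theory plus generation --- is sound in principle, but Step~2 has a genuine gap, and the paper takes a different and cleaner route that avoids it.

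The paper does not attempt to prove generation of $\mathcal{P}$ directly. Instead it invokes the \emph{Calabi--Yau trick} of Bezrukavnikov--Kaledin (\cite[Proposition~2.2]{BK_Procesi}, stated here as Proposition~\ref{Prop:CY_trick}): if $X$ is smooth, projective over an affine variety, with trivial canonical class, and $\mathcal{A}$ is an Azumaya algebra on $X$ with $H^i(X,\mathcal{A})=0$ for $i>0$ and $\Gamma(\mathcal{A})$ of finite homological dimension, then $R\Gamma\colon D^b(\operatorname{Coh}(X,\mathcal{A}))\to D^b(\Gamma(\mathcal{A})\operatorname{-mod})$ is an equivalence. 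One applies this with $\mathcal{A}=\mathcal{E}nd_{\Str_X}(\mathcal{P})$: since $X$ is symplectic its canonical bundle is trivial, the Ext-vanishing for $\mathcal{P}$ gives the required cohomology vanishing for $\mathcal{A}$, and $\Gamma(\mathcal{A})=\mathbb{K}[V_n]\#\Gamma_n$ has finite global dimension. The splitting $\mathcal{A}\cong\mathcal{E}nd(\mathcal{P})$ then identifies $\operatorname{Coh}(X,\mathcal{A})$ with $\operatorname{Coh}(X)$ and $R\Gamma$ with $R\operatorname{Hom}_{\Str_X}(\mathcal{P},\bullet)$. The point of the CY trick is that triviality of $\omega_X$ makes the relative Serre functor a shift, so $\mathcal{A}$ is automatically a two-sided spanning object; this is precisely what circumvents a direct generation argument.

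Your Step~2, by contrast, is not a proof as written. Knowing that $\Str_X$ is a summand of $\mathcal{P}$ does not help: on a non-affine variety $\Str_X$ does not generate $D^b(\operatorname{Coh}X)$. Your reduction to $\mathcal{F}$ supported on the exceptional locus is fine, but the subsequent ``d\'evissage on symplectic leaves using local models'' is circular: the local model near a deeper stratum is again a symplectic resolution of a smaller quotient singularity, and showing that the restriction of $\mathcal{P}$ generates there is the same problem you started with, with no evident induction that terminates. If you want to run the tilting argument without the CY trick, you would need an independent proof that $\mathcal{P}$ generates --- e.g.\ a resolution of the diagonal, as in Beilinson's argument for $\mathbb{P}^n$ --- and none is available here in general. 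So the CY trick is not merely a packaging convenience; it supplies exactly the input (generation, in effect) that your sketch is missing.
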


The proof is based on the following more general result (Calabi-Yau trick) of (in this form)
Bezrukavnikov and Kaledin.

\begin{Prop}{\cite[Proposition 2.2]{BK_Procesi}}\label{Prop:CY_trick}
Let $X$ be a smooth variety, projective over an affine variety, with trivial canonical class.
Furthermore, let $\A$ be an Azumaya algebra over $X$ such that $\Gamma(\A)$ has finite homological
dimension and $H^i(X,\mathcal{A})=0$ for $i>0$. Then the functor $R\Gamma: D^b(\operatorname{Coh}(X,\A))
\rightarrow D^b(\Gamma(\A)\operatorname{-mod})$ is an equivalence.
\end{Prop}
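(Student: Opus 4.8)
The plan is to exhibit a fully faithful left adjoint to $R\Gamma$ whose essential image is everything. Write $\Lambda:=\Gamma(\A)$ and let $\operatorname{Loc}=\A\otimes_{\Lambda}(-)$, with derived functor $L\operatorname{Loc}$ left adjoint to $R\Gamma$. First I would check that $R\Gamma$ and $L\operatorname{Loc}$ interchange the bounded derived categories: since $X$ is projective over an affine, $R\Gamma$ (the composition of the projective pushforward with the exact global sections functor on the affine base) preserves coherence and has bounded amplitude, and $\Lambda$ is a finite module over $\Gamma(\Str_X)$, hence Noetherian; in the other direction, finiteness of the global dimension of $\Lambda$ guarantees that a bounded complex of finitely generated $\Lambda$-modules has a finite resolution by finitely generated projectives, so $L\operatorname{Loc}$ of it is a bounded complex of direct summands of free $\A$-modules, in particular coherent.

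The next step is the identity $R\Gamma\circ L\operatorname{Loc}\cong\operatorname{id}$ on $D^b(\Lambda\operatorname{-mod})$. Indeed $R\Gamma(\A)=\Gamma(\A)=\Lambda$ by the assumption $H^{>0}(X,\A)=0$, and for general $M$ one resolves by finitely generated projectives and uses that $R\Gamma$ carries a bounded complex of summands of $\A^{k}$ term by term to the corresponding complex of summands of $\Lambda^{k}$ (summands of $\A^{k}$ are acyclic for $R\Gamma$, and $\Gamma$ is additive). One checks that the adjunction unit realizes this isomorphism, so $L\operatorname{Loc}$ is fully faithful. By general nonsense this yields a semiorthogonal decomposition $D^b(\operatorname{Coh}(X,\A))=\langle \ker R\Gamma,\ \operatorname{im}L\operatorname{Loc}\rangle$: for each $F$ the cone $C_F$ of the counit $L\operatorname{Loc}R\Gamma(F)\to F$ satisfies $R\Gamma(C_F)=0$ by the triangle identities, and $\operatorname{Hom}(\operatorname{im}L\operatorname{Loc},\ker R\Gamma)=0$ again by adjunction. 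Thus the proposition will follow once we show $\ker R\Gamma=0$.

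The crux is precisely this vanishing, and it is where the triviality $\omega_X\cong\Str_X$ enters. Since $X$ is smooth and $\A$ is Azumaya, $\operatorname{Coh}(X,\A)$ has finite homological dimension (étale-locally $\A$ is a matrix algebra over $\Str_X$), and one has a Serre-duality statement for $D^b(\operatorname{Coh}(X,\A))$ relative to the affine base: the dualizing bimodule is $\omega_X$, the potential ``opposite-algebra'' twist being killed by the nondegenerate reduced trace form of $\A$. Because $\omega_X$ is trivial, the associated Serre functor is the shift $[\,\dim X\,]$, i.e.\ the category is Calabi--Yau. In a Calabi--Yau triangulated category any two-term semiorthogonal decomposition is an orthogonal (direct-sum) decomposition: mutating $\langle \ker R\Gamma,\operatorname{im}L\operatorname{Loc}\rangle$ by the Serre functor, which is shift-stable, produces $\langle \operatorname{im}L\operatorname{Loc},\ker R\Gamma\rangle$, so the $\operatorname{Hom}$ groups between the two factors vanish in both directions. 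Since $X$ is connected, $\operatorname{Coh}(X,\A)$ is indecomposable, hence so is its bounded derived category; as $\operatorname{im}L\operatorname{Loc}$ contains $\A\neq 0$, the other factor $\ker R\Gamma$ must vanish. Therefore $L\operatorname{Loc}$ is essentially surjective as well as fully faithful, so it is an equivalence with quasi-inverse $R\Gamma$, proving the proposition.

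I expect the main obstacle to be the Serre-duality input: formulating a Serre functor (or the substitute relative-duality statement) for $D^b(\operatorname{Coh}(X,\A))$ when $X$ is only projective over an affine rather than proper, and bookkeeping the Azumaya twist. A safe route is to observe that the vanishing $\ker R\Gamma=0$ can be checked after completing the affine base at each of its points, where the structure morphism becomes proper over a complete local Noetherian ring and Grothendieck--Serre duality applies cleanly (the trivialization of $\omega_X$ and connectedness of the fibres being inherited); alternatively, in our situation the affine base has trivial canonical, so the relative dualizing complex is simply a shift and the argument above goes through verbatim.
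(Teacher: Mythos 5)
Your proof is the same Calabi--Yau trick that Bezrukavnikov and Kaledin use in \cite[Proposition 2.2]{BK_Procesi}: exhibit $L\operatorname{Loc}$ as a fully faithful left adjoint via $R\Gamma\circ L\operatorname{Loc}\cong\operatorname{id}$ (this is exactly where $H^{>0}(X,\A)=0$ and finiteness of $\operatorname{gl.dim}\Gamma(\A)$ enter), get the semiorthogonal decomposition $\langle \ker R\Gamma,\operatorname{im}L\operatorname{Loc}\rangle$, use $\omega_X\cong\Str_X$ to make it orthogonal, and kill the extra factor by indecomposability of $\operatorname{Coh}(X,\A)$ over a connected $X$ (the usual clopen-support argument for a two--sided ideal decomposition of an Azumaya algebra).

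The one place you hedge --- how to run the Serre--duality step when $X$ is only proper over the affine $X_0$ --- is not actually problematic, and your proposed fixes can be replaced by something cleaner. You should not pass to completions (the morphism $X\to X_0$ is already proper, and completing the base doesn't make anything proper over a point), and you do \emph{not} need $\omega_{X_0}$ to be trivial. The right statement is relative Grothendieck duality for $\pi\colon X\to X_0$: writing $D_{X_0}:=R\operatorname{Hom}_{X_0}(-,\omega_{X_0}^{\bullet})$ for the dualizing anti-involution of $D^b_{\mathrm{coh}}(X_0)$ and $D_X$ its analogue on $X$, one has $D_{X_0}\circ R\pi_*\cong R\pi_*\circ D_X$; since $X$ is smooth with $\omega_X\cong\Str_X$, $D_X$ is just $R\mathcal{H}om_{\Str_X}(-,\Str_X)[\dim X]$. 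For $C\in\ker R\Gamma$ this gives $R\operatorname{Hom}_{\A}(C,\A)\cong R\Gamma\bigl(R\mathcal{H}om_{\Str_X}(C,\Str_X)\bigr)\cong D_{X_0}(R\Gamma(C))[-\dim X]=0$ (using the reduced trace to identify $R\mathcal{H}om_{\A}(C,\A)$ with $R\mathcal{H}om_{\Str_X}(C,\Str_X)$), and $D_{X_0}$ is conservative because $X_0$ has a dualizing complex as a finite-type $\C$-scheme --- no Gorenstein hypothesis on $X_0$ is required. This yields the orthogonality you want without any extra input, and the rest of your argument goes through.
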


Proposition \ref{Prop:McKay} follows from Proposition \ref{Prop:CY_trick} with $\A=\mathcal{E}nd(\mathcal{P})$.

Now suppose that we have a derived equivalence $\iota:D^b(\operatorname{Coh}(X))\xrightarrow{\sim}
D^b(\mathbb{K}[V]\#\Gamma_n\operatorname{-mod})$. Assume $\mathcal{P}':=\iota^{-1}(\mathbb{K}[V]\#\Gamma_n)$
is a vector bundle. Then $\End_{\mathcal{O}_X}(\mathcal{P}')=\mathbb{K}[V]\#\Gamma$
and $\operatorname{Ext}^i(\mathcal{P}',\mathcal{P}')=0$ for $i>0$. So $\mathcal{P}'$
is, basically, a Procesi bundle (it also needs to be $\mathbb{K}^\times$-equivariant, but we will
see below that this  always can be achieved). In fact, this is roughly, how the construction
of  a Procesi bundle will work, although it is  more involved and technical.

\subsubsection{Quantization of $X$}\label{SSS_quant_X_char_p}
Here and in \ref{SSS_constr_Procesi_p} everything is going to be over an algebraically closed field
$\Fi$ of characteristic $p\gg 0$.  The first step in the construction of a Procesi bundle is to produce a
Frobenius constant quantization of $X$ with special properties.

\begin{Prop}\label{Prop:quant}
There is a Frobenius constant quantization $\mathcal{D}$ of $X$ such that $\Gamma(\mathcal{D})=\Weyl(V_n)^{\Gamma_n}$
(an isomorphism of filtered algebras over $\Fi[X^{(1)}]=\Fi[V_n^{(1)}]^{\Gamma_n}$).
\end{Prop}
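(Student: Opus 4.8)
The goal is to produce a Frobenius constant quantization $\mathcal{D}$ of $X = \M^\theta(n\delta,\epsilon_0)_{\Fi}$ whose global sections recover $\Weyl(V_n)^{\Gamma_n}$. The natural candidate is the quantum Hamiltonian reduction $D_{R_{\Fi}}\red_\lambda^\theta G_{\Fi}$ at the parameter $\lambda$ corresponding to $c = 0$ via the formulas (\ref{eq:param_corresp}), i.e. $\lambda_i = \operatorname{tr}_{N_i}{\bf c}$ with ${\bf c} = \frac{1}{|\Gamma_1|}\sum_{\gamma}\gamma = e$, so that $\lambda_0 = \operatorname{tr}_{N_0}e - \frac{1}{2}(c_0+1)$ and $\lambda_i = \operatorname{tr}_{N_i}e$; concretely these are the values that over $\C$ give $eH_{1,0}e = \Weyl(V_n)^{\Gamma_n}$. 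First I would check that for $p\gg0$ this $\lambda$ lies in $\operatorname{Hom}(G,\Fi^\times)\otimes_{\Z}\Fi_p$: the components of $\lambda$ are rational numbers independent of $p$, so reducing mod $p$ makes sense and lands in the character lattice tensored with $\Fi_p$ once $p$ avoids the (finitely many) primes dividing denominators. Given that, the construction of \ref{SSS_qHam_char_p} applies verbatim: since $G_{\Fi}$ acts freely on $\mu^{-1}(0)^{\theta-ss}$ (this is the genericity of $\theta$, which persists for $p\gg0$ by the spreading-out in \ref{SSS_quiv_var_char_p}), the sheaf $D_{R_{\Fi}}\red_\lambda^\theta G_{\Fi}$ is a Frobenius constant quantization of $T^*R_{\Fi}\red_0^\theta G_{\Fi} = X$.

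**Identifying the global sections.** The substantive point is the isomorphism $\Gamma(\mathcal{D}) \cong \Weyl(V_n)^{\Gamma_n}$ of filtered algebras over $\Fi[V_n^{(1)}]^{\Gamma_n}$. I would deduce this from the characteristic-zero Theorem \ref{Thm:iso} by a spreading-out argument. Over $\C$, Theorem \ref{Thm:iso} (in the $\hbar$-adic or, equivalently, filtered form, using the $c=0$ specialization) gives a filtered isomorphism $D(R)\red_\lambda G \cong e H_{1,0} e = \Weyl(V_n)^{\Gamma_n}$ which is the identity on associated graded algebras. Both sides are defined over a finite localization $\mathfrak{R}$ of $\Z$: $\Weyl(V_n)^{\Gamma_n}$ because $\Gamma_n$ and its action on $V_n$ are defined over $\mathfrak{R}$ (enlarging $\mathfrak{R}$ so that $|\Gamma_n|$ is invertible and the $N_i$ are defined over $\mathfrak{R}$), and $D(R_{\mathfrak{R}})\red_\lambda G_{\mathfrak{R}}$ because $R$, $G$, $\mu$ are. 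After a further finite localization the $\C$-isomorphism descends to an $\mathfrak{R}$-algebra isomorphism, which I then base-change to $\Fi$. Finally I must match $\Gamma(\mathcal{D})$ with $D(R_{\Fi})\red_\lambda G_{\Fi}$: this is the char-$p$ analog of Proposition \ref{Prop:quant_alg_vs_sheaf}, whose hypotheses hold because the moment map $\mu_{\Fi}$ is flat (Theorem \ref{Thm_flat}, as noted this is why $\epsilon_0$ is used, and flatness spreads out), $X_0 = \Fi^{2n}/\Gamma_n$ is normal and reduced, and $\M^\theta_0(n\delta,\epsilon_0)_{\Fi}\to X_0$ is a resolution for $p\gg0$ by \ref{SSS_quiv_var_char_p}; then Grauert--Riemenschneider, which over $\Fi$ one replaces by the explicit $\check{\text{C}}$ech/cohomology-vanishing $H^i(X_{\Fi},\mathcal{O}_{X_{\Fi}})=0$ established in \ref{SSS_quiv_var_char_p}, gives $R\Gamma(\mathcal{D}) = D(R_{\Fi})\red_\lambda G_{\Fi}$, and in particular $H^i(\mathcal{D})=0$ for $i>0$ and $\Gamma(\mathcal{D}) = D(R_{\Fi})\red_\lambda G_{\Fi} \cong \Weyl(V_n)^{\Gamma_n}$.

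**The main obstacle.** The delicate part is not the formal reduction machinery but keeping all the characteristic-zero identities (the parameter correspondence (\ref{eq:param_corresp}) at $c=0$, flatness of $\mu$, normality of the reduction, the resolution property, the cohomology vanishing, and the isomorphism of Theorem \ref{Thm:iso} itself) valid simultaneously after reduction mod $p$ --- i.e. choosing the finite localization $\mathfrak{R}$ (equivalently the bound on $p$) uniformly. Each individual statement is a finitely-presented condition and hence holds for all but finitely many $p$, so the intersection of the good loci is still cofinite; the argument is a standard but somewhat lengthy spreading-out, and I would organize it by first fixing $\mathfrak{R}$ large enough that \ref{SSS_quiv_var_char_p} applies (giving the resolution and vanishing), then enlarging it to carry the $\C$-isomorphism of Theorem \ref{Thm:iso}. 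A secondary subtlety is that Theorem \ref{Thm:iso} is proved later in the paper (Section \ref{SS_iso_proof}) partly using Procesi bundles, so to avoid circularity one should invoke instead the $n=1$ case (Holland, \cite{Holland}) together with the completion/reduction techniques, or simply note that for the present Proposition it suffices to know the isomorphism $D(R)\red_\lambda G \cong \Weyl(V_n)^{\Gamma_n}$ for the single parameter $\lambda$ corresponding to $c=0$, which follows from \cite{EGGO} (and for $n=1$ from \cite{Holland}) independently of Procesi bundles.
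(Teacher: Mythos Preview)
Your approach is essentially that of the paper's Lemma~\ref{Lem:iso_char_p_red}: take $\mathcal{D}=D_{R_{\Fi}}\red_\lambda^\theta G_{\Fi}$ and reduce the characteristic-zero isomorphism of Theorem~\ref{Thm:iso} (at $c=0$) modulo $p$ to identify $\Gamma(\mathcal{D})$ with $\Weyl(V_n)^{\Gamma_n}$. Your resolution of the circularity---citing \cite{Holland,EGGO} for the $c=0$ isomorphism independently of Procesi bundles---is legitimate; the paper instead follows the self-contained route of \cite{BK_Procesi}, first constructing $\mathcal{D}$ on the open locus $X_1=\rho^{-1}(V_n^{sr}/\Gamma_n)$ (where only the rank-one stabilizers $\Gamma_1,\mathfrak{S}_2$ occur, so the $n=1$ case suffices) and then extending across the complement of codimension $>1$.

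There is, however, a genuine gap. You produce a filtered isomorphism $\Gamma(\mathcal{D})\cong \Weyl(V_n)^{\Gamma_n}$ that is the identity on associated graded, but the Proposition demands an isomorphism of algebras \emph{over} $\Fi[V_n^{(1)}]^{\Gamma_n}$. This is not automatic: the $p$-center is invisible in characteristic zero, so the spread-out isomorphism has no reason to fix it pointwise. All you get is that the isomorphism restricts to a filtered automorphism $\varphi$ of the center with $\gr\varphi=\operatorname{id}$, and you must show $\varphi=\operatorname{id}$. The paper does this in two steps: first it checks that the center of $\Weyl(V_n)^{\Gamma_n}$ is exactly $\Fi[V_n^{(1)}]^{\Gamma_n}$ (via the Poisson center); then, for $p$ large relative to the degrees of generators and relations, it forms $\psi:=\ln\varphi$ (a well-defined derivation since $\varphi-\operatorname{id}$ lowers degree), lifts $\psi$ to a degree-decreasing derivation of $\Fi[V_n^{(1)}]$ (the quotient $V_n^{(1)}\to V_n^{(1)}/\Gamma_n$ is ramified only in codimension $>1$), identifies it with some $\partial_v$, and observes that no nonzero $v$ is $\Gamma_n$-invariant (when $\Gamma_1=\{1\}$ one passes to the reflection representation to make this true). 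You should add this argument.

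A smaller point: your ``char-$p$ analog of Proposition~\ref{Prop:quant_alg_vs_sheaf}'' is not straightforwardly available, since taking $G$-invariants is not exact over $\Fi$. But you do not need it. Once $H^i(X_{\mathfrak{R}},\mathcal{D}'_{\mathfrak{R}})=0$ for $i>0$, flat base change gives $\Gamma(\mathcal{D}'_{\Fi})=\Fi\otimes_{\mathfrak{R}}\Gamma(\mathcal{D}'_{\mathfrak{R}})$; since $\Gamma(\mathcal{D}'_{\mathfrak{R}})$ is already an $\mathfrak{R}$-form of $D(R_{\C})\red_\lambda G_{\C}\cong \Weyl(V_{n,\C})^{\Gamma_n}$, you can pass directly to $\Weyl(V_{n,\Fi})^{\Gamma_n}$ without the detour through $D(R_{\Fi})\red_\lambda G_{\Fi}$.
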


Note that this proposition can be thought as a special case of the characteristic $p$ version of
Theorem \ref{Thm:iso}.  Here $\Gamma(\mathcal{D})$ is an analog of $D(R)\red_\lambda G$
(indeed, the latter is the algebra of global sections of some filtered quantization
of $X_{\C}$, see Proposition \ref{Prop:quant_alg_vs_sheaf}),
while $\Weyl(V_n)^{\Gamma_n}$ is the characteristic $p$ analog of
$eH_{1,0}e$.

In fact, the following is true.

\begin{Lem}\label{Lem:iso_char_p_red}
Theorem \ref{Thm:iso} (for $c=0$)  implies Proposition \ref{Prop:quant}.
\end{Lem}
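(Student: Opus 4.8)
The plan is to derive Proposition \ref{Prop:quant} by reducing the isomorphism of Theorem \ref{Thm:iso} at $c=0$ modulo a large prime $p$ and recognizing the reduction as the algebra of global sections of the Frobenius constant quantization produced by the characteristic $p$ quantum Hamiltonian reduction of \ref{SSS_qHam_char_p}. First I would spell out what Theorem \ref{Thm:iso} says at $c=0$. Here $H_{1,0}=\Weyl(V_n)\#\Gamma_n$, so $eH_{1,0}e=\Weyl(V_n)^{\Gamma_n}$, and since ${\bf c}=1/|\Gamma_1|$ in this case, formula (\ref{eq:param_corresp}) gives $\lambda_i=\dim N_i/|\Gamma_1|$ for $i=1,\dots,r$ and $\lambda_0=1/|\Gamma_1|-\tfrac12$ (for $n=1$ one puts $c_0=1$, but that case is already covered by \cite{KaVa}). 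In particular $\lambda$ is a \emph{rational} point of $\C^{Q_0}=\g^{*G}$ all of whose coordinates have denominators dividing $2|\Gamma_1|$, and Theorem \ref{Thm:iso} provides a filtered algebra isomorphism $\Weyl(V_n)^{\Gamma_n}\xrightarrow{\sim}D(R)\red_\lambda G$ over $\C$ that is the identity on associated graded algebras, where $D(R)\red_\lambda G$ is formed using the symmetrized comoment map $\Phi(\xi)=\tfrac12(\xi_R+\xi_{R^*})$.

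Next I would spread this out. Being given by finitely many structure constants, the isomorphism is defined over a finitely generated subring $\mathfrak{R}\subset\C$; enlarging $\mathfrak{R}$ I may assume $\tfrac1{2|\Gamma_1|}\in\mathfrak{R}$ and that all the $\mathfrak{R}$-forms and geometric properties arranged in \ref{SSS_quiv_var_char_p} --- the quiver data, the $\Gamma_n$-module $V_n$, the principal $G$-bundle $\mu^{-1}(0)^{\theta-ss}\to\mu^{-1}(0)^{\theta-ss}/G$, flatness of $\mu$, normality of $V_n/\Gamma_n$, the resolution property of $X\to V_n/\Gamma_n$, and $H^{>0}(X,\mathcal{O}_X)=0$ --- hold over $\mathfrak{R}$. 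Specializing at a closed point of $\Spec\mathfrak{R}$ with residue field $\Fi=\overline{\Fi}_p$ for $p\gg0$, and using that after a further finite localization of $\mathfrak{R}$ both $\gr$ and quantum Hamiltonian reduction commute with this base change, I obtain a filtered algebra isomorphism
\[
\Weyl(V_n)^{\Gamma_n}\xrightarrow{\ \sim\ }D(R_\Fi)\red_{\lambda_\Fi}G_\Fi
\]
(now over $\Fi$) that is again the identity on associated graded algebras, where $\lambda_\Fi\in\Fi_p^{Q_0}=\operatorname{Hom}(G,\Fi^\times)\otimes_\Z\Fi_p\hookrightarrow\g^{*G}$ is the reduction of $\lambda$; this makes sense precisely because the denominators of $\lambda$ are invertible in $\Fi$.

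Now I would invoke \ref{SSS_qHam_char_p}. Since $\theta$ is generic, the $G$-action on $\mu^{-1}(0)^{\theta-ss}$ is free over $\Fi$ for $p\gg0$, and since $\lambda_\Fi\in\operatorname{Hom}(G,\Fi^\times)\otimes_\Z\Fi_p$, the reduction $\mathcal{D}:=D_{R_\Fi}\red^\theta_{\lambda_\Fi}G_\Fi$ is a Frobenius constant quantization of $T^*R_\Fi\red^\theta_0 G_\Fi=X$. The argument of Proposition \ref{Prop:quant_alg_vs_sheaf} applies verbatim in characteristic $p\gg0$ --- its hypotheses (flatness of $\mu$, normality of $V_n/\Gamma_n$, $X\to V_n/\Gamma_n$ a resolution, $H^{>0}(X,\mathcal{O}_X)=0$) hold over $\Fi$ by \ref{SSS_quiv_var_char_p} --- and gives $\Gamma(\mathcal{D})=D(R_\Fi)\red_{\lambda_\Fi}G_\Fi$ as filtered algebras. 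Combining this with the displayed isomorphism yields a filtered algebra isomorphism $\Gamma(\mathcal{D})\cong\Weyl(V_n)^{\Gamma_n}$ which is the identity on associated graded algebras; since the $p$-central subalgebra is recovered from the associated graded, this is in particular an isomorphism of filtered algebras over $\Fi[X^{(1)}]=\Fi[V_n^{(1)}]^{\Gamma_n}$, which is exactly Proposition \ref{Prop:quant}.

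The step I expect to be the main obstacle is this reduction modulo $p$: one must verify that the isomorphism of Theorem \ref{Thm:iso} spreads out over an arithmetic ring over which, after localization, \emph{all} the geometric inputs survive to characteristic $p\gg0$ --- flatness of $\mu$, normality and the resolution/cohomology-vanishing properties of $X\to V_n/\Gamma_n$, and freeness of the $G$-action on $\mu^{-1}(0)^{\theta-ss}$ --- which is precisely what the set-up of \ref{SSS_quiv_var_char_p} provides, together with the elementary but essential observation, read off from (\ref{eq:param_corresp}) at $c=0$, that $\lambda$ is rational with denominators coprime to $p$, so that $\lambda_\Fi$ lands in $\operatorname{Hom}(G,\Fi^\times)\otimes_\Z\Fi_p$ and $\mathcal{D}$ is genuinely Frobenius constant rather than only a filtered quantization.
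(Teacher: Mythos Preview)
Your reduction-mod-$p$ strategy matches the paper's, and the first part of your argument---spreading out Theorem~\ref{Thm:iso} at $c=0$, checking that $\lambda$ reduces to an element of $\Fi_p^{Q_0}$, and invoking \ref{SSS_qHam_char_p} together with Proposition~\ref{Prop:quant_alg_vs_sheaf} to identify $\Gamma(\mathcal{D})$ with $D(R_\Fi)\red_{\lambda_\Fi}G_\Fi$---is essentially what the paper does.

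The gap is your last sentence. You assert that ``since the $p$-central subalgebra is recovered from the associated graded, this is in particular an isomorphism of filtered algebras over $\Fi[X^{(1)}]=\Fi[V_n^{(1)}]^{\Gamma_n}$.'' This does not follow. The isomorphism $\varphi:\Gamma(\mathcal{D})\xrightarrow{\sim}\Weyl(V_n)^{\Gamma_n}$ is an algebra isomorphism, so it certainly carries the center to the center; once one checks (as the paper does) that both centers equal $\Fi[V_n^{(1)}]^{\Gamma_n}$, $\varphi$ restricts to an \emph{automorphism} of $\Fi[V_n^{(1)}]^{\Gamma_n}$ that preserves the filtration and is the identity on the associated graded. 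But being the identity on $\gr$ only says $\varphi(z)-z$ has strictly lower degree for each central $z$; it does not say $\varphi(z)=z$. Proposition~\ref{Prop:quant} demands linearity over $\Fi[V_n^{(1)}]^{\Gamma_n}$, i.e.\ that this induced automorphism is the identity, and this is a genuine additional step.

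The paper supplies the missing argument: one shows that $\Fi[V_n^{(1)}]^{\Gamma_n}$ admits no nontrivial filtered automorphism that is the identity on $\gr$. For $p\gg 0$ one can form $\psi:=\ln\varphi$ (a well-defined derivation since $\varphi-1$ is nilpotent on any bounded-degree piece), lift $\psi$ to $\Fi[V_n^{(1)}]$ using that the quotient $V_n^{(1)}\to V_n^{(1)}/\Gamma_n$ is ramified only in codimension $\geqslant 2$, and observe that a degree-decreasing derivation of a polynomial ring is $\partial_v$ for some vector $v$; $\Gamma_n$-invariance then forces $v=0$ (with a small modification when $\Gamma_1=\{1\}$). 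You should add this step, or at least flag that the $\Fi[V_n^{(1)}]^{\Gamma_n}$-linearity requires its own argument.
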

\begin{proof}
First, let us see that we get an isomorphism $\Gamma(\mathcal{D})\cong \Weyl(V_n)^{\Gamma_n}$
of filtered algebras that is the identity on the associated graded algebra. Set
$\mathcal{D}:=D(R)\red_{\lambda}^\theta G$, where $\lambda$ is the parameter corresponding to $c=0$.

The algebra
$\Weyl(V_{n,\C})^{\Gamma_n}$ is finitely generated and so an isomorphism in Theorem
\ref{Thm:iso} is defined over some finitely generated subring $\mathfrak{R}$ of $\C$.
We can enlarge $\mathfrak{R}$ and assume that we are in the situation described in \ref{SSS_quiv_var_char_p}.
We can form filtered quantizations $\mathcal{D}'_{\C}, \mathcal{D}'_{\mathfrak{R}}, \mathcal{D}'_{\Fi}$
of $X_{\C}, X_{\mathfrak{R}}, X$. Both $\mathcal{D}'_{\C}, \mathcal{D}'_{\Fi}$ are obtained as suitable
completions of base changes of $\mathcal{D}'_{\mathfrak{R}}$ (completions are necessary because
of our condition on the filtration in the definition of a filtered quantization,
see \ref{SSS_filt_quant_sheaf}). In particular, $D(R_{\C})\red_\lambda G_{\C}=(\Gamma(\mathcal{D})=)
\C\otimes_{\mathfrak{R}}\Gamma(\mathcal{D}'_{\mathfrak{R}})$, while $\Gamma(\mathcal{D})=(\Gamma(\mathcal{D}'_{\Fi})=)\Fi\otimes_{\mathfrak{R}}\Gamma(\mathcal{D}'_{\mathfrak{R}})$.

So we can reduce an isomorphism from Theorem \ref{Thm:iso} (for $c=0$) mod $p\gg 0$ and get an isomorphism  $\Gamma(\mathcal{D})\cong \Weyl(V_n)^{\Gamma_n}$.
What remains to show is that this isomorphism is $\Fi[V_n^{(1)}]^{\Gamma_n}$-linear.
The first step here is to show that $\Fi[V_n^{(1)}]^{\Gamma_n}$ is the center
of $\Weyl(V_n)^{\Gamma_n}$. It is enough to check that $\Fi[V_n^{(1)}]^{\Gamma_n}$
coincides with the center of the Poisson algebra $\Fi[V_n]^{\Gamma_n}$.
Here we just note that the Poisson center of $\Fi[V_n]^{\Gamma_n}$
is finite and birational over $\Fi[V_n^{(1)}]^{\Gamma_n}$ and use that the latter algebra
is normal. So the isomorphism $\Gamma(\mathcal{D})\cong \Weyl(V_n)^{\Gamma_n}$
induces an automorphism of $\Fi[V_n^{(1)}]^{\Gamma_n}$. This isomorphism preserves the filtration
and is trivial on the level of associated graded algebras.
%
%

The second step is to show that the algebra $\Fi[V_n^{(1)}]^{\Gamma_n}$ has no nontrivial automorphisms
$\varphi$ with such properties. Let us define a derivation $\psi$ of $\Fi[V_n^{(1)}]^{\Gamma_n}$ that should be
thought as $\ln \varphi$. The degrees of generators of  $\Fi[V_n^{(1)}]^{\Gamma_n}$
are bounded from above for all $p\gg 0$ and so are degrees of relations between them.
Observe that it is only enough to define a derivation on generators and it will be well-defined
as long as it sends all relations to $0$. Now to construct $\psi$ we note that  $\varphi-1$
decreases degrees, and hence $\psi:=\ln\varphi$ makes sense as long as $p$ is sufficiently large.
The derivation $\psi$ lifts to $\Fi[V_n^{(1)}]$ because the quotient
morphism $V_n^{(1)}\rightarrow V_n^{(1)}/\Gamma_n$ is ramified in codimension bigger than $1$.
Since it decreases degrees, we see that $\psi$ has the form $\partial_v$ for some $v\in \Fi^{2n(1)}$. But, if $\Gamma_1\neq \{1\}$, the vector $v$ cannot be $\Gamma_n$-equivariant and so $\partial_v$ does not preserve $\Fi[V_n^{(1)}]^{\Gamma_n}$.
When $\Gamma_1=\{1\}$, there is a $\Gamma_n$-invariant vector. However, in this case we can modify our construction:
consider the reflection representation $\h$ of $\mathfrak{S}_n$ instead of the permutation representation $\C^n$.
We need to replace $R$ with $\mathfrak{sl}_n\oplus \C^n$. Theorem \ref{Thm:iso} gets modified accordingly.
\end{proof}

However, the easiest way to prove Theorem \ref{Thm:iso} is by using Procesi bundles
(at least for non-cyclic $\Gamma_1$ or general $c$, the case $c=0$ may be easier).
So we need some roundabout way to construct $\mathcal{D}$. In \cite{BK_Procesi} the question
of existence of $\mathcal{D}$ was reduced to $n=1$. More precisely, let $V^{sr}$ denote
the set of all $v\in V_n$ such that $\dim V^{\Gamma_v}>2$. Let us write
$X_1:=\rho^{-1}(V_n^{sr}/\Gamma_n)$. This is an open subset in $X$  with
$\operatorname{codim}_{X}X\setminus X_1>1$. First, Bezrukavnikov and Kaledin
produce a Frobenius constant quantization $\mathcal{D}_1$ of $X_1$ with
$\Gamma(\mathcal{D}_1)=\Weyl(V_n)^{\Gamma_n}$. This requires the existence
of such a quantization in the case when $n=1$. The latter case can be handled using
Theorem \ref{Thm:iso} proved in this case by Holland (that can be alternatively
proved using the existence of a Procesi bundle in the case $n=1$). When
$\mathcal{D}_1$ is constructed, Bezrukavnikov and Kaledin use the inequality
$\operatorname{codim}X\setminus X_1>1$ to show that $\mathcal{D}_1$ uniquely
extends to a Frobenius constant quantization $\mathcal{D}$ of $X$, automatically
with $\Gamma(\mathcal{D})=\Weyl(V_n)^{\Gamma_n}$.

\subsubsection{Construction of a Procesi bundle: characteristic $p$}\label{SSS_constr_Procesi_p}
Let $\mathcal{D}$ be as in the previous subsection. We will produce a Procesi bundle on
$X^{(1)}$ starting from $\mathcal{D}$. Since $X^{(1)}\cong X$ (an isomorphism of
$\Fi$-varieties), this will automatically establish a Procesi bundle on $X$. The isomorphism
$X^{(1)}\cong X$ follows from the observation that $X$ is defined over $\Fi_p$ and
$\operatorname{Fr}$ is an isomorphism of $\Fi$ fixing $\Fi_p$.

By Proposition \ref{Prop:CY_trick}, we have a derived equivalence $D^b(\operatorname{Coh}(X^{(1)},\mathcal{D}))
\xrightarrow{\sim} D^b(\Weyl(V_n)^{\Gamma_n}\operatorname{-mod})$. Also we have an abelian
equivalence $\Weyl(V_n)^{\Gamma_n}\operatorname{-mod}\xrightarrow{\sim}\Weyl(V_n)\#\Gamma_n\operatorname{-mod}=
\Weyl(V_n)\operatorname{-mod}^{\Gamma_n}$. Composing the two equivalences, we get
\begin{equation}\label{eq:equi1} D^b(\operatorname{Coh}(X,\mathcal{D}))\xrightarrow{\sim}
D^b(\Weyl(V_n)\operatorname{-mod}^{\Gamma_n}),\end{equation}
while what we need is a derived McKay equivalence
\begin{equation}\label{eq:equi2} D^b(\operatorname{Coh}X^{(1)})\xrightarrow{\sim} D^b(\Fi[V_n^{(1)}]\operatorname{-mod}^{\Gamma_n}).\end{equation}
Recall that $\mathcal{D}$ is an Azumaya algebra on $X$, while $\Weyl(V_n)$
is a $\Gamma_n$-equivariant Azumaya algebra on $V_n^{(1)}$. If we had a splitting
and a $\Gamma_n$-equivariant splitting, respectively, we would get (\ref{eq:equi2})
from (\ref{eq:equi1}). However, this is obviously not the case: $\Weyl(V_n)$
admits no splitting at all.

This can be fixed by passing to completions at $0$. Namely, let $X^{(1)\wedge_0}$ denote the formal
neighborhood of $(\rho^{(1)})^{-1}(0)$ in $X^{(1)}$. It was checked in \cite[Section 6.3]{BK_Procesi} that
the restriction of $\mathcal{D}$ to $X^{(1)\wedge_0}$ splits. Also it was checked that
the restriction of $\Weyl(V_n)$ to the formal neighborhood of $0$ in $\Fi^{2n(1)\wedge_0}$
admits a $\Gamma_n$-equivariant splitting. So, we get an equivalence
$$\iota:D^b(\operatorname{Coh}(X^{(1)\wedge_0}))\xrightarrow{\sim} D^b(\Fi[V_n^{(1)}]^{\wedge_0}\#\Gamma_n\operatorname{-mod})$$
that makes the following diagram commutative (all arrows are equivalences
of triangulated categories and all arrows but $R\Gamma$ come from abelian equivalences):

\begin{picture}(140,30)
\put(4,2){$D^b(\operatorname{Coh}(X^{(1)\wedge_0}))$}
\put(1,22){$D^b(\operatorname{Coh}(X^{(1)\wedge_0}, \mathcal{D}))$}
\put(56,22){$D^b(\Weyl(V_n)^{\wedge_0\Gamma_n}\operatorname{-mod})$}
\put(111,22){$D^b(\Weyl(V_n)^{\wedge_0}\#\Gamma_n\operatorname{-mod})$}
\put(110,2){$D^b(\Fi[V_n^{(1)}]^{\wedge_0}\#\Gamma_n\operatorname{-mod})$}
\put(12,8){\vector(0,1){12}}
\put(13,12){\tiny $\mathcal{B}^*\otimes\bullet$}
\put(37,23){\vector(1,0){18}}
\put(45,24){\tiny $R\Gamma$}
\put(95,23){\vector(1,0){14}}
\put(125,8){\vector(0,1){12}}
\put(37,3){\vector(1,0){70}}
\end{picture}

Here $\mathcal{B}$ denotes a splitting bundle for the restriction of $\mathcal{D}$ to $X^{(1)\wedge_0}$.

Set $\mathcal{P}':=\iota^{-1}(\Fi[V_n^{(1)}]^{\wedge_0}\#\Gamma_n)$. We claim that
$\mathcal{P}'$ is a vector bundle on $X^{(1)\wedge_0}$. Indeed, the image of $\Fi[V_n^{(1)}]^{\wedge_0}\#\Gamma_n$
in $\Weyl(V_n)^{\wedge_0\Gamma_n}\operatorname{-mod}$ is a projective generator and so
is a direct summand in the sum of several copies of $\Weyl(V_n)^{\wedge_0\Gamma_n}$.
But $R\Gamma^{-1}(\Weyl(V_n)^{\wedge_0\Gamma_n})=\mathcal{B}^*$. So $\mathcal{P}'$ is a direct summand in
a vector bundle (the sum of several copies of $\mathcal{B}^*$) and hence is a vector bundle itself.

So we get a vector bundle $\mathcal{P}'$ on $X^{(1)\wedge_0}$ that satisfies
$\operatorname{End}(\mathcal{P}')\cong\Fi[V_n^{(1)}]^{\wedge_0}\#\Gamma_n,
\operatorname{Ext}^i(\mathcal{P}',\mathcal{P}')=0$ for $i>0$.
The latter vanishing implies that $\mathcal{P}'$ is equivariant with respect
to the $\Fi^\times$-action on $X^{(1)\wedge_0}$, see \cite{Vologodsky}. From here it follows
that $\mathcal{P}'$ can be extended to $X^{(1)}$ (this is because  $\Fi^\times$
contracts $X^{(1)}$ to the zero fiber, see \cite[Section 2.3]{BK_Procesi}). Moreover, we can modify the equivariant
structure on $\mathcal{P}'$ and achieve that the isomorphism
$\operatorname{End}(\mathcal{P}')\cong\Fi[V_n^{(1)}]^{\wedge_0}\#\Gamma_n$
is $\Fi^\times$-equivariant, see \cite[Section 3.1]{Procesi}. It follows that $\mathcal{P}$ is a Procesi bundle.

\subsubsection{Construction of a Procesi bundle: lifting to characteristic $0$}\label{SSS_constr_Procesi_0}
Recall the $\mathfrak{R}$-scheme $X_{\mathfrak{R}}$ from \ref{SSS_quiv_var_char_p}.
We may assume $\mathfrak{R}$ is regular. Taking an algebraic extension of  $\mathfrak{R}$,
we get a maximal ideal $\mathfrak{m}$ such that there is a Procesi bundle $\mathcal{P}_{\Fi}$ on $X_{\Fi}$,
where $\Fi$ is an algebraic closure of $\Fi_0:=\mathfrak{R}/\mathfrak{m}$. We may assume that $\mathcal{P}_{\Fi}$
is defined over $\Fi_0$, let $\mathcal{P}_{\Fi_0}$ be the corresponding form.
Let $\mathfrak{R}^\wedge$ be the $\mathfrak{m}$-adic completion of $\mathfrak{R}$.
Since $\Ext^i(\mathcal{P}_{\Fi_0}, \mathcal{P}_{\Fi_0})=0$ for $i=1,2$, we see that $\mathcal{P}_{\Fi_0}$ uniquely
deforms to a $\mathbb{G}_m$-equivariant vector bundle on the formal neighborhood of
$X_{\Fi_0}$ in $X_{\mathfrak{R}^\wedge}$ (see \cite[Section 2.3]{BK_Procesi}).

Let us show that the $\mathbb{G}_m$-finite part of $\operatorname{End}(\mathcal{P}_{\mathfrak{R}^\wedge})$
is $\mathfrak{R}^\wedge[V_n]\#\Gamma_n$. Consider the formal neighborhood $Z$
of $X_{\Fi_0}^{reg}$ in $X_{\mathfrak{R}^\wedge}^{reg}$. Note that
$\operatorname{Ext}^1(\mathcal{P}_{\Fi_0}|_{X_0^{reg}},\mathcal{P}_{\Fi_0}|_{X_0^{reg}})=0$, see, for example, \cite[Appendix]{BL}. So the  restriction of
$\mathcal{P}_{\mathfrak{R}^\wedge}$ to $Z$ coincides with $\eta_* \mathcal{O}_{(\mathfrak{R}^{\wedge 2n})^{reg}}$,
where $\eta$ denotes the quotient morphism $\mathfrak{R}^{\wedge 2n}\rightarrow \mathfrak{R}^{\wedge 2n}/\Gamma_n$.
This implies the claim about endomorphisms.

Since $\mathcal{P}_{\mathfrak{R}^\wedge}$ is $\mathbb{G}_m$-equivariant and the $\mathbb{G}_m$-action
is contracting, it extends from a formal neighborhood of $X_{\Fi_0}$ in $X_{\mathfrak{R}}$ to
$X_{\mathfrak{R}^\wedge}$. So we get a Procesi bundle
on $X_K$, where $K=\operatorname{Frac}(\mathfrak{R}^\wedge)$. But being a finite extension of
the $p$-adic field, $K$ embeds into $\C$ and so we get a Procesi bundle on $X$.


\subsection{Symplectic reflection algebras}\label{SS_SRA}
\subsubsection{Flatness and universality}
Let $V$ be a symplectic vector space with form $\Omega$ and $\Gamma\subset \operatorname{Sp}(V)$ be a finite group
of symplectomorphisms. We write $S$
for the set of symplectic reflections in $\Gamma$, it is a union of
conjugacy classes: $S=S_0\sqcup S_1\sqcup\ldots\sqcup S_r$. We pick independent variables
$t,c_0,\ldots,c_r$.

Recall the universal Symplectic reflection algebra ${\bf H}$, the quotient
of $T(V)\#\Gamma[t,c_0,\ldots,c_r]$ by the relations (\ref{eq:SRA_reln}). Let us write
$\param_{univ}$ for the vector space with basis $t,c_0,\ldots,c_r$ so that
${\bf H}$ is a graded $S(\param_{univ})$-algebra.

\begin{Thm}\label{Thm:flatness}
The algebra ${\bf H}$ is a free graded $S(\param_{univ})$-module. Moreover, assume
that $\Gamma$ is symplectically irreducible.  Then ${\bf H}$ is universal
with this property in the following sense. Let $\param'$ be a vector space and ${\bf H}'$
be a graded $S(\param')$-algebra (with $\deg\param'=2$) that is a free graded $S(\param')$-module
and ${\bf H}'/(\param')=S(V)\#\Gamma$. Then there is a unique linear map $\nu:\param_{univ}\rightarrow
\param'$ and unique isomorphism $S(\param')\otimes_{S(\param_{univ})}{\bf H}\xrightarrow{\sim} {\bf H}'$
of graded $S(\param')$-algebras that induces the identity isomorphism of $S(V)\#\Gamma_n$.
\end{Thm}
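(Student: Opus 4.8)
The strategy is to realize both claims — freeness of $\mathbf{H}$ over $S(\param_{univ})$ and the universality property — as consequences of a Hochschild cohomology computation for $S(V)\#\Gamma$, in the spirit of \cite{EG}. The first step is to set up the deformation-theoretic framework: a graded flat $S(\param')$-algebra deformation of $S(V)\#\Gamma$ with $\deg\param'=2$ is classified (to first order) by a graded piece of $HH^2(S(V)\#\Gamma)$, and the obstructions to extending such a deformation live in $HH^3(S(V)\#\Gamma)$. Concretely, I would use the PBW-type presentation: a deformation is governed by an alternating bilinear map $V\times V\to (S(V)\#\Gamma)\otimes \param'^*$ of the appropriate degree (the would-be commutator $[u,v]$), and the Jacobi identity is the obstruction condition. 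So the key is to identify the relevant graded components of $HH^\bullet(S(V)\#\Gamma)$.

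The second step is the cohomology computation itself. Using the decomposition $HH^\bullet(S(V)\#\Gamma)=\left(\bigoplus_{\gamma\in\Gamma}HH^\bullet(S(V),S(V)\gamma)\right)^{\Gamma}$ and the Koszul resolution of $S(V)$, one computes $HH^\bullet(S(V),S(V)\gamma)$ in terms of $\bigwedge^\bullet$ of the coinvariant space $V/\ker(\gamma-1)$, twisted by $S(V^\gamma)$. The upshot — and this is the heart of \cite{EG} — is that in the degree relevant to our deformation problem (commutator of degree $0$ in the grading where $\deg V=1$), $HH^2$ is spanned by the class of the symplectic form $\Omega$ itself (coming from $\gamma=1$) together with one class $\omega_s$ for each conjugacy class $S_i$ of symplectic reflections $s$ (coming from $\gamma=s$, where $\operatorname{rk}(s-1_V)=2$ forces the coinvariant space to be $2$-dimensional). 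This gives exactly $r+2 = \dim\param_{univ}$ independent first-order deformations, which are precisely the ones appearing on the right side of \eqref{eq:SRA_reln}. Moreover one checks that the obstruction space $HH^3$ in the corresponding degree vanishes, so all these first-order deformations extend (uniquely, up to equivalence) to all orders, and the universal such deformation is a free $S(\param_{univ})$-module whose fiber at $0$ is $S(V)\#\Gamma$. That $\mathbf{H}$ as defined by \eqref{eq:SRA_reln} \emph{is} this universal deformation is then the PBW property, already recorded in the excerpt as checked in \cite{EG}: one verifies the defining relations are consistent by a diamond-lemma / Jacobi-identity argument, which the cohomology vanishing guarantees.

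The third step handles universality under the symplectic irreducibility hypothesis. Given $\mathbf{H}'$ a graded flat $S(\param')$-algebra with $\mathbf{H}'/(\param')\cong S(V)\#\Gamma$, reduce modulo $(\param')^2$ to get a first-order deformation, hence a linear map $\param'^* \to HH^2_{\text{relevant}} = \operatorname{span}(\Omega,\omega_{s_1},\dots,\omega_{s_r}) \cong \param_{univ}^*$; dualizing gives the desired $\nu:\param_{univ}\to\param'$. Here symplectic irreducibility is used to pin down the $\gamma=1$ contribution: on $\C[V]^\Gamma$ the space of Poisson brackets (degree $-2$ skew bi-derivations) is one-dimensional by the lemma cited just before Theorem \ref{Thm:flatness}, so the "$t$"-direction is canonically a line and there is no ambiguity in matching it. Then one shows $S(\param')\otimes_{S(\param_{univ})}\mathbf{H}\to\mathbf{H}'$ is an isomorphism: it is surjective because $V$ and $\Gamma$ generate, both sides are free $S(\param')$-modules of the same graded rank (by the flatness statement applied to both), and the map is the identity at $\param'=0$, so graded Nakayama finishes it. Uniqueness of $\nu$ follows because the first-order part of the deformation determines $\nu$ and any two isomorphisms inducing the identity on $S(V)\#\Gamma$ differ by an automorphism trivial mod $(\param')$, which by a grading/degree argument must be trivial.

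The main obstacle I expect is the Hochschild cohomology computation in Step 2 — specifically, isolating \emph{exactly} the right graded component and verifying both that $HH^2$ there has dimension precisely $r+2$ (no extra classes from non-reflection group elements sneaking in, which is where the grading constraint $\deg V = 1$, forcing the commutator to land in degree $0$, does the work) and that the relevant $HH^3$ vanishes so there are no higher obstructions. This is essentially re-deriving the core of \cite{EG}, and the bookkeeping with the $\Gamma$-invariants of the twisted Koszul complexes, together with tracking gradings carefully through the $\gamma \neq 1$ summands, is the delicate part. The universality bookkeeping in Step 3 is then comparatively formal once the cohomology is in hand.
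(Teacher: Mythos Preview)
Your proposal is correct and follows essentially the same route as the paper: compute the relevant graded pieces of $HH^\bullet(S(V)\#\Gamma)$ via the decomposition $HH^i(S(V)\#\Gamma)=\big(\bigoplus_{\gamma}HH^i(S(V),S(V)\gamma)\big)^\Gamma$, obtain $\dim HH^2(S(V)\#\Gamma)^{-2}=r+2$ together with the vanishing $HH^i(A)^j=0$ for $i+j<0$, invoke the resulting universal graded deformation (the paper packages this as Lemma~\ref{Lem:free_deform}), and then identify $\mathbf H$ with that universal deformation. Two small points of alignment: the paper uses the uniform vanishing $HH^i(A)^j=0$ for $i+j<0$ rather than isolating $HH^3$ separately (this also takes care of the uniqueness of the isomorphism, via $HH^1$), and symplectic irreducibility enters directly as $(\bigwedge^2 V)^\Gamma=\C\Omega$ in the $\gamma=1$ summand of the $HH^2$ computation rather than through the Poisson-bracket lemma you cite.
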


When $\Gamma_1\neq \{1\}$, then the action of the group $\Gamma_n$ on $V_n=\C^{2n}$ is symplectically
irreducible. When $\Gamma_1=\{1\}$, the module $\C^{2n}$ over $\Gamma_n$ is not symplectically irreducible,
so we replace $\C^{2n}$ with $V_n=\h\oplus \h^*$, where $\h$ is the reflection representation of $\mathfrak{S}_n$.
Note that we did the same in \ref{SSS_quant_X_char_p}.

\subsubsection{Hochschild cohomology}
Before we prove this theorem we will need to get some information about Hochshild cohomology
of $S(V)\#\Gamma$. We need this because the Hochschild cohomology controls deformations of $S(V)\#\Gamma$.

Let $A$ be a graded algebra. We want to describe graded deformations of $A$.
The Hochschild cohomology group $\operatorname{HH}^i(A)$ inherits the grading
from $A$, let $\operatorname{HH}^i(A)^j$ denote the $j$th graded component. The
general deformation theory implies the following.

\begin{Lem}\label{Lem:free_deform}
Assume that $\dim \operatorname{HH}^2(A)^{-2}<\infty$ and $\operatorname{HH}^i(A)^j=0$ for $i+j<0$. Set $P_{univ}:=(HH^2(A)^{-2})^*$. Then there is a free graded $S(P_{univ})$-algebra $\A_{univ}$ (with $\deg P_{univ}=2$)
such that $\A_{univ}/(P_{univ})=A$ that is a universal graded deformation of $A$
in the same sense as in Theorem \ref{Thm:flatness}.
\end{Lem}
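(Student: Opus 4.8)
The plan is to run the standard Gerstenhaber obstruction theory for graded deformations, using the two hypotheses to kill every obstruction. First I would fix the bookkeeping. Since $P_{univ}$ sits in degree $2$, a graded $S(P_{univ})$-algebra that is free as a graded $S(P_{univ})$-module and reduces to $A$ modulo $(P_{univ})$ is, after choosing a graded module splitting $\A_{univ}\cong A\otimes S(P_{univ})$, the same datum as an associative product of the form
\[
a*b=ab+\sum_{k\geqslant 1}m_k(a,b),\qquad m_k\in\operatorname{Hom}(A\otimes A,A)^{-2k}\otimes S^k(P_{univ}),
\]
where the superscript denotes the graded component of the Hom-space and $S^k(P_{univ})$ is placed in degree $2k$. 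Associativity of $*$ is the usual tower of Gerstenhaber equations: for each $N$, the Hochschild differential $\partial m_N$ equals a universal quadratic expression in the Gerstenhaber brackets of the $m_i$ with $i<N$, and in particular $m_1$ must be a $2$-cocycle. Because $A=S(V)\#\Gamma$ is non-negatively graded, for each fixed degree only finitely many $S^k(P_{univ})$ contribute, so such a $*$ automatically defines an honest (uncompleted) graded algebra; and $\dim\operatorname{HH}^2(A)^{-2}<\infty$ makes $S(P_{univ})$ a polynomial ring in finitely many degree-$2$ variables.

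For existence I would construct the $m_k$ by induction on $k$. For $k=1$, take $m_1$ to be the canonical element of $\operatorname{HH}^2(A)^{-2}\otimes(\operatorname{HH}^2(A)^{-2})^{*}=\operatorname{HH}^2(A)^{-2}\otimes P_{univ}$ corresponding to the identity, i.e. pick a basis of $\operatorname{HH}^2(A)^{-2}$, lift each basis vector to a genuine $2$-cocycle, and pair with the dual basis of $P_{univ}$; this is a $2$-cocycle, so the order-$1$ equation holds. Given $m_1,\dots,m_{N-1}$ with $N\geqslant 2$, the right-hand side of the order-$N$ Gerstenhaber equation is automatically a closed Hochschild $3$-cochain of degree $-2N$ with values in $S^N(P_{univ})$, and its class — the obstruction to solving for $m_N$ — lies in $\operatorname{HH}^3(A)^{-2N}\otimes S^N(P_{univ})$; since $3+(-2N)\leqslant -1<0$ for $N\geqslant 2$, the hypothesis gives $\operatorname{HH}^3(A)^{-2N}=0$, the obstruction vanishes, and a suitable $m_N$ exists. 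Iterating produces $\A_{univ}$.

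For universality, let ${\bf H}'$ be a free graded $S(\param')$-algebra with ${\bf H}'/(\param')=A$. Its first-order term defines a well-defined class in $\operatorname{HH}^2(A)^{-2}\otimes\param'=\operatorname{Hom}(P_{univ},\param')$, which is the map $\nu$; $\nu$ is unique because any isomorphism $S(\param')\otimes_{S(P_{univ})}\A_{univ}\xrightarrow{\sim}{\bf H}'$ restricting to the identity on $A$ must match first-order terms, and $P_{univ}=(\operatorname{HH}^2(A)^{-2})^{*}$ turns this matching into an identification of $\nu$ with that first-order class. Now compare the base change $\nu^{*}\A_{univ}:=S(\param')\otimes_{S(P_{univ}),\nu}\A_{univ}$ with ${\bf H}'$: by construction of $\nu$ they are graded deformations over $S(\param')$ with the same first-order cohomology class, so after one first-order change of splitting they agree modulo $(\param')^2$. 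Inductively, if two graded deformations over $S(\param')$ agree modulo $(\param')^{k+1}$ with $k\geqslant 1$, the difference of their products is a closed Hochschild $2$-cochain whose class lives in $\operatorname{HH}^2(A)^{-2(k+1)}\otimes S^{k+1}(\param')$; since $2-2(k+1)=-2k\leqslant -2<0$ this group vanishes, the difference is a coboundary, and it is absorbed by a change of splitting equal to the identity modulo $(\param')^{k+1}$. This builds the required isomorphism. Uniqueness of the isomorphism follows the same way: the quotient of two such isomorphisms is a graded automorphism of $\nu^{*}\A_{univ}$ equal to the identity modulo $(\param')$, and the leading term in which it differs from the identity, say in $S^{k}(\param')$ with $k\geqslant 1$, would be a derivation of $A$ of degree $-2k$, i.e. a class in $\operatorname{HH}^1(A)^{-2k}=0$ (as $1-2k<0$); hence that automorphism is the identity.

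The homological input here is completely standard Gerstenhaber theory, so the only real work is the grading bookkeeping: one must verify that beyond first order every obstruction, every ambiguity in the extension, and every non-identity automorphism is governed by a group $\operatorname{HH}^{i}(A)^{j}$ with $i+j<0$, while the first-order behaviour is governed exactly by $\operatorname{HH}^2(A)^{-2}$ — which is precisely why $P_{univ}$ is placed in degree $2$ and taken to be the linear dual of $\operatorname{HH}^2(A)^{-2}$. The mild convergence point, that the inductive construction closes up to an honest graded algebra rather than a formal one, is free because $A$ is bounded below.
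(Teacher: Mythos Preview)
Your argument is correct and is exactly the standard Gerstenhaber obstruction theory the paper has in mind: the paper does not prove this lemma at all, it merely prefaces it with ``The general deformation theory implies the following'' and moves on to the Hochschild computation for $S(V)\#\Gamma$. So there is nothing to compare against; you have supplied the omitted details.

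One small remark on presentation: in your convergence step you invoke that $A=S(V)\#\Gamma$ is non-negatively graded, but the lemma as stated is for an abstract graded algebra $A$ satisfying the two cohomological hypotheses. The boundedness-below is indeed needed to pass from the formal tower $(m_k)$ to an honest uncompleted graded $S(P_{univ})$-algebra, and the paper is silently assuming it (the only application is to $S(V)\#\Gamma$), but strictly speaking you are adding a hypothesis not recorded in the lemma. It would be cleaner to say ``assume further that $A$ is bounded below'' rather than to name the specific algebra at that point.
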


What we are going to do is to compute the relevant graded components of $\operatorname{HH}^\bullet(SV\#\Gamma)$.
The vanishing result is easy and the computation of $P_{univ}$ is more subtle.

First, we use the fact that $\operatorname{HH}^i(A,M)=\operatorname{Ext}^i_{A\otimes A^{opp}}(A,M)$
(where $M$ is an $A$-bimodule) to see that
\begin{equation}\label{eq:HH1}
\operatorname{HH}^i(S(V)\#\Gamma, S(V)\#\Gamma)=\operatorname{HH}^i(S(V),S(V)\#\Gamma)^{\Gamma}.
\end{equation}
We have a $\Gamma$-action on $\operatorname{HH}^i(S(V),S(V)\#\Gamma)$ because both $S(V)$-bimodules
$S(V), S(V)\#\Gamma$ are $\Gamma$-equivariant. We have $S(V)\#\Gamma=\bigoplus_{\gamma\in \Gamma}S(V)\gamma$
of $S(V)$-bimodules, where $S(V)\gamma$ is identified with $S(V)$ as a left $S(V)$-module and the right action
is given by $f\cdot a:=f\gamma(a)$.

Let us compute $\operatorname{HH}^i(S(V),S(V)\gamma)$ in degrees we are interested in:  $j<-i$
and also $j=-2$ for $i=2$. We have $\gamma=\operatorname{diag}(\gamma_1,\ldots,\gamma_n)$,
where we view $\gamma_i$ as elements of  cyclic groups acting on $\C$. Then we have an isomorphism
of bigraded spaces
\begin{equation}\label{eq:HH2}
\bigoplus_i \operatorname{HH}^i(S(V),S(V)\gamma)\cong\bigotimes_{\ell=1}^n \bigoplus_i \operatorname{HH}^i(\C[x], \C[x]\gamma_\ell).
\end{equation}
For an arbitrary $\gamma_\ell$, we have $\operatorname{HH}^i(\C[x], \C[x]\gamma_\ell)=0$ when $i>1$.
When $\gamma_\ell=1$, we have $\operatorname{HH}^0(\C[x],\C[x])=\C[x]$ and $\operatorname{HH}^1(\C[x],\C[x])=\C[x]\{1\}$,
where $\{1\}$ indicates the grading shift by $1$ so that $\operatorname{HH}^1(\C[x],\C[x])$ is a free module
generated in degree $-1$. When $\gamma_\ell\neq 1$, then $\operatorname{HH}^0(\C[x],\C[x]\gamma_\ell)=0$
and $\operatorname{HH}^1(\C[x],\C[x])=\C\{1\}$.

This computation easily implies that $\operatorname{HH}^i(S(V),S(V)\#\Gamma)^j=0$ when $i+j<0$.
Now let explain how to compute $\left(\operatorname{HH}^2(S(V),S(V)\#\Gamma)^{-2}\right)^\Gamma$.
If $\operatorname{HH}^2(S(V),S(V)\gamma)^{-2}\neq 0$, then either $\gamma=1$ or $\gamma$
is a symplectic reflection. When $\gamma=1$, then $\operatorname{HH}^2(S(V),S(V)\gamma)^{-2}=\bigwedge^2 V$.
When $\gamma$ is a symplectic reflection, then $\operatorname{HH}^2(S(V),S(V)\gamma)^{-2}=\C$.
An element $\gamma_1\in \Gamma$ maps $S(V)\gamma$ to $S(V)(\gamma_1\gamma\gamma_1^{-1})$.
The action of $\Gamma$ on $\operatorname{HH}^2(S(V),S(V)\gamma)^{-2}=\bigwedge^2 V$
is a natural one. When $\gamma$ is a symplectic reflection, then the action of $Z_\Gamma(\gamma)$
on $\operatorname{HH}^2(S(V),S(V)\gamma)^{-2}=\C$ is trivial. From here we deduce
that $$\dim \operatorname{HH}^2(S(V)\#\Gamma, S(V)\#\Gamma)^{-2}=r+2,$$ as claimed.

\subsubsection{Proof of Theorem \ref{Thm:flatness}}
Let us write ${\bf H}_{univ}$ for the universal deformation, we need to prove that
${\bf H}_{univ}\xrightarrow{\sim}{\bf H}$.

First of all, note that degree $0$ and $1$ components of ${\bf H}_{univ}$ are the same as in $S(V)\#\Gamma$.
So we have natural embeddings $\Gamma,V\hookrightarrow {\bf H}_{univ}$. It is easy to see that
$\param_{univ},V,\Gamma$ generate ${\bf H}_{univ}$. This gives rise to an epimorphism
$S(\param_{univ})\otimes T(V)\#\Gamma\twoheadrightarrow {\bf H}_{univ}$. Further, for
$u,v\in V\subset {\bf H}_{univ}$, we have $[u,v]\in (\param_{univ})$. The degree $2$
of $(\param_{univ})$ is $\param_{univ}\otimes \C\Gamma$. So we get $[u,v]=\kappa(u,v)$
in ${\bf H}_{univ}$, where $\kappa$ is a map $\bigwedge^2 V\rightarrow \param_{univ}\otimes \C\Gamma$.
A computation done in \cite[Section 2]{EG} shows that, since ${\bf H}_{univ}$ is free over $S(\param_{univ})$,
we get $$\kappa=t\Omega+\sum_{i=0}^r c_i \sum_{s\in S_i}\Omega_s(u,v)s.$$
This completes the proof of Theorem \ref{Thm:flatness}.

\subsection{Proof of the isomorphism theorem}\label{SS_iso_proof}
We will prove an isomorphism of $e{\bf H}e$ and the universal Hamiltonian reduction
${\bf A}:=\Weyl_\hbar(T^*R)\red G$, where $\Weyl_\hbar(T^*R)$ is the Rees algebra of $D(R)$
(with modified grading so that $\deg T^*R=1, \deg\hbar=2$). Here we take $R:=R(Q,n\delta,\epsilon_0)$
for $n>1$ and $R:=R(Q,\delta,0)$ for $n=1$. In the case when $n>1$, we take $G:=\operatorname{GL}(n\delta)$.
For $n=1$, for $G$, we take the quotient of $\operatorname{GL}(\delta)$ by the one-dimensional
central subgroup of constant elements.

Both $e{\bf H}e,{\bf A}$ are graded algebras.
The algebra $e{\bf H}e$ is over $S(\param_{univ})$ with $\deg\param_{univ}=2$.
The algebra ${\bf A}$ is over $S(\param_{red})$, where $\param_{red}:=\g/[\g,\g]\oplus \C\hbar$.
We will prove that there is a graded algebra isomorphism $e{\bf H}e\xrightarrow{\sim}{\bf A}$
that maps $\param_{univ}$ to $\param_{red}$ and induces the identity automorphism
$e{\bf H}e/(\param_{univ})= \C[V_n]^{\Gamma_n}={\bf A}/(\param_{red})$. Further, we will
explain why the corresponding isomorphism $\param_{univ}\cong \param_{red}$ maps
$\hbar$ to $t$ and gives (\ref{eq:param_corresp}) on the hyperplanes $t=1$ and $\hbar=1$.
In other words, the isomorphism $\nu:\param_{univ}\rightarrow \param_{red}$
is the inverse of the following map
\begin{equation}\label{eq:nu}
\begin{split}
&\hbar\mapsto t, \quad \epsilon_i\mapsto \frac{1}{|\Gamma_1|}\operatorname{tr}_{N_i}\tilde{\bf c}, i\neq 0, \quad \epsilon_0\mapsto
\frac{1}{|\Gamma_1|}\operatorname{tr}_{N_0}\tilde{\bf c}-\frac{1}{2}(c_0+t), \qquad (n>1)\\
&\hbar\mapsto t, \quad \epsilon_i\mapsto \frac{1}{|\Gamma_1|}\operatorname{tr}_{N_i}\tilde{\bf c}, i\neq 0, \quad \epsilon_0\mapsto
\frac{1}{|\Gamma_1|}\operatorname{tr}_{N_0}\tilde{\bf c}-t,\qquad (n=1)
\end{split}
\end{equation}
Here the notation is as follows. We write $\tilde{\bf c}:=t+\sum_{i=1}^r c_i \sum_{\gamma\in S_i^0}\gamma$
and $\epsilon_i\in \g/[\g,\g]$ is specified by $\operatorname{tr}_i \epsilon_j:=\delta_{ij}$.

\subsubsection{Application of a Procesi bundle}
An isomorphism $e{\bf H}e\cong {\bf A}$ is produced as follows. The algebra $e{\bf H}e$ does not
have good universality properties (although it is expected to be semi-universal), it is ${\bf H}$
that does. We will produce a graded $S(\param_{red})$-algebra $\tilde{\bf A}$ deforming
$\C[V_n]\#\Gamma_n$ with $e\tilde{\bf A}e={\bf A}$. This will give rise to
a linear map $\nu:\param_{univ}\rightarrow \param_{red}$ and to an isomorphism
$S(\param_{red})\otimes_{S(\param_{univ})}{\bf H}\cong \tilde{\bf A}$ and hence
also to an isomorphism
$S(\param_{red})\otimes_{S(\param_{univ})}e{\bf H}e\cong {\bf A}$. The algebra $\tilde{\bf A}$
will be constructed from a Procesi bundle $\mathcal{P}$ on $X=\M^\theta(n\delta,\epsilon_0)$.

First, let us produce a sheaf version of ${\bf A}$. Consider the variety $\widetilde{X}:=T^*R\red^\theta G$,
this is a deformation of $X$ over $\g^{*G}$. Then we consider its deformation quantization
obtained by Hamiltonian reduction, the sheaf $\widetilde{\mathcal{D}}_\hbar:=\Weyl_\hbar(T^*R)^{\wedge_\hbar}\red^\theta G$. The algebra ${\bf A}$ coincides with the $\C^\times$-finite part of $\Gamma(\widetilde{\mathcal{D}}_\hbar)$.
Now let us take a Procesi bundle $\mathcal{P}$ on $X$. Since $\operatorname{Ext}^i(\mathcal{P},\mathcal{P})=0$,
the bundle $\mathcal{P}$ deforms to a unique $\C^\times$-equivariant vector bundle on the formal neighborhood of $X$
in $\widetilde{X}$. But the $\C^\times$-action contracts $\widetilde{X}$ to $X$.
So $\mathcal{P}$ extends to a unique $\C^\times$-equivariant bundle $\widetilde{\mathcal{P}}$
on $\widetilde{X}$. The extension $\widetilde{\mathcal{P}}$ again satisfies the Ext-vanishing conditions and so
further extends to a unique $\C^\times$-equivariant right $\widetilde{\mathcal{D}}_\hbar$-module
$\widetilde{\mathcal{P}}_\hbar$.

Consider the endomorphism algebra $\operatorname{End}_{\widetilde{\mathcal{D}}_\hbar^{opp}}(\widetilde{\mathcal{P}}_\hbar)$. Modulo
$(\param_{red})$, this algebra coincides with $\operatorname{End}_{\Str_X}(\mathcal{P})=\C[V_n]\#\Gamma_n$.
Let $\tilde{\bf A}$ be the $\C^\times$-finite part of $\operatorname{End}_{\widetilde{\mathcal{D}}_\hbar^{opp}}(\widetilde{\mathcal{P}}_\hbar)$.
It is the endomorphism algebra of the right $\widetilde{\mathcal{D}}_{\hbar,fin}$-module
$\widetilde{\mathcal{P}}_{\hbar,fin}$.
The algebra  $\tilde{\bf A}$ is a graded $S(\param_{red})$-algebra with $\tilde{\bf A}/(\param_{red})=\C[V_n]\#\Gamma_n$,
where $\param_{red}$ lives in degree $2$. We conclude that
there is a unique map $\nu_{\mathcal{P}}:\param_{univ}\rightarrow \param_{red}$
with $\tilde{\bf A}\cong S(\param_{red})\otimes_{S(\param_{univ})}{\bf H}$. Then, automatically,
we have
\begin{equation}\label{eq:iso_spher} {\bf A}(=e\tilde{\bf A}e)\cong S(\param_{red})\otimes_{S(\param_{univ})}e{\bf H}e.
\end{equation}
We will study the linear maps $\nu:\param_{univ}\rightarrow \param_{red}$ such that
(\ref{eq:iso_spher}) holds. We will see that
\begin{itemize}
\item[(a)]
 any such $\nu$ is an isomorphism,
\item[(b)] that there are $|W|$
options for $\nu$ when $n=1$ and $2|W|$ options else,
\item[(c)] and that one can choose $\nu$ as in
(\ref{eq:nu}).
\end{itemize}
(c) will complete the proof of Theorem \ref{Thm:iso}, while (b) will be used to classify the Procesi
bundles.

First of all,  let us point out that $\nu(t)=\hbar$. Indeed, the Poisson bracket on $\C[\M_0^0(n\delta,\epsilon_0)]$
induced by the deformation ${\bf A}$ equals $\hbar\{\cdot,\cdot\}$, where $\{\cdot,\cdot\}$ is the standard
bracket given by the Hamiltonian reduction (more precisely, if we specialize to $(\hbar',\lambda)\in \C\oplus \g^{*G}$,
then the bracket induced by the corresponding filtered deformation is $\hbar'\{\cdot,\cdot\}$). Similarly,
the bracket on $\C[V_n]^{\Gamma_n}$ induced by $e{\bf H}e$ coincides with $t\{\cdot,\cdot\}$, see Example
\ref{Ex:SRA}. Since the isomorphism $\M_0^0(n\delta,\epsilon_0)\cong V_n/\Gamma_n$ is Poisson, the
equality $\nu(t)=\hbar$ follows.

\subsubsection{Case $n=1$}\label{SSS_iso_thm_n1}
We start by proving (a)-(c) for $n=1$.

Let us prove (c). First of all, recall that $X$ can be constructed as the moduli space
of the $\C[x,y]\#\Gamma_1$-modules isomorphic to $\C\Gamma_1$ as $\Gamma_1$-modules that admit a cyclic vector. The universal bundle on $X$ is a Procesi bundle. Moreover, from \cite[Section 8]{CBH}, it follows that $\widetilde{X}$
is the moduli space of the ${\bf H}/(t)$-modules isomorphic to $\C\Gamma_1$ and admitting a cyclic vector.
The corresponding isomorphism $\param_{red}/\C\hbar\cong \param_{univ}/\C t$ is induced from $\nu$.

To show that $\nu$ then is given by (\ref{eq:nu}) we consider the loci of parameters $\lambda$ and $c$
where the homological dimensions of $A_{1,\lambda}:=\Weyl(T^*R)\red_\lambda G, e H_{1,c}e$ are infinite.
Both are given by the union of hyperplanes of the form $\lambda\cdot \beta=0$, where $\beta$ runs over
the set of the  roots of $Q\setminus \{0\}$ (when we speak of the parameter $\lambda$ for the algebra
$eH_{1,c}e$ we mean the parameter computed in Theorem \ref{Thm:iso}). The claim for $eH_{1,c}e$ follows
from \cite[Theorem 0.4]{CBH}, and that on $A_{1,\lambda}$ then follows from \cite[Section 5]{quant} (from an isomorphism
of $A_{1,\lambda}$ with a central reduction of a suitable W-algebra) or from \cite{Boyarchenko}.

The same considerations as in the previous paragraph imply (a). To prove (b) one now needs to describe
the group $\mathfrak{A}$ of the automorphisms of ${\bf A}(\cong e{\bf H}e)$ satisfying the following:
\begin{itemize}
\item they preserve the grading,
\item they preserve $\param_{red}$ as a subset of ${\bf A}$,
\item they are the identity modulo $\param_{red}$.
\end{itemize}
We have a natural homomorphism $\mathfrak{A}\rightarrow \operatorname{GL}(\param_{red})$ that is easily seen
to be injective. From the isomorphism with a W-algebra mentioned above, one sees that $W\subset \mathfrak{A}$
(recall that the $W$-action on $\g^{*G}$ was described in \ref{SSS_resol_iso}).
With some more work, see \cite[Proposition 6.4.5]{quant}, one shows that actually $W=\mathfrak{A}$. This implies (b).

\subsubsection{Completions}\label{SSS_completions}
The case of a general $n$ is reduced to $n=1$ using suitable completions of the algebras ${\bf A}, {\bf H}$.
Let us explain what completions we use as well as general results on their structure.

First, let us describe completions of algebras of the form ${\bf A}:=\Weyl_\hbar(V)\red G$, where $V$
is a symplectic vector space and $G$ is a reductive group acting on $V$ by symplectomorphisms.
Let $b\in V\red_0 G$. The point $b$ defines a maximal ideal  $\mathfrak{m}\subset{\bf A}$. So we can form the $b$-adic completion ${\bf A}^{\wedge_b}:=\varprojlim_{n\rightarrow +\infty} {\bf A}/\mathfrak{m}^n$. Let $v\in V$ be a point
with closed $G$-orbit mapping to $b$. Let us write $\Weyl_\hbar(V)^{\wedge_{Gv}}$
for the completion of $\Weyl_\hbar(V)$ with respect to the ideal of $Gv$. Then
it is easy to see that ${\bf A}^{\wedge_b}\cong \Weyl_\hbar(V)^{\wedge_{Gv}}\red G$.
The algebra $\Weyl_\hbar(V)^{\wedge_{Gv}}$ can be described using a suitable version of
the slice theorem. More precisely, it follows, for example, from
\cite[Section 4]{CB_norm}
that the formal neighborhood $V^{\wedge_{Gv}}$ is equivariantly symplectomorphic
to the neighborhood of the base $G/K$ in $(T^*G\times U)\red_0 K$, where $K:=G_v, U:=(T_v Gv)^\perp/T_v Gv$.
This statement quantizes: $\Weyl_\hbar(V)^{\wedge_{Gv}}\cong (D_\hbar(G)\otimes_{\C[\hbar]}\Weyl_\hbar(U))\red_0 K$,
this can be proved similarly to  \cite[Theorem 2.3.1]{HC}.
From here one deduces that
$${\bf A}^{\wedge_b}\cong \C[[\g^{*G}]]\widehat{\otimes}_{\C[[\kf^{*K}]]}\left(\Weyl_\hbar(U)^{\wedge_0}\red K\right),$$
where a homomorphism $\C[[\kf^{*K}]]\rightarrow \C[[\g^{*G}]]$ is induced from the restriction map
$\g^{*G}\rightarrow \kf^{*K}$.

On the other hand, take a symplectic vector space $V'$ and a finite subgroup $\Gamma\subset \operatorname{Sp}(V)$.
From these data we can form the symplectic reflection
algebra ${\bf H}$. Pick $b\in V'/\Gamma$.
We can produce the completion ${\bf H}^{\wedge_b}$: the point $b$ defines a natural
maximal ideal in $\C[V']\#\Gamma$, we take its preimage in ${\bf H}$ and complete with respect to that
preimage. The algebra ${\bf H}^{\wedge_b}$ can also be described in terms of a ``smaller''
algebra of the same type, \cite[Theorem 1.2.1]{sraco}. More precisely, let $\underline{\Gamma}$ be the stabilizer corresponding to
$b$ and let $\underline{\bf H}$ stand for the SRA corresponding to the pair $(\underline{\Gamma},V')$,
an algebra over $S(\param_{univ})$.
Then ${\bf H}^{\wedge_b}\cong Z(\Gamma,\underline{\Gamma}, \underline{\bf H}^{\wedge_0})$, where $Z(\Gamma,\underline{\Gamma},\bullet)$ is the centralizer algebra from \cite[3.2]{BE}, it is isomorphic
to $\operatorname{Mat}_{|\Gamma/\underline{\Gamma}|}(\bullet)$. A consequence we need is that
$e{\bf H}^{\wedge_b}e\cong \underline{e}\underline{\bf H}^{\wedge_0}\underline{e}$.
The algebra $\underline{\bf H}$ can be described as follows. Let us write
$V^+$ for a unique $\underline{\Gamma}$-stable complement to $V'^{\underline{\Gamma}}$
in $V'$. Consider the SRA  $\underline{\bf H}^+$ over $S(\underline{\param}_{univ})$,
where $\underline{\param}_{univ}$ is the parameter space for $\underline{\Gamma}$.
The inclusion $\underline{\Gamma}\hookrightarrow \Gamma$ gives rise to a natural
map $\underline{\param}_{univ}\rightarrow \param_{univ}$. Then
$\underline{\bf H}=\Weyl_t(V'^\Gamma)\otimes_{\C[t]}(S(\param_{univ})\otimes_{S(\underline{\param}_{univ})}
\underline{\bf H})$.

\subsubsection{Completions at leaves of codimension 2}
We are going to use the completions of ${\bf A}$ and $e{\bf H}e$
at points lying in the codimension 2 symplectic leaves.
Recall from \ref{SSS_resol_iso} that when $n>1$ and $\Gamma_1\neq \{1\}$, we have two such leaves.
One corresponds to $\underline{\Gamma}=\Gamma_1\subset \Gamma_n$, the other to
$\mathfrak{S}_2\subset \Gamma_n$. Let $\underline{\bf H}^{1+},\underline{\bf H}^{2+}$
be the corresponding SRA's. The corresponding parameter spaces are $\underline{\param}_{univ}^1=\operatorname{Span}(c_1,\ldots,c_r,t)$ and
$\underline{\param}_{univ}^2=\operatorname{Span}(c_0,t)$. When $\Gamma_1=\{1\}$, we have just one leaf
of codimension $2$, it corresponds  to $\mathfrak{S}_2$.

Now let us describe the completions on the Hamiltonian reduction side. Let $v^1,v^2$
be elements from closed $G$-orbits in $\mu^{-1}(0)\in T^*R$ whose images $b^1,b^2$ in $\M^0_0(n\delta,\epsilon_0),
V_n/\Gamma_n$ lie in the two leaves. We can take the points $v^1,v^2$ as follows.
We have a natural embedding $\mu_1^{-1}(0)^n\hookrightarrow \mu^{-1}(0)$ from the proof of
Proposition \ref{Prop:quot_sing_quiv}.
Take pairwise different elements $v_1,\ldots,v_{n}\in \mu^{-1}(0)$ with closed
$\operatorname{GL}(\delta)$-orbits.  Then we can take $v^1=(v_1,\ldots,v_{n-1},0)\in T^*R(n\delta,0)\subset T^*R$
and $v^2=(v_1,\ldots, v_{n-2},v_{n-1},v_{n-1})$.

Let us describe the completion  ${\bf A}^{\wedge_{b^1}}$.
We have $K_1(=G_{v^1})=(\C^\times)^{n-1}\times \GL(\delta)$. So the space $\kf_{1}^{*K_1}$ coincides $ \C^{n-1}\oplus \C^{Q_0}$. The restriction map $\C^{Q_0}=\g^{*G}\rightarrow \kf_1^{*K_1}=\C^{n-1}\oplus \C^{Q_0}$
sends $\lambda$ to $(\lambda\cdot\delta,\ldots,\lambda\cdot\delta, \lambda)$. The symplectic part
$U$ of the normal space $T^*R/T_{v^1}Gv^1$ splits into the direct sum of the trivial module $\C^{2(n-1)}$,
of the $(\C^\times)^{n-1}$-module $(T^*\C)^{\oplus n-1}$, and of the $\GL(\delta)$-module
$T^*R(\delta,\epsilon_0)$. So
$$\Weyl_\hbar(U)\red K_1\cong \C[z_1,\ldots,z_{n-1}]\otimes \Weyl_\hbar(\C^{2(n-1)})
\otimes_{\C[\hbar]}\Weyl_\hbar(T^*R(\delta,\epsilon_0))\red \operatorname{GL}(\delta),$$
where $z_1,\ldots,z_{n-1}$ are homogeneous elements of degree $2$, the images of the natural
basis in $\operatorname{Lie}(\C^{\times (n-1)})$ under the comoment map.

Let us write $\overline{\GL}(\delta)$ for the quotient of $\GL(\delta)$ by the one-dimensional
torus of constant elements. Set $\g^{*G}_0:=\g^{*G}/\C\delta$, clearly, $\g^{*G}_0=\overline{\mathfrak{gl}}(\delta)^{*\overline{\GL}(\delta)}$.
Set ${\bf A}^1:=\Weyl_\hbar(T^*R(\delta,0))\red \overline{\GL}(\delta)$.
It is easy to see that $\Weyl_\hbar(T^*R(\delta,\epsilon_0))\red \GL(\delta)
=\C[\g^{*G}]\otimes_{\C[\g^{*G}_0]}{\bf A}^1$. From here and the description of
the map $\kf_1^{*K_1}\rightarrow \g^{*G}$ given above, we deduce
that
$$\C[\g^{*G}]\otimes_{\C[\kf_1^{*K_1}]}\Weyl_\hbar(U)\red K_1\cong \Weyl_\hbar(\C^{2n-2})\otimes_{\C[\hbar]}(\C[\g^{*G}]\otimes_{\C[\g^{*G}_0]}{\bf A}^1).$$
It follows that
\begin{equation}\label{eq:red_compl1}
{\bf A}^{\wedge_{b^1}}\cong \Weyl_\hbar^{\wedge_0}(\C^{2n-2})\widehat{\otimes}_{\C[[\hbar]]}
(\C[[\param_{red}^*]]\widehat{\otimes}_{\C[[\param^{1*}_{red}]]}{\bf A}^{1\wedge_0}),
\end{equation}
where we write $\param^1_{red}$ for $\{\lambda\in \C^{Q_0}| \lambda\cdot\delta=0\}\oplus \C\hbar$.

Let us now deal with ${\bf A}^{\wedge_{b^2}}$. We have $K_2(=G_{v^2})=(\C^{\times})^{n-2}\times \GL(2)$.
The map $\g^{*G}\rightarrow \kf_2^{*K_2}$ sends $\lambda$ to the $n-1$-tuple with equal
coordinates $\lambda\cdot \delta$. The symplectic part $U^2$ of the normal space
$T^*R/T_{v^2}Gv^2$ is the sum of the trivial module $\C^{2(n-1)}$, the $(\C^{\times})^{n-2}$-module
$(T^*\C)^{\oplus 2}$ and the $\GL(2)$-module
$T^*(\mathfrak{sl}_2\oplus \C^2)$. Let $\param^2_{red}$ denote the span of $\sum_{i\in Q_0}\delta_i\epsilon_i$
and $\hbar$. Set ${\bf A}^2:=\Weyl_\hbar(T^*(\mathfrak{sl}_2\oplus \C^2))\red \GL(2)$, we can view
it as an algebra over $S(\param_{red}^2)$ (where a natural generator of $\mathfrak{gl}_2/[\mathfrak{gl}_2,\mathfrak{gl}_2]$ corresponds to $\sum_{i\in Q_0}\delta_i\epsilon_i$). As above, we have
$$\C[\g^{*G}]\otimes_{\C[\kf_2^{*K_2}]}\Weyl_\hbar(U^2)\red K_2\cong S(\param_{red})\otimes_{S(\param_{red}^2)}(\Weyl_\hbar(\C^{2n-2})\otimes_{\C[\hbar]}{\bf A}^2)$$
and we get the following description of ${\bf A}^{\wedge_{b^2}}$:
\begin{equation}\label{eq:red_compl2}
{\bf A}^{\wedge_{b^2}}\cong \Weyl_\hbar^{\wedge_0}(\C^{2n-2})\widehat{\otimes}_{\C[[\hbar]]}
(\C[[\param_{red}^*]]\widehat{\otimes}_{\C[[\param_{red}^{2*}]]}{\bf A}^{2\wedge_0}).
\end{equation}

\subsubsection{Reduction to $n=1$}
Using (\ref{eq:red_compl1}) we see that (\ref{eq:iso_spher}) yields an isomorphism of completions
${\bf A}^{\wedge_{b^1}}\cong e^1{\bf H}^{\wedge_{b^1}}e^1$ and hence an isomorphism
\begin{align*}&\Weyl_\hbar^{\wedge_0}(\C^{2(n-1)})\widehat{\otimes}_{\C[[\hbar]]}
(\C[[\param_{red}^*]]\widehat{\otimes}_{\C[[\param^{1*}_{red}]]}{\bf A}^{1\wedge_0})\cong\\
&\Weyl_\hbar^{\wedge_0}(\C^{2(n-1)})\widehat{\otimes}_{\C[[\hbar]]}
(\C[[\param_{red}^*]]\otimes_{\C[[\param_{univ}^*]]}\C[[\param^*_{univ}]]\otimes_{\C[[\param^{1*}_{univ}]]}
e^1{\bf H}^{1\wedge_0}e^1).\end{align*}
It was checked in \cite[Section 6.5]{quant} that this isomorphism restricts to
$$S(\param_{red})\otimes_{S(\param_{red}^1)}{\bf A}^1\cong S(\param_{red})
\otimes_{S(\param_{univ}^1)}e^1{\bf H}^1 e^1$$
that preserves the grading and is the identity modulo $(\param_{red})$.
From here it is easy to deduce that $\nu$ maps $\param^1_{univ}$ to $\param^1_{red}$
and restricts to one of $W$-conjugates of the map in (\ref{eq:nu})
for $n=1$.

Let us proceed to the second leaf. Similarly to \ref{SSS_iso_thm_n1}, one can show
that $\Weyl_\hbar(T^*(\mathfrak{sl}_2\oplus \C^2))\red \operatorname{GL}(2)\cong e^2 {\bf H}^2 e^2$,
where the isomorphism sends the element $\sum_{i\in Q_0}\delta_i\epsilon_i$
to $\pm (c_0+t)/2$.  It follows that $\nu$
maps $\param^2_{univ}$ to $\param^2_{red}$ and induces one of two maps
in the previous sentence. It follows that $\nu$ is an isomorphism that is $W\times \Z/2\Z$-conjugate to
the map given by (\ref{eq:nu}) for $n>1$. Since $W\times \Z/2\Z$-action comes from automorphisms,
that preserve the grading, map $\param_{red}$ to $\param_{red}$, and
are the identity modulo $(\param_{red})$ (see  \ref{SSS_quant_auto}), claims (b) and
(c) follow. This completes the proof of Theorem \ref{Thm:iso}.

\subsection{Classification of Procesi bundles}\label{SS_Procesi_classif}
Here we are going to prove that the number of different Procesi bundles on $X$
equals $2|W|$ for $n>1$ and $|W|$ for $n=1$.

\subsubsection{Upper bound}
Recall that
a Procesi bundle $\mathcal{P}$ on $X$ defines a linear isomorphism
$\nu_{\mathcal{P}}: \param_{univ}\rightarrow \param_{red}$. We claim that if $\nu_{\mathcal{P}^1}=\nu_{\mathcal{P}^2}$,
then $\mathcal{P}^1\cong \mathcal{P}^2$. Indeed, we have
\begin{equation}\label{eq:eq_glob_sec}\Gamma(\tilde{\mathcal{P}}^1_{\hbar,fin})=
\End(\tilde{\mathcal{P}}^1_{\hbar,fin})e\cong \End(\tilde{\mathcal{P}}^2_{\hbar,fin})e=\Gamma(\tilde{\mathcal{P}^2}_{\hbar,fin})\end{equation}
(an isomorphism of graded right ${\bf H}$-modules). Note that
$H^1(\tilde{X}, \tilde{\mathcal{P}}^i)=0$ because $\tilde{\mathcal{P}}^i$ is a direct
summand of $\mathcal{E}nd(\mathcal{P}^i)$ and the latter sheaf has no higher cohomology.
It follows that $\Gamma(\tilde{\mathcal{P}}^i_\hbar)/\hbar \Gamma(\tilde{\mathcal{P}}^i_\hbar)\xrightarrow{\sim}\Gamma(\tilde{\mathcal{P}}^i)$.
Taking the quotient of (\ref{eq:eq_glob_sec})
by $\hbar$, we get an isomorphism $\Gamma(\tilde{\mathcal{P}}^1)\cong \Gamma(\tilde{\mathcal{P}}^2)$ of graded $\C[\tilde{X}]$-modules. We claim that this implies that the vector bundles
$\tilde{\mathcal{P}}^1,\tilde{\mathcal{P}}^2$ are $\C^\times$-equivariantly isomorphic.
Indeed, consider the resolution of singularities morphism $\tilde{\rho}:\tilde{X}\rightarrow\tilde{X}_0$.
This morphism is birational over any $p\in \param_{red}^*$. Moreover, for a Zariski generic $p$,
the morphism $\rho_p$ is an isomorphism, indeed, $\mu^{-1}(p)^{\theta-ss}=\mu^{-1}(p)$. It follows that
the restrictions of bundles $\tilde{\mathcal{P}}^1,\tilde{\mathcal{P}}^2$ to some Zariski open subset
in $\tilde{X}$ with codimension of complement bigger than $1$ are isomorphic. It follows that $\tilde{\mathcal{P}}^1\cong
\tilde{\mathcal{P}}^2$ and hence $\mathcal{P}^1\cong \mathcal{P}^2$.

We have seen above that $\nu_{\mathcal{P}}$ can only be one of $2|W|$ (for $n>1$)
or $|W|$ (for $n=1$) maps. This implies the upper bound on the number of Procesi
bundles.

\subsubsection{Lower bound}\label{SSS_constr_Procesi_all}
Let us show that there are $2|W|$ different Procesi bundles in the case of $n>1$.
Recall that one can construct a Procesi bundle $\mathcal{P}_{\mathcal{D}}$ once one has a
Frobenius constant quantization $\mathcal{D}$ of $X_{\Fi}$ with
$\Gamma(\mathcal{D})=\Weyl(V_{n,\Fi})^{\Gamma_n}$. Note that
the action of $W\times \Z/2\Z$ on ${\bf A}$ is defined over some algebraic extension of
$\Z$. So, as before, it can be reduced modulo $q$ for  $q=p^\ell, p\gg 0$.
Let $\mathcal{D}_\lambda$
be the Frobenius constant quantization obtained by Hamiltonian reduction with parameter
$\lambda\in \Fi_p^{Q_0}$. The parameter $\lambda$ constructed from $c=0$ belongs to
$\Fi_p^{Q_0}$. Above, we have remarked  that $\Gamma(\mathcal{D}_\lambda)\cong \Weyl(V_{n,\Fi})^{\Gamma_n}$.
Moreover, for $q\gg 0$, the stabilizer of this parameter in $W\times \Z/2\Z$ is trivial.
So we get $2|W|$ different Frobenius constant quantizations with required global sections.
Procesi bundles produced by them are different as well, as was checked in \cite[Section 3.3]{Procesi}.

\subsubsection{Canonical Procesi bundle}\label{SSS_Procesi_canon}
By a canonical Procesi bundle we mean $\mathcal{P}$ such that $\nu_{\mathcal{P}}$ is as in
(\ref{eq:nu}). According to \cite[Section 4.2]{Procesi}, this bundle has the following property:
the subbundle $\mathcal{P}^{\Gamma_{n-1}}$ coincides with the rank $n|\Gamma_1|$
bundle $\mathcal{T}$ on $X=\M^\theta(n\delta,\epsilon_0)$ induced by the $G$-module
$\bigoplus_{i\in Q_0}(\C^{n\delta_i})^{\oplus \delta_i}$. We will write $\mathcal{P}^\theta$
for this bundle. Recall that for $w\in W\times \Z/2\Z$ we get an isomorphism
$\M^\theta(n\delta,\epsilon_0)\cong \M^{w\theta}(n\delta,\epsilon_0)$ that yields
the map $\param_{red}=H^2(\M^\theta(n\delta,\epsilon_0))\oplus \C\rightarrow H^2(\M^{w\theta}(n\delta,\epsilon_0))\oplus \C=\param_{red}$ equal to $w$. It follows that $\nu_{w_*\mathcal{P}^\theta}=w\nu$. So every other Procesi
bundle on $\M^\theta(n\delta,\epsilon_0)$ is obtained as a push-forward of the canonical
Procesi bundle $\mathcal{P}^{w\theta}$ on $\M^{w\theta}(n\delta,\epsilon_0)$.

Note that when $\mathcal{P}$ is a Procesi bundle, then so is $\mathcal{P}^*$. Indeed,
$\End_{\mathcal{O}_X}(\mathcal{P}^*,\mathcal{P}^*)\cong \End_{\mathcal{O}_X}(\mathcal{P},\mathcal{P})^{opp}$.
The algebra $\C[V_n]\#\Gamma_n$ is identified with its opposite via $v\mapsto v, \gamma\mapsto \gamma^{-1},
v\in V_n^*, \gamma\in \Gamma_n$ and this gives a Procesi bundle structure on $\mathcal{P}^*$.
We have $\nu_{\mathcal{P}^{\theta*}}=w_0\sigma\nu_{\mathcal{P}^*}$, where $w_0$ is the longest element
in $W$ and $\sigma$ is the image of $1$ in $\Z/2\Z$, see \cite[Remark 4.4]{Procesi}.

\section{Macdonald positivity and categories $\mathcal{O}$}\label{S_Macdonald}
In this section we provide some applications of results of Section \ref{S_Procesi_SRA}.

In Section \ref{SS_der_equiv}, we will produce equivalences between categories $D^b(H_{1,c})$ and
$D^b(\operatorname{Coh}(\mathcal{D}_\lambda))$.

Starting from Section \ref{SS_Cat_O}, we will only consider
the groups $\Gamma_n$ with cyclic $\Gamma_1$. Here $\Gamma_n$ is a complex
reflection group and the corresponding algebra $H_{t,c}$ (called a Rational Cherednik
algebra) in this case admits a triangular decomposition. This decomposition
allows to define Verma modules and, for $t=1$, category $\mathcal{O}$ for $H_{1,c}$
that has  a so called highest weight structure. We can also define the category
$\mathcal{O}$ for $\mathcal{D}_\lambda$, this will be a subcategory in $\operatorname{Coh}(\mathcal{D}_\lambda)$.
We will show that the derived equivalence $D^b(H_{1,c}\operatorname{-mod})\cong D^b(\operatorname{Coh}(\mathcal{D}_\lambda))$
restricts to categories $\mathcal{O}$. This was used in \cite{GL} to establish
\cite[Conjecture 5.6]{rouqqsch} for the groups $\Gamma_n$.

In Section \ref{SS_Macpos} we prove Theorem \ref{Thm:Mac_pos} and also
its generalization to the groups $\Gamma_n$ due to Bezrukavnikov and Finkelberg.
The proof is based on studying the algebras $H_{0,c}$ and their Verma modules.

Finally, in Section \ref{SS_Thm_ab_loc} we prove an analog of the Beilinson-Bernstein
localization theorem, \cite{BB_loc}, for the Rational Cherednik algebras
associated to the groups $\Gamma_n$. More precisely, we answer the question when
the derived equivalence $D^b(\operatorname{Coh}(\mathcal{D}_\lambda)\rightarrow
D^b(H_{1,c}\operatorname{-mod})$ restricts to an equivalence
$\operatorname{Coh}(\mathcal{D}_\lambda)\rightarrow H_{1,c}\operatorname{-mod}$.

\subsection{Derived equivalence}\label{SS_der_equiv}
\subsubsection{Deformed derived McKay correspondence} Similarly to \ref{SSS_derived_McKay}, the  functor $R\Gamma(\mathcal{P}\otimes_{\mathcal{O}_X}\bullet)$ defines an equivalence
$D^b(\operatorname{Coh}X)\xrightarrow{\sim} D^b(\C[V_n]\#\Gamma_n\operatorname{-mod})$
with quasi-inverse $\mathcal{P}^*\otimes^L_{\C[V_n]\#\Gamma_n}\bullet$. These equivalence
automatically upgrade to the categories of $\C^\times$-equivariant objects:
$D^b(\operatorname{Coh}^{\C^\times}X)\cong D^b(\C[V_n]\#\Gamma_n\operatorname{-mod}^{\C^\times})$ defined in the
same way.

Now let us consider the deformation  $\tilde{\mathcal{P}}_\hbar$ of $\mathcal{P}$ to a right
$\C^\times$-equivariant $\widetilde{\mathcal{D}}_\hbar$-module. It gives a functor
$\tilde{\mathcal{F}}:=R\Gamma(\tilde{\mathcal{P}}_{\hbar,fin}\otimes_{\widetilde{\mathcal{D}}_{\hbar,fin}}\bullet):
D^b(\operatorname{Coh}^{\C^\times}(\tilde{\mathcal{D}}_{\hbar,fin}))\rightarrow
D^b({\bf H}\operatorname{-mod}^{\C^\times})$.
This functor has left adjoint and right inverse $\tilde{\mathcal{G}}=\tilde{\mathcal{P}}_{\hbar,fin}^*\otimes^L_{\bf H}\bullet$. So we get the adjunction morphism $\tilde{\mathcal{G}}\circ \tilde{\mathcal{F}}\rightarrow \operatorname{id}$.
One can show (see \cite[Section 5]{GL} for details) that since this morphism is an isomorphism modulo $\param_{univ}$,
it is an isomorphism itself.

\subsubsection{Specialization}
The equivalence $\tilde{\mathcal{F}}$ can be specialized to a numerical parameter. In particular, we get  equivalences
$D^b(\operatorname{Coh}(\mathcal{D}_\lambda))\rightarrow D^b(H_{1,c}\operatorname{-mod})$, where $\lambda$
is recovered from $c$ as in Theorem \ref{Thm:iso}. This is done in two steps. First, one gets a derived equivalence
between $\operatorname{Coh}^{\C^\times}(R_{\hbar^{1/2}}(\mathcal{D}_\lambda))$ and
$R_{\hbar^{1/2}}(H_{1,c})\operatorname{-mod}^{\C^\times}$, the corresponding sheaf and algebra
are obtained from $\widetilde{\mathcal{D}}_{\hbar,fin}, {\bf H}$ by base change (and the
equivalence we need comes from the corresponding base change of $\tilde{\mathcal{P}}_{\hbar,fin}$).
To do the second step  we recall that $H_{1,c}\operatorname{-mod}$ is the quotient
$R_{\hbar^{1/2}}(H_{1,c})\operatorname{-mod}^{\C^\times}$ by the full subcategory of the $\C[\hbar]$-torsion
modules and the similar claim holds for $\operatorname{Coh}(\mathcal{D}_\lambda)$, see Lemma \ref{Lem:cat_equi}.
It follows that $D^b(H_{1,c}\operatorname{-mod})$ is the quotient of $D^b(R_{\hbar^{1/2}}(H_{1,c})\operatorname{-mod}^{\C^\times})$ by the category of all complexes whose homology are $\C[\hbar]$-torsion and a similar claim holds for $\mathcal{D}_\lambda$. Since
the equivalence $D^b(R_{\hbar^{1/2}}(H_{1,c})\operatorname{-mod}^{\C^\times})\cong
D^b(\operatorname{Coh}^{\C^\times}(R_{\hbar^{1/2}}(\mathcal{D}_\lambda)))$ is $\C[\hbar]$-linear
by the construction, they induce
\begin{equation}\label{eq:equi_spec} D^b(H_{1,c}\operatorname{-mod})\cong
D^b(\operatorname{Coh}(\mathcal{D}_\lambda)).\end{equation}

\subsubsection{Application: shift equivalences}
The  equivalences (\ref{eq:equi_spec}) can be applied to producing a result that only concerns the symplectic reflection
algebras. Namely, we say that parameters $c,c'$ for $H_?$ have integral difference if $\lambda-\lambda'\in \Z^{Q_0}$
for the corresponding parameters $\lambda$. Recall that we can view $\chi\in \Z^{Q_0}$ as a character of
$G$. So $\chi$ defines a line bundle on $X$, explicitly, $\mathcal{O}_\chi=\pi_*(\Str_{\mu^{-1}(0)^{\theta-ss}})^{G,\chi}$.
This line bundle can be quantized to a $\mathcal{D}_{\lambda+\chi}$-$\mathcal{D}_{\lambda}$-bimodule to be denoted
by $\mathcal{D}_{\lambda,\chi}$. Explicitly,
$$\mathcal{D}_{\lambda,\chi}:=\pi_*(\mathcal{D}^{ss}/\mathcal{D}^{ss}\{\Phi(x)-\langle \lambda,x\rangle\})^{G,\chi}.$$
This bundle carries a natural filtration and an isomorphism $\gr \mathcal{D}_{\lambda,\chi}\cong \mathcal{O}_\chi$
follows from the flatness of the moment map.

Note that there is a natural (multiplication) homomorphism $\Dcal_{\lambda+\chi,\chi'}\otimes_{\Dcal_{\lambda+\chi}}\Dcal_{\lambda,\chi}
\rightarrow \Dcal_{\lambda,\chi+\chi'}$ that becomes the isomorphism $\mathcal{O}_{\chi'}\otimes
\mathcal{O}_\chi\rightarrow \mathcal{O}_{\chi+\chi'}$ after passing to the associated graded.
So the multiplication homomorphism itself is an isomorphism. It follows that a functor
$\Dcal_{\lambda,\chi}\otimes_{\Dcal_\lambda}\bullet:\operatorname{Coh}(\Dcal_\lambda)\rightarrow
\operatorname{Coh}(\Dcal_{\lambda+\chi})$
is a category equivalence. We conclude that categories $D^b(H_{1,c}\operatorname{-mod})$ and $D^b(H_{1,c'}\operatorname{-mod})$ are equivalent provided $c,c'$ have integral difference.


\subsection{Category $\mathcal{O}$}\label{SS_Cat_O}
Starting from now on, we assume that $\Gamma_1$ is a cyclic group $\Z/\ell\Z$.
Recall that in this case
the space $V_n$ (equal to $\C^{2n}$ when $\ell>1$ and $\C^{2(n-1)}$ when $\ell=1$) splits
as $\h\oplus\h^*$, where $\h$ is a standard reflection representation of the group $\Gamma_n$.
The embeddings $\h,\h^*\hookrightarrow {\bf H}$ extend to algebra embeddings $S(\h),S(\h^*)
\hookrightarrow {\bf H}$.  These embeddings give rise to the {\it triangular decomposition}
${\bf H}=S(\h^*)\otimes S(\param_{univ})\Gamma_n\otimes  S(\h)$. We can also consider the specialization
$H_{1,c}=S(\h^*)\otimes \C\Gamma_n\otimes S(\h)$ (here and below $c$ is a numerical
parameter) of this decomposition.

\subsubsection{Category $\mathcal{O}$ for $H_{1,c}$}\label{SSS_cat_O_H}
By definition, the category $\mathcal{O}$ for $H_{1,c}$ consists of all $H_{1,c}$-modules
$M$ such that
\begin{itemize}
\item[(i)] $\h$ acts locally nilpotently on $M$.
\item[(ii)] $M$ is finitely generated over $H_{1,c}$.
\end{itemize}
Note that, modulo (i), the condition (ii) is equivalent to
\begin{itemize}
\item[(ii$'$)] $M$ is finitely generated over $S(\h^*)$.
\end{itemize}
An example of an object in the category $\mathcal{O}$ is a Verma module constructed as follows.
Pick  an irreducible representation $\tau$ of $\Gamma_n$
and view it as a $S(\h)\#\Gamma_n$-module by making $\h$ act by $0$. Then set
$\Delta_{1,c}(\tau):=H_{1,c}\otimes_{S(\h)\#\Gamma_n}\tau$.
As a $S(\h^*)\#W$-module, $\Delta_{1,c}(\tau)$ is naturally identified
with $S(\h^*)\otimes \tau$ (the algebra $S(\h^*)$ acts by multiplications
from the left, and $W$ acts diagonally).

The algebra $H_{1,c}$ carries an {\it Euler grading} given by $\deg \h=-1, \deg \h^*=1, \deg W=0$.
This grading is internal: we have an element $h\in H_{1,c}$ with $[h,a]=da$ for $a\in H_{1,c}$
of degree $d$. Explicitly, the element $h$ is given by
$$\sum_{i=1}^m x_i y_i+\sum_{s\in S} \frac{c(s)}{1-\lambda_s}s.$$ Here the notation is as follows.
We write $y_1,\ldots,y_m$ for a basis in $\h$ (of course, $m=n$ for $\ell>1$ and $m=n-1$
for $\ell=1$) and $x_1,\ldots,x_m$ for the dual basis in $\h^*$. By $S$ we, as usual, denote the set
of reflections in $\Gamma_n$ and $c(s)$ stands for $c_i$ if $s\in S_i$
(note that the formula for $h$ is different from the usual formula for the Euler element,
see, e.g., \cite[Section 2.1]{BE}, because our $c(s)$ is rescaled).
Finally, $\lambda_s$ is the eigenvalue of $s$ in $\h^*$ different from $1$.

Using the element $h$, we can show that every Verma module $\Delta_{1,c}(\tau)$ has a unique
simple quotient. These quotients form a complete collection of the simple objects in
$\mathcal{O}$. Also one can show that every object in $\mathcal{O}$ has finite length.
These claims are left as exercises to the reader.

\subsubsection{Category $\mathcal{O}$ for $\Dcal_\lambda$}\label{SSS_loc_O}
We have a $\C^\times$-action on $D(R)$ induced by the $\C^\times$-action on $R$ given by
$t.r:=t^{-1}r$. This action is Hamiltonian, the corresponding quantum comoment map
$\Phi:\C\rightarrow D(R)$ sends $1$ to the Euler vector field. The action descends to
a Hamiltonian $\C^\times$-action on $\Dcal_\lambda$ for any $\lambda$.

Consider the corresponding Hamiltonian $\C^\times$-action on $X=\M^\theta_0(n\delta,\epsilon_0)$.
Recall that the resolution of singularities morphism $X\rightarrow (\h\oplus \h^*)/\Gamma_n$
becomes $\C^\times$-equivariant if we equip the target variety with the $\C^\times$-action
induced by $t.(a,b)=(t^{-1}a, tb), a\in \h, b\in \h^*$. This action has finitely many fixed
points that are in a natural bijection with the irreducible representations of $\Gamma_n$, see
\cite[5.1]{Gordon_O}.  Namely, $X^{\C^\times}$ is in a natural bijection with $\M^0_{p}(n\delta,\epsilon_0)^{\C^\times}$, where $p\in \g^{*G}$ is generic. Indeed, $\M^0_{p}(n\delta,\epsilon_0)=\M^\theta_{p}(n\delta,\epsilon_0)$
and the sets $\M^\theta_p(n\delta,\epsilon_0)^{\C^\times}$ are identified for all $p$
by continuity. Let $c$ be a parameter corresponding to $p$ (meaning that $\nu(0,c)=(0,p)$).
Then we can consider the Verma module $\Delta_{0,c}(\tau):=H_{0,c}\otimes_{S(\h)\#\Gamma_n}\tau$.
The subalgebra $S(\h^*)^{\Gamma_n}$ is easily seen to be central. Let us write $S(\h^*)^{\Gamma_n}_+$ for the augmentation
ideal in $S(\h^*)^{\Gamma_n}$. Following \cite{Gordon_baby}, consider the {\it baby Verma module}
$\underline{\Delta}_{0,c}(\tau):=\Delta_{0,c}(\tau)/S(\h^*)^{\Gamma_n}_+ \Delta_{0,c}(\tau)
\cong S(\h^*)/(S(\h^*)^{\Gamma_n}_+)\otimes \tau$ (the last isomorphism is that of
$S(\h^*)\#\Gamma_n$-modules). This module is easily seen to be indecomposable so it has a central
character that is a point of $\operatorname{Spec}(Z(H_{0,c}))=\M^0_{p}(n\delta,\epsilon_0)$. Clearly, this point is fixed by $\C^\times$ and this defines a map $\operatorname{Irr}(\Gamma_n)\rightarrow \M^0_{p}(n\delta,\epsilon_0)^{\C^\times},\tau\mapsto z_\tau,$ that was shown to be a bijection
in \cite{Gordon_O}.

Fix some $p\in \g^{*G}$. Consider the attracting locus $Y_p\subset \M^\theta_p(n\delta,\epsilon_0)$ for the $\C^\times$-action.
Since this action has finitely many fixed points, we see that $Y_p$ is a lagrangian subvariety
with  irreducible components indexed by $\operatorname{Irr}(\Gamma_n)$. Namely, to $\tau\in
\operatorname{Irr}(\Gamma_n)$ we assign the attracting locus $Y_p(\tau):=\{z\in \M^\theta_p(n\delta,\epsilon_0)|
\lim_{t\rightarrow 0}t.z=z_\tau\}$. The irreducible components of $Y_p$ are the closures $\overline{Y_p(\tau)}$.
When $p$ is Zariski generic, the subvarietes $Y_p(\tau)$ are already closed.

By the category
$\mathcal{O}^{loc}$ for $\Dcal_\lambda$ we mean the full category of coherent $\Dcal_\lambda$-modules
that are supported on $Y$ (see \ref{SSS_supports}) and admit a $\C^\times$-equivariant structure
compatible with the $\C^\times$-action on $\mathcal{D}_\lambda$.
Such categories were systematically studied in \cite{BLPW}. In particular, it was shown that
all modules in $\mathcal{O}^{loc}$ have finite length and are indexed by $\M^\theta_0(n\delta,\epsilon_0)^{\C^\times}$,
see \cite[Sections 3.3,5.3]{BLPW}.

\subsubsection{Choice of identification $X^{\C^\times}\cong \operatorname{Irr}(\Gamma_n)$}\label{SSS_alt_order}
We note that despite our identification of $X^{\C^\times}$ with $\operatorname{Irr}(\Gamma_n)$ is natural,
there are other natural choices as well. The choice we have made is good for working with
the category $\mathcal{O}$. We could also consider the category $\mathcal{O}^*$, where the modules
are locally nilpotent for $\h^*$, not for $\h$ (and are still finitely generated over $H_{1,c}$).
Consequently, we need to use the opposite
Hamiltonian $\C^\times$-action on $X,\M^0_p(n\delta,\epsilon_0)$ and Verma modules
$\Delta^*_{0,c}(\tau):=H_{0,c}\otimes_{S(\h^*)\#\Gamma_n}\tau$. Let us explain how the bijection
$X^{\C^\times}\cong \operatorname{Irr}(\Gamma_n)$ changes.

All simple constituents of $\underline{\Delta}_{0,c}(\tau)$ are isomorphic modules of dimension
$|\Gamma_n|$ (indeed, $H_{0,c}$ is the endomorphism algebra of the rank $|\Gamma_n|$ bundle $\tilde{\mathcal{P}}_p$
on $\M^0_p(n\delta,\epsilon_0)$). Let us denote this simple module by $L_{0,c}(\tau)$. This module
is graded, the highest graded component is $\tau$. Let us determine the lowest graded component
in $L_{0,c}(\tau)$.  This component coincides with the lowest graded component in $\Delta_{0,c}(\tau)$
that is the tensor product  of $\tau$ with the lowest degree component in $\C[\h]/(\C[\h]^{\Gamma_n})_+$.
It is easy to see that the latter is $\Lambda^{top}\h$. Abusing the notation,
we will denote $\tau\otimes \Lambda^{top}\h$ by $\tau^t$. When $\Gamma_1=\{1\}$
we can use the standard identification of $\operatorname{Irr}(\mathfrak{S}_n)$ with the set of Young
diagrams of $n$ boxes.  In this case, $\Lambda^{top}\h$ is the sign representation of $\mathfrak{S}_n$
and $\tau^t$ indeed corresponds to the transposed Young diagram of $\tau$.

The previous paragraph shows that there is an epimorphism $\Delta^*_{0,c}(\tau^t)\twoheadrightarrow L_p(\tau)$.
So our new bijection sends the point $z_\tau\in X^{\C^\times}$ to $\tau^t$.

We also note that the identification  $X^{\C^\times}\cong \operatorname{Irr}(\Gamma_n), \tau\mapsto z_\tau,$
depends on the choice of a Procesi bundle $\mathcal{P}$ but we are not going to use this.

\subsubsection{Highest weight structures}\label{SSS_HWS}
Let us recall the definition of a highest weight category. Let $\mathcal{C}$ be an abelian category
that is equivalent to the category of modules over a finite dimensional algebra, equivalently, the
category $\mathcal{C}$ has finitely many simples, enough projectives and finite dimensional Hom's
(and hence every object has finite length). Let $\mathcal{T}$ denote an indexing set of the simple
objects in $\mathcal{C}$, we write $L(\tau)$ for the simple object indexed by $\tau\in \mathcal{T}$
and $P(\tau)$ for its projective cover. The additional structure of a highest weight category
is a partial order $\leqslant$ on $\mathcal{T}$ and a collection of so called {\it standard} objects
$\Delta(\tau), \tau\in \mathcal{T}$, satisfying the following axioms:
\begin{enumerate}
\item $\operatorname{Hom}_{\mathcal{C}}(\Delta(\tau),\Delta(\tau'))\neq 0$ implies
$\tau\leqslant \tau'$,
\item $\operatorname{End}_{\mathcal{C}}(\Delta(\tau))=\C$.
\item $P(\tau)\twoheadrightarrow \Delta(\tau)$ and the kernel admits a filtration
with quotients $\Delta(\tau')$ for $\tau'>\tau$.
\end{enumerate}

\begin{Rem}\label{Rem:stand_from_order}
Let us point out that the standard objects are uniquely recovered from the partial order.
Namely, consider the category $\mathcal{C}_{\leqslant \tau}$ that is the Serre span
of the simples $L(\tau')$ with $\tau'\leqslant \tau$. Then $\Delta(\tau)$ is the projective
cover of $L(\tau)$ in $\mathcal{C}_{\leqslant \tau}$.
\end{Rem}

Both categories $\mathcal{O},\mathcal{O}^{loc}$ that were described above are highest weight,
see \cite[Sections 2.6,3.2]{GGOR} for $\mathcal{O}$ and \cite[Section 5.3]{BLPW} for $\mathcal{O}^{loc}$.
The standard objects $\Delta(\lambda)$ are the Verma modules. The order can be introduced
as follows. Recall the element $h\in H_{1,c}$ introduced in \ref{SSS_cat_O_H}.
It acts on $\tau\subset \Delta(\tau)$
by $\displaystyle \sum_{s\in S}\frac{c(s)}{1-\lambda_s}s.$ The latter element in $\C \Gamma_n$ is central
and so acts on $\tau$ by a scalar, denote that scalar by $c_\tau$. Then we set
$\tau\leqslant\tau'$ if $c_{\tau}-c_{\tau'}\in \Z_{\geqslant 0}$.

Let us provide a formula
for $c_\tau$. We start with $\ell=1$.
Then a classical computation shows that $c_\tau=c_0\operatorname{cont}(\tau)/2$, where the integer $\operatorname{cont}(\tau)$
is defined as follows. For the box $b\in \tau$ lying in $x$th column and $y$th row,
we set $\operatorname{cont}(b):=x-y$. Then $\operatorname{cont}(\tau):=\sum_{b\in \tau}
\operatorname{cont}(b)$. Now let us proceed to $\ell>1$. In this case, the irreducible
representations of $\Gamma_n$ are parameterized by the $\ell$-multipartitions $(\tau^{(1)},\ldots,\tau^{(\ell)})$
of $n$. Define elements $\lambda_1,\ldots,\lambda_{\ell}$ by requiring that $\lambda_i, i=1,\ldots,\ell-1,$
is recovered from $c$ as in Theorem \ref{Thm:iso} and $\sum_{i=1}^{\ell}\lambda_i=0$. For a box $b\in \tau^{(j)}$
set $d_c(b):=c_0\ell\operatorname{cont}(b)/2+\ell \lambda_j$. Then, up to a summand independent of $\tau$,
we have $c_\tau=\sum_{b\in \tau} d_c(b)$, see \cite[Proposition 6.2]{rouqqsch} or
\cite[2.3.5]{GL} (in both papers the notation is different from what we use).

In fact, one can take a weaker ordering on $\operatorname{Irr}(\Gamma_n)$ making $\mathcal{O}$
into a highest weight category. Namely, according to \cite{Griffeth}, for two boxes $b, b'$
in $j$th and $j'$th diagrams respectively we say that $b\leqslant b'$ if $d_c(b)-d_c(b')$
is congruent to $j-j'$ modulo $\ell$ and is in $\Z_{\geqslant 0}$. Then $\lambda\leqslant \lambda'$
if one can order boxes $b_1,\ldots,b_n$ of $\lambda$ and $b_1',\ldots,b_n'$ of $\lambda'$
in such a way that $b_i\leqslant b'_i$ for all $i$.

Let us proceed to the categories $\mathcal{O}^{loc}$. They are highest weight with respect to
the  order $\leqslant$ (we will often write $\leqslant^\theta$ to indicate the dependence
on $\theta$) defined as follows. We first define a pre-order $\leqslant'$ by setting $\tau\leqslant' \tau'$ if $z_{\tau}\in \overline{Y}_{\tau'}$ and then define $\leqslant$ as the transitive closure of $\leqslant'$.
\begin{Ex}\label{Ex:geom_order_Hilbert}
When $\ell=1$ and $\theta<0$, the bijection between the $\C^\times_h$-fixed points
and partitions is the standard one.   A combinatorial description of $\leqslant^\theta$
follows from \cite[Section 4]{Nakajima_Jack}: we have $\tau\leqslant^\theta \tau'$
if $\tau\leqslant \tau'$ as Young diagrams.
\end{Ex}

In the case when $\ell>1$ an a priori stronger order (that automatically also makes $\mathcal{O}^{loc}$
into a highest weight category) was described by Gordon in \cite[Section 7]{Gordon_O} in combinatorial terms.
The standard modules are recovered from $\leqslant^\theta$ as before. Below we will see that they can be described
using the deformations of the Procesi bundle.

\subsubsection{Derived equivalence}
Here we are going to produce a derived equivalence $D^b(\mathcal{O})\cong D^b(\mathcal{O}^{loc})$.

Inside $D^b(H_{1,c}\operatorname{-mod})$ we can consider the full subcategory $D^b_{\mathcal{O}}(H_{1,c}\operatorname{-mod})$ consisting of all complexes
whose homology lie in the category $\mathcal{O}$. We then have a natural
functor $D^b(\mathcal{O})\rightarrow D^b_{\mathcal{O}}(H_{1,c}\operatorname{-mod})$.
This functor is an equivalence by \cite[Proposition 4.4]{Etingof_affine}. We can also consider the
category $D^b_{\mathcal{O}}(\operatorname{Coh}(\mathcal{D}_\lambda))$, the functor
$D^b(\mathcal{O}^{loc})\rightarrow D^b_{\mathcal{O}}(\operatorname{Coh}(\mathcal{D}_\lambda))$
is an equivalence as well, this follows   from  \cite[Corollary 5.13]{BLPW} and \cite[Corollary 5.12]{BPW}.

The equivalence $D^b(H_{1,c}\operatorname{-mod})\xrightarrow{\sim} D^b(\operatorname{Coh}(\mathcal{D}_\lambda))$
is compatible with the supports in the following sense. Recall that we have two commuting $\C^\times$-actions.
The Hamiltonian torus will be denoted by $\C^{\times}_h$, while, for the contracting torus
(which is present even when $\Gamma_1$ is not cyclic), we will write $\C^\times_c$.
Pick a closed subvariety $Y_0\subset (\h\oplus \h^*)/\Gamma_n$
that is stable under the $\C^\times_c$-action. Consider the full subcategory $D^b_{Y_0}(H_{1,c}\operatorname{-mod})$
in $D^b(H_{1,c})$ of all complexes with homology supported on $Y_0$. Set $Y:=\rho^{-1}(Y_0)$, where, recall,
$\rho$ stands for the resolution of singularities morphism $\rho:X\rightarrow V_n/\Gamma_n$
and consider the subcategory $D^b_{Y}(\operatorname{Coh}(\mathcal{D}_\lambda))\subset D^b(\operatorname{Coh}(\mathcal{D}_\lambda))$.
Then the equivalence $D^b(\operatorname{Coh}(\mathcal{D}_\lambda))\cong D^b(H_{1,c}\operatorname{-mod})$
restricts to $D^b_Y(\operatorname{Coh}(\mathcal{D}_\lambda))\cong D^b_{Y_0}(H_{1,c}\operatorname{-mod})$.

Note that the bundle $\mathcal{P}$ on $X$ is $(\C^\times)^2$-equivariant.
Therefore the deformation $\tilde{\mathcal{P}}_\hbar$ is $(\C^\times)^2$-equivariant
as well. It follows that the equivalence $D^b(\operatorname{Coh}(\mathcal{D}_\lambda))\cong
D^b(H_{1,c}\operatorname{-mod})$ preserves complexes whose homology admit $\C^\times$-equivariant
liftings. Combined with the previous paragraph, this means that we get an equivalence
$D^b_{\mathcal{O}}(H_{1,c}\operatorname{-mod})\cong D^b_{\mathcal{O}}(\operatorname{Coh}(\mathcal{D}_\lambda))$
and hence an equivalence $D^b(\mathcal{O})\cong D^b(\mathcal{O}^{loc})$.

This was used in \cite[Section 5]{GL} to prove a conjecture of Rouquier, \cite[Conjecture 5.6]{rouqqsch}.
Namely, suppose that we have parameters $c,c'$ such that the corresponding
parameters $\lambda,\lambda'$ have integral difference. Then we have an abelian equivalence
$\operatorname{Coh}(\mathcal{D}_\lambda)\xrightarrow{\sim} \operatorname{Coh}(\mathcal{D}_{\lambda'})$,
given by tensoring with the bimodule $\mathcal{D}_{\lambda,\lambda'-\lambda}$. This bimodule is $\C^\times_h$-equivariant,
this follows from the construction. Also it is clear that tensoring with $\mathcal{D}_{\lambda,\lambda'-\lambda}$
preserves the supports. So we conclude that $\mathcal{O}^{loc}_{\lambda}\xrightarrow{\sim} \mathcal{O}^{loc}_{\lambda'}$.
It follows that the categories $\mathcal{O}_c$ and $\mathcal{O}_{c'}$ are derived equivalent
that was conjectured by Rouquier (in the generality of all Cherednik algebras).

\subsection{Macdonald positivity}\label{SS_Macpos}
Consider the ${\bf H}$-module ${\bf \Delta}(\lambda):={\bf H}\otimes_{S(\h)\#\Gamma_n}\lambda$. Recall
the derived equivalence $D^b(\operatorname{Coh}(\widetilde{\Dcal}_{\hbar,fin}))\xrightarrow{\sim}
D^b({\bf H}\operatorname{-mod})$ given by
$$\mathcal{F}:=\Gamma(\widetilde{\mathcal{P}}_{\hbar,fin}\otimes_{\widetilde{\Dcal}_{\hbar,fin}}\bullet)$$
and its inverse $\mathcal{G}$.  It turns out that the study of the objects $\mathcal{G}({\bf \Delta}(\lambda))$
leads to the proof of the Macdonald positivity. The proof that we provide below is morally similar
to but  different from the original proof in \cite{BF}.

\subsubsection{Flatness}
A key step in the proof is to establish the flatness over $\C[\h]$ of an arbitrary Procesi bundle
$\mathcal{P}$, where we view $\mathcal{P}$
 ($\C[\h]$ acts on $\mathcal{P}$ via the inclusion $\C[\h]\hookrightarrow S(\h\oplus \h^*)\#\Gamma_n=\operatorname{End}_{\mathcal{O}_X}(\mathcal{P})$). This will imply that
the Koszul complex
$$\mathcal{P}\leftarrow \h^*\otimes\mathcal{P}\leftarrow \Lambda^2\h^*\otimes\mathcal{P}\leftarrow
\ldots\leftarrow \Lambda^n\h^*\otimes\mathcal{P}$$
is a resolution of $\mathcal{P}/\h^* \mathcal{P}$. The proof of the flatness is taken
from the proof of \cite[Lemma 3.7]{BF}.

Note that, since $\Gamma_n$ is a complex
reflection group, $\C[\h]$ is free over $\C[\h]^{\Gamma_n}$. So it is enough to show that
$\mathcal{P}$ is flat over $\C[\h]^{\Gamma_n}$.

Let us recall how $\mathcal{P}$ was constructed, see \ref{SSS_constr_Procesi_p} (construction of one Procesi
bundle in characteristic $p\gg 0$), \ref{SSS_constr_Procesi_0} (construction of one Procesi bundle
in characteristic $0$), \ref{SSS_constr_Procesi_all} (construction of all Procesi bundles).

\begin{enumerate}
\item We start with a suitable Frobenius constant quantization $\mathcal{D}$ of $X_{\Fi}$, where $\Fi$
is an algebraically closed field of characteristic $0$.
\item Then we take a splitting bundle $\mathcal{B}$ of $\mathcal{D}|_{X_{\Fi}^{(1)\wedge_0}}$.
\item We form a bundle $\mathcal{P}'$ on $X_{\Fi}^{(1)\wedge_0}$ that is the sum of indecomposable summands
of $\mathcal{S}^*$ with suitable multiplicities. Then we extend this bundle to $X_{\Fi}^{(1)}$
and get a Procesi bundle $\mathcal{P}^{(1)}_{\Fi}$ on $X_{\Fi}^{(1)}$.
\item Since $X_{\Fi}^{(1)}\cong X_{\Fi}$ as  $\Fi$-varieties, we can view $\mathcal{P}^{(1)}_{\Fi}$ as a bundle
$\mathcal{P}_{\Fi}$ on $X$.
\item Then we lift $\mathcal{P}_{\Fi}$ to characteristic 0.
\end{enumerate}

The procedure in (5) implies that if $\mathcal{P}_{\Fi}$ is flat over $\Fi[\h]^{\Gamma_n}$,
then $\mathcal{P}$ is flat over $\C[\h]^{\Gamma_n}$ (the reader is welcome to verify the technical
details). Obviously, $\mathcal{P}_{\Fi}$ is flat over $\Fi[\h]^{\Gamma_n}$ if and only if
$\mathcal{P}_{\Fi}^{(1)}$ is flat over $\Fi[\h^{(1)}]^{\Gamma_n}$. The latter is equivalent to
$\mathcal{B}^*$ being flat over $\Fi[[\h^{(1)}]]^{\Gamma_n}$, which, in turn, is equivalent
to the claim that $\mathcal{D}$ is a flat $\Fi[\h^{(1)}]^{\Gamma_n}$-module. But $\operatorname{gr}\mathcal{D}
\cong \operatorname{Fr}^X_*\mathcal{O}_{X_\Fi}$. So it is enough to verify that
$\mathcal{O}_{X_\Fi}$ is flat over $\Fi[\h^{(1)}]^{\Gamma_n}$. Since $\Fi[\h]^{\Gamma_n}$
is flat over $\Fi[\h^{(1)}]^{\Gamma_n}$, we reduce to proving that $X_{\Fi}$ is flat over
$\h_{\Fi}/\Gamma_n$, equivalently, all fibers of $X_{\Fi}\rightarrow \h_{\Fi}/\Gamma_n$
have the same dimension, equivalently, the zero fiber has dimension $\dim\h$. But the zero fiber of
this map is precisely the contracting variety for the Hamiltonian $\Fi^\times$-action
and so is lagrangian. This completes the proof.

Similarly, $\mathcal{P}$ is flat over $\C[\h^*]$.  Also let us recall, see \ref{SSS_Procesi_canon},
that $\mathcal{P}^*$ can be equipped with a structure of the Procesi bundle, for which we need to
convert  the right $S(\h\oplus \h^*)\#\Gamma_n$-module into a left $S(\h\oplus\h^*)\#\Gamma_n$
using a natural anti-automorphism of $S(\h\oplus\h^*)\#\Gamma_n$. This shows that $\mathcal{P}^*$
is a flat {\it right} module over both $\C[\h]$ and $\C[\h^*]$. This is what we are going to
use below.

\subsubsection{Upper triangularity}
Let $\theta$ be a generic stability condition and take $X=X^\theta$. This gives rise to the partial order
$\leqslant^\theta$ on the set $\operatorname{Irr}(\Gamma_n)$ described in \ref{SSS_loc_O}. Recall that we
write $z_\tau$ for the $\C^\times_h$-fixed point in $X$ corresponding to $\tau$
as explained in \ref{SSS_loc_O}.
We write $Y_\tau$ for  the $\C^\times_h$-contracting component of $z_\tau$, a
lagrangian subvariety in $X^\theta$. Further, write $e_\tau$ for a primitive idempotent in
$\C\Gamma_n$ corresponding to $\tau$ so that $\tau \cong (\C\Gamma_n)e_\tau$.

\begin{Prop}\label{Prop:upper_triang}
Let $\mathcal{P}$ be the canonical Procesi bundle on $X^\theta$. Then the sheaf $(\mathcal{P}^*/\mathcal{P}^*\mathfrak{h})e_\tau$ is supported on
$\bigcup_{\tau'\leqslant^\theta \tau}Y_{\tau'}$.
\end{Prop}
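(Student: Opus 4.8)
The plan is to reduce the statement to a fact about the localization category $\mathcal{O}^{loc}$ for $\mathcal{D}_\lambda$, where the highest-weight order $\leqslant^\theta$ came from in \ref{SSS_HWS}. First I would interpret the sheaf $(\mathcal{P}^*/\mathcal{P}^*\mathfrak{h})e_\tau$ geometrically. Using the flatness of $\mathcal{P}^*$ over $\C[\h]$ established in the previous subsection, the Koszul complex $\mathcal{P}^*\leftarrow \h^*\otimes\mathcal{P}^*\leftarrow\ldots$ (acting on the right) is a resolution of $\mathcal{P}^*/\mathcal{P}^*\mathfrak{h}$, so this quotient is a well-behaved coherent $\mathcal{O}_X$-module whose support is contained in $\rho^{-1}(\mathfrak{h}^*/\Gamma_n)$, the $\C^\times_h$-contracting locus $Y=\bigcup_\tau Y_\tau$. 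Thus we already know the support is a union of some of the $Y_{\tau'}$; the content of the proposition is the bound $\tau'\leqslant^\theta\tau$ on which components occur when we restrict along $e_\tau$.

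Next I would pass through the derived McKay equivalence. Applying $\mathcal{F}=R\Gamma(\widetilde{\mathcal{P}}_{\hbar,\mathrm{fin}}\otimes\bullet)$, or rather its specialized abelian analog $\mathcal{P}^*\otimes^L_{\C[V_n]\#\Gamma_n}\bullet$ from \ref{SS_der_equiv}, I would identify $\mathcal{P}^*\otimes_{\C[V_n]\#\Gamma_n}\Delta_{1,c}(\tau)$ — equivalently $\mathcal{P}^*\otimes_{S(\h)\#\Gamma_n}\tau = (\mathcal{P}^*/\mathcal{P}^*\mathfrak{h})\otimes_{\C\Gamma_n}\tau = (\mathcal{P}^*/\mathcal{P}^*\mathfrak{h})e_\tau$ — as (up to shift and the Ext-vanishing that makes this a genuine object rather than a complex) the image of the standard module $\Delta(\tau)$ of the category $\mathcal{O}$ for $H_{1,c}$ under the equivalence $D^b(H_{1,c}\text{-mod})\cong D^b(\operatorname{Coh}(\mathcal{D}_\lambda))$, composed with $\mathcal{P}^*\otimes$. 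Since the equivalence $D^b(\mathcal{O})\cong D^b(\mathcal{O}^{loc})$ of \ref{SSS_HWS} is a highest-weight (perverse) equivalence — it sends standards to standards up to the filtration-by-higher-standards, because it is compatible with both the $\C^\times_h$-equivariant structure and the supports as explained there — the object $(\mathcal{P}^*/\mathcal{P}^*\mathfrak{h})e_\tau$ has a filtration whose subquotients are the standard objects $\underline{\Delta}(\tau')$ of $\mathcal{O}^{loc}$ for $\tau'\leqslant^\theta\tau$ (together, possibly, with $\tau'=\tau$ once).

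Finally I would read off the support. By definition of the highest-weight structure on $\mathcal{O}^{loc}$ recalled in \ref{SSS_HWS} (following \cite{BLPW}), the standard object indexed by $\tau'$ is supported precisely on $\overline{Y_{\tau'}}$; and $z_{\tau''}\in\overline{Y_{\tau'}}$ forces $\tau''\leqslant^\theta\tau'\leqslant^\theta\tau$. Hence every standard subquotient of $(\mathcal{P}^*/\mathcal{P}^*\mathfrak{h})e_\tau$ is supported on $\bigcup_{\tau'\leqslant^\theta\tau}Y_{\tau'}$, and therefore so is $(\mathcal{P}^*/\mathcal{P}^*\mathfrak{h})e_\tau$ itself, which is the assertion.

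The step I expect to be the main obstacle is making precise that $(\mathcal{P}^*/\mathcal{P}^*\mathfrak{h})e_\tau$ really is (the underived image of) a standard object, i.e.\ that applying $\mathcal{P}^*\otimes$ to the Verma $\Delta_{1,c}(\tau)$ lands in a single cohomological degree and goes to an object of $\mathcal{O}^{loc}$ that is filtered by the $\underline{\Delta}(\tau')$ with $\tau'\leqslant^\theta\tau$ in the geometric order. This requires knowing that the derived equivalence restricted to categories $\mathcal{O}$ is \emph{t-exact on standards} up to the highest-weight filtration, which one extracts from the fact that both sides are highest weight, the functor intertwines the $\C^\times_h$-actions, the orders on the two sides are compatible (both are refinements of, or governed by, the $c_\tau$-grading / geometric attracting order), and from Remark \ref{Rem:stand_from_order} — the standards are determined by the order, so a support-and-equivariance-preserving equivalence must send the truncation subcategories $\mathcal{C}_{\leqslant\tau}$ to $\mathcal{C}^{loc}_{\leqslant^\theta\tau}$. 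Once that dictionary is in place the support computation is immediate from the structure results for $\mathcal{O}^{loc}$ cited above. An alternative, more hands-on route — degenerating to $H_{0,c}$, using that $(\mathcal{P}^*/\mathcal{P}^*\mathfrak{h})e_\tau$ specializes to the baby Verma $\underline{\Delta}_{0,c}(\tau)$ whose support is a single $\C^\times_h$-fixed point $z_\tau$ and then spreading out over the family — would also work and sidesteps the t-exactness subtlety, at the cost of a semicontinuity-of-support argument in the family $\widetilde{X}\to\param_{red}^*$.
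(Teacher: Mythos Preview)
Your primary approach has a circularity problem. You want to use that the derived equivalence $D^b(\mathcal{O}_c)\cong D^b(\mathcal{O}^{loc}_\lambda)$ respects the highest weight structures, sending $\Delta_{1,c}(\tau)$ into $\mathcal{O}^{loc}_{\leqslant^\theta\tau}$. But nothing established before this proposition gives you that compatibility: the order on $\mathcal{O}_c$ is by the scalars $c_\tau$, the order on $\mathcal{O}^{loc}_\lambda$ is the geometric attracting order $\leqslant^\theta$, and no relation between them has been proved at this point. In fact the paper runs the logic the other way: in the proof of Theorem~\ref{Thm:ab_loc} it \emph{uses} Proposition~\ref{Prop:upper_triang} to show that $\Delta^{loc}_\lambda(\tau):=(\mathcal{P}^*_{1,\lambda}/\mathcal{P}^*_{1,\lambda}\h)e_\tau$ lies in $\mathcal{O}^{loc}_{\leqslant^\theta\tau}$ and hence is the standard object. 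So invoking ``the equivalence sends standards to standard-filtered objects'' to prove the proposition is assuming what you are trying to establish. Your attempted justification via Remark~\ref{Rem:stand_from_order} does not close the gap: that remark says standards are determined by the order, but you have not shown the equivalence carries $\mathcal{C}_{\leqslant\tau}$ (defined via $c_\tau$) to $\mathcal{C}^{loc}_{\leqslant^\theta\tau}$ (defined via geometry).

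Your alternative route at the end is essentially the paper's argument, though stated imprecisely. The paper stays at $\hbar=0$ throughout and deforms in the classical direction over $\g^{*G}$: the bundle $\widetilde{\mathcal{P}}^*$ on $\widetilde{X}$ is flat over $\C[\g^{*G},\h^*]$, so $\widetilde{\mathcal{P}}^*/\widetilde{\mathcal{P}}^*\h$ is flat over $\C[\g^{*G}]$, and semicontinuity gives $\operatorname{Supp}\big((\mathcal{P}^*/\mathcal{P}^*\h)e_\tau\big)\subset \overline{\C^\times_c\cdot\operatorname{Supp}\big((\mathcal{P}^*_p/\mathcal{P}^*_p\h)e_\tau\big)}$ for generic $p$. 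At generic $p$ one has $(\mathcal{P}^*_p/\mathcal{P}^*_p\h)e_\tau=e\Delta_{0,c}(\tau)$, the spherical part of the \emph{full} Verma for $H_{0,c}$ (not the baby Verma), and its support is contained in the attracting cell $Y_{p,\tau}$ (not just the point $z_\tau$); this is checked via the baby Verma argument from \ref{SSS_loc_O}. Finally $\overline{\C^\times_c Y_{p,\tau}}\cap X^\theta\subset\bigcup_{\tau'\leqslant^\theta\tau}Y_{\tau'}$ gives the conclusion. So: drop the main approach, and sharpen the alternative by replacing ``baby Verma supported at $z_\tau$'' with ``Verma supported on $Y_{p,\tau}$'' and making the flatness/semicontinuity step over $\g^{*G}$ explicit.
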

\begin{proof}
Consider the deformation $\widetilde{\mathcal{P}}^*$ of $\mathcal{P}^*$ to $\widetilde{X}$.  It is flat over $\C[\g^{*G},\h^*]$. Therefore $\widetilde{\mathcal{P}}^*/\widetilde{\mathcal{P}}^*\h$ is flat over
$\C[\g^{*G}]$. It follows that $\operatorname{Supp}((\mathcal{P}^*/\mathcal{P}^*\h)e_\tau)\subset
\overline{\C_c^\times\operatorname{Supp}(\mathcal{P}_p^*/\mathcal{P}_p^*\h)e_\tau}$ for a generic
$p\in \g^{*G}$. But $(\mathcal{P}_p^*/\mathcal{P}_p^*\h)e_\tau$ is nothing else but
$e\Delta_{0,c}(\tau)$. We claim that $\operatorname{Supp}\Delta_{0,c}(\tau)\subset Y_{p,\tau}$.
Indeed, we have shown in \ref{SSS_loc_O} that $\Delta_{0,c}(\tau)/S(\h^*)^{\Gamma_n}_+\Delta_{0,c}(\tau)$
is supported in $z_{p,\tau}$, the point in $\M_p^0(n\delta,\epsilon_0)^{\C^\times_h}$ indexed by $\tau$. If $\operatorname{Supp}\Delta_{0,c}(\tau)\not\subset Y_{p,\tau}$,
then there is $\tau'\neq \tau$ with $z_{p,\tau'}\in \operatorname{Supp}\Delta_{0,c}(\tau)$
(because the latter is closed and contained in $Y_p$). The support of $\Delta_{0,c}(\tau)$
is disconnected and so the module $\Delta_{0,c}(\tau)$ is indecomposable.  From here one deduces that
$z_{p,\tau'}$ lies in the support of $\Delta_{0,c}(\tau)/S(\h^*)^{\Gamma_n}_+\Delta_{0,c}(\tau)$, contradiction.

Now the inclusion
$$\operatorname{Supp}\left((\mathcal{P}^*/\mathcal{P}^*\mathfrak{h})e_\tau\right)
\subset \bigcup_{\tau'\leqslant^\theta \tau}Y_{\tau'}$$
follows from
$$\overline{\C^\times Y_{p,\tau}}\cap X^\theta\subset \bigcup_{\tau'\leqslant^\theta\tau} Y_{\tau'},$$ see \cite[Lemma 3.8]{BF}.
\end{proof}

In fact, $e\Delta_{0,c}(\tau)=\C[Y_{p,\tau}]$ but we do not need this fact.

\subsubsection{Wreath-Macdonald positivity}
Now we are ready to prove the Macdonald positivity theorem, Theorem \ref{Thm:Mac_pos},
and its ``wreath-generalization'' due to Bezrukavnikov and Finkelberg.

First of all, Proposition \ref{Prop:upper_triang} implies that if the fiber
of $[\mathcal{P}^*/\mathcal{P}^*\h]e_\tau$ in $z_{\tau'}$ is nonzero, then
$\tau'\leqslant^\theta \tau$. It follows that if $\tau^*$ is a constituent of
the fiber $(\mathcal{P}^*/\mathcal{P}^*\h)_{z_{\tau'}}$, then $\tau\geqslant^\theta \tau'$.
But since $\mathcal{P}^*$ is a flat right $\C[\h]$-module, we see that
the class of $[\mathcal{P}^*/\mathcal{P}^*\h^*]_{z_{\tau'}}$ in the $K_0$
of bigraded $\Gamma_n$-modules coincides with
that of the Koszul resolution
$$\mathcal{P}^*_{z_{\tau'}}\leftarrow \mathcal{P}^*_{z_{\tau'}}\otimes \h\leftarrow\ldots$$
Taking the duals, we see that if $\tau$ occurs in the class
$$\mathcal{P}_{z_{\tau'}}\otimes \sum_{i=0}^{\dim \h}(-1)^i \Lambda^i\h^*,$$
then $\tau'\leqslant^\theta\tau$. When $\Gamma_1=\{1\}$, this yields (a) from
Definition \ref{defi_Macdonald}.

To get (b) in that definition (and its wreath-generalization), we consider
$[\mathcal{P}^*/\mathcal{P}^*\h^*]e_\tau$. This sheaf is supported
on the union of repelling components for $\C^{\times}_h$ and can have nonzero
fibers only in the fixed points $z_{\tau'}$ with $z_{\tau'}\geqslant^\theta z_{\tau^t}$
meaning $\tau^t\leqslant^\theta \tau'$. In other words, if $\tau$ appears in
$$\mathcal{P}_{z_{\tau'}}\otimes \sum_{i=0}^{\dim \h}(-1)^i \Lambda^i\h,$$
then $\tau^t\leqslant^\theta \tau'$.  When $\Gamma_1=\{1\}$, this yields
(b) in Definition \ref{defi_Macdonald}. (c) there follows because $\mathcal{P}$
is normalized.

\subsection{Localization theorem}\label{SS_Thm_ab_loc}
Let $\mathcal{P}_{1,\lambda}$ denote the the right $\mathcal{D}_\lambda$-module
obtained by specializing $\widetilde{\mathcal{P}}_\hbar$.
One can ask when (i.e., for which $\lambda$) the functor $\Gamma(\mathcal{P}_{1,\lambda}\otimes_{\Dcal_\lambda}\bullet):\mathcal{O}^{loc}_\lambda\rightarrow \mathcal{O}_c$
is a category equivalence. The following result answers this question.

\begin{Thm}\label{Thm:ab_loc}
Suppose that there is an order $\leqslant$ on $\operatorname{Irr}(\Gamma_n)$ refining $\leqslant^\theta$
 and making \underline{both}
$\mathcal{O}^{loc}_\lambda, \mathcal{O}_c$  into highest weight categories. Then $\Gamma:\operatorname{Coh}(\mathcal{D}_\lambda)\rightarrow H_{1,c}\operatorname{-mod}, \mathcal{O}^{loc}_\lambda\rightarrow \mathcal{O}_c$ are equivalences of categories.
\end{Thm}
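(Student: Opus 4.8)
The plan is to deduce the theorem from the $t$-exactness, for the tautological $t$-structures, of the derived equivalence $\mathcal{F}\colon D^b(\operatorname{Coh}(\Dcal_\lambda))\xrightarrow{\sim} D^b(H_{1,c}\operatorname{-mod})$ of (\ref{eq:equi_spec}), whose $0$-th cohomology is the functor $\Gamma(\mathcal{P}_{1,\lambda}\otimes_{\Dcal_\lambda}\bullet)$ appearing in the statement. First I would reduce the assertion about $\operatorname{Coh}(\Dcal_\lambda)$ to the assertion about $\mathcal{O}^{loc}_\lambda$: since $\mathcal{F}$ is compatible with supports (as recalled in \ref{SS_der_equiv}) and with the $\C^\times_c$-equivariant structures, $t$-exactness of $\mathcal{F}$ on $\operatorname{Coh}(\Dcal_\lambda)$ restricts to $t$-exactness on $\mathcal{O}^{loc}_\lambda$; conversely, by the principle that abelian localization is detected by category $\mathcal{O}$ — the relevant reduction, using the contracting action of $\C^\times_c$, is the one of \cite{GL,cher_ab_loc} — $t$-exactness on $\mathcal{O}^{loc}_\lambda$ forces it on all of $\operatorname{Coh}(\Dcal_\lambda)$. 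So it is enough to show that the induced equivalence $D^b(\mathcal{O}^{loc}_\lambda)\xrightarrow{\sim}D^b(\mathcal{O}_c)$ is $t$-exact.

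For this I would invoke the general principle that a derived equivalence between highest weight categories which matches the labelling posets of the simple objects (both indexed here by $\operatorname{Irr}(\Gamma_n)$) and sends standard objects to standard objects placed in cohomological degree $0$ is automatically $t$-exact. Granting this, $t$-exactness follows once we know that $\mathcal{F}$, equivalently its quasi-inverse $\mathcal{G}$, carries standards to standards: every object of $\mathcal{O}_c$ has a finite projective resolution by $\Delta$-filtered projectives, so $\mathcal{O}_c$ lies in the triangulated subcategory of $D^b(\mathcal{O}_c)$ generated under cones by the standards, which forces $\mathcal{G}(\mathcal{O}_c)\subseteq D^{\leqslant 0}(\mathcal{O}^{loc}_\lambda)$, and the dual argument with costandard objects gives the reverse inclusion. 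The hypothesis of the theorem enters exactly here: the two highest weight orders, the geometric order $\leqslant^\theta$ on $\mathcal{O}^{loc}_\lambda$ from \ref{SSS_HWS} and the $c_\tau$-order on $\mathcal{O}_c$, are a priori different, and the common refinement $\leqslant$ is needed both to apply the principle to a single pair of highest weight structures and to ensure that the standard objects attached to a given $\tau$ on the two sides correspond.

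It remains to check that $\mathcal{G}$ carries standards to standards in degree $0$. The standard objects of $\mathcal{O}_c$ are the Verma modules $\Delta_{1,c}(\tau)={\bf H}\otimes_{S(\h)\#\Gamma_n}\tau$ (after specialization), and applying $\mathcal{G}=\widetilde{\mathcal{P}}^*_{\hbar,fin}\otimes^L_{\bf H}\bullet$ gives $\mathcal{G}(\Delta_{1,c}(\tau))=\mathcal{P}^*_{1,\lambda}\otimes^L_{S(\h)\#\Gamma_n}\tau$. By the flatness of $\mathcal{P}^*$ over $\C[\h]$ proved in the course of Section \ref{SS_Macpos}, this derived tensor product is underived, hence concentrated in degree $0$ and equal to $(\mathcal{P}^*_{1,\lambda}/\mathcal{P}^*_{1,\lambda}\h)\otimes_{\C\Gamma_n}\tau$; Proposition \ref{Prop:upper_triang} shows that this coherent $\Dcal_\lambda$-module lies in $\mathcal{O}^{loc}_\lambda$ and that its composition factors are the simples labelled by $\tau'\leqslant^\theta\tau$, so by Remark \ref{Rem:stand_from_order}, together with the common-refinement hypothesis, it is the standard object of $\mathcal{O}^{loc}_\lambda$ attached to $\tau$. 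Running the same computation for $\mathcal{F}$ shows that standards go to standards on both sides, and the principle of the previous paragraph completes the proof. The main obstacle is precisely this last identification: it requires the flatness of the Procesi bundle over $\C[\h]$, both to pin $\mathcal{G}(\Delta_{1,c}(\tau))$ in cohomological degree $0$ and to relate it via a Koszul resolution to the fibrewise computation, and it requires reconciling the geometric order $\leqslant^\theta$ with the Cherednik order, which is exactly what the existence of a common refinement secures.
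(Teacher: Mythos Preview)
Your strategy is the paper's: reduce to the categories $\mathcal{O}$ (citing \cite{cher_ab_loc}), use the flatness of $\mathcal{P}^*$ over $\C[\h]$ from Section~\ref{SS_Macpos} to see that $\mathcal{G}(\Delta_{1,c}(\tau))=(\mathcal{P}^*_{1,\lambda}/\mathcal{P}^*_{1,\lambda}\h)e_\tau$ sits in degree~$0$, argue that this object is the standard of $\mathcal{O}^{loc}_\lambda$ labelled by $\tau$, and then invoke the general fact that a derived equivalence of highest weight categories sending standards to standards is $t$-exact. The paper states that last fact as a separate lemma; your paragraph with $\Delta$-filtered projectives and the dual costandard argument is essentially a sketch of its proof.

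There is one genuine gap. From Proposition~\ref{Prop:upper_triang} you only get that $(\mathcal{P}^*_{1,\lambda}/\mathcal{P}^*_{1,\lambda}\h)e_\tau$ lies in the Serre subcategory $\mathcal{O}^{loc}_{\leqslant^\theta\tau}\subset\mathcal{O}^{loc}_{\leqslant\tau}$. Remark~\ref{Rem:stand_from_order} does \emph{not} let you conclude from this alone that the object \emph{is} the standard: that remark says the standard is the projective cover of $L^{loc}(\tau)$ in $\mathcal{O}^{loc}_{\leqslant\tau}$, and you have neither identified the head of your object as $L^{loc}(\tau)$ nor checked the relevant $\Ext^1$-vanishing against the simples in that subcategory. (Knowing $\End=\C$ via the derived equivalence gives indecomposability, but not which simple sits on top, nor projectivity in the truncated category.) The paper is admittedly just as terse at this point; what is implicitly being used is that the geometric standards of $\mathcal{O}^{loc}_\lambda$ in the sense of \cite{BLPW} admit exactly this description via the Procesi bundle (cf.\ the forward reference at the end of \ref{SSS_HWS}), and the labels match because the bijection $\operatorname{Irr}(\Gamma_n)\cong X^{\C^\times}$ in \ref{SSS_loc_O} was set up precisely so that for generic parameter the fibre of the deformed Procesi bundle over $z_\tau$ is the baby Verma for $\tau$. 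Your ``dual argument with costandards'' likewise needs the companion flatness of the Procesi bundle over $\C[\h^*]$, which is available from Section~\ref{SS_Macpos} but should be stated.
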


This theorem can be viewed as an analog of the Beilinson-Bernstein localization theorem, \cite{BB_loc},
from the Lie representation theory.

\begin{proof}[Sketch of proof]
It is enough to prove that $\Gamma$ gives an equivalence between the categories $\mathcal{O}$,
see \cite[Section 3.3]{cher_ab_loc}. So below in the proof we only deal with the categories $\mathcal{O}$.

Set $\Delta^{loc}(\lambda):=[\mathcal{P}_{1,\lambda}^*/\mathcal{P}_{1,\lambda}^*\h]e_\lambda$.
Further, let $\mathcal{F}$ stand for $R\Gamma(\mathcal{P}_{1,\lambda}\otimes_{\Dcal_\lambda}\bullet)$.
The flatness of $\mathcal{P}$ over $S(\h)$ from the previous subsection implies that
\begin{equation}\label{eq:images}
\mathcal{F}\Delta_\lambda^{loc}(\tau)=\Delta_c(\tau).
\end{equation}
We have $\Delta_\lambda^{loc}(\tau)\in \mathcal{O}^{loc}_{\leqslant^\theta \lambda}$.
The condition on the orders implies that $\Delta_\lambda^{loc}(\tau)$ is the standard object in
$\mathcal{O}^{loc}_\lambda$.
Now the claim of Theorem \ref{Thm:ab_loc} follows from the next general claim.
\end{proof}

\begin{Lem}
Let $\mathcal{C}^1,\mathcal{C}^2$ be two abelian categories  with the same indexing poset $\mathcal{T}$.
Suppose that $\mathcal{F}:D^b(\mathcal{C}^1)\rightarrow D^b(\mathcal{C}^2)$ is a derived equivalence
mapping $\Delta^1(\tau)$ to $\Delta^2(\tau)$ for any $\tau\in \mathcal{T}$. Then $\mathcal{F}$
is induced from an abelian equivalence $\mathcal{C}^1\rightarrow \mathcal{C}^2$.
\end{Lem}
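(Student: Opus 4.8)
The plan is to prove the following general lemma: if $\mathcal{C}^1, \mathcal{C}^2$ are highest weight categories with the same poset $\mathcal{T}$ of standard objects, and $\mathcal{F}: D^b(\mathcal{C}^1) \to D^b(\mathcal{C}^2)$ is a triangulated equivalence taking $\Delta^1(\tau)$ to $\Delta^2(\tau)$ for every $\tau \in \mathcal{T}$, then $\mathcal{F}$ is t-exact, hence restricts to an equivalence of the abelian hearts. The key observation is that in a highest weight category the standard objects generate $D^b(\mathcal{C})$ as a triangulated category, and the poset structure controls the $\operatorname{Ext}$-groups between them: one has $\operatorname{Ext}^{>0}_{\mathcal{C}}(\Delta(\tau), \Delta(\tau')) = 0$ unless $\tau < \tau'$, and $\operatorname{Hom}_{\mathcal{C}}(\Delta(\tau), \Delta(\tau')) = 0$ unless $\tau \leqslant \tau'$, with $\operatorname{End}(\Delta(\tau)) = \C$. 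These facts are standard for highest weight categories and follow from axioms (1)--(3) in \ref{SSS_HWS} together with the existence of costandard objects and the standard filtration of projectives.

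The first step is to show $\mathcal{F}$ has no negative homological amplitude on any object, and dually no positive amplitude, by reducing to the generators. Since $\mathcal{F}(\Delta^1(\tau)) = \Delta^2(\tau)$ sits in homological degree $0$, and the $\Delta^1(\tau)$ generate $D^b(\mathcal{C}^1)$, it suffices to propagate t-exactness along triangles. Concretely, I would argue by induction on the poset: filtering $\mathcal{C}^1$ by the Serre subcategories $\mathcal{C}^1_{\leqslant \tau}$, one checks that the projective $P^1(\tau)$ has a finite filtration by standards $\Delta^1(\tau')$ with $\tau' \geqslant \tau$, so $\mathcal{F}(P^1(\tau))$ is an iterated extension (in $D^b(\mathcal{C}^2)$) of the $\Delta^2(\tau')$. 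The vanishing $\operatorname{Ext}^{i}_{\mathcal{C}^2}(\Delta^2(\tau''), \Delta^2(\tau')) = 0$ for $i > 0$ when $\tau'' \not< \tau'$, combined with the fact that standards appearing in $\mathcal{F}(P^1(\tau))$ are all $\geqslant \tau$, forces these extensions to live in a single triangulated degree; more precisely one shows the complex $\mathcal{F}(P^1(\tau))$ has homology concentrated in degree $0$ by peeling off standard subquotients in an order compatible with $\mathcal{T}$. Since the $P^1(\tau)$ generate $D^b(\mathcal{C}^1)$ and $\mathcal{F}$ sends them to objects of $\mathcal{C}^2 \subset D^b(\mathcal{C}^2)$, and $\mathcal{F}$ is an equivalence (so reflects such properties), $\mathcal{F}$ is t-exact.

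Once t-exactness is established, $\mathcal{F}$ restricts to a fully faithful exact functor $\mathcal{C}^1 \to \mathcal{C}^2$; essential surjectivity on the abelian level follows because an exact equivalence of bounded derived categories that is t-exact must induce an equivalence of hearts (the heart is recovered from the t-structure, which $\mathcal{F}$ preserves). This gives the desired abelian equivalence $\mathcal{C}^1 \xrightarrow{\sim} \mathcal{C}^2$, and applied with $\mathcal{C}^1 = \mathcal{O}^{loc}_\lambda$, $\mathcal{C}^2 = \mathcal{O}_c$, $\mathcal{F}$ the equivalence from (\ref{eq:images}), it completes the proof of Theorem \ref{Thm:ab_loc}.

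The main obstacle I anticipate is the bookkeeping in the inductive step: one must show that $\mathcal{F}(P^1(\tau))$ is genuinely concentrated in degree $0$ and not merely an object with standard-filtered homology spread over several degrees. This requires carefully using both that the standard subquotients of $P^1(\tau)$ are indexed by elements $\geqslant \tau$ and that $\operatorname{Hom}$ and $\operatorname{Ext}^{>0}$ between standards vanish in the "wrong" direction of the order, so that the spectral sequence (or iterated cone) computing $\mathcal{F}(P^1(\tau))$ degenerates. An alternative, perhaps cleaner, route is to invoke the uniqueness of highest weight structures with a given poset of standards: $\mathcal{F}$ identifies the "standardly filtered" objects on both sides, hence identifies the projectives (the projective $P(\tau)$ is the unique standardly filtered object $P$ with $\operatorname{Hom}(P, \Delta(\tau')) $ of the expected dimensions and no higher self-extensions among standards), and an equivalence matching projectives must be induced by a Morita equivalence of the underlying finite-dimensional algebras. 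I would likely present the t-exactness argument as the main line and remark on this alternative.
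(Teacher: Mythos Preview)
The paper does not supply a proof of this lemma; it is stated as a general fact immediately after the sketch of proof of Theorem~\ref{Thm:ab_loc} and then used without further comment. So there is nothing in the paper to compare your argument against directly.

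Your approach is correct, but you are making the central step harder than it is. Once you know $P^1(\tau)$ has a finite filtration with standard subquotients, applying $\mathcal{F}$ to the associated short exact sequences gives distinguished triangles in $D^b(\mathcal{C}^2)$ whose outer two terms, by induction on the filtration length, lie in the heart $\mathcal{C}^2$; the long exact cohomology sequence then forces the middle term into $\mathcal{C}^2$ as well. No $\operatorname{Ext}$-vanishing between standards, no spectral sequence, and no poset bookkeeping is needed here: the heart of a t-structure is automatically extension-closed in this sense. So the ``main obstacle'' you anticipate does not arise.

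What does require a word is the passage from ``$\mathcal{F}$ sends each $P^1(\tau)$ into $\mathcal{C}^2$'' to ``$\mathcal{F}$ is t-exact.'' The first statement, together with the finite global dimension of a highest weight category, only yields $\mathcal{F}(\mathcal{C}^1)\subset D^{\leqslant 0}(\mathcal{C}^2)$ (via finite projective resolutions). For the other half, run the identical argument for the quasi-inverse $\mathcal{F}^{-1}$, which also sends standards to standards: this gives $\mathcal{F}^{-1}(\mathcal{C}^2)\subset D^{\leqslant 0}(\mathcal{C}^1)$, hence $\mathcal{C}^2\subset \mathcal{F}(D^{\leqslant 0}(\mathcal{C}^1))$, and combined with the inclusion already obtained one gets $\mathcal{F}(D^{\leqslant 0})=D^{\leqslant 0}$, i.e.\ t-exactness. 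Your parenthetical ``(so reflects such properties)'' is presumably gesturing at this symmetry, but it should be made explicit.
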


Theorem \ref{Thm:ab_loc} generalizes  results of \cite{GS,KR} for $\Gamma_1=\{1\}$
to the case of general cyclic $\Gamma_1$.


\end{document}